\tikzset{join/.code=\tikzset{after node path={%
			\ifx\tikzchainprevious\pgfutil@empty\else(\tikzchainprevious)%
			edge[every join]#1(\tikzchaincurrent)\fi}}}
\tikzset{>=stealth',every on chain/.append style={join},
	every join/.style={->}}
\tikzstyle{labeled}=[execute at begin node=$\scriptstyle,
\newtheorem{theorem}{Theorem}[section]
\newtheorem{lemma}[theorem]{Lemma}
\newtheorem{proposition}[theorem]{Proposition}
\theoremstyle{definition}
\newtheorem{definition}[theorem]{Definition}
\theoremstyle{remark}
\newtheorem{remark}[theorem]{Remark}
\numberwithin{equation}{section}
\newcommand{\norm}[1]{\lVert#1\rVert}
\newcommand{\Ag}[1]{\langle#1\rangle}
\newcommand{\X}{\mathcal{X}}
\newcommand{\R}{\mathbb{R}}
\newcommand{\Z}{\mathbb{Z}}
\newcommand{\E}{\mathbb{E}}
\newcommand{\N}{\mathbb{N}}
\newcommand{\e}{\varepsilon}
\newcommand{\txt}[1]{\text{\rm #1}}
\newcommand{\argmin}{\mathop{\mathrm{argmin}}\limits}
\newcommand{\argmax}{\mathop{\mathrm{argmax}}\limits}
\def\Xint#1{\mathchoice
  {\XXint\displaystyle\textstyle{#1}}%
  {\XXint\textstyle\scriptstyle{#1}}%
  {\XXint\scriptstyle\scriptscriptstyle{#1}}%
  {\XXint\scriptscriptstyle\scriptscriptstyle{#1}}%
  \!\int}
\def\XXint#1#2#3{{\setbox0=\hbox{$#1{#2#3}{\int}$}
  \vcenter{\hbox{$#2#3$}}\kern-.5\wd0}}
\def\average{\Xint-}
\newcommand{\vertiii}[1]{{\left\vert\kern-0.25ex\left\vert\kern-0.25ex\left\vert #1
		\right\vert\kern-0.25ex\right\vert\kern-0.25ex\right\vert}}
\newcommand{\RN}[1]{%
	\textup{\uppercase\expandafter{\romannumeral#1}}%
}
\begin{document}

\title{Large-scale Regularity of Nearly Incompressible Elasticity in Stochastic Homogenization}



\author{Shu Gu\quad and\quad Jinping Zhuge}



\subjclass[2010]{74A40, 74Q05, 35B27.}

\keywords{System of Elasticity, Compressibility, Large-scale Regularity, Stochastic Homogenizatio, Lipschitz Estimate}

\maketitle

\begin{abstract}
In this paper, we systematically study the regularity theory of the linear system of nearly incompressible elasticity. In the setting of stochastic homogenization, we develop new techniques to establish the large-scale estimates of displacement and pressure, which are uniform in both the scale parameter and the incompressibility parameter. In particular, we obtain the boundary estimates in a new class of Lipschitz domains whose boundaries are smooth at large scales and bumpy at small scales.
\end{abstract}

\pagestyle{plain}
\tableofcontents

\section{Introduction}
\subsection{Motivations}
The system of linear elasticity for a homogeneous isotropic material is called Lam\'{e} system given by
\begin{equation}\label{eq.lame}
\mu \Delta u + (\mu +\lambda) \nabla(\nabla\cdot u) = F,
\end{equation}
where the vector-valued function $u = (u^1,u^2,\cdots, u^d)$ represents the displacement field in equilibrium state, and the scalar constants $\mu$ and $\lambda$ are usually referred as Lam\'{e}'s first and second parameters. In terms of the modulus of elasticity $E$ and Poisson ratio $\nu$ ($-1<\nu<\frac{1}{2}$), one has
\begin{equation*}
\mu = \frac{E}{2(1+\nu)}, \qquad \lambda = \frac{E\nu}{(1+\nu)(1-2\nu)}.
\end{equation*}
The quantity $\frac{2}{3}\mu + \lambda = \frac{1}{3}E/(1-2\nu)$ is called the bulk modulus, which measures the compressibility. A material tends to be incompressible, if the bulk modulus is large, or equivalently, if the Poisson ratio $\nu$ is close to $\frac{1}{2}$. The endpoint $\nu = \frac{1}{2}$ corresponds to the incompressible materials and reduces the system into a Stokes system. In the real world, all materials are more or less compressible, while the Poisson ratio for some materials could be very close to $\frac{1}{2}$. For example, the typical nearly incompressible material, natural rubber, has a Poisson ratio of $0.4999$ \cite{MR09}. Due to many applications of nearly incompressible materials in engineering, the Lam\'{e} system (\ref{eq.lame}) with large $\lambda$ has been studied extensively in physics and numerical analysis (see, e.g., \cite{Temam01,Her65,Vog83,KS95,HL02,MDR08,DWO09} and references therein). However, to the best of the authors' knowledge, only sparse results are known on the theoretical analysis of the nearly incompressible elasticity.

In this paper, we study the system of linear elasticity for inhomogeneous, anisotropic, nearly incompressible materials \cite{OSY92,JikovKozlovOleinik94}. Precisely, let $D\subset \R^d$ be a bounded domain occupied by a material body and consider the system with Dirichlet boundary condition
\begin{equation}\label{eq.main}
\left\{
\begin{aligned}
\nabla\cdot (A(x) \nabla u) +  \nabla (\lambda(x) \nabla\cdot u) &= F \qquad &\txt{in } &D, \\
u  &= f \qquad &\txt{on } &\partial D,
\end{aligned}
\right.
\end{equation}
where $A(\cdot)=(a_{ij}^{\alpha\beta}(\cdot)): \mathbb{R}^d \mapsto \mathbb{R}^{d^2\times d^2}$ is a tensor-valued function and $\lambda(\cdot):\R^d\mapsto [0,\infty)$ is a scalar function. We point out that $\lambda(\cdot)$ plays a role, similar as Lam\'{e}'s second parameter or bulk modulus, in measuring the incompressibility of the material.

Our primary hypothesis for the coefficients $A$ and $\lambda$ (measurable and deterministic) are as follows:
\begin{itemize}
	\item Ellipticity condition: there exists a fixed constant $\Lambda>0$ so that
	\begin{equation}\label{cond.ellipticity}
	\Lambda^{-1} |\xi|^2 \le  a_{ij}^{\alpha\beta}(x)\xi_i^\alpha \xi_j^\beta \le \Lambda |\xi|^2 \quad \text{for any } x\in \R^d, \xi \in \R^{d\times d}. 
	\end{equation}
	(The Einstein summation convention will be used throughout the paper.)
	
	\item Symmetry condition:
	\begin{equation}\label{cond.symmetry}
	a_{ij}^{\alpha\beta} = a_{ji}^{\beta\alpha} \quad \text{for any } 1\le i,j,\alpha,\beta \le d.
	\end{equation}
	
	\item Compressibility condition: there is a constant $\lambda_0 \ge 0$ so that
	\begin{equation}\label{cond.incompr}
	\lambda_0 \le \lambda(x) \le \lambda_0 + \Lambda.
	\end{equation}
\end{itemize}
Note that (\ref{cond.ellipticity}) and (\ref{cond.symmetry}) are the usual ellipticity and symmetry conditions for the system of elasticity. However, the ``incompressibility parameter'' $\lambda_0$ in the compressibility condition (\ref{cond.incompr}) is allowed to be arbitrarily large which makes the system (\ref{eq.main}) very ill-conditioned. In this paper, we are interested in the uniform regularity of the solutions that are independent of $\lambda_0$.

Observe that the upper bound of (\ref{cond.incompr}) also implies that the oscillation of $\lambda$ is bounded by a fixed constant $\Lambda$, though its magnitude could be arbitrarily large. This assumption is purely technical but crucial for our analysis, because it allows us to reduce (\ref{eq.main}) to a simpler situation. Indeed, we may write $\lambda(x) = \lambda_0 + b(x)$ and
\begin{equation}\label{eq.reduction}
\begin{aligned}
\nabla\cdot (A(x) \nabla u) + \nabla(\lambda(x) \nabla\cdot u) &= \Big[ \nabla\cdot (A(x) \nabla u) +  \nabla(b(x) \nabla\cdot u) \Big]+ \nabla(\lambda_0 \nabla\cdot u) \\
& = \nabla\cdot (\widetilde{A}(x) \nabla u) + \nabla(\lambda_0 \nabla\cdot u),
\end{aligned}
\end{equation}
where $\widetilde{A} = (\widetilde{a}_{ij}^{\alpha\beta})$ is defined by
\begin{equation}\label{eq.splitting}
\widetilde{a}_{ij}^{\alpha\beta}(x) = a_{ij}^{\alpha\beta}(x) + b(x)\delta_{i}^\alpha \delta_{j}^\beta.
\end{equation}
Clearly, $\widetilde{A}$ still satisfies the ellipticity and symmetry conditions. Hence, without loss of generality, we may simply assume $\lambda\ge 0$ is constant and concentrate on the following Dirichlet boundary value problem
\begin{equation}\label{eq.elast}
\left\{
\begin{aligned}
\nabla\cdot (A(x) \nabla u_\lambda) +  \lambda \nabla (\nabla\cdot u_\lambda) &= F \qquad &\txt{in } &D, \\
u_\lambda  &= f \qquad &\txt{on } &\partial D,
\end{aligned}
\right.
\end{equation}
where $A$ satisfies (\ref{cond.ellipticity}) and (\ref{cond.symmetry}) and $\lambda \ge 0$ is an arbitrary constant. 

\subsection{General regularity theory}
Nowadays, the regularity theory for elliptic equation/system and Stokes system has been well-understood. The system (\ref{eq.elast}) can be viewed as an intermediate state between elliptic system and Stokes system. Intuitively, the uniform regularity should be expected thanks to the fine regularity of the endpoint systems. However, the regularity for (\ref{eq.elast}) uniform in $\lambda$ seems to be a mathematically different and harder problem, compared to the usual elliptic or Stokes systems. For example, the fundamental Caccioppoli inequality for (\ref{eq.elast}) does not hold uniform in $\lambda$. Actually, if $\nabla\cdot (A(x) \nabla u_\lambda) +  \lambda \nabla (\nabla\cdot u_\lambda) = 0$ in $B_{2r}$, we can only show
\begin{equation}\label{est.Caccioppoli}
\average_{B_r}|\nabla u_\lambda|^2 \le \frac{C\lambda}{r^2} \average_{B_{2r}}|u_\lambda|^2.
\end{equation}
Note that $\lambda$, appearing in front of the $L^2$ norm of $u_\lambda$, makes the ienquality useless in the study of the uniform regularity. Fortunately, we invent a novel variation of (\ref{est.Caccioppoli}), which will be called the generalized Caccioppoli inequality,
\begin{equation}\label{est.G.Cacc}
\begin{aligned}
\average_{B_r} |\nabla u_\lambda|^2 & \le \frac{C}{r^2} \average_{B_{2r}} | u_\lambda|^2 + \frac{C}{r^2} \norm{\lambda \nabla\cdot u_\lambda - \average_{B_{2r}}\lambda \nabla\cdot u_\lambda  }_{\underline{H}^{-1}(B_{2r})}^2 \\
& \qquad + C\sup_{k,\ell \in [1/4,1]} \bigg| \average_{B_{2kr}} \lambda\nabla \cdot u_\lambda - \average_{B_{2\ell r}} \lambda\nabla \cdot u_\lambda \bigg|^2.
\end{aligned}
\end{equation}
We emphasize that, in (\ref{est.G.Cacc}) and all the estimates involved in this paper, $\lambda$ is harmless when it comes together with $\nabla\cdot u_\lambda$. As a whole, $\lambda\nabla\cdot u_\lambda$ has obvious physical and mathematical meaning, namely, the ``pressure''. The additional structures with the pressure in (\ref{est.G.Cacc}) gives a taste why the uniform regularity of (\ref{eq.elast}) should be expected, but meanwhile more complicated.

The first part of this paper is devoted to the uniform regularity of (\ref{eq.main}) or (\ref{eq.elast}) in the non-homogenization setting without $\e$. Besides the energy estimates and the generalized Caccippoli inequality mentioned above, our main tool to study the uniform regularity for large $\lambda$ is the following asymptotic expansion (also see \cite{Temam01})
\begin{equation}\label{est.exp.intr}
u_\lambda = \sum_{k = 0}^\infty \lambda^{-k} v_k \quad \text{in } H^1 \quad \txt{and} \quad \lambda \nabla\cdot u_\lambda - \lambda \Ag{f}_D = \sum_{k=0}^\infty \lambda^{-k} p_k \quad \text{in } L^2,
\end{equation}
where $(v_k,p_k), k\ge 0$, are the solutions of certain $\lambda$-independent iterative Stokes systems whose regularity are known (see Theorem \ref{thm.expansion}), and $\Ag{f}_D$ is a constant defined in (\ref{def.Agf}). In particular, the pair $(u_\lambda,\lambda\nabla\cdot u_\lambda - \lambda\Ag{f}_D)$, as $\lambda\to \infty$, converges quantitatively to $(v_0,p_0)$ which is the solution of a Stokes system
\begin{equation*}
\left\{
\begin{aligned}
\nabla\cdot (A(x) \nabla v_0) + \nabla p_0 &= F \qquad &\txt{in }& D, \\
\nabla\cdot v_0 & = \Ag{f}_D \qquad &\txt{in }& D, \\
v_0 & = f \qquad &\txt{on }& \partial D.
\end{aligned}
\right.
\end{equation*}
The above asymptotic behavior of $u_\lambda$ provides us at least two approaches to study the uniform regularity: (1) exploring the regularity for all $(v_k,p_k)$ in (\ref{est.exp.intr}); (2) a real variable perturbation argument. It turns out these two approaches are effective in different situations. For instance, the first approach is useful in Schauder estimate (Theorem \ref{thm.C1a}), while the second one is more efficient in Calder\'{o}n-Zygmund estimate (Theorem \ref{thm.Meyers.Elasticity}). These basic regularity estimates uniform in $\lambda$ are of independent interest and crucial for studying the uniform regularity in homogenization.

\subsection{Regularity in homogenization}
The larger part of this paper will be devoted to the uniform regularity in stochastic homogenization.
Recently, there have been lots of interests in the uniform regularity in homogenization theory in either random (e.g., \cite{AS16,AKM16,AM16,AKM17,AKM19,GNO15,GO17,MN17}) or deterministic settings (e.g., \cite{AL8701,AL89,KenigLinShen1301,ArmstrongShen15,GuShen15,GuZhuge17}). In particular, the uniform regularity for Stokes system in periodic homogenization has been studied in \cite{GuShen15,GuXu17,GuZhuge17,Xu17}. In this paper, we consider the system of nearly incompressible elasticity in a bounded Lipschitz domain with an additional tiny scale parameter $\e>0$:
\begin{equation}\label{eq.elasticity.Le}
\nabla\cdot (A^\e \nabla u^\e_\lambda) + \nabla ( \lambda^\e \nabla\cdot u^\e_\lambda) = 0 \qquad \txt{in } D,
\end{equation}
where $A^\e(x) = A(x/\e), \lambda^\e(x) = \lambda(x/\e)$ and the solution $u_\lambda^\e$ depends both on $\e$ and $\lambda_0$.\footnote{Without ambiguity, we write $u^\e_\lambda$, instead of $u^\e_{\lambda_0}$, for short.} We are interested in the interior and boundary uniform estimates that are independent of both $\e\in (0,1)$ and $\lambda_0 \in (0,\infty)$. Notice that the expansion (\ref{est.exp.intr}) also applies to the system (\ref{eq.elasticity.Le}). Therefore, we expect to obtain the uniform estimate for the system (\ref{eq.elasticity.Le}) by Theorem \ref{thm.expansion}, as long as the same uniform estimate holds for the Stokes system. This straightforward strategy actually works for the $W^{1,p}$ estimate with $p\in (1,\infty)$; see Theorem \ref{thm.W1p.periodic} for example (in periodic setting). However, it fails for the Lipschitz estimate of $u_\lambda^\e$, due to the following two essential reasons: (1) Lipschitz estimate is optimal in homogenization. This means that it is impossible to prove a higher regularity, say $C^{1,\alpha}$ estimate, that implies the Lipschitz estimates. (2) The $L^\infty$ estimate of the pressure (corresponding to the Lipschitz estimate of $u_\lambda^\e$) is not preserved through the iterative Stokes system (\ref{eq.iterated.Stokes}), since the map $p_{k-1} \mapsto p_k$ is a singular  integral which definitely is not bounded in $L^\infty$.

In this paper, we will make substantial modifications to the excess decay method developed recently to establish the uniform Lipschitz and pressure estimates for (\ref{eq.elasticity.Le}) in stochastic homogenization. Moreover, we generalize the large-scale boundary estimates to a class of Lipschitz domains whose boundaries are ``smooth'' only above $\e$-scale and could be very rough at or below $\e$-scale.

Before stating the main result, we first set up
the random environment in quantitative homogenization theory. We will follow the approach developed recently in, e.g., \cite{AS16,AKM16,AM16,AKM17}, which is based on the natural stationarity and ergodicity assumptions on the coefficient fields. 
Precisely, denote the set of all the possible coefficient fields by
\begin{equation}\label{def.Omega}
\Omega := \{ (A,\lambda): \mathbb{R}^d \mapsto \mathbb{R}^{d^2\times d^2} \times \R \text{ satisfying } (\ref{cond.ellipticity}) - (\ref{cond.incompr}) \},
\end{equation}
which is endowed by a family of $\sigma$-algebras $\{\mathcal{F}_D\}$, where $\mathcal{F}_D$ is the $\sigma$-algebra representing the information of the coefficient fields restricted in $D$, formally generated by 
\begin{equation*}
\begin{aligned}
&\mathcal{F}_D := \text{the $\sigma$-algebra generated by the random elements} \\
&\quad (A,\lambda) \mapsto \bigg( \int_{\R^d}a_{ij}^{\alpha\beta}(x)\phi(x), \int_{\R^d} \lambda(x)\psi(x) \bigg),  \quad \phi,\psi \in C_0^\infty(D), 1\le i,j,\alpha,\beta\le d.
\end{aligned}
\end{equation*}
Let $\mathcal{F} = \mathcal{F}_{\R^d}$ be the largest $\sigma$-algebra in the family $\{\mathcal{F}_D\}$. We further assume that there is a probability measure $\mathbb{P}$ satisfying the following assumptions:
\begin{itemize}
	
	\item Stationarity with respect to $\mathbb{Z}^d$-translations:
	\begin{equation}\label{def.stationarity}
	\mathbb{P}\circ T_z = \mathbb{P}, \quad\text{where }(T_z(A,\lambda))(x)= (A(x+z),\lambda(x+z)).
	\end{equation}
	
	\item Unit range of dependence:
	\begin{equation}\label{def.unit-range-dependence}
	\begin{aligned}
	&\text{$\mathcal{F}_D$ and $\mathcal{F}_E$ are $\mathbb{P}$-independent for every Borel}\\
	&\qquad\text{subset pair $D, \, E\subset \mathbb{R}^d$ satisfying dist$(D,E)\ge 1$}.
	\end{aligned}
	\end{equation}
	
\end{itemize}

Throughout this paper, we will use the following notation, which has been commonly used in many recent references, to control the size of a random variable. For a random variable $X:\Omega\to [1,\infty)$, we write $X\le \mathcal{O}_s(\theta)$ for some $s>0,\theta>0$, if
\begin{equation}\label{def.XO}
\E\Big[ \exp \Big( (X/\theta)^s \Big) \Big] \le 2,
\end{equation}
where $\mathbb{E}[Y]$ denotes the expectation of the random variable $Y$. Note that (\ref{def.XO}) implies that, for any $t\ge 1$,
\begin{equation}\label{est.prob}
\mathbb{P}[ X\ge t] \le 2\exp((-t/\theta)^s).
\end{equation}
Conversely, (\ref{est.prob}) implies $X\le \mathcal{O}_s(2^{1/s}\theta)$. As a convention, we write $X\le Y + \mathcal{O}_s(\theta)$ if $X-Y \le \mathcal{O}_s(\theta)$.

Now we state the main theorem for the interior estimate.
\begin{theorem}\label{thm.main}
	Let $(\Omega,\mathcal{F},\mathbb{P})$ be as above. For any $s\in (0,d)$ and $\lambda_0 \in [0,\infty)$, there exist a constant $C_0 = C_0(s,d,\Lambda)$ and a random variable $\X = \X_{s,\lambda}: \Omega\mapsto [1,\infty)$ satisfying
	\begin{equation}\label{est.Xs}
	\X \le \mathcal{O}_{s}(C_0),
	\end{equation}
	such that if $u_\lambda^\e\in H^1(B_2;\R^d)$ is a weak solution of
	\begin{equation}\label{eq.elasticity.B2}
	\nabla\cdot (A^\e \nabla u^\e_\lambda) + \nabla ( \lambda^\e \nabla\cdot u^\e_\lambda) = 0 \qquad\txt{in } B_2 = B_2(0),
	\end{equation}
	then for any $r \in [\e \X, 1]$, we have
	\begin{equation}\label{est.Lip.Osc}
	\bigg( \average_{B_r} |\nabla u_\lambda^\e|^2 \bigg)^{1/2} + \bigg( \fint_{B_r} |\lambda^\e \nabla\cdot u_\lambda^\e -  \fint_{B_2} \lambda^\e \nabla\cdot u_\lambda^\e|^2 \bigg)^{1/2} \le C \bigg( \average_{B_2} |\nabla u_\lambda^\e|^2 \bigg)^{1/2},
	\end{equation}
	where $B_r = B_r(0)$ and $C$ depends only on $s,d$ and $\Lambda$.
\end{theorem}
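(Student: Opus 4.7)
The plan is to prove (\ref{est.Lip.Osc}) via a large-scale excess decay scheme, in the spirit of Armstrong--Smart and Armstrong--Kuusi--Mourrat, but engineered to track the displacement gradient and the pressure $P_\lambda^\e := \lambda^\e\nabla\cdot u_\lambda^\e$ simultaneously and uniformly in $\lambda_0$. For each scale $r\in(0,1]$, I would introduce an intrinsic excess of the form
\begin{align*}
H(r) := \inf_{M\in\R^{d\times d},\, q\in\R}\bigg[\bigg(\fint_{B_r}|\nabla u_\lambda^\e - M - \nabla\chi^\e_M|^2\bigg)^{1/2} + \bigg(\fint_{B_r}|P_\lambda^\e - q - \pi^\e_M|^2\bigg)^{1/2}\bigg],
\end{align*}
where $(\chi^\e_M,\pi^\e_M)$ are the stochastic displacement and pressure correctors in the affine direction $Mx$. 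The estimate (\ref{est.Lip.Osc}) will then follow by controlling $H(r)$ down to the scale $r=\e\X$ and absorbing the corrector contributions via their sublinearity on scales $r\ge \e\X$.

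The main engine is a one-step decay lemma: there exist $\theta\in(0,1/4)$ and $\alpha,\beta>0$ such that for every $r\in[\e\X,1]$,
\begin{align*}
H(\theta r)\le \tfrac{1}{2} H(r) + C(\e/r)^\beta \bigg(\fint_{B_{2r}}|\nabla u_\lambda^\e|^2 + \bigl|P_\lambda^\e - \langle P_\lambda^\e\rangle_{B_{2r}}\bigr|^2\bigg)^{1/2}.
\end{align*}
This is assembled from two ingredients. First, a quantitative two-scale expansion $u_\lambda^\e\approx \bar u + \e\chi^\e_j\partial_j\bar u$ and $P_\lambda^\e\approx \bar P + \pi^\e_j\partial_j\bar u$, where $(\bar u,\bar P)$ solves the homogenized constant-coefficient nearly incompressible system; its $L^2$ error is controlled by the sublinearity of the correctors and flux correctors, which is precisely what defines the random minimal scale $\X$. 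Second, the $C^{1,\alpha}$ Schauder estimate for the homogenized system (Theorem \ref{thm.C1a}), uniform in $\lambda_0$ thanks to the expansion (\ref{est.exp.intr}), decays the excess of $(\bar u,\bar P)$ by a factor $\theta^\alpha$. Iterating dyadically via a Shen-type lemma from $r=1$ down to $r=\e\X$ then yields (\ref{est.Lip.Osc}); the bound $\X\le\mathcal{O}_s(C_0)$ follows from quantitative sublinearity of the correctors under (\ref{def.stationarity}) and (\ref{def.unit-range-dependence}), by a subadditive quantity argument of Armstrong--Kuusi--Mourrat type.

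The hard part is the pressure. Since $\lambda_0$ may be arbitrarily large, no $L^\infty$ bound on $P_\lambda^\e$ is available, and the pressure corrector $\pi^\e_M$ is a genuine singular object rather than the gradient of a scalar, so the entire scheme must proceed in $L^2$-oscillation norms for the pressure throughout. The generalized Caccioppoli inequality (\ref{est.G.Cacc}) is indispensable here: it converts $L^2$-oscillation control of $P_\lambda^\e$ into uniform-in-$\lambda$ gradient control of $u_\lambda^\e$, both in closing the energy estimate inside the homogenization error and in translating the final iterated excess $H(r)$ into the left-hand side of (\ref{est.Lip.Osc}). Constructing the stochastic pressure correctors and proving their sublinearity coupled to that of the displacement correctors, uniformly in $\lambda_0$, is the delicate novel ingredient that will occupy the bulk of the technical work.
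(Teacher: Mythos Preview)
Your proposal takes a genuinely different route from the paper, and there is a structural mismatch that may be a real gap.

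The paper's excess uses \emph{no correctors at all}: $H(r)$ subtracts only affine functions $Mx+q$ from $u_\lambda^\e$, and the pressure part is measured not in $L^2$ but in the weaker norm
\[
\frac{1}{r}\Big\|\lambda\nabla\cdot u_\lambda^\e - \fint_{B_r}\lambda\nabla\cdot u_\lambda^\e\Big\|_{\underline{H}^{-1}(B_r)} + \sup_{k,\ell\in[1/4,1]}\Big|\fint_{B_{kr}}\lambda\nabla\cdot u_\lambda^\e - \fint_{B_{\ell r}}\lambda\nabla\cdot u_\lambda^\e\Big|.
\]
This choice is forced by the convergence rate (Theorem~\ref{thm.global.rate}): the pressure error $\lambda\nabla\cdot u_\lambda^\e - \lambda\nabla\cdot u_\lambda^0$ is only controlled in $H^{-1}$, not $L^2$. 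The one-step decay is obtained by comparing $u_\lambda^\e$ directly to the homogenized solution $u_\lambda^{0,r}$ of (\ref{eq.rate.0}) on $B_r$ (no two-scale expansion), then applying the $C^{1,\alpha}$ estimate (Theorem~\ref{thm.C1a}) to $u_\lambda^{0,r}$. Because the pressure convergence is only $H^{-1}$, an interpolation trick (Lemma~\ref{lem.average2H-1}) is needed to convert $H^{-1}$ smallness plus $L^2$ boundedness into control of the pressure averages, producing the cube-root loss $\sqrt[3]{\eta(\e\X/r)}$ in Lemma~\ref{lem.Hr.iteration}. The $L^2$ pressure estimate in (\ref{est.Lip.Osc}) is recovered only at the very end, from the iterated control on the sup-of-averages plus Lemma~\ref{lem.Nabla}.

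Your $L^2$-based pressure excess with pressure correctors $\pi_M^\e$ is a GNO-style alternative, but the step ``its $L^2$ error is controlled by sublinearity of the correctors'' is precisely where the paper's machinery does not help you: the paper never constructs pressure correctors, and its quantitative homogenization only yields $H^{-1}$ pressure convergence. To close your iteration in $L^2$ for the pressure you would need $L^2$-scale sublinearity of $\pi_M^\e$ uniform in $\lambda_0$, which is not established anywhere in the paper and is not obviously reducible to the subadditive quantities $\mu_\lambda$, $\mu_\lambda^*$ used there. Unless you can supply that, the scheme as written does not close; the paper's $H^{-1}$-plus-averages excess is designed exactly to avoid this obstacle.
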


Before all else, we should emphasize that the nontrivial point of Theorem \ref{thm.main} is that the constants involved are independent of both $\e$ and $\lambda_0$. The first part of (\ref{est.Lip.Osc}) is called the large-scale Lipschitz estimate of the displacement. In other words, the ``average deformation'' of the material at relatively large scales ($r \ge \e \X$) is controlled by the average deformation at macroscopic scale ($r=2$). Particularly, this guarantees the continuity of the material and no ``cracks'' will be seen at scales greater than $\e\X$. Since there is no regularity assumption on the coefficients $(A,\lambda)$, we cannot expect any continuity for $u_\lambda^\e$ at scales less than $\e$. The second part of (\ref{est.Lip.Osc}) is called the large-scale oscillation estimate of the pressure, which is an exclusive feature of the system of elasticity or Stokes system. This particularly implies that the ``average pressure'' at a relatively large scale ($r\ge \e\X$) has uniform bounded oscillation, i.e.,
\begin{equation*}
\bigg| \fint_{B_r(x)} \lambda^\e \nabla\cdot u_\lambda^\e - \fint_{B_r(y)} \lambda^\e \nabla\cdot u_\lambda^\e \bigg| \le C \bigg( \average_{B_2} |\nabla u_\lambda^\e|^2 \bigg)^{1/2},
\end{equation*}
for any $x,y\in B_1(0)$ and $r\in [\e\X,1]$. If $\lambda_0$ is large, this estimate actually implies that the spatial change of $\nabla\cdot u_\lambda^\e$ is small as we expected for the nearly incompressible materials. We also mention that the estimate (\ref{est.Lip.Osc}) together with the ranges of $s\in (0,d)$ and $r\in [\e\X,1]$ is optimal in stochastic integrability \cite{AKM19}. Of course, because of (\ref{est.Xs}) and (\ref{est.prob}), the random variable $\X$ is large only with a small probability (decaying exponentially). Precisely, Theorem \ref{thm.main} implies that
\begin{equation}\label{est.Prob.Lip}
\mathbb{P} \big[ \text{(\ref{est.Lip.Osc}) holds for } r \big] \ge 1-2\exp\Big(-\Big( \frac{r}{C_0 \e}\Big)^s \Big).
\end{equation}
Note that this probability (depending on $r/\e$) is independent of $\lambda_0$.

Now, let us consider the boundary estimates. As we mentioned earlier, we will establish the boundary estimates in a class of Lipschitz domains, defined as follows.

\begin{definition}\label{def.C1a.e}
	Let $\alpha \in (0,1]$ and $D$ be a bounded Lipschitz domain with $0\in \partial D$. We say that $D$ satisfies the $\e$-scale $C^{1,\alpha}$ condition at $0$, if there exist $C_0>0$ and $r_0>0$ such that for any $r\in (\e,r_0)$, there exists a unit vector $n_r$ such that
	\begin{equation}
	\begin{aligned}
	& \big\{y\in \R^d: y\cdot n_r < -C_0 r \big[ r^\alpha + (\e/r)^\alpha \big] \big\} \cap B_r(0) \\
	& \qquad \subset D\cap B_r(0) \subset \big\{y\in \R^d: y \cdot n_r < C_0 r \big[ r^\alpha + (\e/r)^\alpha \big] \big\}\cap B_r(0).
	\end{aligned}
	\end{equation}
\end{definition}

From the above definition, we see that the local boundary $\partial D \cap B_r(0)$ is contained between two parallel hyperplanes with distance comparable to $r \zeta_\alpha(r,\e)$, where $\zeta_\alpha(r,\e):= r^\alpha + (\e/r)^\alpha$.
In particular, this class of domains covers the classical $C^{1,\alpha}$ domains and the so called bumpy Lipschitz domains. Obviously, the $C^{1,\alpha}$ domains correspond to the case $r\zeta_\alpha(r,\e) = r^{1+\alpha}$. On the other hand, in \cite{KP15} and \cite{KP18}, Kenig and Prange studied the Lipschitz estimate by the compactness argument in the bumpy Lipschitz domain whose boundary is the graph of the function 
\begin{equation}\label{eq.bdryGraph}
x_d = \e \psi(x'/\e),
\end{equation}
where $\psi \in W^{1,\infty}(\R^{d-1})$. This actually corresponds to the special case $r\zeta_1(r,\e) = \e = r(\e/r)$ in Definition \ref{def.C1a.e}. From these two typical exampls, we notice that the two parts of the function $\zeta_\alpha$ come from two different sources, namely, smoothness and small bumps, which dominates at large-scales and small scales, respectively. Particularly, Definition \ref{def.C1a.e} includes the domain whose boundary is the local graph of
\begin{equation*}
x_d = \psi_0(x') + \e\psi_1(x'/\e),
\end{equation*}
where $\psi_0\in C^{1,\alpha}(\R^{d-1})$ and $\psi_1\in W^{1,\infty}(\R^{d-1})$.

\begin{figure}[h]
	\begin{center}
		\includegraphics[scale =0.3]{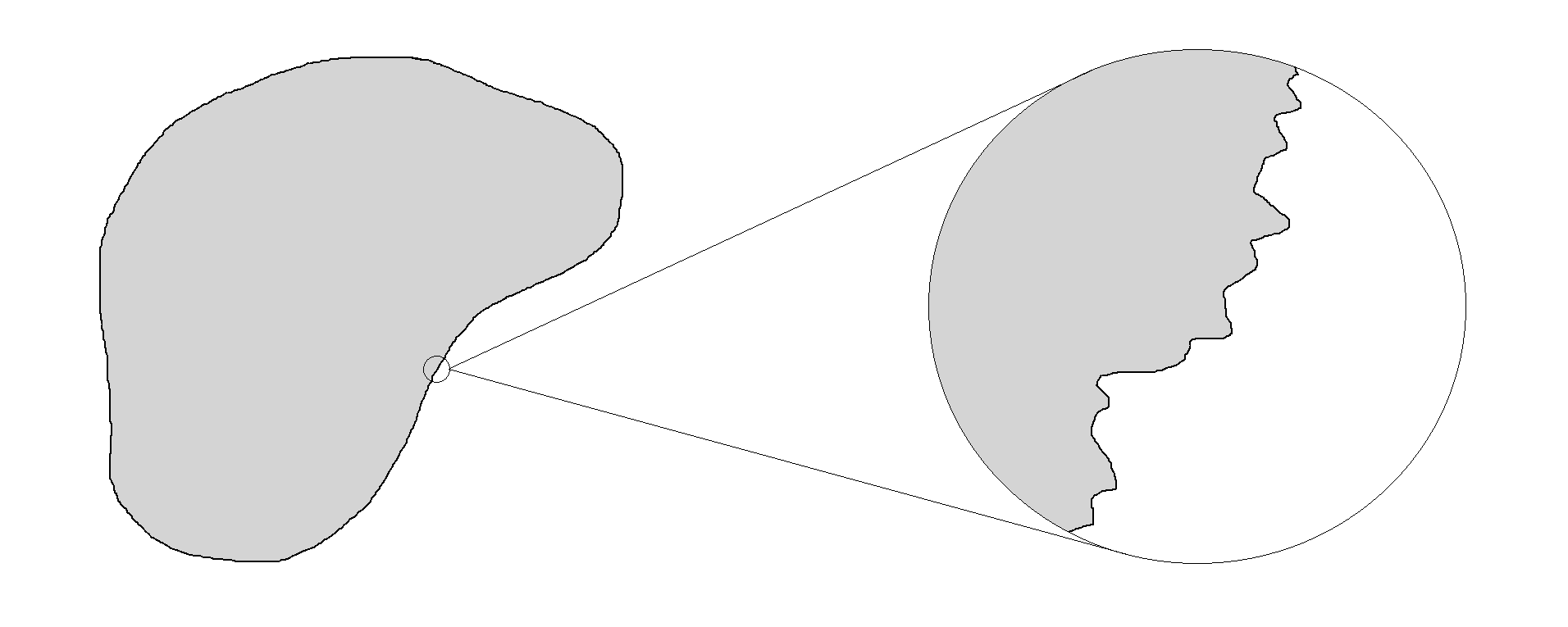}
	\end{center}
	\vspace{-1 em}
	\caption{A Lipschitz domain bumpy at small scales}\label{fig_1}
\end{figure}

\begin{remark}
	In Definition \ref{def.C1a.e}, we assume in priori that $D$ is a Lipschitz domain. This assumption actually is not essential in the proof of the large-scale regularity. It is only required for Lemma \ref{lem.Nabla} which affects the basic energy estimate for the Stokes system and the system of elasticity with large $\lambda_0$. It is possible to relax this assumption to even more general domains with fractals (such as John domains \cite{Jones81,ADM06,Rogers06}). For simplicity, however, we will not explore this direction in the present paper.
\end{remark}

\begin{remark}
	Oscillating boundaries given by (\ref{eq.bdryGraph}), with additional structure such as periodicity, have been widely studied in the analysis of the wall law for the Navier-Stokes equations with rough boundaries; see \cite{BG08,GD09,GM10,DG11,DP14} for some recent references. Most recently, Higaki and Prange \cite{HP20} obtained the large-scale Lipschitz estimate for the stationary Navier-Stokes equations over bumpy Lipschitz boundaries without any structure by a compactness method.
\end{remark}

Our main result for the boundary estimate is stated as follows.  
\begin{theorem}\label{thm.main1}
	Let $D$ be a bounded Lipschitz domain satisfying the $\e$-scale $C^{1,\alpha}$ condition at $0\in \partial D$. Define $D_t = D\cap B_t(0)$ and $\Delta_t = \partial D\cap B_t(0)$. Let $(\Omega,\mathcal{F},\mathbb{P})$ be as before. For any $s\in (0,d)$ and $\lambda_0\in [0,\infty)$, there exist  a constant $C_0 = C_0(s,d,\Lambda)$ and a random variable $\X = \X_{s,\lambda}: \Omega\mapsto [1,\infty)$ satisfying
	\begin{equation*}
	\X \le \mathcal{O}_{s}(C_0),
	\end{equation*}
	such that if $u_\lambda^\e\in H^1(D_{2};\R^d)$ is a weak solution of
	\begin{equation}
	\left\{
	\begin{aligned}
	\nabla\cdot (A^\e \nabla u^\e_\lambda) +  \nabla (\lambda^\e \nabla\cdot u^\e_\lambda) &= 0 \qquad &\txt{in } &D_{2}, \\
	u_\lambda^\e  &=0  \qquad &\txt{on } & \Delta_{2},
	\end{aligned}
	\right.
	\end{equation}
	then for any $r \in [\e \X, 1]$, 
	\begin{equation}\label{est.bdryLip.Osc}
	\bigg( \average_{D_r} |\nabla u_\lambda^\e|^2 \bigg)^{1/2} + \bigg( \fint_{D_r} |\lambda^\e \nabla\cdot u_\lambda^\e -  \fint_{D_2} \lambda^\e \nabla\cdot u_\lambda^\e|^2 \bigg)^{1/2} \le C \bigg( \average_{D_2} |\nabla u_\lambda^\e|^2 \bigg)^{1/2}.
	\end{equation}
\end{theorem}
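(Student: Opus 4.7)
The proof will follow the excess decay (or ``Campanato-type iteration'') method that is by now standard in quantitative stochastic homogenization, suitably adapted to (i) the nearly incompressible setting, where the pressure $\pi^\e_\lambda := \lambda^\e \nabla\cdot u^\e_\lambda$ must be tracked separately and uniformity in $\lambda_0$ is achieved through the generalized Caccioppoli inequality (\ref{est.G.Cacc}) together with the asymptotic expansion (\ref{est.exp.intr}), and (ii) the $\e$-scale $C^{1,\alpha}$ boundary, which is only flat up to an error $r\zeta_\alpha(r,\e) = r^{1+\alpha} + r(\e/r)^\alpha$. Define the boundary excess
\begin{equation*}
H(r) := \biggl(\average_{D_r}|\nabla u^\e_\lambda|^2\biggr)^{1/2} + \inf_{q\in\R}\biggl(\average_{D_r}|\pi^\e_\lambda - q|^2\biggr)^{1/2},
\end{equation*}
and let $\Phi(r)$ denote the analogous quantity with $u^\e_\lambda$ replaced by a comparison solution (specified below). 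The goal is a one-step decay of the form $H(\theta r) \le \tfrac{1}{2} H(r) + C\,\mathcal{E}(r)$ valid for all $r\in[\e\X,1]$, where $\theta\in(0,\tfrac14)$ is a small absolute constant and $\mathcal{E}(r)$ is a controlled homogenization/bumpiness error. Iterating over the dyadic scales $r, \theta r, \theta^2 r, \dots$ down to $\e\X$ and summing the geometric series then yields $H(r) \le C H(1)$, which is exactly (\ref{est.bdryLip.Osc}) after using the generalized Caccioppoli inequality (\ref{est.G.Cacc}) at the largest scale to relate $\average_{D_2}|\nabla u^\e_\lambda|^2$ and the oscillation of $\pi^\e_\lambda$ on $D_2$.

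The key one-step estimate is produced by a two-level comparison. First, at scale $r$, compare $u^\e_\lambda$ with the solution $w$ of the homogenized nearly incompressible system
\begin{equation*}
\nabla\cdot(\bar A\nabla w) + \lambda_0 \nabla(\nabla\cdot w) = 0 \quad\text{in } D_r,\qquad w = u^\e_\lambda \quad\text{on }\partial D_r.
\end{equation*}
The $H^1$-error $\|u^\e_\lambda - w\|_{H^1(D_r)}$ is controlled by a quantity that, via the correctors and flux correctors associated with the asymptotic expansion (\ref{est.exp.intr}) applied to both $u^\e_\lambda$ and $w$, is $\lambda_0$-free and bounded by a minimal-scale-type random variable $\X$ controlling the sublinear growth of the correctors; this is where stationarity (\ref{def.stationarity}) and unit range of dependence (\ref{def.unit-range-dependence}) enter, producing $\X \le \mathcal{O}_s(C_0)$. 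Second, inside the homogenized problem, invoke Theorem \ref{thm.expansion} to expand $w = \sum\lambda_0^{-k}v_k$ and reduce regularity of $w$ at scale $\theta r$ to Schauder-type regularity for the iterated homogeneous Stokes systems that define the $v_k$'s. Since $D$ is only $\e$-scale $C^{1,\alpha}$, further compare the domain $D_r$ with a genuine half-ball; standard Schauder estimates for constant-coefficient Stokes in smooth domains then yield the decay
\begin{equation*}
\Phi(\theta r) \le C \theta^\alpha \Phi(r) + C\zeta_\alpha(r,\e)\Phi(r),
\end{equation*}
absorbing the bumpiness error into $\mathcal{E}(r)$. Choosing $\theta$ small, combining with the homogenization error and the generalized Caccioppoli inequality applied to $u^\e_\lambda - w$ to transfer the $H^1$-difference into the excess $H$, closes the iteration.

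Two technical points merit attention. The pressure part of the excess requires the full strength of (\ref{est.G.Cacc}), because the $L^\infty$ nature of the pressure estimate is not preserved through the Stokes iteration $p_{k-1}\mapsto p_k$; consequently one must estimate pressure \emph{oscillations}, which do propagate, rather than pressure $L^\infty$-norms. The bumpy boundary contribution $(\e/r)^\alpha$ in $\zeta_\alpha(r,\e)$ is small precisely when $r\gg\e$, and the sub-iteration loses a summable amount at each dyadic step provided $r\ge\e\X$; this explains the lower bound on $r$ in the statement. The random variable $\X$ appearing in the conclusion is then constructed as the maximum of (a) the corrector minimal scale from the homogenization comparison of Step~1 and (b) the scale above which the bumpiness error summed geometrically stays bounded; standard concentration arguments under (\ref{def.unit-range-dependence}) yield (\ref{est.Xs}), with the key observation that all constants can be arranged independent of $\lambda_0$ because every use of $\lambda$ in the argument occurs inside the combination $\lambda\nabla\cdot u_\lambda$ or the expansion (\ref{est.exp.intr}).

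The main obstacle will be the simultaneous uniformity in $\lambda_0$ and the treatment of the bumpy boundary. Uniformity in $\lambda_0$ forbids the use of standard elasticity Caccioppoli-type estimates and forces us to carry the pressure as an independent unknown through every step of the iteration, ensuring that the comparison in Step~1 is carried out at the level of the \emph{pair} $(u^\e_\lambda, \pi^\e_\lambda)$ with corrector-type objects for the pressure as well. Meanwhile, the $\e$-scale $C^{1,\alpha}$ geometry prevents any true $C^{1,\alpha}$ estimate at the boundary and limits us to the large-scale Lipschitz regime $r\ge\e\X$; reconciling the bumpiness error $(\e/r)^\alpha$ with the homogenization error within a single geometric iteration, without losing the independence of constants from $\lambda_0$, is the delicate core of the argument.
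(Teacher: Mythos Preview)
Your overall architecture is right, but two concrete gaps prevent the iteration from closing.

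First, your excess $H(r)$ is built from $\average_{D_r}|\nabla u^\e_\lambda|^2$ and $\average_{D_r}|\pi^\e_\lambda - q|^2$, i.e.\ $L^2$ quantities at the level of $\nabla u$ and of the pressure. To compare $H(\theta r; u^\e_\lambda)$ with $H(\theta r; w)$ you would therefore need an $H^1$ rate for $u^\e_\lambda - w$ and an $L^2$ rate for the pressure difference. The available convergence theorem (Theorem~\ref{thm.global.rate}) gives only $\|u^\e_\lambda - w\|_{L^2}$ and $\|\lambda\nabla\cdot u^\e_\lambda - \lambda\nabla\cdot w\|_{H^{-1}}$, and the generalized Caccioppoli inequality cannot upgrade this because $u^\e_\lambda - w$ is not a solution of either system. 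The paper's excess (\ref{def.re.H}) is designed around exactly the quantities that the convergence rate \emph{does} control: the $L^2$ norm of $u^\e_\lambda - (n_r\cdot x)q$ itself (not its gradient), the $\underline H^{-1}$ norm of the mean-free pressure, and the quantity $\sup_{k,\ell}\bigl|\average_{D_{kr}}\pi - \average_{D_{\ell r}}\pi\bigr|$. The last piece is recovered from the $H^{-1}$ rate via the interpolation Lemma~\ref{lem.average2H-1}, and the Lipschitz and pressure estimates are extracted only at the very end through the generalized Caccioppoli inequality (\ref{est.WCacc.bdry}). With your $L^2$-based excess the comparison step simply does not go through.

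Second, your order of operations---homogenize in the bumpy region $D_r$, then ``compare $D_r$ with a genuine half-ball''---fails because the constant-coefficient solution $w$ lives in $D_r$, whose boundary is only Lipschitz at small scales, so no $C^{1,\alpha}$ boundary estimate with a bumpiness-independent constant is available for $w$ there. The paper reverses the order: at each scale it first replaces $D_{2r}$ by the flat region $T_{2r}^{+}$ and solves the \emph{heterogeneous} problem (\ref{eq.v.approx}) in $T_{2r}^{+}$ with the zero-extended Dirichlet data $\widetilde u^\e_\lambda$, controlling the perturbation error $\widetilde u^\e_\lambda - v^\e_\lambda$ by Meyers' estimate together with the thinness $|T_{2r}^{+}\setminus D_{2r}|\lesssim r^d\zeta(r,\e)$ (Lemma~\ref{lem.u-v}). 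Only then does it homogenize, inside the flat domain $T_{2r}^{+}$, where Theorem~\ref{thm.C1a} (which already packages the expansion of Theorem~\ref{thm.expansion}, so no further appeal to it is needed) gives clean $C^{1,\alpha}$ regularity and hence the decay of Lemma~\ref{lem.bdry.v0}. The resulting errors $\zeta(r,\e)^\gamma$ and $\eta(\e\X/r)$ are then absorbed simultaneously by the two-weight iteration Lemma~\ref{lem.iteration}.
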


The above theorem gives the expected boundary estimates parallel to Theorem \ref{thm.main}. The main novelty of Theorem \ref{thm.main1} is that the boundary is not necessarily smooth below $\e$-scale. This phenomenon seems physically and experimentally natural, as the microscopic structure of the bumpy boundary (which is always the case in reality) should not have a visible influence if only the large-scale or macroscopic regularity is concerned. In other words, the following philosophy should be valid:
\begin{equation*}
\begin{aligned}
&\text{The large-scale smoothness of the boundary}\\ & \qquad \Longrightarrow \text{the large-scale smoothness of the solutions.}
\end{aligned}
\end{equation*}
Furthermore, it seems very promising that the quantitative method in this paper may also apply to other types of equations, such as Navier-Stokes equations.

\begin{remark}
	By forcing $\lambda\equiv 0$, our main theorems recover the results for the usual elliptic system. Particularly, Theorem \ref{thm.main1} recovers Kenig and Prange's work \cite{KP18} for the large-scale boundary estimate. On the other hand,
	by taking $\lambda_0\to \infty$ (literally, replacing $\lambda^\e\nabla\cdot u_\lambda^\e$ by $p^\e$ in (\ref{est.Lip.Osc}) and (\ref{est.bdryLip.Osc})), the results in this paper also implies the large-scale regularity for Stokes system (namely, the system of completely incompressible elasticity).
\end{remark}

\subsection{New ingredients of the proofs}
We will prove our main theorems by a method of excess decay iteration. The key step in this method is to show an algebraic rate of convergence for (\ref{eq.elasticity.Le}) (which will first be reduced to the case with constant $\lambda$). Precisely, if $u_\lambda^\e\in H^1(D;\R^d)$ is a weak solution of
\begin{equation}\label{eq.elast.e}
\left\{
\begin{aligned}
\nabla\cdot (A^\e \nabla u^\e_\lambda) + \lambda \nabla ( \nabla\cdot u^\e_\lambda) &= 0 \qquad &\txt{in } &D, \\
u^\e_\lambda  &= f \qquad &\txt{on } &\partial D,
\end{aligned}
\right.
\end{equation}
for some $f\in W^{1,2+\delta}(D;\R^d)$ with $\delta>0$, then we show that this system homogenizes to
\begin{equation}\label{eq.elast.0}
\left\{
\begin{aligned}
\nabla\cdot (\overline{A}_\lambda \nabla u^0_\lambda) + \lambda \nabla ( \nabla\cdot u^0_\lambda) &= 0 \qquad &\txt{in } &D, \\
u^0_\lambda  &= f \qquad &\txt{on } &\partial D,
\end{aligned}
\right.
\end{equation}
with a rate of convergence
\begin{equation}\label{est.global.rate}
\begin{aligned}
& \norm{u^\e_\lambda - u^0_\lambda}_{L^2(D)} + \norm{\lambda \nabla\cdot u^\e_\lambda - \lambda\nabla \cdot u^0_\lambda}_{H^{-1}(D)} \\
&\quad \le C \big(\e^{\beta(d-s)}  + (\X \e)^{\alpha s} \big)\norm{\nabla f}_{L^{2+\delta}(D)},
\end{aligned}
\end{equation}
for some $\alpha,\beta\in (0,1)$, where the random variable satisfies $\X = \X_{s,\lambda} \le \mathcal{O}_s(C_0)$. Note that the pressures only have a weak convergence in $H^{-1}(D)$. Surprisingly, this is sufficient for us to establish the optimal pressure estimates.
Also, it should be pointed out that the homogenized matrix $\overline{A}_\lambda$, depending on $\lambda$ implicitly, satisfies the ellipticity condition (\ref{cond.ellipticity}) uniform in $\lambda$. Moreover, $\overline{A}_\lambda = \widehat{A} + O(\lambda^{-1})$ as $\lambda\to \infty$, where $\widehat{A}$ is the homogenized matrix of a Stokes system; see Section 4.

A crucial principle in our mind to prove (\ref{est.global.rate}) is that (\ref{eq.elasticity.Le}) could be viewed as an elliptic system for relatively small $\lambda$ and could be approximated by a Stokes system for relatively large $\lambda$, due to (\ref{est.exp.intr}). The precise threshold we will use is $\lambda = \e^{-\sigma}$ for some small $\sigma\in (0,1)$ independent of $\e$ and $\lambda$. If $\lambda < \e^{-\sigma}$, the convergence rate follows from the result for elliptic system. In this case, we need to track how the constant $C$ depends on $\lambda$. If $\lambda>\e^{-\sigma}$, with an explicit error, (\ref{est.exp.intr}) may be first reduced to a Stokes system for which a convergence rate may be obtained similarly as elliptic system (Theorem \ref{thm.Stokes.rate}). This process may be described by the diagram in Figure 2.

\begin{figure}[h]
	\centering
	\begin{tikzpicture}
	\matrix (m) [matrix of math nodes, row sep=3em, column sep=3em]
	{ \nabla\cdot A^\e\nabla u^\e_\lambda + \nabla(\lambda\nabla\cdot u^\e_\lambda) &   & \nabla\cdot \overline{A}_\lambda u^0_\lambda + \nabla(\lambda\nabla\cdot u^0_\lambda)  \\
		\nabla\cdot A^\e\nabla v^\e + \nabla p^\e & \nabla\cdot \widehat{A}\nabla v^0 + \nabla p^0 & \nabla\cdot \overline{A}_\lambda \nabla v^0_\lambda + \nabla p^0_\lambda  \\ };
	
	{ [start chain] \chainin (m-1-1);
		
		{ [start branch=B] \chainin (m-2-1)
			[join={node[right,labeled] {
					\lambda > \e^{-\sigma} }
			}];}
		
		\chainin (m-1-3) [join={node[above,labeled] {\lambda < \e^{-\sigma} }
			node[below,labeled] {\text{Homogenization of elliptic system}}
		}];
	}
	{ [start chain] \chainin (m-2-1);
		
		\chainin (m-2-2)
		[join={node[above,labeled] {\text{Homogenization}} 
			node[below,labeled] {\text{Stokes system}}
		}];
		
		\chainin (m-2-3) [join={node[above,labeled] {|\widehat{A} - \overline{A}_\lambda| \lesssim \lambda^{-1}  }}];
		{ [start branch=B] \chainin (m-1-3)
			[join={node[left,labeled] { \text{Error}\lesssim \lambda^{-1} }}];}
	}
	
	\end{tikzpicture}
	\vspace{-1 em}
	\caption{A sketch of the proof of the convergence rate}\label{fig_2}
\end{figure}
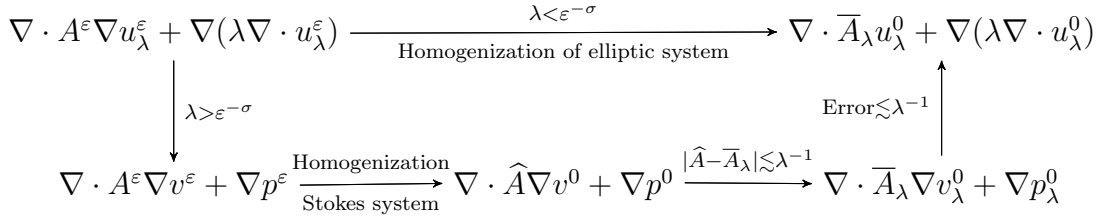

With the explicit convergence rate, we are able to control the excess decays in an iterative argument. Our method follows from Shen's elegant framework in \cite{Shen17}, which originates from \cite{AS16,ArmstrongShen15}. Because of the generalized Caccioppoli inequality (\ref{est.G.Cacc}), however, our argument is much more complicated than the usual elliptic/Stokes system.
In the following context, we would like to describe our main idea to tackle the boundary estimate (the interior estimate is similar). In \cite{KP15} and \cite{KP18}, Kenig and Prange introduced the boundary layer correctors to prove the large-scale Lipschitz estimate by a compactness argument in bumpy Lipschitz domains. In this paper, we adopt a more effective quantitative perturbation argument which can be beautifully unified into the aforementioned method of excess decay iteration. The quantified excess we are going to use for the boundary estimate is defined by
\begin{equation}\label{def.Ht.Intro}
\begin{aligned}
H(t) &= \frac{1}{t}\inf_{q\in\R^d} \bigg( \average_{D_{t} } |u^\e_\lambda - (n_t\cdot x)q|^2 \bigg)^{1/2} + \frac{1}{t} \norm{\lambda\nabla\cdot u^\e_\lambda - \average_{D_t} \lambda\nabla\cdot u^\e_\lambda }_{\underline{H}^{-1}(D_t)} \\
& \qquad + \sup_{k,\ell \in [1/4,1]} \bigg| \average_{D_{kt}} \lambda\nabla\cdot u^\e_\lambda - \average_{D_{\ell t}} \lambda\nabla\cdot u^\e_\lambda \bigg|,
\end{aligned}
\end{equation}
where $n_t$ is the unit vector given in Definition \ref{def.C1a.e} which may be understood as an approximate outer normal to the large-scale smooth boundary. We do need this specified directions because of the lack of regularity of the real normal or tangential directions along the Lipschitz boundary at small scales. Also, the particular structure is designed corresponding to the generalized Caccioppoli inequality (\ref{est.G.Cacc}). The advantage of this new structure in (\ref{def.Ht.Intro}) involving the pressure is that we can obtain the Lipschitz estimate (for the displacement) and the pressure estimate simultaneously, which were proved separately in the previous work \cite{GuShen15,GuZhuge17} for the Stokes system.

With the excess quantity given as above, we show that there exists some constant $\theta\in (0,1/4)$ such that for $r\in (\e\X,1)$
\begin{equation}\label{est.ExcessDecay}
H(\theta r) \le \frac{1}{2}H(r) + \text{small error}.
\end{equation}
This eventually leads to the desired estimates by Lemma \ref{lem.iteration}, which is an iteration argument generalizing \cite[Lemma 8.5]{Shen17}. Finally, let us explain the key idea for proving (\ref{est.ExcessDecay}). First of all, Definition \ref{def.C1a.e} implies that for any mesoscopic scale $r\in (\e,1)$, the localized boundary is close to be flat with a controllable error. This fact allows us to construct an approximate solution $v^\e_\lambda$ in a nicer domain with flat boundary, in which the excess decay estimate for the approximate solution could be established in a familiar way. Meanwhile, the errors between the approximate and real solutions could be estimated quantitatively via the Meyers' estimate which holds in any Lipschitz domains. Collecting all these errors, we obtain (\ref{est.ExcessDecay}).

\subsection{Organization of the paper}
The organization of the paper is as follows. In Section 2, we give the definitions of variational solutions, energy estimates and the generalized Caccioppoli inequality. In Section 3, we prove the asymptotic expansion and apply it to the regularity theory in non-homogenization setting. In Section 4 and 5, we establish the algebraic convergence rates for Stokes system and system of elasticity, respectively. Finally, in Section 6 and 7, we prove Theorem \ref{thm.main} and \ref{thm.main1}, respectively.

\textbf{Acknowledgment}. Both of the authors would like to thank Professor Fanghua Lin for the helpful comments after the second author reporting the results of this paper in the SUSTech PDE Workshop in Shenzhen.

\section{Variational Solutions and Energy Estimates}

In this section, we will define the variational solution for the system of elasticity and establish the enery estimates. The classical theory for the Stokes system may be found in \cite{GiaquintaModica82,Temam01}. We begin with an important lemma for the Stokes system.

Denote by $H^{-1}(D)$ and $W^{-1,p'}$ the dual spaces of $H_0^1(D)$ and $W^{1,p}_0(D)$, respectively. Define $L^2_0(D) = \{ f\in L^2(D): \int_D f = 0 \}$.

\begin{lemma}\label{lem.Nabla}
	Let $D$ be a Lipschitz domain and $f\in L^p(D)$. Then
	\begin{equation*}
	C^{-1} \norm{\nabla f}_{W^{-1,p'}(D)} \le \norm{f - \fint_D f}_{L^p(D)} \le C\norm{\nabla f}_{W^{-1,p'}(D)},
	\end{equation*}
	where $C$ depends only on $d$ and $D$. In particular, if $p = 2$,
	\begin{equation*}
	C^{-1} \norm{\nabla f}_{H^{-1}(D)} \le \norm{f - \fint_D f}_{L^2(D)} \le C\norm{\nabla f}_{H^{-1}(D)}.
	\end{equation*}
\end{lemma}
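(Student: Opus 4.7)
The inequality is the classical Ne\v{c}as inequality for Lipschitz domains. My plan is to deduce both directions by duality, with the non-trivial half resting on Bogovskii's right inverse for the divergence.

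\emph{Lower bound} ($\norm{\nabla f}_{W^{-1,p'}} \lesssim \norm{f - \fint_D f}_{L^p}$). For any test vector field $\varphi \in W^{1,p}_0(D;\R^d)$, the vanishing trace forces $\int_D \nabla\cdot\varphi = 0$, so integration by parts gives
\[
\langle \nabla f,\varphi\rangle \;=\; -\int_D f\,\nabla\cdot\varphi \;=\; -\int_D \Bigl(f - \fint_D f\Bigr)\,\nabla\cdot\varphi.
\]
H\"older's inequality together with the standard bound $\norm{\nabla\cdot\varphi}_{L^{p'}} \le C\norm{\nabla\varphi}_{L^p}$ (using boundedness of $D$) yields $|\langle \nabla f,\varphi\rangle| \le C\norm{f - \fint_D f}_{L^p}$ for $\varphi$ in the unit ball of $W^{1,p}_0$, and taking the supremum delivers the estimate.

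\emph{Upper bound} ($\norm{f - \fint_D f}_{L^p} \lesssim \norm{\nabla f}_{W^{-1,p'}}$). Here I would combine $L^p$--$L^{p'}$ duality with Bogovskii's lemma. Because $f - \fint_D f$ has mean zero,
\[
\norm{f - \fint_D f}_{L^p(D)} \;=\; \sup\Bigl\{\, \int_D f\,g \;:\; g\in L^{p'}_0(D),\ \norm{g}_{L^{p'}}\le 1 \Bigr\}.
\]
The key input is Bogovskii's theorem on bounded Lipschitz domains: for each mean-zero $g \in L^{p'}(D)$ there exists $\varphi \in W^{1,p'}_0(D;\R^d)$ solving $\nabla\cdot\varphi = g$ with $\norm{\nabla\varphi}_{L^{p'}} \le C\norm{g}_{L^{p'}}$. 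Substituting,
\[
\int_D f\,g \;=\; \int_D f\,\nabla\cdot\varphi \;=\; -\langle \nabla f,\varphi\rangle \;\le\; \norm{\nabla f}_{W^{-1,p'}}\,\norm{\nabla\varphi}_{L^{p'}} \;\le\; C\,\norm{\nabla f}_{W^{-1,p'}},
\]
and taking the supremum over $g$ finishes the argument. The case $p=2$ follows by specialization to $H^1_0$ and $H^{-1}$.

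\emph{Main obstacle.} The only non-routine ingredient is Bogovskii's construction, which genuinely exploits the Lipschitz structure of $\partial D$. The standard proof decomposes $D$ into finitely many star-shaped Lipschitz pieces, solves $\nabla\cdot\varphi = g$ on each by an explicit integral formula whose gradient is a Calder\'on--Zygmund type singular operator, and glues the local pieces via a partition of unity; verifying that the assembled operator maps $L^{p'}_0(D) \to W^{1,p'}_0(D;\R^d)$ boundedly -- with constant uniform across the pieces -- is the technical heart. This is precisely where the Lipschitz hypothesis on $D$ enters the argument, which also explains why the authors' subsequent remark flags the extension to John domains as requiring substantially more work.
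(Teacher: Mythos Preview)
Your approach matches the paper's exactly: the authors write only that the lemma is ``the duality of the solvability of the divergence equation $\nabla\cdot u = g \in L^p(D)$, $u \in W^{1,p}_0(D)$,'' citing a Bogovskii-type result valid in John domains, and you have spelled out precisely that duality argument. One minor exponent slip in your lower bound: testing against $\varphi \in W^{1,p}_0$ gives $\nabla\cdot\varphi \in L^p$, not $L^{p'}$, so the H\"older pairing with $f \in L^p$ as written only works when $p \ge 2$; the clean fix is to test against $\varphi \in W^{1,p'}_0$ (the paper's own statement carries a matching exponent inconsistency, harmless since only $p=2$ is used downstream).
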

\begin{proof}
	This is the duality of the solvability of the divergence equation $\nabla \cdot u = g\in L^p(D), u\in W_0^{1,p}(D)$, which has been proved in \cite{ADM06} in any bounded John domains (including Lipschitz domains).
\end{proof}

\subsection{Stokes system}
Let $A = A(x):D\mapsto \R^{d^2\times d^2} $ satisfy (\ref{cond.ellipticity}). Consider the general (compressible) Stokes system
\begin{equation}\label{eq.SS}
\left\{
\begin{aligned}
\nabla\cdot (A(x) \nabla v) + \nabla p &= F \qquad &\txt{in }& D, \\
\nabla\cdot v & = g \qquad &\txt{in }& D, \\
v & = f \qquad &\txt{on }& \partial D.
\end{aligned}
\right.
\end{equation}
We say a pair $(v,p)\in H^1(D;\R^d)\times L^2_0(D)$ is a weak solution of (\ref{eq.SS}) if it holds
\begin{equation*}
\int_{D} A(x)\nabla v\cdot \nabla w + \int_{D} p\nabla\cdot w = -\Ag{F, w}
\end{equation*}
for any $w\in H^1_0(D;\R^d)$, and $\nabla\cdot v = g$ in $L^2(D)$, $v - f \in H_0^1(D;\R^d)$.

The following theorem includes the wellposedness of (\ref{eq.SS}) and the energy estimate.

\begin{theorem}\label{thm.Stokes.Energy}
	Let $D$ be a bounded Lipschitz domain and the compatibility condition
	\begin{equation}\label{def.compatibility}
	\int_{D} g \,dx = \int_{\partial D} f\cdot n \,d\sigma
	\end{equation}
	be satisfied. Then the Stokes system (\ref{eq.SS}) has a unique weak solution $(v,p) \in H^1(D;\R^d)\times L^2_0(D)$. Moreover,
	\begin{equation*}
	\norm{\nabla v}_{L^2(D)} + \norm{p}_{L^2(D)} \le C\big( \norm{F}_{H^{-1}(D)} + \norm{g}_{L^2(D)} + \norm{f}_{H^{1/2}(D)} \big),
	\end{equation*}
	where $C$ depends only on $d,\Lambda$ and $D$.
\end{theorem}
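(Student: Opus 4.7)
The plan is the classical three-step approach for mixed problems of Stokes type, adapted to the general elliptic coefficient tensor $A(x)$ and reinforced by Lemma \ref{lem.Nabla} which plays the role of the Bogovskii/de Rham ingredient in a Lipschitz domain.

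First, I would reduce to the case of homogeneous boundary and divergence-free data. Extend $f\in H^{1/2}(\partial D)$ to some $\widetilde f\in H^1(D;\R^d)$ with $\|\widetilde f\|_{H^1(D)}\le C\|f\|_{H^{1/2}(\partial D)}$. By the compatibility condition (\ref{def.compatibility}),
\[
\int_D \bigl(g-\nabla\cdot \widetilde f\bigr)\,dx=\int_{\partial D}f\cdot n\,d\sigma-\int_D\nabla\cdot\widetilde f\,dx=0,
\]
so $h:=g-\nabla\cdot\widetilde f\in L^2_0(D)$. Lemma \ref{lem.Nabla} (applied with $p=2$, in its dual formulation as solvability of the divergence equation with zero boundary data, which is exactly the content invoked in the proof of that lemma via \cite{ADM06}) produces $w\in H^1_0(D;\R^d)$ with $\nabla\cdot w=h$ and $\|w\|_{H^1(D)}\le C\|h\|_{L^2(D)}$. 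Then $\phi:=\widetilde f+w\in H^1(D;\R^d)$ satisfies $\phi=f$ on $\partial D$, $\nabla\cdot\phi=g$, and
\[
\|\phi\|_{H^1(D)}\le C\bigl(\|f\|_{H^{1/2}(\partial D)}+\|g\|_{L^2(D)}\bigr).
\]
Setting $u:=v-\phi$, the problem becomes: find $(u,p)\in H^1_0(D;\R^d)\times L^2_0(D)$ with $\nabla\cdot u=0$ solving $\nabla\cdot(A\nabla u)+\nabla p=\widetilde F$, where $\widetilde F:=F-\nabla\cdot(A\nabla\phi)\in H^{-1}(D;\R^d)$ with $\|\widetilde F\|_{H^{-1}(D)}\le \|F\|_{H^{-1}(D)}+C\|\phi\|_{H^1(D)}$.

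Second, I would solve for $u$ by Lax--Milgram on the closed subspace $V:=\{w\in H^1_0(D;\R^d):\nabla\cdot w=0\}$ endowed with the $H^1_0$ inner product. The bilinear form $a(u,w):=\int_D A\nabla u\cdot\nabla w$ is continuous by (\ref{cond.ellipticity}), and coercive on $H^1_0(D;\R^d)$ by the lower bound in (\ref{cond.ellipticity}) together with Poincar\'e; so Lax--Milgram gives a unique $u\in V$ with $a(u,w)=-\langle\widetilde F,w\rangle$ for every $w\in V$, together with $\|\nabla u\|_{L^2(D)}\le C\|\widetilde F\|_{H^{-1}(D)}$.

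Third, I would recover the pressure via the de Rham-type corollary of Lemma \ref{lem.Nabla}. The functional $w\mapsto T(w):=a(u,w)+\langle\widetilde F,w\rangle$ lies in $H^{-1}(D;\R^d)$ and annihilates $V$. The quotient $H^{-1}(D;\R^d)/(\text{range of gradients of }L^2_0)$ is naturally identified with the dual of $V$ because, by Lemma \ref{lem.Nabla}, $\nabla:L^2_0(D)\to H^{-1}(D;\R^d)$ is a bounded embedding with closed range whose annihilator is exactly $V$. Hence there exists a unique $p\in L^2_0(D)$ with $\nabla p=-T$ in $H^{-1}(D;\R^d)$, i.e.\ $\int_D p\,\nabla\cdot w=-a(u,w)-\langle\widetilde F,w\rangle$ for every $w\in H^1_0(D;\R^d)$, which is the weak formulation of (\ref{eq.SS}) after undoing the substitution $v=u+\phi$. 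The quantitative version of Lemma \ref{lem.Nabla} then yields
\[
\|p\|_{L^2(D)}\le C\|\nabla p\|_{H^{-1}(D)}\le C\bigl(\|A\nabla u\|_{L^2(D)}+\|\widetilde F\|_{H^{-1}(D)}\bigr)\le C\bigl(\|\nabla u\|_{L^2(D)}+\|\widetilde F\|_{H^{-1}(D)}\bigr).
\]
Combining with the bound on $\nabla u$ and the earlier estimate on $\phi$ gives the stated energy inequality for $(v,p)$. Uniqueness follows at once from linearity: if $F=g=f=0$, testing the interior equation against $u=v$ (which lies in $V$) kills the pressure term and, by ellipticity, forces $v\equiv 0$; then $\nabla p=0$ in $H^{-1}$ and $p\in L^2_0$ force $p\equiv 0$.

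The only step that is not bookkeeping is the construction of the pressure, and that is precisely where Lemma \ref{lem.Nabla} is indispensable: without the closed-range property of $\nabla:L^2_0(D)\to H^{-1}(D;\R^d)$ on a general Lipschitz domain, the identification of $T$ as a gradient would fail. I therefore expect the main (already-resolved) obstacle to lie in invoking Lemma \ref{lem.Nabla} in both its direct form (to estimate $p$) and its dual form (to construct $\phi$ in the lifting step).
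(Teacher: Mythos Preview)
Your proof is correct and follows the standard route (lifting of boundary/divergence data via the Bogovskii operator, Lax--Milgram on the solenoidal subspace, recovery of the pressure via the closed-range/de Rham argument encoded in Lemma \ref{lem.Nabla}). The paper does not actually prove Theorem \ref{thm.Stokes.Energy}; it is stated as a classical result with reference to \cite{GiaquintaModica82,Temam01}, so there is no paper-proof to compare against beyond noting that your argument is the expected one.
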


\subsection{System of elasticity}
Let $A$ satisfy (\ref{cond.ellipticity}) and consider (\ref{eq.elast}) with constant $\lambda \ge 0$. We say $u_\lambda\in H^1(D;\R^d)$ is the weak solution of (\ref{eq.elast}), if
\begin{equation*}
\int_{D} A(x)\nabla u_\lambda\cdot \nabla w + \int_D \lambda \nabla u_\lambda\cdot \nabla w = -\Ag{F,w}
\end{equation*}
for any $w\in H_0^1(D;\R^d)$ and $u_\lambda - f\in H_0^1(D;\R^d)$. The following theorem gives the energy estimate of the elasticity system with arbitrary $\lambda \ge 0$.

\begin{theorem}\label{lem.energy.elasticity}
	Let $D$ be a bounded Lipschitz domain. Then the elasticity system (\ref{eq.elast}) has a unique weak solution $u_\lambda\in H^1(D;\R^d)$ satisfying
	\begin{equation}\label{est.energy}
	\norm{u_\lambda}_{H^1(D)} + \norm{\lambda \nabla\cdot u_\lambda - \fint_D \lambda \nabla\cdot u_\lambda}_{L^2(D)} \le C\big( \norm{F}_{H^{-1}(D)} + \norm{f}_{H^1(D)} \big),
	\end{equation}
	where $C$ depends only on $d,\Lambda$ and $D$.
\end{theorem}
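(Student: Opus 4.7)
The plan is to establish a $\lambda$-uniform $H^1$ bound via Lax--Milgram on $H_0^1$, and then read off the pressure-oscillation bound from the PDE using Lemma~\ref{lem.Nabla}. The subtle point is to arrange the boundary-data lift so that no $\lambda$ factor survives on the right-hand side of the Lax--Milgram problem.

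The bilinear form
\[
B(u,w) := \int_D A(x)\nabla u\cdot\nabla w\,dx + \lambda\int_D(\nabla\cdot u)(\nabla\cdot w)\,dx
\]
is symmetric, bounded on $H^1(D;\R^d)^2$, and, by (\ref{cond.ellipticity}), coercive on $H_0^1(D;\R^d)$ with constant $\Lambda^{-1}$ uniformly in $\lambda\ge 0$, since the $\lambda$-term is non-negative. Writing $u_\lambda=v+\tilde f$ for a lift $\tilde f\in H^1(D;\R^d)$ of the boundary data reduces the problem to $B(v,w)=-\Ag{F,w}-B(\tilde f,w)$ on $H_0^1$. For a generic lift, the right-hand side contains $\lambda\int_D(\nabla\cdot\tilde f)(\nabla\cdot w)$, which cannot be absorbed into the coercive form without a $\lambda$-dependent constant. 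My key move is to choose $\tilde f$ with \emph{constant} divergence
\[
\nabla\cdot\tilde f\equiv c:=\frac{1}{|D|}\int_{\partial D}f\cdot n\,d\sigma,
\]
so that $\int_D(\nabla\cdot\tilde f)(\nabla\cdot w)=c\int_{\partial D}w\cdot n\,d\sigma=0$ for all $w\in H_0^1$. Such a $\tilde f$ is built by taking any $H^1$-extension $f_0$ of the boundary data with $\|f_0\|_{H^1(D)}\le C\|f\|_{H^1(D)}$, noting that $g:=c-\nabla\cdot f_0\in L^2(D)$ has mean zero by the divergence theorem, and solving $\nabla\cdot\phi=g$ with $\phi\in H_0^1(D;\R^d)$ and $\|\nabla\phi\|_{L^2}\le C\|g\|_{L^2}$; the $L^2$-solvability of the divergence equation on bounded Lipschitz domains is exactly the duality content of Lemma~\ref{lem.Nabla}. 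Setting $\tilde f:=f_0+\phi$ yields $\|\tilde f\|_{H^1(D)}\le C\|f\|_{H^1(D)}$ uniformly in $\lambda$.

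With this $\tilde f$, the functional $L(w):=-\Ag{F,w}-\int_D A\nabla\tilde f\cdot\nabla w\,dx$ on $H_0^1(D;\R^d)$ is bounded by $(\|F\|_{H^{-1}(D)}+C\|f\|_{H^1(D)})\|\nabla w\|_{L^2(D)}$ uniformly in $\lambda$. Lax--Milgram applied to $B|_{H_0^1\times H_0^1}$ produces a unique $v\in H_0^1$ with $B(v,w)=L(w)$ and $\|\nabla v\|_{L^2}\le C(\|F\|_{H^{-1}}+\|f\|_{H^1})$; hence $u_\lambda=v+\tilde f$ is the unique weak solution (uniqueness also follows directly from coercivity applied to the difference of two solutions) and $\|u_\lambda\|_{H^1(D)}\le C(\|F\|_{H^{-1}(D)}+\|f\|_{H^1(D)})$. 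For the pressure piece, since $\lambda$ is a constant, the PDE rewrites as $\nabla(\lambda\nabla\cdot u_\lambda)=F-\nabla\cdot(A\nabla u_\lambda)$ in $\mathcal{D}'(D)$, so
\[
\|\nabla(\lambda\nabla\cdot u_\lambda)\|_{H^{-1}(D)}\le \|F\|_{H^{-1}(D)}+C\|\nabla u_\lambda\|_{L^2(D)};
\]
Lemma~\ref{lem.Nabla} with $p=2$ applied to the scalar $\lambda\nabla\cdot u_\lambda$ then converts this into $\|\lambda\nabla\cdot u_\lambda-\fint_D\lambda\nabla\cdot u_\lambda\|_{L^2(D)}\le C(\|F\|_{H^{-1}(D)}+\|f\|_{H^1(D)})$, completing (\ref{est.energy}).

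The main obstacle is precisely the constant-divergence lift: preventing the $\lambda$-factor from escaping into the right-hand side of the Lax--Milgram problem requires the structural orthogonality supplied by Lemma~\ref{lem.Nabla} (constants are $L^2$-orthogonal to the range of $\nabla\cdot$ on $H_0^1$). Once this is in place, the uniform coercivity handles existence and the $H^1$-bound, and the pressure oscillation comes essentially for free by reading the PDE backwards.
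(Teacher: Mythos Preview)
Your argument is correct, but it proceeds by a genuinely different route from the paper's proof. The paper does not preprocess the boundary lift; it tests the equation directly against $u_\lambda-f$ (with $f$ the given $H^1$ extension), subtracts the mean of $\lambda\nabla\cdot u_\lambda$ inside the divergence term, and uses the divergence-theorem identity $\fint_D\nabla\cdot u_\lambda=\fint_D\nabla\cdot f$ together with the pressure bound (\ref{est.lambda-Divu}) (which it establishes first, via Lemma~\ref{lem.Nabla}, in terms of $\|\nabla u_\lambda\|_{L^2}$) to absorb the cross terms by Cauchy--Schwarz. In contrast, you invoke the dual statement of Lemma~\ref{lem.Nabla}---solvability of $\nabla\cdot\phi=g$ in $H_0^1$---to build a lift $\tilde f$ with \emph{constant} divergence, so that the $\lambda$-term in $B(\tilde f,\cdot)$ vanishes on $H_0^1$ and Lax--Milgram delivers the uniform $H^1$ bound in one stroke; the pressure bound then follows a posteriori. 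Both arguments rest on the same structural fact (constants are orthogonal to the range of $\nabla\cdot$ on $H_0^1$, equivalently Lemma~\ref{lem.Nabla}), but your packaging is arguably cleaner: it isolates the obstruction and removes it at the level of the lift rather than through cancellation in the energy identity.
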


\begin{proof}
	If we view (\ref{eq.elast}) as an elliptic system with a large ellipticity constant, then the classical Lax-Milgram theorem implies that $u_\lambda \in H^1(D;\R^d)$. It suffices to show (\ref{est.energy}) with constant $C$ independent of $\lambda$. By Lemma \ref{lem.Nabla},
	\begin{equation}\label{est.lambda-Divu}
	\norm{\lambda \nabla\cdot u_\lambda - \average_D \lambda \nabla\cdot u_\lambda }_{L^2(D)} \le C\big( \norm{F}_{H^{-1}(D)} + \norm{\nabla u_\lambda}_{L^2( D)}\big).
	\end{equation}
	Now, by adding a constant, we write the system in (\ref{eq.elast}) as
	\begin{equation*}
	\nabla\cdot (A(x) \nabla u_\lambda) +  \nabla \bigg( \lambda\nabla\cdot u_\lambda - \fint_D \lambda \nabla\cdot u_\lambda \bigg) = F \qquad \text{in } D.
	\end{equation*}
	Integrating this system against $u_\lambda - f$ and using the integration by parts, we arrive at
	\begin{equation}\label{eq.energy.int}
	\begin{aligned}
	& \int_D A(x)\nabla u_\lambda\cdot \nabla u_\lambda + \int_D \bigg( \lambda\nabla\cdot u_\lambda - \fint_D \lambda \nabla\cdot u_\lambda \bigg) \nabla\cdot (u_\lambda - f) \\
	& \qquad = \Ag{F, u_\lambda -f} + \int_D A(x)\nabla u_\lambda\cdot \nabla f.
	\end{aligned}
	\end{equation}
	Substituting
	\begin{equation*}
	\nabla\cdot (u_\lambda - f) = \bigg( \nabla\cdot u_\lambda - \fint_D \nabla\cdot u_\lambda \bigg) + \fint_D \nabla\cdot u_\lambda - \nabla\cdot f
	\end{equation*}
	into the second term of (\ref{eq.energy.int}), we obtain
	\begin{equation}\label{est.energy.1}
	\begin{aligned}
	& \int_D A(x)\nabla u_\lambda\cdot \nabla u_\lambda +  \int_D \lambda \bigg( \nabla\cdot u_\lambda - \fint_D \nabla\cdot u_\lambda \bigg)^2 \\
	& \qquad \le C\norm{F}_{H^{-1}(D)} ( \norm{\nabla u_\lambda}_{L^2(D)} + \norm{\nabla f}_{L^2(D)} ) \\
	& \qquad \qquad + \norm{\lambda \nabla\cdot u_\lambda - \fint_D \lambda \nabla\cdot u_\lambda}_{L^2(D)} \bigg( \bigg|\fint_D \nabla\cdot u_\lambda \bigg| + \norm{\nabla f}_{L^2(D)} \bigg) \\
	& \qquad\qquad + \Lambda \norm{\nabla u_\lambda}_{L^2(D)} \norm{\nabla f}_{L^2(D)} \\
	& \qquad \le \frac{1}{2\Lambda}  \norm{\nabla u_\lambda}_{L^2(D)}^2 + C\big( \norm{F}_{H^{-1}(D)}^2 + \norm{\nabla f}_{L^2(D)}^2 \big),
	\end{aligned}
	\end{equation}
	where in the second inequality, we have used (\ref{est.lambda-Divu}), the Cauchy-Schwarz inequality and the fact
	\begin{equation*}
	\fint_D \nabla\cdot u_\lambda = \frac{1}{|D|} \int_{\partial D} f\cdot n d\sigma = \fint_{D} \nabla\cdot f.
	\end{equation*}
	It follows from the ellipticity condition that
	\begin{equation}\label{est.du2df}
	\norm{\nabla u_\lambda}_{L^2(D)} \le C\big( \norm{F}_{H^{-1}(D)} + \norm{\nabla f}_{L^2(D)} \big).
	\end{equation}
	Finally, the estimate (\ref{est.energy}) follows from the Poincar\'{e} inequality and (\ref{est.lambda-Divu}).
\end{proof}

\subsection{A generalized Caccioppoli inequality}
We introduce the scale-invariant space $\underline{H}^{-1}$. Let $D$ be a Lipschitz domain. Define the scale-invariant $H^{1}$ norm by
\begin{equation*}
\norm{v}_{\underline{H}^1(D)} := |D|^{-1/d} \bigg( \average_{D} |v|^2 \bigg)^{1/2} +\bigg( \average_{D} |\nabla v|^2 \bigg)^{1/2}.
\end{equation*}
Then, we define
\begin{equation*}
\norm{u}_{\underline{H}^{-1}(D)}:= \sup \bigg\{ \average_D u v: v\in H^1_0(D) \text{ and } \norm{v}_{\underline{H}^{1}} \le 1 \bigg\}.
\end{equation*}
Observe that if $u\in L^2(D)$, then
\begin{equation}\label{est.H-1.L2}
\norm{u}_{\underline{H}^{-1}(D)} \le |D|^{1/d} \bigg( \average_{D} |u|^2 \bigg)^{1/2}.
\end{equation}

\begin{theorem}\label{thm.weak.Caccioppoli}
	Let $u_\lambda\in H^1(B_2;\R^d)$ be a weak solution of
	\begin{equation}\label{eq.uB2}
	\nabla\cdot A(x)\nabla u_\lambda + \nabla(\lambda\nabla\cdot u_\lambda) = 0 \quad \text{in } B_2.
	\end{equation}
	Then there exists a constant $C$ depending only on $\Lambda$ and $d$ such that
	\begin{equation*}
	\begin{aligned}
	& \average_{B_1} |\nabla u_\lambda|^2+ { \average_{B_1} |\lambda \nabla\cdot u_\lambda-\average_{B_1} \lambda \nabla \cdot u_\lambda|^2} \\
	&\quad  \le C \average_{B_2} | u_\lambda|^2 + C \norm{\lambda \nabla\cdot u_\lambda - \average_{B_2}\lambda \nabla\cdot u_\lambda  }_{\underline{H}^{-1}(B_2)}^2 \\
	& \qquad + C\sup_{k,\ell \in [1/4,1]} \bigg| \average_{B_{2k}} \lambda\nabla \cdot u_\lambda - \average_{B_{2\ell}} \lambda\nabla \cdot u_\lambda \bigg|^2.
	\end{aligned}
	\end{equation*}
\end{theorem}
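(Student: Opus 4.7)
The plan is a Widman-style Caccioppoli iteration. Writing $p:=\lambda\nabla\cdot u_\lambda$ and $\bar p_r:=\average_{B_r}p$, the pressure is handled through $\underline{H}^{-1}$ duality combined with the compressibility identity $\nabla\cdot u_\lambda = p/\lambda$. The pressure piece on the left-hand side is disposed of at once via Lemma \ref{lem.Nabla}: because $\langle\nabla p,\phi\rangle = \int A\nabla u_\lambda\cdot\nabla\phi$ for every $\phi\in H^1_0(B_1)$, one has $\|\nabla p\|_{H^{-1}(B_1)}\le\Lambda\|\nabla u_\lambda\|_{L^2(B_1)}$ and hence $\|p-\bar p_1\|_{L^2(B_1)}\le C\|\nabla u_\lambda\|_{L^2(B_1)}$. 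This reduces the theorem to a bound on $\int_{B_1}|\nabla u_\lambda|^2$ by the right-hand side.

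For $1\le r_1<r_2\le 2$, take $\eta\in C^\infty_c(B_{r_2})$ with $\eta\equiv 1$ on $B_{r_1}$ and $|\nabla^k\eta|\lesssim(r_2-r_1)^{-k}$ for $k=1,2$, and test the equation against $\eta^2 u_\lambda$. Ellipticity plus AM-GM on the mixed term yields
$$\int\eta^2|\nabla u_\lambda|^2 \lesssim (r_2-r_1)^{-2}\int_{B_{r_2}}|u_\lambda|^2 + \Bigl|\int p\,\nabla\cdot(\eta^2 u_\lambda)\Bigr|.$$
Since $\int\nabla\cdot(\eta^2u_\lambda)=0$, the pressure integral equals $\int(p-\bar p_{r_2})\nabla\cdot(\eta^2u_\lambda) = I_1+I_2$ where $I_1=\int(p-\bar p_{r_2})\,2\eta\nabla\eta\cdot u_\lambda$ and $I_2=\int(p-\bar p_{r_2})\,\eta^2\nabla\cdot u_\lambda$. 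The piece $I_1$ is controlled by $\underline{H}^{-1}$ duality, since $2\eta\nabla\eta\cdot u_\lambda\in H^1_0(B_{r_2})$:
$$|I_1|\lesssim \|p-\bar p_{r_2}\|_{\underline{H}^{-1}(B_{r_2})}\bigl(\|u_\lambda\|_{L^2(B_{r_2})}+\|\nabla u_\lambda\|_{L^2(B_{r_2})}\bigr)(r_2-r_1)^{-O(1)}.$$

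The piece $I_2$ is the main obstacle, because a direct integration by parts using the PDE merely reproduces the test identity and yields a tautology. To break this circularity I invoke the compressibility relation $\nabla\cdot u_\lambda = p/\lambda$ (for $\lambda>0$; $\lambda=0$ is the classical elliptic Caccioppoli), obtaining
$$I_2 = \frac{1}{\lambda}\int\eta^2(p-\bar p_{r_2})^2 + \frac{\bar p_{r_2}}{\lambda}\int\eta^2(p-\bar p_{r_2}).$$
The first summand is non-negative and is moved to the left-hand side (or simply dropped). The cross-term is $\lambda$-independent because $\bar p_{r_2}/\lambda = \average_{B_{r_2}}\nabla\cdot u_\lambda$, so $|\bar p_{r_2}/\lambda|\lesssim\|\nabla u_\lambda\|_{L^2(B_{r_2})}$; combined with the duality estimate $|\int\eta^2(p-\bar p_{r_2})|\lesssim(r_2-r_1)^{-1}\|p-\bar p_{r_2}\|_{\underline{H}^{-1}(B_{r_2})}$ (using $\eta^2\in H^1_0(B_{r_2})$) and AM-GM, the cross-term contributes $\varepsilon\|\nabla u_\lambda\|_{L^2(B_{r_2})}^2$ plus a multiple of $\|p-\bar p_{r_2}\|_{\underline{H}^{-1}(B_{r_2})}^2$.

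Collecting all estimates, every $\|\nabla u_\lambda\|_{L^2(B_{r_2})}^2$ on the right-hand side carries an $\varepsilon$ prefactor, so a standard Widman iteration along a sequence $r_k\to 2$ absorbs them and yields a bound on $\int_{B_1}|\nabla u_\lambda|^2$. To convert the $\bar p_{r_2}$-centered $\underline{H}^{-1}$ norms produced by this scheme into the $\bar p_2$-centered norm stated in the theorem, I apply the triangle inequality
$$\|p-\bar p_{r_2}\|_{\underline{H}^{-1}(B_{r_2})}\le C\|p-\bar p_2\|_{\underline{H}^{-1}(B_2)}+C|\bar p_{r_2}-\bar p_2|,$$
in which the oscillation-of-averages $|\bar p_{r_2}-\bar p_2|$ for $r_2\in[1,2]$ is majorized by $\sup_{k,\ell\in[1/4,1]}|\bar p_{2k}-\bar p_{2\ell}|$; this is exactly the sup term on the right-hand side of the theorem. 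Combining with the pressure-to-gradient reduction of the first paragraph completes the argument.
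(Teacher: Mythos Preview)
Your argument is correct and structurally close to the paper's proof: both test against a cutoff times $u_\lambda$, subtract the mean of $p$, move the nonnegative term $\lambda^{-1}\int\eta^2(p-\bar p)^2$ to the left, and control the remaining pressure pieces by $\underline{H}^{-1}$ duality. The genuine difference lies in how the scalar $\bar p_{r_2}/\lambda=\fint_{B_{r_2}}\nabla\cdot u_\lambda$ is handled. You bound it crudely by $\|\nabla u_\lambda\|_{L^2(B_{r_2})}$, which forces a Widman iteration to absorb the resulting $\varepsilon\|\nabla u_\lambda\|_{L^2(B_{r_2})}^2$. The paper instead writes $T(t):=\fint_{B_t}\nabla\cdot u_\lambda=|B_t|^{-1}\int_{\partial B_t}n\cdot u_\lambda$ via the divergence theorem and observes, by co-area, that $\int_{1/2}^2 T(t)^2\,dt\le C\int_{B_2}|u_\lambda|^2$; integrating the pointwise Caccioppoli inequality in $t$ then gives the result in one shot without iteration. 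Your route is more routine and slightly longer; the paper's averaging trick is cleaner and produces the $\|u_\lambda\|_{L^2}^2$ control of the cross term directly, but both reach the same conclusion with the same right-hand side.
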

\begin{proof}
	Since $\frac{x}{|x|}\cdot u_\lambda \in L^2(B_2)$, the co-area formula implies
	\begin{equation*}
	\int_{1/2}^2 \int_{\partial B_t} \Big(\frac{x}{|x|}\cdot u_\lambda \Big)^2 dt = \int_{1/2}^2 \int_{\partial B_t} (n\cdot u_\lambda)^2 dt \le C\int_{B_2} |u_\lambda|^2,
	\end{equation*}
	where $n$ is the unit outer normal of $\partial B_t$.
	Let
	\begin{equation*}
	T(t) = \average_{B_t} \nabla\cdot u_\lambda.
	\end{equation*}
	Then the divergence theorem yields
	\begin{equation}\label{est.Tt.L2}
	\int_{1/2}^2 T(t)^2 dt \le C\int_{1/2}^2 \int_{\partial B_t} (n\cdot u_\lambda)^2 dt \le C\int_{B_2} |u_\lambda|^2.
	\end{equation}
	
	Let $\phi\in C_0^\infty(B_t)$ be a nonnegative cut-off function so that $\phi(x) = 1 \text{ for } x\in B_1$ and $\sum_{k = 0}^2 |\nabla^k \phi| \le C$. By inserting the constant $\lambda T(t)$ to the divergence part of the equation, we obtain
	\begin{equation*}
	\nabla\cdot A(x)\nabla u_\lambda + \nabla(\lambda\nabla\cdot u_\lambda - \lambda T(t)) = 0 \quad \text{in } B_2.
	\end{equation*}	
	Now, integrating the above system against $u_\lambda \phi^2$ and using the integration by parts, we obtain
	\begin{equation*}
	\begin{aligned}
	&\int_{B_2}A\nabla u_\lambda \cdot \nabla u_\lambda \phi^2 + \int_{B_2} \lambda (\nabla\cdot u_\lambda - T(t) )^2 \phi^2 \\
	&= - 2\int_{B_2} A\nabla u_\lambda \phi \cdot (u_\lambda\otimes \nabla \phi) - \int_{B_2} \lambda(\nabla\cdot u_\lambda)(u_\lambda\cdot 2\phi\nabla \phi)
	\\
	& = - 2\int_{B_2} A\nabla u_\lambda \phi \cdot (u_\lambda\otimes \nabla \phi) - \int_{B_2} \lambda(\nabla\cdot u_\lambda - T(t) )(u_\lambda\cdot 2\phi\nabla \phi) - \int_{B_2} \lambda(\nabla\cdot u_\lambda - T(t) ) T(t) \phi^2.
	\end{aligned}
	\end{equation*}
	This implies that
	\begin{equation*}
	\begin{aligned}
	& \int_{B_2} |\nabla u_\lambda|^2 \phi^2 \\
	& \le C\int_{B_2} |u_\lambda|^2 + \norm{\lambda \nabla\cdot u_\lambda - \lambda T(t)}_{\underline{H}^{-1}(B_2)}^2 + CT(t)^2  \\
	& \le C\int_{B_2} |u_\lambda|^2 |\nabla \phi|^2 + \norm{\lambda \nabla\cdot u_\lambda - \lambda T(2)}_{\underline{H}^{-1}(B_2)}^2 + C|\lambda T(t) - \lambda T(2)|^2 + CT(t)^2.
	\end{aligned}
	\end{equation*}
	Finally, integrating in $t$ over $[1/2,2]$ and using (\ref{est.Tt.L2}), we obtain the desired estimate for $\nabla u_\lambda$. The estimate for the pressure follows from (\ref{lem.Nabla}).
\end{proof}

\begin{remark}
	By considering rescaling and the fact that $u_\lambda - q$ with any constant $q\in \R^d$ is also a solution, we actually prove that
	\begin{equation}\label{est.WCaccioppoli.Int}
	\begin{aligned}
	& \average_{B_r} |\nabla u_\lambda|^2+ { \average_{B_r} |\lambda \nabla\cdot u_\lambda-\average_{B_r} \lambda \nabla \cdot u_\lambda|^2} \\
	& \quad \le \frac{C}{r^2} \inf_{q\in \R^d} \average_{B_{2r}} | u_\lambda - q|^2 + \frac{C}{r^2} \norm{\lambda \nabla\cdot u_\lambda - \average_{B_{2r}}\lambda \nabla\cdot u_\lambda  }_{\underline{H}^{-1}(B_{2r})}^2 \\
	& \qquad + C\sup_{k,\ell \in [1/4,1]} \bigg| \average_{B_{2kr}} \lambda\nabla \cdot u_\lambda - \average_{B_{2\ell r}} \lambda\nabla \cdot u_\lambda \bigg|^2.
	\end{aligned}
	\end{equation}
	Similarly, one may also show the generalized boundary Caccioppoli inequality as follows. Let $0\in \partial D, D_t = D\cap B_t(0)$ and $\Delta_t = \partial D\cap B_t(0)$.Let $u_\lambda\in H^1(D_2;\R^d)$ be a weak solution of
	\begin{equation}\label{eq.bdry.uB2}
	\left\{
	\begin{aligned}
	\nabla\cdot A(x)\nabla u_\lambda + \lambda\nabla(\nabla\cdot u_\lambda)&= 0 \qquad &\txt{in }& D_{2r}, \\
	u_\lambda & = 0 \qquad &\txt{on }& \Delta_{2r}.
	\end{aligned}
	\right.
	\end{equation}
	Then, we have
	\begin{equation}\label{est.WCacc.bdry}
	\begin{aligned}
	& \average_{D_r} |\nabla u_\lambda|^2+ { \average_{D_r} |\lambda \nabla\cdot u_\lambda-\average_{D_r} \lambda \nabla \cdot u_\lambda|^2} \\
	& \quad \le \frac{C}{r^2} \average_{D_{2r}} | u_\lambda|^2 + \frac{C}{r^2} \norm{\lambda \nabla\cdot u_\lambda - \average_{D_{2r}}\lambda \nabla\cdot u_\lambda  }_{\underline{H}^{-1}(D_{2r})}^2 \\
	& \qquad + C\sup_{t\in [1/4,1]} \bigg| \average_{D_{2kr}} \lambda\nabla \cdot u_\lambda - \average_{D_{2\ell r}} \lambda\nabla \cdot u_\lambda \bigg|^2.
	\end{aligned}
	\end{equation}
	The generalized Caccioppoli inequalities (\ref{est.WCaccioppoli.Int}) and (\ref{est.WCacc.bdry}) will be useful for us.
\end{remark}

\section{Asymptotic Behaviors and General Regularity}
It is well-known in physics and numerical analysis that the solution $u_\lambda$ of 
\begin{equation}\label{eq.elast1}
\left\{
\begin{aligned}
\nabla\cdot (A(x) \nabla u_\lambda) +  \lambda \nabla (\nabla\cdot u_\lambda) &= F \qquad &\txt{in } &D, \\
u_\lambda  &= f \qquad &\txt{on } &\partial D,
\end{aligned}
\right.
\end{equation}
converges to the solution of a Stokes system as constant $\lambda$ approaches infinity. This property allows people to design efficient numerical algorithm to solve the system of nearly incompressible elasticity \cite{Temam01}. In this section, we will prove a complete asymptotic expansion in terms of the solutions of certain iterative Stokes systems and use it to study the uniform regularity of the system of nearly incompressible elasticity.

\subsection{A proof of asymptotic expansion}
To describe the limiting system of (\ref{eq.elast1}), we define
\begin{equation}\label{def.Agf}
\Ag{f}_D = \frac{1}{|D|} \int_{\partial D} f\cdot n d\sigma.
\end{equation}
Roughly speaking, the quantity $\Ag{f}_D$ represents the averaging volume change of the material body. For nearly incompressible materials, $\Ag{f}_D$ is small under a mild physical condition, and could be large under high pressure.


\begin{lemma}\label{lem.lambda.rate}
	Let $D$ be a Lipschitz domain and $u_\lambda$ the weak solution of (\ref{eq.elast1}). Let $v_0$ be the weak solution of 
	\begin{equation}\label{eq.Stokes}
	\left\{
	\begin{aligned}
	\nabla\cdot (A(x) \nabla v_0) + \nabla p_0 &= F \qquad &\txt{in }& D, \\
	\nabla\cdot v_0 & = \Ag{f}_D \qquad &\txt{in }& D, \\
	v_0 & = f \qquad &\txt{on }& \partial D.
	\end{aligned}
	\right.
	\end{equation}
	Then
	\begin{equation}\label{est.0order}
	\norm{u_\lambda - v_0}_{H^1(D)} + \norm{\lambda \nabla\cdot u_\lambda - \average_D \lambda \nabla\cdot u_\lambda - p_0 }_{L^2(D)} \le C\lambda^{-1} \big( \norm{F}_{H^{-1}(D)} + \norm{f}_{H^{1/2}(\partial D)}\big),
	\end{equation}
	where $C$ depends only on $d, \Lambda$ and $D$.
\end{lemma}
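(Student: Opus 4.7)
The plan is to derive a Stokes-type system satisfied by the difference $w := u_\lambda - v_0$ and then invoke the Stokes energy estimate of Theorem~\ref{thm.Stokes.Energy}. To this end, set $q := \lambda\nabla\cdot u_\lambda - \average_D \lambda\nabla\cdot u_\lambda$, which lies in $L_0^2(D)$ by construction. Because subtracting a constant inside a gradient is harmless, the elasticity equation can be rewritten as $\nabla\cdot(A(x)\nabla u_\lambda) + \nabla q = F$ in $D$. Subtracting the Stokes equation for $(v_0,p_0)$ yields
\begin{equation*}
\nabla\cdot(A(x)\nabla w) + \nabla(q - p_0) = 0 \quad \txt{in } D, \qquad w = 0 \quad \txt{on } \partial D.
\end{equation*}

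The key identity is $\nabla\cdot w = q/\lambda$. Indeed, the divergence theorem applied to $u_\lambda$ gives $\average_D \nabla\cdot u_\lambda = \Ag{f}_D$, while $\nabla\cdot v_0 = \Ag{f}_D$ by construction, so
\begin{equation*}
\nabla\cdot w \;=\; \nabla\cdot u_\lambda - \Ag{f}_D \;=\; \nabla\cdot u_\lambda - \average_D \nabla\cdot u_\lambda \;=\; q/\lambda.
\end{equation*}
Thus $(w, q - p_0)$ is a weak solution of the Stokes system (\ref{eq.SS}) with source $F = 0$, divergence datum $g = q/\lambda \in L^2_0(D)$, and boundary datum $f = 0$; the compatibility condition (\ref{def.compatibility}) holds automatically since $q$ is mean-zero.

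Applying Theorem~\ref{thm.Stokes.Energy} then produces
\begin{equation*}
\norm{\nabla w}_{L^2(D)} + \norm{q - p_0}_{L^2(D)} \;\le\; C\norm{q/\lambda}_{L^2(D)} \;=\; C\lambda^{-1}\norm{q}_{L^2(D)},
\end{equation*}
and Poincar\'e's inequality (since $w$ vanishes on $\partial D$) upgrades this to the desired $H^1$ bound on $w$. The prefactor $\lambda^{-1}$ generated here is the whole point of the argument. It remains to estimate $\norm{q}_{L^2(D)}$ \emph{uniformly in $\lambda$}, which is precisely the content of Theorem~\ref{lem.energy.elasticity}: after choosing an extension of $f$ from $\partial D$ realizing the trace norm $\norm{f}_{H^{1/2}(\partial D)}$, one obtains $\norm{q}_{L^2(D)} \le C(\norm{F}_{H^{-1}(D)} + \norm{f}_{H^{1/2}(\partial D)})$. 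Chaining the two estimates gives (\ref{est.0order}).

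I do not expect a serious obstacle here. The one conceptual subtlety is recognizing that the natural ``pressure'' associated with $u_\lambda$ is not $\lambda\nabla\cdot u_\lambda$ itself but its mean-zero part $q$; with this normalization the proof collapses into a two-line application of the Stokes energy estimate combined with the $\lambda$-uniform a priori bound of Theorem~\ref{lem.energy.elasticity}, which is exactly the estimate that was designed to make this reduction possible.
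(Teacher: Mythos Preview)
Your proof is correct and essentially identical to the paper's: both write the difference $w=u_\lambda-v_0$ as the solution of a Stokes system with pressure $q-p_0$ and divergence datum $q/\lambda$, apply Theorem~\ref{thm.Stokes.Energy} to get the factor $\lambda^{-1}$, and then use Theorem~\ref{lem.energy.elasticity} to bound $\norm{q}_{L^2(D)}$ uniformly in $\lambda$. The paper's presentation is slightly terser (it writes the pressure as $\lambda\nabla\cdot w_0-p_0$ rather than introducing $q$ explicitly), but the argument is the same.
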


\begin{proof}
	The proof is well-known. We provide a proof for completeness. By the divergence theorem, $\Ag{f}_D =\average_D \nabla\cdot u_\lambda $. Let $w_0 = u_\lambda - v_0$ and consider the Stokes system for $w_0$
	\begin{equation}\label{eq.0order}
	\left\{
	\begin{aligned}
	\nabla\cdot (A(x) \nabla w_0) + \nabla (\lambda \nabla\cdot w_0 -p_0) &= 0 \qquad &\txt{in }& D, \\
	\nabla\cdot w_0 & = \nabla\cdot u_\lambda - \average_D \nabla\cdot u_\lambda \qquad &\txt{in }& D, \\
	w_0 & = 0 \qquad &\txt{on }& \partial D.
	\end{aligned}
	\right.
	\end{equation}
	As a consequence, it follows from Theorem \ref{thm.Stokes.Energy} and Lemma \ref{lem.energy.elasticity} that
	\begin{equation*}
	\begin{aligned}
	&\norm{u_\lambda - v_0}_{H^1(D)} + \norm{\lambda \nabla\cdot u_\lambda - \average_D \lambda \nabla\cdot u_\lambda - p_0}_{L^2(D)} \\
	& \le C\norm{ \nabla\cdot u_\lambda - \average_D \nabla\cdot u_\lambda }_{L^2(D)} \\
	& \le C\lambda^{-1} \big( \norm{F}_{H^{-1}(D)} + \norm{f}_{H^{1/2}(\partial D)}\big).
	\end{aligned}
	\end{equation*}
	This completes the proof.
\end{proof}


Divided by $\lambda$, (\ref{est.0order}) yields
\begin{equation}\label{est.1order.helper}
\nabla\cdot u_\lambda - \average_D \nabla\cdot u_\lambda - \lambda^{-1} p_0 \le O(\lambda^{-2}).
\end{equation}
Observe that $\lambda^{-1}p_0$ may be used to correct system (\ref{eq.0order}) with a higher-order error $O(\lambda^{-2})$ and thus the first-order term may be determined. It turns out that iterating this argument leads to a complete asymptotic expansion in $\lambda$ for the solution $u_\lambda$. 

\begin{theorem}\label{thm.expansion}
	Let $D$ be a bounded Lipschitz domain and $A$ satisfy (\ref{cond.ellipticity}) and (\ref{cond.symmetry}). Suppose $u_\lambda$ is the weak solution of (\ref{eq.elast1}) with $F\in H^{-1}(D;\R^d)$ and $f\in H^{1/2}(D;\R^d)$. Then there exists $C_0>0$ depending only on $d,\Lambda$ and $D$, such that if $\lambda>C_0$,
	\begin{equation}\label{eq.asym.expansion}
	u_\lambda = \sum_{k = 0}^\infty \lambda^{-k} v_k \quad \text{in } H^1 \quad \txt{and} \quad \lambda \nabla\cdot u_\lambda - \lambda \Ag{f}_D = \sum_{k=0}^\infty \lambda^{-k} p_k \quad \text{in } L^2_0.
	\end{equation}
	where $(v_0, p_0)$ is the weak solution of (\ref{eq.Stokes}) and $(v_k,p_k)$ with $k\ge 1$ are the solutions of a sequence of iterative Stokes systems
	\begin{equation}\label{eq.iterated.Stokes}
	\left\{
	\begin{aligned}
	\nabla\cdot (A(x) \nabla v_k) + \nabla p_k &= 0 \qquad &\txt{in }& D, \\
	\nabla\cdot v_k & = p_{k-1} \qquad &\txt{in }& D, \\
	v_k & = 0 \qquad &\txt{on }& \partial D.
	\end{aligned}
	\right.
	\end{equation}
\end{theorem}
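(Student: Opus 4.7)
The plan is to construct the $(v_k,p_k)$ one layer at a time via Stokes systems, show they grow at most geometrically in $k$, and then bound the remainder of the partial sum by invoking the uniform-in-$\lambda$ elasticity energy estimate (Theorem~\ref{lem.energy.elasticity}). First, Theorem~\ref{thm.Stokes.Energy} yields $(v_0,p_0)$ solving (\ref{eq.Stokes}) with $\|\nabla v_0\|_{L^2(D)}+\|p_0\|_{L^2(D)}\leq C M_0$, where $M_0:=\|F\|_{H^{-1}(D)}+\|f\|_{H^{1/2}(\partial D)}$; the compatibility condition $\int_D\Ag{f}_D\,dx=\int_{\partial D}f\cdot n\,d\sigma$ is automatic by the very definition of $\Ag{f}_D$. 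For $k\geq 1$, since $p_{k-1}\in L^2_0(D)$ and $v_k|_{\partial D}=0$, the compatibility condition for (\ref{eq.iterated.Stokes}) holds trivially, and Theorem~\ref{thm.Stokes.Energy} produces a unique $(v_k,p_k)\in H_0^1(D;\R^d)\times L^2_0(D)$ with $\|\nabla v_k\|_{L^2(D)}+\|p_k\|_{L^2(D)}\leq C_1\|p_{k-1}\|_{L^2(D)}$. Iterating gives $\|p_k\|_{L^2(D)}\leq C_1^k M_0$, so the formal series converges in $H^1\times L^2$ whenever $\lambda>C_1$.

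Next, set $U_N=\sum_{k=0}^N\lambda^{-k}v_k$ and $P_N=\sum_{k=0}^N\lambda^{-k}p_k$. Plugging $U_N$ into the elasticity operator and using the index shift $\lambda\cdot\lambda^{-k}\nabla(\nabla\cdot v_k)=\lambda^{-(k-1)}\nabla p_{k-1}$ for $k\geq 1$ (together with $\nabla\Ag{f}_D=0$ for the $k=0$ term) produces a telescoping cancellation of all interior pressure contributions except the last, yielding
\begin{equation*}
\nabla\cdot(A\nabla U_N)+\lambda\nabla(\nabla\cdot U_N)=F-\lambda^{-N}\nabla p_N\quad\text{in }D,\qquad U_N=f\text{ on }\partial D,
\end{equation*}
where the boundary data match because $v_0|_{\partial D}=f$ and $v_k|_{\partial D}=0$ for $k\geq 1$. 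Setting $w_N=u_\lambda-U_N\in H_0^1(D;\R^d)$ and subtracting (\ref{eq.elast1}) gives an elasticity system for $w_N$ with right-hand side $\lambda^{-N}\nabla p_N$ and zero boundary data. The uniform-in-$\lambda$ energy estimate (Theorem~\ref{lem.energy.elasticity}), combined with $\int_D\nabla\cdot w_N\,dx=0$ (from $w_N|_{\partial D}=0$), then delivers
\begin{equation*}
\|w_N\|_{H^1(D)}+\|\lambda\nabla\cdot w_N\|_{L^2(D)}\leq C\lambda^{-N}\|p_N\|_{L^2(D)}\leq CM_0(C_1/\lambda)^N.
\end{equation*}
A direct computation using $\nabla\cdot v_0=\Ag{f}_D$ and $\nabla\cdot v_k=p_{k-1}$ yields the algebraic identity $\lambda\nabla\cdot u_\lambda-\lambda\Ag{f}_D-P_N=\lambda\nabla\cdot w_N-\lambda^{-N}p_N$, which transfers the bound to the pressure error. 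Choosing $C_0=2C_1$ makes both remainders $O(2^{-N})$ whenever $\lambda>C_0$, proving (\ref{eq.asym.expansion}).

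Essentially no deep analysis is needed beyond the two energy theorems already in hand; the crux is the residual identity in the middle step, where one must carefully isolate the zeroth-order term $\lambda\nabla\Ag{f}_D=0$ and perform the index shift that converts $\lambda\nabla(\nabla\cdot v_k)$ into $\lambda^{-(k-1)}\nabla p_{k-1}$ so that consecutive pressures telescope. Once this algebraic identity is written down the rest is mechanical, and the condition $\lambda>C_0$ (with $C_0$ governed by the Stokes-energy constant $C_1$) reflects exactly the convergence radius of the geometric series in $\lambda^{-1}$.
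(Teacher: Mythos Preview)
Your proof is correct. Both your argument and the paper's prove the same remainder bound $\|u_\lambda-U_N\|_{H^1}+\|\lambda\nabla\cdot u_\lambda-\lambda\Ag{f}_D-P_N\|_{L^2}\le C(C_1/\lambda)^{N}$, but the organization differs. The paper works inductively: it sets $(w_\ell,\pi_\ell)=(u_\lambda-U_\ell,\ \lambda\nabla\cdot u_\lambda-\lambda\Ag{f}_D-P_\ell)$, observes that this pair solves a \emph{Stokes} system with divergence data $\lambda^{-1}\pi_{\ell-1}$, and applies the Stokes energy estimate (Theorem~\ref{thm.Stokes.Energy}) at each step to get the recursion $\|w_\ell\|+\|\pi_\ell\|\le C_1\lambda^{-1}\|\pi_{\ell-1}\|$. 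You instead compute the residual of the partial sum in the \emph{elasticity} operator, obtain the clean identity $\nabla\cdot(A\nabla U_N)+\lambda\nabla(\nabla\cdot U_N)=F-\lambda^{-N}\nabla p_N$ by telescoping, and then apply the uniform-in-$\lambda$ elasticity energy estimate (Theorem~\ref{lem.energy.elasticity}) once to $w_N$. Your route is somewhat more direct and makes the algebraic structure (the telescoping of pressures) explicit; the paper's route makes it transparent that each remainder is itself a Stokes pair, which is conceptually natural given that the $(v_k,p_k)$ are Stokes solutions. Both rely on the same two energy theorems and yield the same threshold $\lambda>C_0$ determined by the Stokes constant.
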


\begin{proof}
	The theorem is proved by induction with Lemma \ref{lem.lambda.rate} being the base case. Let $w_\ell = u_\lambda - \sum_{k=0}^\ell \lambda^{-k} v_k$ and $\pi_\ell = \lambda\nabla\cdot u_\lambda-\lambda \Ag{f}_D - \sum_{k=0}^\ell \lambda^{-k} p_k$. We prove that
	\begin{equation}\label{est.wl.pil}
	\norm{w_{\ell}}_{H^1(D)} + \norm{\pi_{\ell}}_{L^2(\Omega)} \le C_0 C_1^{\ell} \lambda^{-\ell-1},
	\end{equation}
	where $C_0$ depends only on $d,\Lambda, D$ and the data $(F,f)$, and $C_1$ depends only on $d,\Lambda$ and $D$. Definitely, if we let $\lambda>\lambda_0:=C_0$, then the right-hand side of (\ref{est.wl.pil}) converges to zero as $\ell\to\infty$, which leads to (\ref{eq.asym.expansion}).
	
	To show (\ref{est.wl.pil}), we first consider the base case $\ell = 0$. Note that $(w_0,\pi_0) = (u_\lambda - v_0, \lambda\nabla\cdot u_\lambda-\lambda\Ag{f}_D-p_0)$ satisfies
	\begin{equation*}
	\left\{
	\begin{aligned}
	\nabla\cdot (A(x) \nabla w_0) + \nabla \pi_0 &= 0 \qquad &\txt{in }& D, \\
	\nabla\cdot w_0 & = \nabla\cdot u_\lambda - \Ag{f}_D \qquad &\txt{in }& D, \\
	w_0 & = 0 \qquad &\txt{on }& \partial D.
	\end{aligned}
	\right.
	\end{equation*}
	Thus, Lemma \ref{lem.lambda.rate} implies
	\begin{equation*}
	\norm{w_0}_{H^1(D)} + \norm{\pi_0}_{L^2(\Omega)} \le C_0\lambda^{-1},
	\end{equation*}
	where $C_0$ depends only on $d,\Lambda, D$ and the data $(F,f)$.
	
	To clearly see our idea, let us work out the first iteration step for $(w_1,\pi_1) = (w_0,\pi_0) - \lambda^{-1}(v_1, p_1)$. By the definition of $(v_1, p_1)$ in (\ref{eq.iterated.Stokes}), one has
	\begin{equation*}
	\left\{
	\begin{aligned}
	\nabla\cdot (A(x) \nabla w_1) + \nabla \pi_1 &= 0 \qquad &\txt{in }& D, \\
	\nabla\cdot w_1 & = \lambda^{-1} \pi_0 \qquad &\txt{in }& D, \\
	w_1 & = 0 \qquad &\txt{on }& \partial D.
	\end{aligned}
	\right.
	\end{equation*}
	Then the energy estimate for the Stoke system yields
	\begin{equation*}
	\norm{w_1}_{H^1(D)} + \norm{\pi_1}_{L^2(\Omega)} \le C_1\lambda^{-1} \norm{\pi_0}_{L^2(D)} \le C_0 C_1 \lambda^{-2},
	\end{equation*}
	where $C_1$ depends only on $d,\Lambda$ and $D$.
	
	In general, assume that $(w_\ell, \pi_\ell) = (w_{\ell-1}, \pi_{\ell-1}) - \lambda^{-\ell} (v_\ell,p_\ell) $ with $\ell \ge 1$ satisfies
	\begin{equation}\label{eq.iterated.ell}
	\left\{
	\begin{aligned}
	\nabla\cdot (A(x) \nabla w_\ell) + \nabla \pi_\ell &= 0 \qquad &\txt{in }& D, \\
	\nabla\cdot w_\ell & = \lambda^{-1} \pi_{\ell-1} \qquad &\txt{in }& D, \\
	w_\ell & = 0 \qquad &\txt{on }& \partial D,
	\end{aligned}
	\right.
	\end{equation}
	and
	\begin{equation}\label{est.wpi.ell}
	\norm{w_\ell}_{H^1(D)} + \norm{\pi_\ell}_{L^2(\Omega)} \le C_0 C_1^{\ell} \lambda^{-\ell-1}.
	\end{equation}
	Now, let $(w_{\ell+1}, \pi_{\ell+1}) = (w_{\ell}, \pi_{\ell}) - \lambda^{-\ell-1} (v_{\ell+1},p_{\ell+1}) $. In view of (\ref{eq.iterated.ell}) and (\ref{eq.iterated.Stokes}), we see that $(w_{\ell+1}, \pi_{\ell+1})$ is the solution of
	\begin{equation*}
	\left\{
	\begin{aligned}
	\nabla\cdot (A(x) \nabla w_{\ell+1}) + \nabla \pi_{\ell+1} &= 0 \quad &\txt{in }& D, \\
	\nabla\cdot w_{\ell+1} & = \lambda^{-1} \pi_{\ell} \quad &\txt{in }& D, \\
	w_{\ell+1} & = 0 \quad &\txt{on }& \partial D,
	\end{aligned}
	\right.
	\end{equation*}
	where we have used the construction $\pi_\ell = \pi_{\ell-1} - \lambda^{-\ell} p_\ell$. By (\ref{est.wpi.ell}), the last system implies
	\begin{equation*}
	\norm{w_{\ell+1}}_{H^1(D)} + \norm{\pi_{\ell+1}}_{L^2(\Omega)} \le C_1\lambda^{-1} \norm{\pi_{\ell}}_{L^2(D)} \le C_0 C_1^{\ell+1} \lambda^{-\ell-2}.
	\end{equation*}
	This proves (\ref{est.wl.pil}).
\end{proof}

\subsection{Global estimates}
The asymptotic expansion in Theorem \ref{thm.expansion} is a powerful tool to study the global regularity of the system of nearly incompressible elasticity, provided the same regularity holds for the Stokes systems. The result may be described in an abstract setting. Let $X_0(D;\R^d)$ be a subspace of $L^1(D;\R^d)$ endowed with norm $\norm{\cdot}_{X_0}$, namely,
\begin{equation}
X_0(D;\R^d) = \{ f\in L^1(D;\R^d): \norm{f}_{X_0} < \infty \}.
\end{equation}
Define $X_1(D;\R^d) = \{ f\in X_0(D;\R^d): \nabla f\in X_0(D;\R^{d\times d}) \}$ and $\norm{f}_{X_1} = \norm{f}_{X_0} + \norm{\nabla f}_{X_0}$.

\begin{theorem}\label{thm.abs.expansion}
	Let $D$ be a Lipschitz domain. Suppose there exists a constant $M>0$ such that for any $h\in X_0(D;\R^{d\times d}), g\in X_0(D;\R)$ and $f\in X_1(D;\R^d)$, the solution $(v,p)$ of the Stokes system
	\begin{equation}
	\left\{
	\begin{aligned}
	\nabla\cdot (A(x) \nabla v) + \nabla p &= \nabla\cdot h \qquad &\txt{in }& D, \\
	\nabla\cdot v & = g \qquad &\txt{in }& D, \\
	v & = f \qquad &\txt{on }& \partial D
	\end{aligned}
	\right.
	\end{equation}
	satisfies
	\begin{equation}\label{est.abs.Stokes}
	\norm{v}_{X_1} + \norm{p}_{X_0} \le M(\norm{f}_{X_1} + \norm{h}_{X_0} + \norm{g}_{X_0}).
	\end{equation}
	Then if $\lambda > 2M$, and $u_\lambda$ is the weak solution of
	\begin{equation}\label{eq.uhf}
	\left\{
	\begin{aligned}
	\nabla\cdot (A(x) \nabla u_\lambda) +  \nabla (\lambda \nabla\cdot u_\lambda) &= \nabla\cdot h \qquad &\txt{in } &D, \\
	u_\lambda  &= f \qquad &\txt{on } &\partial D,
	\end{aligned}
	\right.
	\end{equation}
	then
	\begin{equation}
	\norm{u_\lambda}_{X_1} + \lambda \norm{\nabla\cdot u_\lambda - \Ag{f}_D }_{X_0} \le 2M(\norm{f}_{X_1} + \norm{h}_{X_0}).
	\end{equation}
\end{theorem}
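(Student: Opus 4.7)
The plan is to exploit the asymptotic expansion of Theorem~\ref{thm.expansion} but to measure each term of the series in the stronger $X_1\times X_0$ norm by iterating the hypothesized Stokes estimate (\ref{est.abs.Stokes}); the condition $\lambda>2M$ is precisely what makes the resulting geometric series summable, with ratio $M/\lambda<1/2$.

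Concretely, let $(v_0,p_0)$ solve the Stokes system with forcing $\nabla\cdot h$, prescribed divergence $\Ag{f}_D$, and boundary data $f$; for $k\ge 1$ let $(v_k,p_k)$ solve the iterative Stokes systems (\ref{eq.iterated.Stokes}) (zero forcing, zero boundary data, divergence $p_{k-1}$). Applying (\ref{est.abs.Stokes}) to the $v_0$ system yields
$$
E_0 \;:=\; \|v_0\|_{X_1}+\|p_0\|_{X_0} \;\le\; M\bigl(\|f\|_{X_1}+\|h\|_{X_0}+\|\Ag{f}_D\|_{X_0}\bigr),
$$
and applying it to the $k$-th iterative system yields $E_k:=\|v_k\|_{X_1}+\|p_k\|_{X_0}\le M\|p_{k-1}\|_{X_0}\le ME_{k-1}$, hence $E_k\le M^k E_0$ by induction. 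Since $\lambda>2M$ the partial sums therefore converge absolutely in $X_1\times X_0$ to limits $V:=\sum_{k\ge 0}\lambda^{-k}v_k$ and $P:=\sum_{k\ge 0}\lambda^{-k}p_k$ satisfying
$$
\|V\|_{X_1}+\|P\|_{X_0} \;\le\; \frac{E_0}{1-M/\lambda} \;<\; 2E_0.
$$
Because $\Ag{f}_D=\fint_D\nabla\cdot f$ is a constant controlled by $\|f\|_{X_1}$, the extra term $\|\Ag{f}_D\|_{X_0}$ will be absorbed into $\|f\|_{X_1}$ (up to a harmless geometric factor implicit in $M$), giving the claimed bound $2M(\|f\|_{X_1}+\|h\|_{X_0})$.

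Next I will identify $V$ with $u_\lambda$ by verifying directly that the pair $(V,\lambda\Ag{f}_D+P)$ solves (\ref{eq.uhf}). Summing the identities $\nabla\cdot v_0=\Ag{f}_D$ and $\nabla\cdot v_k=p_{k-1}$ (weighted by $\lambda^{-k}$) produces $\lambda\nabla\cdot V=\lambda\Ag{f}_D+P$; summing $\nabla\cdot A\nabla v_0=\nabla\cdot h-\nabla p_0$ with $\nabla\cdot A\nabla v_k=-\nabla p_k$ for $k\ge 1$ produces $\nabla\cdot A\nabla V=\nabla\cdot h-\nabla P$. Adding the two identities the pressure contributions cancel, giving $\nabla\cdot A\nabla V+\nabla(\lambda\nabla\cdot V)=\nabla\cdot h$ in $D$, while the boundary trace is $V|_{\partial D}=v_0|_{\partial D}=f$ since $v_k$ vanishes on $\partial D$ for $k\ge 1$. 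Uniqueness of the elasticity weak solution (Theorem~\ref{lem.energy.elasticity}) then forces $u_\lambda=V$ and $\lambda\nabla\cdot u_\lambda-\lambda\Ag{f}_D=P$, which delivers the desired estimate.

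The main technical subtlety is the justification of this last uniqueness step, since a priori the convergence of the series is only known in $X_1\times X_0$ rather than in $H^1\times L^2$. In the typical case $X_0\hookrightarrow L^2$ (which covers $L^p$ for $p\ge 2$ and the H\"older spaces that will be used later in the paper), the standard Stokes energy estimate gives $\|v_k\|_{H^1}\le CM^{k-1}E_0$, so the series also converges in $H^1$ and $V$ is a bona fide weak solution in the sense of Section~2. In the absence of such an embedding, one can alternatively invoke Theorem~\ref{thm.expansion} to identify $V$ with $u_\lambda$ for $\lambda$ above a fixed domain-dependent threshold and then extend the $X_1$-bound down to all $\lambda>2M$ by enlarging $M$ if necessary.
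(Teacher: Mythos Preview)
Your proposal is correct and follows exactly the approach the paper indicates: the paper's ``proof'' is the single sentence ``Theorem~\ref{thm.abs.expansion} may be proved directly by using Theorem~\ref{thm.expansion},'' and you have carried out precisely that program, iterating the $X_1\times X_0$ Stokes bound to obtain $E_k\le M^kE_0$ and summing the geometric series under $\lambda>2M$. Your remark on the identification subtlety (convergence in $X_1\times X_0$ versus $H^1\times L^2$) is a fair point that the paper leaves implicit; invoking Theorem~\ref{thm.expansion} directly for the identification, as you suggest in your final paragraph, is the cleanest resolution and is presumably what the authors intend.
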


Theorem \ref{thm.abs.expansion} may be proved directly by using Theorem \ref{thm.expansion}.  Note that Theorem \ref{thm.abs.expansion} applies only for $\lambda\ge 2M$. However, for $0\le \lambda\le 2M$, the system (\ref{eq.uhf}) is the classical elliptic system whose regularity theory is well-understood.

Particularly, Theorem \ref{thm.abs.expansion} applies to $X_0 = W^{k,p}, X_1 = W^{k+1,p}$ or $X_0 = C^{k,\alpha}, X_1 = C^{k+1,\alpha}$ for $p\in (1,\infty)$ and $k\in \N, \alpha\in (0,1)$.
To give a concrete example, in the following, we apply Theorem \ref{thm.abs.expansion} to show the $W^{1,p}$ (or $C^\alpha$) regularity of (\ref{eq.uhf}) in the context of periodic homogenization, for the Stokes system has been well-studied in \cite{GuShen15,Xu17,GuXu17,GuZhuge17}. Precisely, we assume that the coefficient pair $(A,\lambda)$ satisfies (\ref{cond.ellipticity}), (\ref{cond.incompr}) and the following assumptions:
\begin{itemize}
	\item Periodicity:
	\begin{equation*}
	A(x+z) = A(x),\ \lambda(x+z) =\lambda(x), \quad \text{for any } x\in \R^d, z\in \Z^d.
	\end{equation*}
	
	\item Uniform VMO condition:
	\begin{equation}
	\lim_{r\to 0} \sup_{y\in [0,1]^d} \bigg( \fint_{B_r(y)} | A(x) - \fint_{B_r(y)} A |dx + \fint_{B_r(y)} | \lambda(x) - \fint_{B_r(y)} \lambda |dx \bigg)= 0.
	\end{equation}
\end{itemize}
Let $D$ be a bounded $C^1$ domain and consider the system
\begin{equation}\label{eq.ue.hf}
\left\{
\begin{aligned}
\nabla\cdot (A^\e \nabla u_\lambda^\e) +  \nabla (\lambda^\e \nabla\cdot u_\lambda^\e) &= \nabla\cdot h \qquad &\txt{in } &D, \\
u_\lambda^\e  &= f \qquad &\txt{on } &\partial D,
\end{aligned}
\right.
\end{equation}
where $A^\e(x) = A(x/\e)$ and $\lambda^\e(x) = \lambda(x/\e)$.

\begin{theorem}\label{thm.W1p.periodic}
	Let $D$ and $(A,\lambda)$ satisfy the above assumptions. Suppose $p\in (1,\infty), h\in L^p(D;\R^{d\times d})$ and $f\in W^{1,p}(D;\R^d)$. Then the weak solution of (\ref{eq.ue.hf}) satisfies
	\begin{equation}
	\norm{u_\lambda^\e}_{W^{1,p}(D)} + \norm{\lambda^\e\nabla\cdot  u_\lambda^\e - \lambda_0 \Ag{f}_D}_{L^p(D)} \le C(\norm{f}_{W^{1,p}(D)} + \norm{h}_{L^p(D)}),
	\end{equation}
	where $C$ depends only on $d,D,\Lambda$ and the VMO modulus of $(A,\lambda)$. In particular, $C$ is independent of $\lambda_0$ and $\e$.
\end{theorem}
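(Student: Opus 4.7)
The plan is to combine the abstract transfer principle in Theorem \ref{thm.abs.expansion} with the $W^{1,p}$ theory for the periodic, VMO Stokes system in $C^1$ domains from \cite{GuShen15,Xu17,GuXu17,GuZhuge17}, after first reducing to the case of a constant incompressibility parameter. Using the splitting (\ref{eq.reduction})--(\ref{eq.splitting}) with $b(x) = \lambda(x) - \lambda_0 \in [0,\Lambda]$, I would rewrite (\ref{eq.ue.hf}) as the same system for $u_\lambda^\e$ with the modified coefficient $\widetilde{A}^\e$ and the constant scalar $\lambda_0$ in place of $\lambda^\e$. Since $b$ is $\mathbb{Z}^d$-periodic, bounded by $\Lambda$, and VMO, the tensor $\widetilde{A}$ remains periodic, symmetric, elliptic (with a new constant depending only on $\Lambda$), and uniformly VMO with modulus controlled by that of $(A,\lambda)$.

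Next, fix the constant $M = M(d,D,p,\Lambda,\omega)$ supplied by the cited Stokes theory so that (\ref{est.abs.Stokes}) holds with $X_0 = L^p(D)$, $X_1 = W^{1,p}(D)$, and with the coefficient $\widetilde{A}^\e$, uniformly in $\e \in (0,1)$. I would then split according to the size of $\lambda_0$. If $\lambda_0 \le 2M$, the reduced elasticity equation is equivalent to a divergence-form elliptic system with coefficient $B^{\alpha\beta}_{ij} = \widetilde{a}^{\alpha\beta}_{ij} + \lambda_0\,\delta_i^\alpha\delta_j^\beta$, which is periodic, symmetric, uniformly elliptic (constants depending only on $d,\Lambda,M$), and uniformly VMO; the classical $W^{1,p}$ estimate for elliptic systems with VMO coefficients on $C^1$ domains then gives $\|u_\lambda^\e\|_{W^{1,p}(D)} \le C(\|f\|_{W^{1,p}(D)} + \|h\|_{L^p(D)})$ uniformly in $\e$ and $\lambda_0 \in [0,2M]$. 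If instead $\lambda_0 > 2M$, I would invoke Theorem \ref{thm.abs.expansion} directly to obtain
\begin{equation*}
\|u_\lambda^\e\|_{W^{1,p}(D)} + \lambda_0 \|\nabla\cdot u_\lambda^\e - \langle f\rangle_D\|_{L^p(D)} \le 2M\big(\|f\|_{W^{1,p}(D)} + \|h\|_{L^p(D)}\big),
\end{equation*}
again uniformly in $\e$ and in $\lambda_0$.

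The pressure bound then follows from the decomposition
\begin{equation*}
\lambda^\e \nabla\cdot u_\lambda^\e - \lambda_0 \langle f\rangle_D = \lambda_0\bigl(\nabla\cdot u_\lambda^\e - \langle f\rangle_D\bigr) + b^\e \nabla\cdot u_\lambda^\e,
\end{equation*}
where the first term is controlled in each regime by the bounds above (in the low-$\lambda_0$ regime using $\lambda_0 \le 2M$ and the $W^{1,p}$ bound on $u_\lambda^\e$, together with the identity $\langle f\rangle_D = \fint_D \nabla\cdot u_\lambda^\e$), and the second is controlled by $\|b^\e\|_{L^\infty} \le \Lambda$ and the already-obtained displacement estimate. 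The main subtlety I expect is bookkeeping rather than analytic: one must verify that the reduction $A \leadsto \widetilde{A}$ preserves the VMO modulus with constants depending only on the original data, and that the threshold $M$ used to split the two regimes is chosen to coincide with the Stokes-system constant appearing in Theorem \ref{thm.abs.expansion}, so that the estimates in the two regimes glue into one $\lambda_0$- and $\e$-independent bound.
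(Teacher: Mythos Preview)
Your proposal is correct and follows essentially the same route as the paper: the paper's proof is a one-liner invoking Theorem~\ref{thm.abs.expansion} together with the known $W^{1,p}$ theory for the periodic VMO Stokes system, and your write-up simply unpacks the details (the reduction to constant $\lambda_0$, the small/large-$\lambda_0$ case split already noted after Theorem~\ref{thm.abs.expansion}, and the pressure decomposition). There is no meaningful difference in strategy.
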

\begin{proof}
	This follows from \ref{thm.abs.expansion} and   \cite[Theorem 1.4]{GuShen15}.
\end{proof}

\begin{remark}
	We should emphasize that the asymptotic expansion does not apply to the Lipschitz estimate of $u_\lambda^\e$ for (\ref{eq.ue.hf}), because the uniform boundedness of $|\nabla u_\lambda^\e|$ is not preserved under the iterative Stokes system (\ref{eq.iterated.Stokes}). In other words, (\ref{est.abs.Stokes}) is generally wrong for $X_0 = L^\infty$. This failure is due to a well-known fact that the singular integral (or Riesz transform) is not bounded in $L^\infty$. In the rest of the paper, we will develop a new approach, using the regularity theory for Stokes system only in the case of constant coefficients, to resolve the problem.
\end{remark}

\subsection{Local estimates}
In this subsection, we will prove two local estimates, i.e., the Meyers' estimate and $C^{1,\alpha}$ estimate, which are crucial in the study of quantitative homogenization. For the system of elasticity with large $\lambda$, the local estimates are technically more involving.

To show the local $C^{1,\alpha}$ regularity of the system of elasticity, we need the same regularity for the Stokes system. For convenience, define
\begin{equation*}
[f]_{C^{\alpha}(D)} := \sup_{x,y\in D} \frac{|f(x) - f(y)|}{|x-y|^\alpha}.
\end{equation*}
\begin{theorem}[\cite{GiaquintaModica82}]\label{thm.Stokes.C1a}
	Let $D$ be a bounded $C^{1,\alpha}$ domain and $A$ be constant. There exits $C>0$ depending only on $d,\Lambda$ and $D$ such that if $(v,p)$ is a weak solution of
	\begin{equation}\label{eq.const.Stokes}
	\left\{
	\begin{aligned}
	\nabla\cdot (A \nabla v) + \nabla p &= \nabla\cdot h \qquad &\txt{in }& D_{2r}, \\
	\nabla\cdot v & = g \qquad &\txt{in }& D_{2r},\\
	v & = f \qquad &\txt{on }& \Delta_{2r},
	\end{aligned}
	\right.
	\end{equation}
	with $h\in C^{\alpha}(D_{2r};\R^{d\times d}), g\in C^\alpha(D_{2r})$ and $f\in C^{1,\alpha}(\Delta_{2r};\R^d)$, then $(\nabla v, p) \in C^{\alpha}(D_r;\R^{d}\times \R)$ and
	\begin{equation*}
	\begin{aligned}
	&[\nabla v]_{C^\alpha(D_r)} + [p]_{C^\alpha(D_r)} \\
	&\quad \le C\bigg( \frac{1}{r^\alpha} \bigg( \average_{D_{2r}} |\nabla v|^{2} \bigg)^{1/2} + [h]_{C^\alpha(D_{2r})} + [g]_{C^\alpha(D_{2r})} + [\nabla f]_{C^\alpha(\Delta_{2r})} \bigg).
	\end{aligned}
	\end{equation*}
\end{theorem}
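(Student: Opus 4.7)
The plan is to establish this by the classical Campanato--Schauder scheme, using the constant-coefficient homogeneous Stokes system with frozen data as the building block. Since $A$ is already constant, ``freezing'' means replacing $h(x), g(x)$ by their values $h(x_0), g(x_0)$ at a reference point; the desired Hölder regularity then follows from decay of $L^2$ mean oscillations of $(\nabla v, p)$ via Campanato's characterization of $C^\alpha$.

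For an interior point $x_0 \in D_r$ and a scale $\rho$ smaller than $\txt{dist}(x_0, \partial D_{2r})$, I would split $(v,p) = (v_1, p_1) + (v_2, p_2)$ on $B_\rho(x_0)$, where $(v_2, p_2)$ solves the Stokes system with right-hand sides $\nabla \cdot h(x_0)$ and $g(x_0)$ (adjusted to satisfy the compatibility condition of Theorem \ref{thm.Stokes.Energy}) and Dirichlet datum $v$ on $\partial B_\rho(x_0)$. The perturbation $(v_1, p_1)$ then vanishes on $\partial B_\rho(x_0)$ and has right-hand sides controlled by $\rho^\alpha [h]_{C^\alpha}$ and $\rho^\alpha [g]_{C^\alpha}$, so Theorem \ref{thm.Stokes.Energy} gives
\begin{equation*}
\norm{\nabla v_1}_{L^2(B_\rho)} + \norm{p_1 - \fint_{B_\rho} p_1}_{L^2(B_\rho)} \le C \rho^{d/2 + \alpha} \big( [h]_{C^\alpha(D_{2r})} + [g]_{C^\alpha(D_{2r})} \big).
\end{equation*}
For $(v_2, p_2)$, the constant-coefficient homogeneous Stokes system (after subtracting a linear ansatz absorbing the constants $h(x_0), g(x_0)$) has smooth solutions, so differentiating the system and applying Caccioppoli-type estimates to $\partial_k v_2, \partial_k p_2$ yields the decay
\begin{equation*}
\Phi_2(x_0, \theta\rho) \le C\theta^{2} \Phi_2(x_0, \rho), \qquad \theta \in (0, 1/2),
\end{equation*}
where $\Phi_2(x_0, \sigma) = \fint_{B_\sigma(x_0)} \big( |\nabla v_2 - (\nabla v_2)_\sigma|^2 + |p_2 - (p_2)_\sigma|^2 \big)$. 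Combining the two bounds produces the Campanato recursion
\begin{equation*}
\Phi(x_0, \theta\rho) \le C\theta^{2}\Phi(x_0, \rho) + C\rho^{2\alpha} \big( [h]_{C^\alpha}^2 + [g]_{C^\alpha}^2 \big)
\end{equation*}
for $\Phi(x_0, \sigma) = \fint_{B_\sigma(x_0)} \big( |\nabla v - (\nabla v)_\sigma|^2 + |p - (p)_\sigma|^2 \big)$. Iterating with small $\theta$ yields $\Phi(x_0, \rho) \le C \rho^{2\alpha}$ times the data seminorms, and Campanato's characterization then gives the claimed interior $C^\alpha$ bounds on $\nabla v$ and $p$.

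For a boundary point $x_0 \in \Delta_r$, I would flatten $\partial D \cap B_{2r}$ by a $C^{1,\alpha}$ diffeomorphism and subtract a $C^{1,\alpha}$ extension of $f$, reducing to the model problem of a constant-coefficient Stokes system (with $C^\alpha$-perturbed coefficients arising from the change of variables) in a half-ball with zero Dirichlet data on the flat portion. Solutions of such model problems are smooth up to the flat boundary --- tangential regularity follows from difference quotients, and normal regularity is recovered from the momentum equation --- so the interior decomposition, energy bound, and Caccioppoli decay all have boundary analogues at half-balls $D_\rho(x_0)$. The Campanato recursion then runs as before, now incorporating the $[\nabla f]_{C^\alpha(\Delta_{2r})}$ term through the boundary datum of the frozen-data subproblem.

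The main obstacle is handling the velocity gradient and the pressure simultaneously within a single Campanato recursion: $p$ does not satisfy its own elliptic equation, and its $L^2$ oscillation must be controlled via the momentum equation. Lemma \ref{lem.Nabla} is the key tool here, upgrading $H^{-1}$ control of $\nabla p$, which comes directly from testing the momentum equation against $H^1_0$ fields, into $L^2$ control of $p - \fint p$ --- exactly the quantity appearing in $\Phi$. A secondary technicality is that the boundary flattening perturbs the divergence constraint by $C^\alpha$ factors, so the compatibility condition (\ref{def.compatibility}) for the subtracted subproblem must be restored at each scale by adjusting $g$ by its mean and absorbing a constant into the pressure, contributing only lower-order terms to the Campanato iteration.
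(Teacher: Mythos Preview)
The paper does not actually prove this theorem; it is stated with a citation to Giaquinta--Modica \cite{GiaquintaModica82} and used as a black box. Your proposed Campanato--Schauder scheme is precisely the classical method of that reference, so there is nothing to compare against here beyond noting that your outline matches the standard proof.
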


As a corollary, we may show the following lemma.
\begin{lemma}\label{lem.C1a}
	Let the same conditions as (\ref{thm.Stokes.C1a}) hold. Let $(v,p)$ be the solution of (\ref{eq.const.Stokes}) with $r = 1$. Then for any $s\in (0,2)$
	\begin{align*}
	[\nabla v]_{C^\alpha(D_s)} + [p]_{C^\alpha(D_s)} &\le \frac{C}{(2-s)^{d/2+\alpha}} \bigg( \average_{D_{2}} |\nabla v|^{2} \bigg)^{1/2} \\
	& \quad + C\Big( [h]_{C^\alpha(D_{(2+s)/2})} + [g]_{C^\alpha(D_{(2+s)/2})} + [\nabla f]_{C^\alpha(\Delta_{(2+s)/2})} \Big).
	\end{align*}
\end{lemma}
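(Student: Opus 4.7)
The plan is to localize the fixed-radius statement of Theorem \ref{thm.Stokes.C1a} to the shrinking subdomain $D_s$ by a covering/rescaling argument. Set $\rho:=(2-s)/4$, so that $s+2\rho=(2+s)/2$ and hence $B_{2\rho}(x_0)\cap D\subset D_{(2+s)/2}$ for every $x_0\in\overline{D_s}$. Fix such an $x_0$ and apply Theorem \ref{thm.Stokes.C1a} (or its easier interior analog, if $B_\rho(x_0)\subset D$) centered at the nearest boundary point $y_0\in\partial D$ with radius $\rho$. Since $D$ is $C^{1,\alpha}$ and $A$ is constant, the hypotheses of the theorem transfer to any boundary point with the same constants. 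After rescaling from radius $1$ to radius $\rho$, this produces
\begin{equation*}
[\nabla v]_{C^\alpha(D\cap B_{\rho/2}(x_0))}+[p]_{C^\alpha(D\cap B_{\rho/2}(x_0))}\le \frac{C}{\rho^\alpha}\bigg(\average_{D\cap B_{2\rho}(x_0)}|\nabla v|^2\bigg)^{1/2}+C\mathcal{D},
\end{equation*}
where $\mathcal{D}:=[h]_{C^\alpha(D_{(2+s)/2})}+[g]_{C^\alpha(D_{(2+s)/2})}+[\nabla f]_{C^\alpha(\Delta_{(2+s)/2})}$.

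Next I would convert the $L^2$ average on $D\cap B_{2\rho}(x_0)$ into the $L^2$ average on $D_2$. Since $D$ is Lipschitz, $|D\cap B_{2\rho}(x_0)|\ge c\rho^d$, while $|D_2|$ is bounded from above, so
\begin{equation*}
\bigg(\average_{D\cap B_{2\rho}(x_0)}|\nabla v|^2\bigg)^{1/2}\le C\rho^{-d/2}\bigg(\average_{D_2}|\nabla v|^2\bigg)^{1/2}.
\end{equation*}
Combining the two inequalities and using $\rho\sim (2-s)$ yields the advertised factor $(2-s)^{-d/2-\alpha}$ in front of the energy term, uniformly in $x_0\in\overline{D_s}$; the data contribution $\mathcal{D}$ retains its $s$-independent constant.

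Finally, I would upgrade the collection of local Hölder bounds to the global seminorm on $D_s$. For any $x,y\in\overline{D_s}$ with $|x-y|\le\rho/2$, applying the local estimate at $x_0=x$ directly bounds $|\nabla v(x)-\nabla v(y)|$ and $|p(x)-p(y)|$ by the desired right-hand side times $|x-y|^\alpha$. For pairs with $|x-y|>\rho/2$, I would use $(\rho/2)^\alpha\le|x-y|^\alpha$ together with the crude bound $|\nabla v(x)-\nabla v(y)|\le 2\norm{\nabla v}_{L^\infty(D_s)}$ (and analogously for $p$); the required $L^\infty$ control is extracted from the local Hölder estimate via the elementary inequality $\norm{w}_{L^\infty(B)}\le[w]_{C^\alpha(B)}\,\text{diam}(B)^\alpha+C(\average_B|w|^2)^{1/2}$ applied on $B=D\cap B_{\rho/2}(x_0)$. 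Absorbing the extra $\rho^{-\alpha}$ factor into the $(2-s)^{-d/2-\alpha}$ constant completes the estimate.

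The main (minor) technical obstacle is justifying that Theorem \ref{thm.Stokes.C1a}, stated only at the origin, applies with uniform constants at every boundary point of a $C^{1,\alpha}$ domain. This is standard, since the $C^{1,\alpha}$ modulus of $\partial D$ is a translation-invariant global property and the equation has constant coefficients; the remaining steps are routine bookkeeping of scales and volumes.
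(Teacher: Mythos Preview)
Your covering argument is sound for the near case $|x-y|\le\rho/2$ and also for the far case when applied to $\nabla v$, but it has a genuine gap for the pressure $p$ in the far case. The crux is the step ``$\norm{w}_{L^\infty(B)}\le[w]_{C^\alpha(B)}\,\mathrm{diam}(B)^\alpha+C(\average_B|w|^2)^{1/2}$'' applied to $w=p-c$. Unlike $\nabla v$, whose $L^2$ norm is exactly the energy on the right-hand side of the lemma, any global $L^2$ bound on $p-c$ over $D_2$ (via Lemma~\ref{lem.Nabla} and the equation) necessarily picks up $\norm{h-h(0)}_{L^2}+\norm{g-g(0)}_{L^2}$, i.e.\ a contribution of size $\mathcal{D}$. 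Feeding this through your inequality yields $\norm{p-c}_{L^\infty(D_s)}\le C\rho^{-d/2}\big((\text{energy})+\mathcal{D}\big)$, and after dividing by $|x-y|^\alpha\ge(\rho/2)^\alpha$ you only obtain
\[
[p]_{C^\alpha(D_s)}\le \frac{C}{(2-s)^{d/2+\alpha}}\Big((\text{energy})+\mathcal{D}\Big),
\]
which is strictly weaker than the claimed bound with coefficient $C$ (independent of $s$) in front of $\mathcal{D}$. This distinction is not cosmetic: in the proof of Theorem~\ref{thm.C1a} the lemma is iterated with $g=p_{k-1}$, and the recursion $[p_k]_{C^\alpha(D_s)}\le \frac{C^{k+2}}{(2-s)^{d/2+\alpha}}+C[p_{k-1}]_{C^\alpha(D_{(2+s)/2})}$ closes only because the second term carries no $(2-s)$-power; with your bound the extra factor $(2-s)^{-d/2-\alpha}$ compounds at each step and the induction blows up.

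The paper repairs exactly this point by handling the far case $|x-y|>(2-s)/4$ with a Harnack chain of balls $\{B_{r_k}(x_k)\}$ whose radii grow geometrically from $\sim(2-s)$ up to $\sim|x-y|$, all contained in $B_{(2+s)/2}$. Telescoping $|p(x)-p(y)|$ along the chain and applying Theorem~\ref{thm.Stokes.C1a} on each ball gives, per hop, an oscillation bound $C(\average_{2B_{r_k}\cap D_2}|\nabla v|^2)^{1/2}+Cr_k^\alpha\mathcal{D}$; the first contributions sum as a geometric series dominated by the smallest radius, producing $C(2-s)^{-d/2}(\text{energy})$, while the second sum is $\le C|x-y|^\alpha\mathcal{D}$. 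Dividing by $|x-y|^\alpha$ then yields the sharp separation of the energy and data constants. Replacing your $L^\infty$ step by this chain argument would fix the proof.
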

\begin{proof}
	The case $s\in (0,1)$ is obvious. Fix $s\in (1,2)$. Let $x,y\in D_s$. Then $\text{dist}(x,\partial D_{(2+s)/2}\setminus \Delta_2) \ge (2-s)/2$ and $\text{dist}(y,\partial D_{(2+s)/2}\setminus \Delta_2) \ge (2-s)/2$. Now, if $|x-y|<(2-s)/4$, we can find a ball $\widetilde{B}$ with radius $(2-s)/8$ containing both $x$ and $y$ so that $2\widetilde{B} \subset B_{(2+s)/2}$. It follows from Theorem \ref{thm.Stokes.C1a} that
	\begin{equation*}
	\begin{aligned}
	&[\nabla v]_{C^\alpha(\widetilde{B}\cap D_2)} + [p]_{C^\alpha(\widetilde{B}\cap D_2)} \\
	& \le C\bigg( \frac{1}{(2-s)^\alpha} \bigg( \average_{2\widetilde{B}\cap D_2} |\nabla v|^{2} \bigg)^{1/2} + [h]_{C^\alpha(2\widetilde{B}\cap D_2)} + [g]_{C^\alpha(2\widetilde{B}\cap D_2)} + [\nabla f]_{C^\alpha(2\widetilde{B}\cap \Delta_2)} \bigg) \\
	& \le C\bigg( \frac{1}{(2-s)^{d/2+\alpha}} \bigg( \average_{ D_2} |\nabla v|^{2} \bigg)^{1/2} + [h]_{C^\alpha(D_{(2+s)/2})} + [g]_{C^\alpha(D_{(2+s)/2})} + [\nabla f]_{C^\alpha(\Delta_{(2+s)/2})} \bigg).
	\end{aligned}
	\end{equation*}
	
	Now, assume $|x-y|>(2-s)/4$. Because $D_{(2+s)/2}$ is a Lipschitz domain, we can find a sequence of balls $\{ B_{r_k}(x_k)\}_{k=M}^N$ (a Harnack chain) connecting the points $x$ and $y$. Moreover, the radius $r_k$ are comparable to $\theta^k(2-s)$ for some $\theta>1$ and
	\begin{equation*}
	2B_{r_k}(x_k) \subset B_{(2-s)/2} \qquad \text{for all } k = M,M+1,\cdots, N.
	\end{equation*}
	The largest radius is comparable to $|x-y|$, i.e., $\theta^N(2-s) \simeq |x-y|$. The idea is that we apply Theorem \ref{thm.Stokes.C1a} on each ball $B_{r_k}(x_k)$ and then estimate $|\nabla v(x) - \nabla v(y)|$ and $|p(x) - p(y)|$ by connecting a path through the chain of balls. Precisely, we have
	\begin{equation*}
	\begin{aligned}
	& |\nabla v(x) - \nabla v(y)| + |p(x) - p(y)| \\
	& \le C\sum_{k}  \bigg( \average_{2B_{r_k}(x_k)\cap D_2} |\nabla v|^{2} \bigg)^{1/2} \\
	& \qquad + C\sum_{k} r_k^\alpha  \Big( [h]_{C^\alpha(2B_{r_k}(x_k)\cap D_2)} + [g]_{C^\alpha(2B_{r_k}(x_k)\cap D_2)} + [\nabla f]_{C^\alpha(2B_{r_k}(x_k)\cap \Delta_2)} \Big) \\
	& \le \frac{C}{(2-s)^{d/2}} \bigg( \average_{D_{2}} |\nabla v|^{2} \bigg)^{1/2} \\
	& \qquad + C|x-y|^\alpha \Big( [h]_{C^\alpha(D_{(2+s)/2})} + [g]_{C^\alpha(D_{(2+s)/2})} + [\nabla f]_{C^\alpha(\Delta_{(2+s)/2})} \Big).
	\end{aligned}
	\end{equation*}
	This implies the desired estimate since $|x-y|>(2-s)/4$.
\end{proof}

\begin{theorem}\label{thm.C1a}
	Let $D$ be a bounded $C^{1,\alpha}$ domain and $A$ be constant. There exists $C>0$ depending only on $d,\Lambda$ and $D$ such that if $u_\lambda$ is a weak solution of
	\begin{equation}
	\left\{
	\begin{aligned}
	\nabla\cdot (A \nabla u_\lambda) +  \nabla (\lambda \nabla\cdot u_\lambda) &= \nabla\cdot h \qquad &\txt{in } &D_{2r}, \\
	u_\lambda  &= f \qquad &\txt{on } &\partial D_{2r},
	\end{aligned}
	\right.
	\end{equation}
	with $h\in C^{\alpha}(D_{2r};\R^{d\times d})$ and $f\in C^{1,\alpha}(\Delta_{2r};\R^d)$, then $u_\lambda \in C^{1,\alpha}(D_r;\R^{d})$ and
	\begin{equation*}
	\begin{aligned}
	&[\nabla u_\lambda]_{C^\alpha(D_r)} + [\lambda\nabla\cdot u_\lambda]_{C^\alpha(D_r)} \\
	& \qquad \le C\bigg\{ \frac{1}{r^\alpha} \bigg( \average_{D_{2r}} |\nabla u_\lambda|^{2} \bigg)^{1/2} + [h]_{C^\alpha(D_{2r})} + [\nabla f]_{C^\alpha(\Delta_{2r})} \bigg\}.
	\end{aligned}
	\end{equation*}
\end{theorem}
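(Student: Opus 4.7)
The plan is to split the range of $\lambda$ and reduce the large-$\lambda$ case to the Stokes Schauder estimate already in hand (Lemma \ref{lem.C1a}) via the asymptotic expansion of Theorem \ref{thm.expansion}.

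\textbf{Moderate regime.} Fix a threshold $\Lambda_\ast = \Lambda_\ast(d,\Lambda,D)$ to be chosen below. When $0 \le \lambda \le \Lambda_\ast$, the operator $u \mapsto \nabla\cdot(A\nabla u) + \lambda\nabla(\nabla\cdot u)$ is a constant-coefficient elliptic system whose ellipticity constants are bounded in terms of $\Lambda$ and $\Lambda_\ast$. Classical interior and boundary Schauder theory for such systems on $C^{1,\alpha}$ domains (applied at scale $r$) gives the claimed $C^\alpha$ bound on $\nabla u_\lambda$, and the bound on $[\lambda\nabla\cdot u_\lambda]_{C^\alpha(D_r)}$ follows since $\lambda$ is uniformly bounded.

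\textbf{Large regime.} By scaling we may take $r=1$. For $\lambda > \Lambda_\ast$, with $\Lambda_\ast$ also at least as large as the threshold required by Theorem \ref{thm.expansion}, I would apply that theorem on the intermediate Lipschitz subdomain $D_{3/2}$ using $u_\lambda\big|_{\partial D_{3/2}}$ as boundary datum. This yields
\[
u_\lambda = \sum_{k=0}^\infty \lambda^{-k} v_k \quad \text{in } H^1(D_{3/2}), \qquad \lambda\nabla\cdot u_\lambda - c_\lambda = \sum_{k=0}^\infty \lambda^{-k} p_k \quad \text{in } L^2_0(D_{3/2}),
\]
where $c_\lambda$ is a constant, $(v_0,p_0)$ solves the base Stokes system with data $(h,f)$, and $(v_k,p_k)$ for $k \ge 1$ solves the iterative constant-coefficient Stokes system (\ref{eq.iterated.Stokes}) on $D_{3/2}$ with $g = p_{k-1}$ and zero boundary data. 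The constant $c_\lambda$ vanishes from any $C^\alpha$ seminorm, so the theorem reduces to $C^\alpha$ bounds on $\nabla v_k$ and $p_k$ in $D_1$ that are summable against $\lambda^{-k}$. Applying Lemma \ref{lem.C1a} along a shrinking sequence of radii $s_k \downarrow 1$ (for instance $s_k = 1 + 2^{-k-1}$) near the smooth piece $\Delta_{3/2}$, and the interior Schauder estimate away from it, controls $[\nabla v_k]_{C^\alpha(D_{s_k})} + [p_k]_{C^\alpha(D_{s_k})}$ in terms of $\|\nabla v_k\|_{L^2(D_{3/2})}$ and $[p_{k-1}]_{C^\alpha(D_{s_{k-1}})}$. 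The Stokes energy estimate (Theorem \ref{thm.Stokes.Energy}) iterated on the chain (\ref{eq.iterated.Stokes}) yields $\|\nabla v_k\|_{L^2(D_{3/2})} + \|p_k\|_{L^2(D_{3/2})} \le C_1^{k}N_0$ with $N_0$ encoding $\|\nabla u_\lambda\|_{L^2(D_2)}$, $[h]_{C^\alpha(D_2)}$ and $[\nabla f]_{C^\alpha(\Delta_2)}$. Feeding this into the iterated Schauder step produces a geometric bound $[\nabla v_k]_{C^\alpha(D_{s_k})} + [p_k]_{C^\alpha(D_{s_k})} \le C_2^{k} N_0$ for some $C_2 = C_2(d,\Lambda,D)$, so that choosing $\Lambda_\ast \ge 2 C_2$ makes the series $\sum_k \lambda^{-k}(C_2)^k$ a convergent geometric series and termwise summation delivers the claim.

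\textbf{Main obstacle.} The delicate point is showing that the Schauder constants in the iteration grow only geometrically, not factorially. Each step $p_{k-1}\mapsto(v_k,p_k)$ loses a factor $(s_{k-1}-s_k)^{-d/2-\alpha}\sim 2^{k(d/2+\alpha)}$ from Lemma \ref{lem.C1a}, and a factor $C_1$ from the $L^2$ tier; both must be absorbed by $\lambda^{-1}$, which fixes the size of $\Lambda_\ast$. The remark following Theorem \ref{thm.W1p.periodic} explains precisely why this strategy cannot be pushed to the Lipschitz endpoint: the map $p_{k-1}\mapsto p_k$ is essentially a singular integral and is unbounded on $L^\infty$, so $C^{1,\alpha}$ is the sharpest regularity category in which the expansion-plus-iteration scheme closes.
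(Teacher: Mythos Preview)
Your strategy is the paper's: split on $\lambda$, and for large $\lambda$ expand via Theorem \ref{thm.expansion} and sum iterated Schauder bounds from Lemma \ref{lem.C1a}. The bookkeeping, however, has a real gap. In Lemma \ref{lem.C1a} the blow-up factor $(2-s)^{-d/2-\alpha}$ multiplies only the $L^2$ energy; the recursive term is $C\,[p_{k-1}]_{C^\alpha(D_{(2+s)/2})}$ with a \emph{bounded} constant in front. The intermediate radius $(2+s)/2$ cannot be nested into any fixed sequence $s_k\downarrow 1$: requiring $(2+s_k)/2\le s_{k-1}$ and passing to the limit forces $s_\infty\ge 2$ (and $(3/2+s_k)/2\le s_{k-1}$ forces $s_\infty\ge 3/2$ if you expand in $D_{3/2}$, so your example $s_k=1+2^{-k-1}$ already fails at $k=2$). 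The paper sidesteps this by expanding in $D_2$ and running the induction \emph{uniformly} in $s$: one proves
\[
[\nabla v_k]_{C^\alpha(D_s)} + [p_k]_{C^\alpha(D_s)} \le \frac{(k+1)C^{k+2}}{(2-s)^{d/2+\alpha}} \qquad \text{for every } s\in(0,2),
\]
applying the hypothesis at $s'=(2+s)/2$; passing from $(2-s')^{-d/2-\alpha}$ to $(2-s)^{-d/2-\alpha}$ costs only the fixed factor $2^{d/2+\alpha}$, absorbed into $C$, and the extra $L^2$ contribution $\le C^{k+1}$ accounts for the $(k+1)$ prefactor. Setting $s=1$ and summing against $\lambda^{-k}$ for $\lambda>2C$ then finishes. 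Your phrase ``each step loses a factor $2^{k(d/2+\alpha)}$'' would, read literally, compound to $2^{\Theta(k^2)}$ and destroy summability; the correct mechanism is that the recursive $C^\alpha$ term carries no blow-up at all, only the fresh $L^2$ input at level $k$ does.
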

\begin{proof}
	It suffices to show the estimate for $\lambda>2C$ for some constant $C>0$, while the case with small $\lambda$ follows from the classical Schauder estimate for the elliptic system. 
	
	
	By rescaling and normalization, we assume $r = 1$ and
	\begin{equation*}
	\bigg( \average_{D_{2}} |\nabla u_\lambda|^{2} \bigg)^{1/2} + [h]_{C^\alpha(D_{2})} + [\nabla f]_{C^\alpha(\Delta_{2})} = 1.
	\end{equation*}
	Applying Theorem \ref{thm.expansion} in $D_2$, we may write
	\begin{equation*}
	u_\lambda = \sum_{k = 0}^\infty \lambda^{-k} v_k \quad \text{in } H^1 \quad \txt{and} \quad \lambda \nabla\cdot u_\lambda - \lambda \Ag{f}_D = \sum_{k=0}^\infty \lambda^{-k} p_k \quad \text{in } L^2.
	\end{equation*}
	where $(v_0,p_0)$ solves (\ref{eq.Stokes}) with $F = \nabla\cdot h$ and $(v_k,p_k)$ with $k\ge 1$ solves (\ref{eq.iterated.Stokes}). By the energy estimate, it is not hard to see
	\begin{equation}\label{est.vkpk.L2}
	\norm{\nabla v_k}_{L^2(D_2)} + \norm{p_k}_{L^2(D_2)} \le C^{k+1}.
	\end{equation}
	
	Now, applying Lemma \ref{lem.C1a} to (\ref{eq.Stokes}), we have
	\begin{equation*}
	[\nabla v_0] _{C^\alpha(D_s)} + [p_0]_{C^\alpha(D_s)} \le \frac{C^2}{(2-s)^{d/2+\alpha}}.
	\end{equation*}
	On the other hand, applying Lemma \ref{lem.C1a} to (\ref{eq.iterated.Stokes}) and using (\ref{est.vkpk.L2}), we obtain, for any $s\in (0,2)$,
	\begin{equation*}
	[\nabla v_k] _{C^\alpha(D_s)} + [p_k]_{C^\alpha(D_s)} \le \frac{C^{k+2}}{(2-s)^{d/2+\alpha}} + C [p_{k-1}]_{C^\alpha(D_{(2+s)/2})}.
	\end{equation*}
	Next, without much difficulty, we may prove by induction that
	\begin{equation*}
	[\nabla v_k] _{C^\alpha(D_s)} + [p_k]_{C^\alpha(D_s)} \le \frac{(k+1)C^{k+2}}{(2-s)^{d/2+\alpha}} \qquad \text{for all } k\ge 0, s\in (0,2).
	\end{equation*}
	This implies the desired estimate if $\lambda$ is large. Indeed, if $\lambda > 2C$,
	\begin{align*}
	[\nabla u_\lambda]_{C^\alpha(D_s)} + [\lambda\nabla\cdot u_\lambda]_{C^\alpha(D_s)} &\le \sum_{k=0}^\infty \lambda^{-k} \Big( [\nabla v_k] _{C^\alpha(D_s)} + [p_k]_{C^\alpha(D_s)} \Big) \\
	&\le \sum_{k=0}^\infty (2C)^{-k} \frac{(k+1)C^{k+2}}{(2-s)^{d/2+\alpha}} \\
	& \le \frac{C}{(2-s)^{d/2+\alpha}}.
	\end{align*}
	This implies the desired estimate by setting $s = 1$.
\end{proof}

Next, we are going to show the Meyers' estimate for the system of elasticity which is independent of $\lambda$. To this end, let us recall the local Meyers' estimate of Stokes system with bounded measurable coefficients.

\begin{theorem}[Meyers' estimate for Stokes system]\label{thm.Meyers.Stokes}
	Let $D$ be a Lipschitz domain. There exists $p_0>2$, depending only on $d,\Lambda$ and $\text{Lip}(D)$ so that if $(v,p)$ is a weak solution of
	\begin{equation*}
	\left\{
	\begin{aligned}
	\nabla\cdot (A(x) \nabla v) + \nabla p &= \nabla\cdot h \qquad &\txt{in }& D_{2r}, \\
	\nabla\cdot v & = g \qquad &\txt{in }& D_{2r},\\
	v & = f \qquad &\txt{on }& \Delta_{2r},
	\end{aligned}
	\right.
	\end{equation*}
	with  $h\in W^{1,p_0}(D_{2r};\R^{d\times d})$, $g\in L^{p_0}(D_{2r})$ and $f\in W^{1,p_0}(D_{2r};\R^d)$, then $(v,p)\in W^{1,p_0}(D_r;\R^d) \times L^{p_0}(D_r)$ and
	\begin{equation*}
	\begin{aligned}
	& \bigg( \average_{D_r} |\nabla v|^{p_0} \bigg)^{1/p_0} + \bigg( \average_{D_r} |p - \average_{D_r} p |^{p_0} \bigg)^{1/p_0}
	\\
	& \quad \le C\bigg\{  \bigg( \average_{D_{2r}} |\nabla v|^{2} \bigg)^{1/2} + \bigg( \average_{D_{2r}} |h|^{p_0} \bigg)^{1/p_0}  + \bigg( \average_{D_{2r}} |g|^{p_0} \bigg)^{1/p_0} + \bigg( \average_{D_{2r}} |\nabla f|^{p_0} \bigg)^{1/p_0} \bigg\},
	\end{aligned}
	\end{equation*}
	where $C$ depends only on $d,\Lambda$ and $\text{Lip}(D)$.
\end{theorem}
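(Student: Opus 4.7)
The plan is to prove a reverse H\"older inequality at small scales and invoke Gehring's lemma to upgrade $L^2$ integrability to $L^{p_0}$ for some $p_0>2$. Three ingredients will be needed: a Caccioppoli inequality adapted to the Stokes system, the pressure--gradient duality from Lemma~\ref{lem.Nabla}, and Sobolev--Poincar\'e in Lipschitz domains.

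First I would derive the Caccioppoli step. For an interior point $x_0$ with $B_{2\rho}(x_0)\subset D_{2r}$ and a standard cutoff $\eta$ equal to $1$ on $B_\rho(x_0)$ and supported in $B_{2\rho}(x_0)$, testing the equation against $\eta^2(v-v_0)$ with $v_0=(v)_{B_{2\rho}}$ and subtracting the constant $p_0=(p)_{B_{2\rho}}$ from the pressure (legitimate since the test function has compact support in $B_{2\rho}$), the divergence constraint $\nabla\cdot v=g$ together with Cauchy--Schwarz and ellipticity produces
\begin{equation*}
\average_{B_\rho}|\nabla v|^2 \le \frac{C}{\rho^2}\average_{B_{2\rho}}|v-v_0|^2 + C\average_{B_{2\rho}}|p-p_0|^2 + C\average_{B_{2\rho}}(|h|^2+|g|^2).
\end{equation*}
The boundary case is analogous: one tests against $\eta^2(v-\tilde f)$ for a Bogovskii-corrected extension $\tilde f$ of $f$ that restores the divergence constraint locally, using the Lipschitz character of $D_{2r}$.

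Next Sobolev--Poincar\'e replaces $\tfrac{1}{\rho^2}\average|v-(v)|^2$ by $(\average|\nabla v|^q)^{2/q}$ with $q=\tfrac{2d}{d+2}<2$. The delicate step is to bound $\average_{B_{2\rho}}|p-p_0|^2$ by a sub-$L^2$ quantity. A na\"ive application of Lemma~\ref{lem.Nabla} at exponent $2$ combined with the weak identity $\nabla p=\nabla\cdot(h-A\nabla v)$ gives only $\average|p-(p)|^2\lesssim\average(|\nabla v|^2+|h|^2)$, which reproduces $\average|\nabla v|^2$ on the right and fails to close. The refinement uses the Bogovskii operator $\mathcal{B}$: for $\psi=(p-p_0)\chi_{B_\rho}$ mean-corrected on $B_{2\rho}$, construct $\Phi=\mathcal{B}(\psi)\in H^1_0(B_{2\rho})$ with $\nabla\cdot\Phi=\psi$ and $\|\nabla\Phi\|_{L^s}\le C\|\psi\|_{L^s}$ for all $s\in(1,\infty)$; testing the Stokes equation against $\Phi$ bounds $\average_{B_\rho}|p-p_0|^2$ by $L^q$-quantities of $\nabla v$ and $h$ together with a higher-exponent pressure contribution that is absorbed through a finite chain of balls. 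The outcome is the pair-wise reverse H\"older inequality
\begin{equation*}
\Bigl(\average_{B_\rho}(|\nabla v|^2+|p-(p)_{B_\rho}|^2)\Bigr)^{1/2} \le C\Bigl(\average_{B_{2\rho}}(|\nabla v|^q+|p-(p)_{B_{2\rho}}|^q)\Bigr)^{1/q} + (\text{data in }L^{p_0}).
\end{equation*}
Gehring's lemma applied to the pair $(|\nabla v|,|p-(p)|)$ then promotes the integrability from $L^2$ to $L^{p_0}$ for some $p_0=p_0(d,\Lambda,\mathrm{Lip}(D))>2$, which yields the stated estimate.

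The main obstacle is the pressure bookkeeping: because the pressure is a non-local quantity with the same natural integrability as $\nabla v$, a reverse H\"older inequality for $p$ alone cannot be assembled from Lemma~\ref{lem.Nabla} and Sobolev--Poincar\'e directly, and a na\"ive $L^2$ pressure bound destroys the Caccioppoli estimate. The Bogovskii-based duality argument and the ensuing pair-wise reverse H\"older inequality are the technical heart of the proof. The boundary case presents no additional difficulty, since both Lemma~\ref{lem.Nabla} and Sobolev--Poincar\'e are valid in any bounded Lipschitz domain with constants depending only on $\mathrm{Lip}(D)$.
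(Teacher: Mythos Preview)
The paper does not prove this theorem; it is stated with the phrase ``let us recall the local Meyers' estimate of Stokes system with bounded measurable coefficients'' and used as a known result (the natural reference being Giaquinta--Modica, already cited in the paper for Theorem~\ref{thm.Stokes.C1a}). So there is no paper proof to compare against, and the question is whether your sketch is a correct reconstruction of the classical argument.

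Your overall strategy---Caccioppoli, reverse H\"older, Gehring---is the right one, but your treatment of the pressure is more elaborate than necessary and the step you flag as the ``technical heart'' is not how the classical proof runs. In the standard argument one does \emph{not} attempt a pair-wise reverse H\"older inequality for $(|\nabla v|,|p-(p)|)$, nor does one try to bound $\average|p-(p)|^2$ by sub-$L^2$ quantities of $\nabla v$ via Bogovskii duality. Instead: in the Caccioppoli step one uses Young's inequality to arrange that the pressure appears with a small coefficient $\epsilon$, i.e.\ $\average_{B_\rho}|\nabla v|^2 \le \tfrac{C}{(\sigma-\rho)^2}\average_{B_\sigma}|v-(v)|^2 + \epsilon\,\average_{B_\sigma}|p-(p)|^2 + \ldots$ for all $\rho<\sigma$. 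Then one applies the Ne\v{c}as inequality (Lemma~\ref{lem.Nabla}) at exponent $2$ exactly as you call ``na\"ive'': $\average_{B_\sigma}|p-(p)|^2 \le C\average_{B_\sigma}(|\nabla v|^2+|h|^2)$. Substituting gives $\average_{B_\rho}|\nabla v|^2 \le C\epsilon\,\average_{B_\sigma}|\nabla v|^2 + \tfrac{C}{(\sigma-\rho)^2}\average_{B_\sigma}|v-(v)|^2 + \ldots$, and the first term on the right is absorbed by the standard iteration lemma (sometimes called Giaquinta's lemma or the hole-filling trick) over radii $\rho<\sigma<2\rho$. This yields a clean Caccioppoli inequality for $\nabla v$ alone, after which Sobolev--Poincar\'e and Gehring give $\nabla v\in L^{p_0}$; the pressure estimate then follows at the end from Lemma~\ref{lem.Nabla} applied at exponent $p_0$. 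Your Bogovskii-based pair-wise scheme might be made to work, but the ``absorbed through a finite chain of balls'' step is vague and unnecessary once you use the iteration lemma.
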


Of course, the Meyers' estimate for the system of elasticity could be proved by the similar strategy as Theorem \ref{thm.C1a}. Here we will use an alternative approach which takes advantage of a real variable perturbation argument by Shen \cite[Chapter 3]{ShenBook18}.

\begin{theorem}\label{thm.Meyers.Elasticity}
	Let $D$ be a Lipschitz domain. There exists $q_0>2$, depending only on $d,\Lambda$ and $D$ so that if $u_\lambda$ is the weak solution of
	\begin{equation}\label{eq.local.Elasticity}
	\left\{
	\begin{aligned}
	\nabla\cdot (A(x) \nabla u_\lambda) + \nabla(\lambda\nabla\cdot u_\lambda) &= \nabla\cdot h \qquad &\txt{in }& D_{2r}, \\
	u_\lambda & = f \qquad &\txt{on }& \Delta_{2r},
	\end{aligned}
	\right.
	\end{equation}
	with  $h\in W^{1,q_0}(D_{2r};\R^{d\times d})$ and $f\in W^{1,q_0}(D_{2r};\R^d)$, then $u_\lambda \in W^{1,q_0}(D_r;\R^d)$ and
	\begin{equation*}
	\begin{aligned}
	& \bigg( \average_{D_r} |\nabla u_\lambda|^{q_0} \bigg)^{1/q_0} + \bigg( \average_{D_r} |\lambda\nabla\cdot u_\lambda - \average_{D_r} \lambda\nabla\cdot u_\lambda |^{q_0} \bigg)^{1/q_0}
	\\
	& \quad \le C\bigg\{  \bigg( \average_{D_{2r}} |\nabla u_\lambda|^{2} \bigg)^{1/2} + \bigg( \average_{D_{2r}} |h|^{q_0} \bigg)^{1/q_0} + \bigg( \average_{D_{2r}} |\nabla f|^{q_0} \bigg)^{1/q_0} \bigg\},
	\end{aligned}
	\end{equation*}
	where $C$ depends only on $d,\Lambda$ and $D$.
\end{theorem}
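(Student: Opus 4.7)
The plan is to dichotomize on the magnitude of $\lambda$ and, in the large-$\lambda$ regime, to invoke Shen's real-variable framework \cite[Chapter 3]{ShenBook18} via a local Stokes comparison. When $\lambda \le \lambda_*$ for some threshold $\lambda_* = \lambda_*(d,\Lambda)$, the system (\ref{eq.local.Elasticity}) is uniformly elliptic with ellipticity bounded by $\lambda_* + \Lambda$, so the classical Meyers estimate for elliptic systems yields both the gradient and pressure bounds directly (with $\lambda\nabla\cdot u_\lambda$ absorbed into the Meyers output). It thus suffices to treat $\lambda > \lambda_*$ with $\lambda_*$ to be fixed large enough below.

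For such $\lambda$ and every ball $B = B(y,\rho)$ with $2B\subset D_{2r}$ (and analogously for surface balls when $y\in \Delta_{2r}$), I would compare $u_\lambda$ with the Stokes solution $(v,q)\in H^1(2B;\R^d)\times L_0^2(2B)$ of
\begin{equation*}
\left\{
\begin{aligned}
\nabla\cdot(A(x)\nabla v) + \nabla q &= \nabla\cdot h & &\txt{in } 2B,\\
\nabla\cdot v &= \average_{2B}\nabla\cdot u_\lambda & &\txt{in } 2B,\\
v &= u_\lambda & &\txt{on } \partial(2B),
\end{aligned}
\right.
\end{equation*}
whose existence is guaranteed by Theorem \ref{thm.Stokes.Energy} and the compatibility built into the divergence. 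The pair $(w,\pi) := (u_\lambda - v,\,\lambda\nabla\cdot u_\lambda - \average_{2B}\lambda\nabla\cdot u_\lambda - q)$ then solves a homogeneous Stokes problem on $2B$ with zero Dirichlet data and divergence $\nabla\cdot u_\lambda - \average_{2B}\nabla\cdot u_\lambda$, so Theorem \ref{thm.Stokes.Energy} controls $\|\nabla w\|_{L^2(2B)} + \|\pi\|_{L^2(2B)}$ by the $L^2(2B)$ oscillation of $\nabla\cdot u_\lambda$. Lemma \ref{lem.Nabla} applied to $\lambda\nabla\cdot u_\lambda$, whose gradient is $\nabla\cdot h - \nabla\cdot(A\nabla u_\lambda)$ from (\ref{eq.local.Elasticity}), bounds this oscillation by $\|\nabla u_\lambda\|_{L^2(2B)} + \|h\|_{L^2(2B)}$. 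Dividing by $\lambda$ yields the key smallness
\begin{equation*}
\biggl(\average_{2B}|\nabla w|^2\biggr)^{1/2} + \biggl(\average_{2B}|\pi|^2\biggr)^{1/2} \le C\lambda^{-1}\biggl[\biggl(\average_{2B}|\nabla u_\lambda|^2\biggr)^{1/2} + \biggl(\average_{2B}|h|^2\biggr)^{1/2}\biggr].
\end{equation*}
Meanwhile, Theorem \ref{thm.Meyers.Stokes} applied to $(v,q)$ on the pair $(B,2B)$ provides a reverse-Hölder bound at exponent $p_0$, with right-hand side controlled by $(\average_{2B}|\nabla u_\lambda|^2)^{1/2} + (\average_{2B}|h|^{p_0})^{1/p_0}$ (using that the prescribed divergence $\average_{2B}\nabla\cdot u_\lambda$ is a constant dominated by $(\average_{2B}|\nabla u_\lambda|^2)^{1/2}$).

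These two ball-wise bounds feed exactly into Shen's real-variable lemma for the composite quantity $F := |\nabla u_\lambda| + |\lambda\nabla\cdot u_\lambda - \average_{D_{2r}}\lambda\nabla\cdot u_\lambda|$: the Stokes ``good'' part $(|\nabla v|,|q-\average_B q|)$ gives $F_B$ with a reverse-Hölder bound at $p_0$, while the remainder $(|\nabla w|,|\pi|)$ contributes $R_B$ with $L^2$ prefactor $C\lambda^{-1}$. Choosing $\lambda_*$ large enough that $C\lambda_*^{-1}$ falls below Shen's smallness threshold, the lemma upgrades $F$ from $L^2$ to $L^{q_0}$ for some $q_0 = \min(p_0,2+\delta) > 2$, with constants uniform in $\lambda > \lambda_*$. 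The main obstacle is ensuring that the pressure is handled on the same footing as the gradient: one must build $\lambda\nabla\cdot u_\lambda$ into the composite $F$ and verify that both the Stokes reverse-Hölder bound (for $q$) and the $\lambda^{-1}$ smallness (for $\pi$) propagate through Shen's scheme in tandem with the gradient, along with the routine bookkeeping needed to adapt the argument to surface balls centered on the Lipschitz boundary $\Delta_{2r}$.
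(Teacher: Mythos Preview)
Your proposal is correct and follows essentially the same route as the paper: dichotomize on $\lambda$, compare $u_\lambda$ locally with a Stokes solution sharing its boundary data to extract a $C\lambda^{-1}$ smallness, invoke the Stokes Meyers estimate for the good part, and close with Shen's real-variable lemma. Two minor points are worth noting. First, the paper introduces a second auxiliary Stokes solution $w_x^r$ with boundary data $f$ so that the difference $v_x^r - w_x^r$ vanishes on $\Delta_{2s}(x)$; this is a cosmetic device for packaging the boundary-data contribution and is not essential---your direct application of Theorem~\ref{thm.Meyers.Stokes} to $v$ works just as well, with the $(\average|\nabla f|^{p_0})^{1/p_0}$ term appearing on the right-hand side for surface balls. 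Second, your plan to carry the pressure through Shen's iteration as part of a composite $F$ is unnecessarily delicate (the constant $\average_{2B}\lambda\nabla\cdot u_\lambda - \average_{D_{2r}}\lambda\nabla\cdot u_\lambda$ floating between scales is awkward to absorb); the paper avoids this entirely by running the real-variable argument for $|\nabla u_\lambda|$ alone and then recovering the $L^{q_0}$ pressure bound in one line from Lemma~\ref{lem.Nabla}, since $\nabla(\lambda\nabla\cdot u_\lambda) = \nabla\cdot h - \nabla\cdot(A\nabla u_\lambda) \in W^{-1,q_0}$.
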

\begin{proof}
	Again, it suffices to consider the case when $\lambda$ is large. By rescaling, assume $r = 1$. Now, let $x\in \Delta_2$ and $D_{2s}(x) \subset D_2$. We construct an approximation of $u_\lambda$ in $D_{2s}(x)$. Actually, let $(v_x^r,p_x^r)$ be the weak solution of
	\begin{equation}\label{eq.approx.Stokes}
	\left\{
	\begin{aligned}
	\nabla\cdot (A(x) \nabla v_x^r) + \nabla p_x^r &= \nabla\cdot h \qquad &\txt{in }& D_{2s}(x), \\
	\nabla\cdot v_x^r & = \Ag{u_\lambda}_{D_{2s}(x)} \qquad &\txt{in }& D_{2s}(x),\\
	v_x^r & = u_\lambda \qquad &\txt{on }& \partial D_{2s}(x).
	\end{aligned}
	\right.
	\end{equation}
	Then Lemma \ref{lem.lambda.rate} implies
	\begin{equation}\label{est.vxr}
	\bigg( \average_{D_{2s}(x)} |\nabla u_\lambda - \nabla v_x^r|^2 \bigg)^{1/2} \le \frac{C}{\lambda} \bigg\{\bigg( \average_{D_{2s}(x)} |\nabla u_\lambda|^2 \bigg)^{1/2} + \bigg( \average_{D_{2s}(x)} |h|^2 \bigg)^{1/2} \bigg\}.
	\end{equation}
	
	Next, we will reduce (\ref{eq.approx.Stokes}) to a homogeneous system. To this end, let $w_x^r$ be the solution of
	\begin{equation}
	\left\{
	\begin{aligned}
	\nabla\cdot (A(x) \nabla w_x^r) + \nabla \pi_x^r &= \nabla\cdot h \qquad &\txt{in }& D_{2s}(x), \\
	\nabla\cdot w_x^r & = \Ag{f}_{D_{2s}(x)} \qquad &\txt{in }& D_{2s}(x),\\
	w_x^r & = f \qquad &\txt{on }& \partial D_{2s}(x).
	\end{aligned}
	\right.
	\end{equation}
	Clearly, the energy estimate implies
	\begin{equation*}
	\bigg( \average_{D_{2s}(x)} |\nabla w^r_x|^2 \bigg)^{1/2} \le C\bigg\{ \bigg( \average_{D_{2s}(x)} |h|^{2} \bigg)^{1/2} + \bigg( \average_{D_{2s}(x)} |\nabla f|^{2} \bigg)^{1/2} \bigg\}
	\end{equation*}
	Combined with (\ref{est.vxr}), this leads to
	\begin{equation}\label{est.Shen.cond1}
	\begin{aligned}
	&\bigg( \average_{D_{2s}(x)} |\nabla u_\lambda - \nabla (v_x^r - w_x^r)|^2 \bigg)^{1/2} \\
	&\quad \le \frac{C}{\lambda} \bigg( \average_{D_{2s}(x)} |\nabla u_\lambda|^2 \bigg)^{1/2} + C\bigg\{ \bigg( \average_{D_{2s}(x)} |h|^2\bigg)^{1/2} + \bigg( \average_{D_{2s}(x)} |\nabla f|^{2}  \bigg)^{1/2} \bigg\}.
	\end{aligned}
	\end{equation}

	On the other hand, observe that the difference $v_x^r - w_x^r$ satisfies
	\begin{equation}
	\left\{
	\begin{aligned}
	\nabla\cdot (A(x) \nabla (v_x^r - w_x^r)) + \nabla (p_x^r -\pi_x^r) &= 0 \quad &\txt{in }& D_{2s}(x), \\
	\nabla\cdot (v_x^r - w_x^r) & = \Ag{u_\lambda - f}_{D_{2s}(x)} \quad &\txt{in }& D_{2s}(x),\\
	v_x^r - w_x^r & = 0 \quad &\txt{on }& \partial D_{2s}(x)\cap D_2.
	\end{aligned}
	\right.
	\end{equation}
	Now, Theorem \ref{thm.Meyers.Stokes} implies that there exists some $p_0>2$ depending only on $d,\Lambda$ and $D$ so that
	\begin{equation}\label{est.Shen.cond2}
	\begin{aligned}
	& \bigg( \average_{D_{s}(x)} |\nabla (v_x^r - w_x^r)|^{p_0} \bigg)^{1/{p_0}} \\
	&\qquad \le C\bigg\{ \bigg( \average_{D_{2s}(x)} | \nabla (v_x^r - w_x^r) |^2 \bigg)^{1/2} + |\Ag{u_\lambda - f}_{D_{2s}(x)}|  \bigg\} \\
	&\qquad  \le C\bigg\{ \bigg( \average_{D_{2s}(x)} |\nabla u_\lambda|^2 \bigg)^{1/2} + \bigg( \average_{D_{2s}(x)} |h|^2 \bigg)^{1/2} + \bigg( \average_{D_{2s}(x)} |\nabla f|^2 \bigg)^{1/2} \bigg\}.
	\end{aligned}
	\end{equation}
	
	In view of (\ref{est.Shen.cond1}) and (\ref{est.Shen.cond2}), which actually holds in any $D_{2s}(x) \subset D_2$, we may apply \cite[Theorem 4.2.6]{ShenBook18} to conclude that for any $q_0\in (2,p_0)$, there exists $\eta>0$ such that if $C/\lambda < \eta$, then
	\begin{equation*}
	\begin{aligned}
	& \bigg( \average_{D_s(x)} |\nabla u_\lambda|^{q_0} \bigg)^{1/q_0} \\
	& \qquad \le C\bigg\{  \bigg( \average_{D_{2s}(x)} |\nabla u_\lambda|^{2} \bigg)^{1/2} + \bigg( \average_{D_{2s}(x)} |h|^{q_0} \bigg)^{1/q_0} + \bigg( \average_{D_{2s}(x)} |\nabla f|^{q_0} \bigg)^{1/q_0} \bigg\},
	\end{aligned}
	\end{equation*}
	for any $D_s(x)$ so that $D_{4s}(x)\subset D_2$. This particularly gives the desired estimate for $\nabla u_\lambda$ with $r = 1$. Finally, the estimate of the pressure follows from Lemma \ref{lem.Nabla}.
\end{proof}

\section{Homogenization of Stokes System}
In this section, we study the quantitative homogenization of Stokes system with coefficient matrix in a probability measure space $(\Omega,\mathcal{F},\mathbb{P})$, where
\begin{equation*}
\Omega := \{ A: \mathbb{R}^d \mapsto \mathbb{R}^{d^2\times d^2} \text{ satisfying } (\ref{cond.ellipticity}) - (\ref{cond.symmetry}) \}.
\end{equation*}
Notice that we redefined $(\Omega,\mathcal{F},\mathbb{P})$ with a slight abuse of notation since it has been defined as the probability measure space for the system of elasticity in Section 1.3. Fortunately, this will cause no ambiguity in this section.

Given an open set $D\subset \R^d$. Let $ \mathcal{F}_D$ be the $\sigma$-algebra generated by the random elements
\begin{equation*}
A \mapsto  \int_{\R^d}a_{ij}^{\alpha\beta}(x)\phi(x),  \quad \phi \in C_0^\infty(D), 1\le i,j,\alpha,\beta\le d.
\end{equation*}
Let $\mathcal{F}$ be the largest $\sigma$-algebra containing all $\mathcal{F}_D$ with $D\subset \R^d$. We assume the probability measure $\mathbb{P}$ satisfies the following assumptions:
\begin{itemize}
	
	\item Stationarity with respect to $\mathbb{Z}^d$-translations:
	\begin{equation*}
	\mathbb{P}\circ T_z = \mathbb{P}, \quad\text{where } T_z(A)(x)= A(x+z).
	\end{equation*}
	
	\item Unit range of dependence:
	\begin{equation*}
	\begin{aligned}
	&\text{$\mathcal{F}_D$ and $\mathcal{F}_E$ are $\mathbb{P}$-independent for every Borel}\\
	&\qquad\text{subset pair $D, \, E\subset \mathbb{R}^d$ satisfying dist$(D,E)\ge 1$}.
	\end{aligned}
	\end{equation*}
	
\end{itemize}

\subsection{Finite volume correctors}
In this subsection, we will introduce several correctors and obtain some quantitative estimates in the case of Stokes system, following the standard work in \cite{AKM19} for the elliptic equation. Since the key results and their proofs are actually very similar to the elliptic equation, we will briefly describe this process with a few key steps and skip the most details.

First of all, we define the solenoidal space $H^{\text{sol}}_0$ by
$$
H^{\text{sol}}_0(D):= H^1_0(D)\cap L^2_{\text{sol}}(D) =\{f\in H^1_0(D,\R^d): \nabla \cdot f =0\}
$$
and $H^{\text{sol}}$ by
$$
H^{\text{sol}}(D):= H^1(D)\cap L^2_{\text{sol}}(D) =\{f\in H^1(D,\R^d): \nabla \cdot f =0\}.
$$

For each $P \in \R^{d\times d}$, we introduce the subadditive quantity $\mu(D,P)$ defined by
\begin{equation}\label{def.mu}
\begin{aligned}
\mu(D,P):&= \inf_{\nu \in \ell_P+H^{\text{sol}}_0(D)} \average_D \frac{1}{2} \nabla \nu  \cdot A\nabla \nu  \\
& =\inf_{w\in H^{\text{sol}}_0(D)} \average_D \frac{1}{2}(P+\nabla w)\cdot A(P+\nabla w),
\end{aligned}
\end{equation}
where $\ell_P: = P x$ is the affine function with slope $P$.  The quantity $\mu(D,P)$ is the energy of its unique minimizer 
\begin{equation*}
\nu (\cdot,D,P):= \argmin_{\nu \in \ell_P+H^{\text{sol}}_0(D)}\mu(D,P),
\end{equation*}
which turns out to be the Dirichlet corrector, i.e., the weak solution of
\begin{equation}\label{def.dirichlet-corrector}
\left\{
\begin{aligned}
\nabla\cdot(A\nabla \nu )+ \nabla \varsigma &=0 &\qquad\text{ in }D,\\
\nabla \cdot \nu  &=\text{Tr}(P) &\qquad\text{ in }D,\\
\nu &= \ell_P &\qquad\text{ on }\partial D.
\end{aligned}
\right.
\end{equation}
Note that the pressure $\varsigma$ in the above system is not defined directly in (\ref{def.mu}).


\begin{proposition}\label{prop.mu}
	Let $D$ be a bounded Lipschitz domain in $\R^d$. Then $\mu(D,P)$ and its minimizer satisfy the following properties:
	\begin{itemize}
		\item Representation as quadratic form: there exist an symmetric $A_D$ such that
		$$
		\Lambda^{-1}I \le A_D\le \Lambda I,
		$$
		and, for $P\in \R^{d\times d}$,
		\begin{equation}\label{mu.quadratic-form}
		\mu(D,P) = \frac{1}{2}P\cdot A_DP.
		\end{equation}
		
		\item Subadditivity. Let $\{D_i\}_{i=1}^N \subset D$ be a partition of $D$ of bounded Lipschitz domains, in the sense that $D_i\cap D_j = \emptyset$ if $i\neq j$ and
		$$
		\left|D \setminus \bigcup\limits_{i=1}^N D_i\right| =0.
		$$
		Then, for every $P\in\mathbb{R}^{d\times d}$,
		$$
		\mu(D,P) \le\sum\limits_{i=1}^N \dfrac{|D_i|}{|D|}\mu(D_i,P).
		$$
		\item Quadratic response. For every $w\in \ell_P+H^{\text{sol}}_0(D)$,
		$$
		\begin{aligned}
		\frac{1}{2\Lambda}\fint_D |\nabla w -\nabla \nu(\cdot, D,P)|^2 & \le \fint_D \frac{1}{2}\nabla w \cdot A\nabla w-\mu(D,P)\\
		& \le \frac{\Lambda}{2} \fint_D |\nabla w -\nabla \nu(\cdot, D,P)|^2.
		\end{aligned}
		$$
		
	\end{itemize}
\end{proposition}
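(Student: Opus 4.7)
My plan for Proposition \ref{prop.mu} is to treat the three items in order, all of which follow from the fact that the minimization defining $\mu(D,P)$ is a strictly convex quadratic problem over the closed affine subspace $\ell_P + H^{\text{sol}}_0(D)$.

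\textbf{Step 1 (Quadratic form).} I would first establish existence and uniqueness of the minimizer $\nu(\cdot,D,P)$ by the direct method: the ellipticity condition (\ref{cond.ellipticity}) together with Poincar\'e's inequality give coercivity on the solenoidal subspace, while the symmetry (\ref{cond.symmetry}) provides strict convexity. The first-order condition reads $\int_D A\nabla\nu\cdot\nabla\eta = 0$ for every $\eta\in H^{\text{sol}}_0(D)$, and Lemma \ref{lem.Nabla} (i.e.\ surjectivity of $\nabla\cdot$ from $H^1_0$ onto $L^2_0$) produces a pressure $\varsigma\in L^2_0(D)$ so that \eqref{def.dirichlet-corrector} holds in the Stokes sense. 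By uniqueness, the map $P\mapsto \nu(\cdot,D,P)-\ell_P$ is linear, so $\mu(D,P)$ is a homogeneous quadratic form, and polarization yields a symmetric matrix $A_D$ with $\mu(D,P) = \frac12 P\cdot A_D P$. The upper bound $A_D\le\Lambda I$ follows by testing with the admissible competitor $w=0$ and using the upper ellipticity of $A$. For the lower bound $A_D\ge\Lambda^{-1}I$, the key remark is that every $w\in H^1_0(D;\R^d)$ satisfies $\int_D \nabla w = 0$, so $\average_D (P+\nabla w) = P$; Jensen's inequality combined with $A\ge \Lambda^{-1}I$ then gives $\mu(D,P)\ge \frac{1}{2\Lambda}|P|^2$.

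\textbf{Step 2 (Subadditivity).} Setting $w_i:=\nu(\cdot,D_i,P)-\ell_P\in H^{\text{sol}}_0(D_i)$, I extend each $w_i$ by zero to $D\setminus D_i$ and assemble $w:=\sum_i w_i \mathbf{1}_{D_i}$. The vanishing traces of $w_i$ on $\partial D_i$ guarantee $w\in H^1_0(D;\R^d)$, and piecewise solenoidality together with the matching zero traces across the interfaces $\partial D_i\cap \partial D_j$ yield $\nabla\cdot w = 0$ in $\mathcal{D}'(D)$. Hence $\ell_P+w\in \ell_P+H^{\text{sol}}_0(D)$ is admissible for $\mu(D,P)$, and since the quadratic functional decomposes additively over the partition, evaluation on this competitor yields exactly the claimed subadditivity inequality.

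\textbf{Step 3 (Quadratic response).} Writing any admissible $w=\nu(\cdot,D,P)+\eta$ with $\eta\in H^{\text{sol}}_0(D)$, I expand
\begin{equation*}
\average_D \tfrac12\nabla w\cdot A\nabla w - \mu(D,P) = \average_D A\nabla\nu(\cdot,D,P)\cdot \nabla\eta + \average_D \tfrac12 \nabla\eta\cdot A\nabla \eta.
\end{equation*}
The first term vanishes by the Euler--Lagrange identity established in Step 1, with the pressure contribution $\average_D \varsigma\,\nabla\cdot\eta$ disappearing thanks to $\nabla\cdot\eta = 0$. Applying the two-sided ellipticity bounds on $A$ to the residual quadratic term $\average_D \frac12 \nabla\eta\cdot A\nabla\eta$ produces both of the claimed inequalities.

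The only substantive obstacle I foresee is recovering the pressure $\varsigma$ that promotes the variational identity to the full Stokes system \eqref{def.dirichlet-corrector}; this is exactly Lemma \ref{lem.Nabla}, and once it is invoked everything else is a direct adaptation of the scalar elliptic scheme from \cite{AKM19}, the only wrinkle being that all test functions and competitors must be kept within the solenoidal class.
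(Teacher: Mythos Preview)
Your proposal is correct and follows the standard approach the paper implicitly adopts: the paper states Proposition \ref{prop.mu} without proof, treating it as a direct adaptation of \cite[Lemma 1.1]{AKM19} to the solenoidal setting, and indeed your argument for the bounds $\Lambda^{-1}I\le A_D\le \Lambda I$ matches line by line the computation the paper later carries out for the elasticity analogue (see \eqref{est.upper}--\eqref{est.lower}). One minor remark: in Step 3 you do not need to invoke the pressure at all, since the Euler--Lagrange identity $\int_D A\nabla\nu\cdot\nabla\eta = 0$ already holds for every $\eta\in H^{\text{sol}}_0(D)$ directly from the variational characterization; the pressure is only required to upgrade this to the full Stokes system \eqref{def.dirichlet-corrector}, which is not needed for quadratic response.
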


For each integer $m \ge 1$, define the triadic cube
\begin{equation*}
\square_m := \Big( -\frac{1}{2}3^m, \frac{1}{2} 3^m \Big)^d \subset \R^d.
\end{equation*}
If $1\le n<m$, then $\square_m$ can be partitioned (up to a set of zero Lebesgue measure) into exactly $3^{d(m-n)}$ subcubes which are $\Z^d$-translations of $\square_n$.

Now, for each $P\in \R^{d\times d}$, the subadditivity of $\mu$ and stationarity of the probability measure space $(\Omega,\mathcal{F},\mathbb{P})$ imply the monotonicity of $\E[\mu(\square_m,P)]$ in $m$, namely, $\E[\mu(\square_{m+1},P)] \le E[\mu(\square_m,P)]$ for all $m\ge 1$. By the monotone convergence theorem, we may define
\begin{equation*}
\widehat{\mu}(P):= \lim_{m\to \infty} \E[ \mu(\square_m,P) ].
\end{equation*}
Using the Proposition \ref{prop.mu}, one sees that for any $P\in \R^{d\times d}$, there is a unique symmetric matrix $\widehat{A}$ such that
$$\label{def.A-hat}
\widehat{\mu}(P) = \frac{1}{2}P\cdot \widehat{A}P,
$$
Moreover, $\Lambda^{-1}I \le \widehat{A} \le \Lambda I$. We will call $\widehat{A}$ the homogenized matrix of the Stokes system with stochastic coefficient matrix $A$.

The homogenized matrix $\widehat{A}$ defined above is sufficient for us to establish the qualitative homogenization for Stokes system. However, it is not enough for quantitative analysis. To this end, for each $Q\in \R^{d\times d}$, we define the dual subadditive quantity $\mu^*(D,Q)$ by
\begin{equation}\label{def.mu*}
\begin{aligned}
\mu^*(D,Q):& =\sup_{w\in H^{\text{sol}}(D)} \average_D \left(-\frac{1}{2}\nabla w\cdot A\nabla w + Q\nabla w\right),
\end{aligned}
\end{equation}
The quantity $\mu^*(D,Q)$ is the energy of its unique maximizer
\begin{equation*}
\nu^*(\cdot,D,Q):= \argmax_{w\in H^{\text{sol}}(D)}\mu^*(D,Q),
\end{equation*}
where $(\nu^*,\varsigma^*)$ turns out to be the Neumann corrector, i.e., the weak solution of
\begin{equation}\label{def.neumann-corrector}
\left\{
\begin{aligned}
\nabla\cdot(A\nabla \nu^*)+ \nabla \varsigma^* &=0 &\qquad\text{ in }D,\\
\nabla \cdot \nu^* &=0 &\qquad\text{ in }D,\\
n\cdot A\nabla \nu^*+ n\varsigma^*&= nQ &\qquad\text{ on }\partial D.
\end{aligned}
\right.
\end{equation}

Proceeding as \cite{AKM19}, we can verify
$\mu(D,P)+\mu^*(D,Q)\ge P\cdot Q$
and then define a crucial quantity
\begin{equation}\label{def.J}
J(D,P,Q) = \mu(D,P)+\mu^*(D,Q)- P\cdot Q,
\end{equation}
which allows to carry out quantitative analysis of correctors. To study the basic properties of $J$, we introduce the set of ``$A$-Stokes functions'' in $D$
\begin{equation*}
\mathcal{A}(D) = \{ u\in H^1(D): \nabla\cdot A \nabla u + \nabla \pi = 0 \text{ in } D \text{ for some } \pi\in L^2(D) \}.
\end{equation*}
Then, we have the following properties for $J(D,P,Q)$.
\begin{proposition}\label{prop.J1}
	Let $D$ be a bounded Lipschitz domain in $\R^d$ and $P,Q\in \R^{d\times d}$. Then
	\begin{equation*}
	J(D,P,Q) = \sup_{w\in \mathcal{A}(D)} \fint_{D} \bigg( -\frac{1}{2}\nabla w\cdot A\nabla w - P\cdot A\nabla w + Q\cdot \nabla w\bigg).
	\end{equation*}
	Moreover, the maximizer $v(\cdot,D,P,Q)$ is equal to $\nu^*(D,Q) - \nu(D,P)$.
\end{proposition}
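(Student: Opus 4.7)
The plan is to exploit the fact that
$$F(w) := \fint_D \Big( -\tfrac{1}{2}\nabla w\cdot A\nabla w - P\cdot A\nabla w + Q\cdot \nabla w \Big)$$
is a strictly concave quadratic functional of $\nabla w$ on the linear space $\mathcal{A}(D)$, so its supremum is attained at its unique critical point (modulo additive constants). My aim is therefore to verify two things: that the candidate $w^\star := \nu^*(D,Q) - \nu(\cdot,D,P)$ satisfies $F(w^\star) = J(D,P,Q)$, and that $w^\star$ is a critical point of $F$ on $\mathcal{A}(D)$; concavity will then upgrade the critical-point property to the global maximizer property.

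To compute $F(w^\star)$, I would expand $\nabla w^\star = \nabla \nu^* - \nabla \nu$ and gather cross terms. The identities $\mu(D,P) = \tfrac{1}{2} P \cdot \fint_D A\nabla \nu$ and $\mu^*(D,Q) = \tfrac{1}{2} Q \cdot \fint_D \nabla \nu^*$ follow from the Euler–Lagrange conditions defining the minimizer $\nu$ and maximizer $\nu^*$. The crucial cross-cancellation comes from testing the Neumann variational condition for $\nu^*$ against $\nu - \ell_P \in H_0^{\text{sol}}(D)$, which yields $\fint_D A\nabla \nu^* \cdot (\nabla \nu - P) = 0$ since the divergence-pressure pairing vanishes; combined with the trace identity $\fint_D \nabla \nu = P$ (immediate from $\nu|_{\partial D} = \ell_P$), this collapses all remaining cross terms and produces $F(w^\star) = \mu(D,P) + \mu^*(D,Q) - P\cdot Q = J(D,P,Q)$.

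To show maximality, for any $h \in \mathcal{A}(D)$ I would write $w = w^\star + h$ and expand
$$F(w) - F(w^\star) = -\tfrac{1}{2}\fint_D \nabla h \cdot A\nabla h + \fint_D \big( Q - AP - A\nabla w^\star \big)\cdot \nabla h.$$
Using symmetry of $A$, split the linear term as $\fint_D A(\nabla \nu - P)\cdot \nabla h + \fint_D (Q - A\nabla \nu^*)\cdot \nabla h$. The first summand vanishes by testing the Stokes equation $\nabla\cdot A\nabla h + \nabla \pi_h = 0$ against $\nu - \ell_P \in H_0^{\text{sol}}(D)$, since the boundary integral and the pressure-divergence pairing both disappear. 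The second summand is handled by applying the weak Neumann formulation of $\nu^*$ with test function $h$ and rewriting $\int_{\partial D} nQ\cdot h = \int_D Q \cdot \nabla h$. Combined with the nonpositive quadratic remainder, this yields $F(w) \le F(w^\star)$, with equality iff $\nabla h \equiv 0$, giving uniqueness of the maximizer.

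The main technical obstacle will be in the second orthogonality: unlike the elliptic case, the Stokes pressures $\varsigma^*$ and $\pi_h$ couple nontrivially to $\nabla \cdot h$ whenever $h \notin H^{\text{sol}}(D)$, so closing the identity cleanly will require a consistent gauging of the pressure fields together with the divergence-compatible structure built into the definitions of $\nu$ (namely $\nabla\cdot\nu = \mathrm{Tr}(P)$) and $\nu^*$ (namely $\nabla\cdot\nu^* = 0$).
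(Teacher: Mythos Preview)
Your approach is exactly the AKM19 argument the paper invokes (the paper gives no proof of its own, only a pointer to \cite[Lemma~2.1--2.2]{AKM19}). The computation $F(w^\star)=J(D,P,Q)$ is correct: the Euler--Lagrange identities $2\mu(D,P)=\fint_D P\cdot A\nabla\nu$, $2\mu^*(D,Q)=\fint_D Q\cdot\nabla\nu^*$, the trace identity $\fint_D\nabla\nu=P$, and the cross-cancellation $\fint_D A\nabla\nu^*\cdot(\nabla\nu-P)=0$ (from testing the Neumann variational condition against $\nu-\ell_P\in H_0^{\mathrm{sol}}$) are all valid.

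The gap is in the maximality step, and it is not a gauging issue you can finesse: with the paper's $\mathcal{A}(D)$ (no constraint on $\nabla\cdot u$), the linear term genuinely fails to vanish and the proposition as literally stated is false. Testing the full weak Neumann formulation of $\nu^*$ against an arbitrary $h\in H^1$ gives
\[
\fint_D (Q-A\nabla\nu^*)\cdot\nabla h \;=\; \fint_D \varsigma^*\,\nabla\cdot h,
\]
and $\varsigma^*$ is \emph{uniquely} determined by the boundary condition $n\cdot A\nabla\nu^*+n\varsigma^*=nQ$---there is no free additive constant to gauge. Concretely, for $A=I$ one has $\nu^*=(Q-\tfrac{1}{d}\mathrm{Tr}(Q)I)x$ and $\varsigma^*=\tfrac{1}{d}\mathrm{Tr}(Q)$; then $w=(Q-P)x$ lies in $\mathcal{A}(D)$ (take $\pi=0$) and yields
\[
F(w)=\tfrac12|P|^2+\tfrac12|Q|^2-P\cdot Q \;>\; J(D,P,Q)=\tfrac12|P|^2+\tfrac12|Q|^2-\tfrac{1}{2d}\mathrm{Tr}(Q)^2-P\cdot Q
\]
whenever $\mathrm{Tr}(Q)\neq 0$. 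So the supremum over the stated $\mathcal{A}(D)$ strictly exceeds $J$.

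What makes the AKM19 transplant work is restricting $\mathcal{A}(D)$ to solenoidal Stokes solutions (as the label ``$A$-Stokes functions'' suggests). Under $\nabla\cdot h=0$ the pressure term above vanishes and your argument closes; equivalently, in the substitution $\tilde w=w+\nu$ one needs $\tilde w\in H^{\mathrm{sol}}(D)$ to invoke the definition of $\mu^*$. Note that $w^\star=\nu^*-\nu$ then lies in this class only when $\mathrm{Tr}(P)=0$; for general $P$ one should take the affine class $\{\nabla\cdot w=-\mathrm{Tr}(P)\}$, after which your proof goes through verbatim.
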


\begin{proposition}\label{prop.J2}
	Let $D$ be a bounded Lipschtiz domain in $\R^d$. Then the quantity $J(D,P,Q)$ and its maximizer $v(\cdot,D,P,Q)$ satisfy the following properties:
	\begin{itemize}
		\item Representation as quadratic form. The mapping $(P,Q)\mapsto J(D,P,Q)$ is a quadratic form and there exist $A_D, A_{*D} \in \R^{d^2\times d^2}$ such that
		\begin{equation*}
		\Lambda^{-1} I \le A_{*D} \le A_D \le \Lambda I,
		\end{equation*}
		and
		\begin{equation*}
		J(U,P,Q) = \frac{1}{2}P\cdot A_D P + \frac{1}{2}Q\cdot A_{*D}^{-1} Q - P\cdot Q.
		\end{equation*}
		Moreover, the matrices $A_D$ and $A_{*D}$ are characterized by the following relationships:
		\begin{equation*}
		\begin{aligned}
		A_D P &= - \fint_{D} A\nabla v(\cdot,D,P,0),\\
		A_{*D}^{-1} Q &= \fint_D \nabla v(\cdot,D,0,Q).
		\end{aligned}
		\end{equation*}
		
		\item Subadditivity. Let $\{D_i\}_{i=1}^N \subset D$ be a partition of $D$ of bounded Lipschitz domains, in the sense that $D_i\cap D_j = \emptyset$ if $i\neq j$ and
		$$
		\left|D \setminus \bigcup\limits_{i=1}^N D_i\right| =0.
		$$
		Then, for every $P,Q\in\mathbb{R}^{d\times d}$,
		$$
		J(D,P,Q) \le\sum\limits_{i=1}^N \dfrac{|D_i|}{|D|}J(D_i,P,Q).
		$$ 
		
		\item Quadratic response. For every $w\in \mathcal{A}(D)$ and $P,Q\in \R^{d\times d}$,
		\begin{equation*}
		\begin{aligned}
		&\fint_D \frac{1}{2} \big(\nabla w - \nabla v(\cdot,D,P,Q)\big)\cdot A \big(\nabla w - \nabla v(\cdot,D,P,Q)\big) \\
		&\qquad = J(D,P,Q) - \fint_D \bigg(-\frac{1}{2} \nabla w\cdot A\nabla w - P\cdot A\nabla w + Q\cdot \nabla w \bigg).
		\end{aligned}
		\end{equation*}
	\end{itemize}
\end{proposition}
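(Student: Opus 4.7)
My plan is to deduce every claim from the variational representation in Proposition \ref{prop.J1}, namely
\begin{equation*}
J(D,P,Q) = \sup_{w\in \mathcal{A}(D)} F_{P,Q}(w), \qquad F_{P,Q}(w) := \fint_D \Big( -\tfrac{1}{2}\nabla w\cdot A\nabla w - P\cdot A\nabla w + Q\cdot \nabla w \Big),
\end{equation*}
together with the information that the maximizer is $v(\cdot,D,P,Q) = \nu^*(\cdot,D,Q) - \nu(\cdot,D,P)$. Since $F_{P,Q}(w)$ is jointly quadratic in $(P,Q)$ with the $PQ$ cross term fixed at $-P\cdot Q$ (it does not involve $w$), the supremum over $w$ is also a quadratic polynomial in $(P,Q)$ with the same bilinear part. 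Setting $Q=0$ and $P=0$ separately defines the quadratic forms $J(D,P,0) = \mu(D,P) = \tfrac{1}{2}P\cdot A_D P$ and $J(D,0,Q) = \mu^*(D,Q) = \tfrac{1}{2}Q\cdot A_{*D}^{-1}Q$, which produces the claimed representation.

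For the ellipticity bounds, the upper bound $A_D\le \Lambda I$ comes from the definition of $\mu(D,P)$: inserting $w=0$ into the minimization problem (\ref{def.mu}) gives $\mu(D,P) \le \tfrac{1}{2}\Lambda |P|^2$. The dual bound $A_{*D}\ge \Lambda^{-1}I$ comes from testing the supremum in (\ref{def.mu*}) against a carefully chosen divergence-free affine competitor that realizes a linear lower bound via the ellipticity of $A^{-1}$. The ordering $A_{*D}\le A_D$ follows from the inequality $\mu(D,P)+\mu^*(D,Q)\ge P\cdot Q$ of \cite{AKM19}; taking $Q = A_{*D}P$ and simplifying yields $P\cdot A_D P \ge P\cdot A_{*D} P$ for all $P$. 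The characterization of $A_D$ and $A_{*D}^{-1}$ in terms of averages of $\nabla v$ and $A\nabla v$ is obtained by differentiating $J(D,P,Q)$ with respect to the parameters: the quadratic form gives $\partial_P J = A_D P - Q$ and $\partial_Q J = A_{*D}^{-1}Q - P$, while the envelope theorem applied to the variational formulation gives $\partial_P J = -\fint_D A\nabla v(\cdot,D,P,Q)$ and $\partial_Q J = \fint_D \nabla v(\cdot,D,P,Q)$. Matching the two expressions at $Q=0$ and $P=0$ yields the two formulas.

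Subadditivity is then immediate from the variational representation: any $w\in \mathcal{A}(D)$ restricts to an element of $\mathcal{A}(D_i)$ for each $i$, so
\begin{equation*}
\fint_D F_{P,Q}(w) = \sum_{i=1}^N \frac{|D_i|}{|D|} \fint_{D_i} F_{P,Q}(w|_{D_i}) \le \sum_{i=1}^N \frac{|D_i|}{|D|} J(D_i,P,Q),
\end{equation*}
and taking the supremum over $w$ gives the result. For the quadratic response, I would write an arbitrary competitor $w\in \mathcal{A}(D)$ as $w = v + (w-v)$ with $v=v(\cdot,D,P,Q)$ and expand $F_{P,Q}(w)$; the purely quadratic piece contributes $-\tfrac{1}{2}\fint_D \nabla(w-v)\cdot A\nabla(w-v)$, while the cross term is
\begin{equation*}
\fint_D \Big( -A\nabla v\cdot \nabla(w-v) - P\cdot A\nabla(w-v) + Q\cdot \nabla(w-v) \Big),
\end{equation*}
which I need to show vanishes. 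The anticipated obstacle is exactly this step: unlike the purely elliptic case in \cite{AKM19}, $w-v$ is neither divergence-free nor compactly supported, so one cannot simply test the Euler--Lagrange equation. The resolution is to use that $v$ is an $A$-Stokes function with some pressure $\pi$, integrate by parts, and observe that the pressure contribution $\fint_D \pi\,\nabla\cdot(w-v)$ is compensated by the boundary/pressure terms coming from both $v$ and $w\in\mathcal{A}(D)$; the first-order stationarity of $v$ in the direction of any other $A$-Stokes perturbation is precisely the statement that this sum cancels. Once the cross term is eliminated, the stated identity follows.
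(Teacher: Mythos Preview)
Your approach is exactly what the paper has in mind: it simply cites \cite[Lemmas 2.1 and 2.2]{AKM19} and leaves the adaptation to the Stokes setting to the reader, and your sketch carries out that adaptation correctly in its main lines. Two small points deserve tightening.

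First, the bound $A_{*D}\ge \Lambda^{-1}I$ is equivalent to $\mu^*(D,Q)\le \tfrac{\Lambda}{2}|Q|^2$, which is an \emph{upper} bound on the supremum. Testing a competitor in (\ref{def.mu*}) can only give lower bounds on $\mu^*$, so your stated mechanism is backwards. The correct argument is the pointwise estimate: for any $w\in H^{\text{sol}}(D)$, the ellipticity $A\ge \Lambda^{-1}I$ gives
\[
-\tfrac{1}{2}\nabla w\cdot A\nabla w + Q\cdot\nabla w \;\le\; -\tfrac{1}{2\Lambda}|\nabla w|^2 + Q\cdot\nabla w \;\le\; \tfrac{\Lambda}{2}|Q|^2,
\]
and taking the supremum preserves the inequality.

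Second, for the quadratic response you are making the argument harder than it needs to be. The space $\mathcal{A}(D)$ is linear (the sum of two $A$-Stokes functions is again one, with pressure the sum of the pressures), so for $w,v\in\mathcal{A}(D)$ the whole line $v+t(w-v)$ lies in $\mathcal{A}(D)$. Since $v$ maximizes the concave quadratic $t\mapsto F_{P,Q}(v+t(w-v))$ at $t=0$, the derivative there---your cross term---vanishes automatically. No integration by parts against the pressure and no boundary bookkeeping is required; the ``obstacle'' you anticipate does not arise. You do arrive at this stationarity statement in your last sentence, so the argument closes, but the detour through pressure cancellations is unnecessary.
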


Proposition \ref{prop.J1} and \ref{prop.J2} may be proved by the similar method as (\cite[Lemma 2.1 and 2.2]{AKM19}). Then, following the argument there, we are able to establish all the quantitative estimates of the correctors. Most importantly, we can show the following rate of convergence for the Dirichlet correctors.
\begin{theorem}\label{thm.Dnu}
	Let $s\in (0,d)$. There exists $C(d,\Lambda)<\infty$ such that, for every $m\in \N$ and $\|P\|_2\le 1$,
	\begin{equation*}
	\begin{aligned}
	3^{-m}\|\nabla \nu(\cdot,\square_m,P)-P\|_{\underline{\widehat{H}}^{-1}(\square_m)}&+3^{-m}\|A\nabla \nu(\cdot,\square_m,P)- \widehat{A}P\|_{\underline{\widehat{H}}^{-1}(\square_m)}\\
	& \le C3^{-m\sigma(d-s)} + \mathcal{O}_1(C3^{-ms}).
	\end{aligned}
	\end{equation*}
\end{theorem}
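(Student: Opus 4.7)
The plan is to follow the variational/subadditive approach of Armstrong--Kuusi--Mourrat \cite{AKM19}, adapted from scalar elliptic equations to Stokes systems via the correctors and subadditive quantities $\mu$, $\mu^*$, $J$ introduced above. The central object is $J(\square_m,P,\widehat{A}P)$: by Proposition \ref{prop.J2}, $\widehat{J}(P,\widehat{A}P)=0$, and the quadratic-response identity of that proposition, combined with the fact that the maximizer $v(\cdot,\square_m,P,\widehat{A}P)=\nu^{*}(\cdot,\square_m,\widehat{A}P)-\nu(\cdot,\square_m,P)$, gives
\begin{equation*}
\Lambda^{-1}\fint_{\square_m}\bigl|\nabla \nu^{*}(\cdot,\square_m,\widehat{A}P)-\nabla \nu(\cdot,\square_m,P)\bigr|^{2}\le 2\,J(\square_m,P,\widehat{A}P).
\end{equation*}
Hence any decay of $J(\square_m,P,\widehat{A}P)$ translates immediately into an $L^{2}$ bound on $\nabla \nu(\cdot,\square_m,P)-P$, and an analogous $L^{2}$ bound on the flux fluctuation $A\nabla \nu(\cdot,\square_m,P)-\widehat{A}P$ (modulo the pressure term). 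What then remains is to bound $J$ itself and to upgrade the $L^{2}$ bounds to $\underline{\widehat H}^{-1}$.

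To bound $J(\square_m,P,\widehat{A}P)$ I proceed in two standard steps. First, deterministic decay of $\E[J]$: subadditivity in Proposition \ref{prop.J2} together with $\mathbb{Z}^{d}$-stationarity gives monotonicity in $m$, and partitioning $\square_{m+1}$ into $3^{d}$ triadic translates of $\square_m$ (up to a thin boundary layer so that one may invoke the independence provided by (\ref{def.unit-range-dependence})) yields, via strict convexity of $\mu$ and concavity of $\mu^{*}$, a nonlinear recursion of the form
\begin{equation*}
\E[J(\square_{m+1},P,\widehat{A}P)]\le \E[J(\square_m,P,\widehat{A}P)]-c\bigl(\E[J(\square_m,P,\widehat{A}P)]\bigr)^{2}+\text{error},
\end{equation*}
which iterates to $\E[J(\square_m,P,\widehat{A}P)]\le C\,3^{-2m\sigma(d-s)}$ for some $\sigma=\sigma(d,\Lambda)>0$. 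Second, concentration: expressing $J(\square_m,\cdot,\cdot)$ as an average of contributions from independent subcubes and iterating, McDiarmid/Azuma-type inequalities produce a subexponential tail $J(\square_m,P,\widehat{A}P)-\E[J(\square_m,P,\widehat{A}P)]\le\mathcal{O}_{1}(C\,3^{-2ms})$, with the flexibility in $s\in(0,d)$ coming from a standard chaining argument across intermediate scales.

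Given $J(\square_m,P,\widehat A P)\le C\,3^{-2m\sigma(d-s)}+\mathcal{O}_1(C\,3^{-2ms})$ (and the analogous bound at every sub-scale by stationarity), a multiscale Poincar\'e inequality of the form
\begin{equation*}
\|f\|_{\underline{\widehat H}^{-1}(\square_m)}\lesssim \sum_{n=0}^{m}3^{n-m}\Bigl(3^{-d(m-n)}\sum_{z\in 3^{n}\Z^{d}\cap\square_{m}}\bigl|\langle f\rangle_{z+\square_{n}}\bigr|^{2}\Bigr)^{1/2}
\end{equation*}
aggregates the scale-by-scale $L^{2}$ control of $f=\nabla \nu(\cdot,\square_m,P)-P$ and of $f=A\nabla \nu(\cdot,\square_m,P)-\widehat A P$ into the required $\underline{\widehat H}^{-1}$ estimate; the prefactor $3^{-m}$ on the left-hand side of the theorem is precisely what absorbs the $3^{n-m}$ weight under the sum.

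The main obstacle I anticipate, specific to the Stokes setting, is the pressure $\varsigma$ accompanying the Dirichlet corrector in (\ref{def.dirichlet-corrector}). Unlike the scalar elliptic case, $A\nabla \nu$ is not itself divergence-free --- only the Stokes flux $A\nabla \nu+\varsigma I$ is weakly divergence-free against solenoidal test vector fields. This forces every variational/duality identity used in the AKM machinery to be tested against elements of $H^{\text{sol}}_0$, and pressure averages must be tracked separately. Lemma \ref{lem.Nabla} is the key technical tool here: it transfers pressure oscillations into $H^{-1}$-norm control of $\nabla \varsigma$ and vice versa, thereby recovering full duality on the whole $H^{1}$ rather than only on $H^{\text{sol}}_0$. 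Once this bookkeeping is in place, the AKM iteration and the multiscale Poincar\'e inequality apply essentially verbatim to produce the stated bound.
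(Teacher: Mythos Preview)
Your proposal is correct and follows essentially the same approach as the paper: the paper does not write out a proof of this theorem but simply states that it follows by adapting the argument of \cite{AKM19} to the Stokes setting via the subadditive quantities $\mu,\mu^*,J$ of Propositions~\ref{prop.mu}--\ref{prop.J2}, which is exactly the program you outline (subadditivity plus stationarity for decay of $\E[J]$, independence for concentration, multiscale Poincar\'e for the $\underline{\widehat H}^{-1}$ upgrade). Your discussion of the pressure obstacle is accurate but slightly anticipatory---in the paper the pressure estimate is deferred to a separate statement (Theorem~\ref{thm.corrector.Pi}) handled via Lemma~\ref{lem.DwDpi}, rather than inside the proof of Theorem~\ref{thm.Dnu} itself.
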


Note that the pressure term does not appear explicitly in the variational method of Stokes system and therefore no estimate of the pressure can be derived directly. In Lemma \ref{lem.DwDpi}, we will see that a weaker estimate for the pressure may be reduced to the estimate of the displacement.

%
%

Next, we construct the ``finite volume corrector'' (an approximation of the true corrector) which is crucial in the study of the convergence rates in homogenization. For each $n\in \N$ and $P\in \R^{d\times d}$, define $(\Phi_{n,j}^\beta,\Pi_{n,j}^\beta)$ by
\begin{equation*}
\Phi_{n,j}^\beta(x) = \nu(x,\square_n,P_j^\beta) - P_j^\beta x, \qquad \Pi_{n,j}^\beta(x) = \pi(x,\square_n,P_j^\beta),
\end{equation*}
where $P_j^\beta = e^\beta \otimes e_j$.
Observe that $(\Phi_{n,j}^\beta, \Pi_{n,j}^\beta)$ solves
\begin{equation}
\left\{
\begin{aligned}
\nabla\cdot(A\nabla \Phi_{n,j}^\beta )+ \nabla \Pi_{n,j}^\beta &=-\nabla\cdot(A P_j^\beta ) &\qquad\text{ in }\square_n,\\
\nabla \cdot \Phi_{n,j}^\beta  &=0 &\qquad\text{ in }\square_n,\\
\Phi_{n,j}^\beta &= 0 &\qquad\text{ on }\partial \square_n.
\end{aligned}
\right.
\end{equation}

The following theorem is a rescaling of Theorem \ref{thm.Dnu}.
\begin{theorem}\label{thm.corrector.Phi}
	Fix $s\in (0,d)$. There exists $\sigma = \sigma(d,\Lambda) >0$ so that
	\begin{equation*}
	\e \norm{\Phi_{n,j}^\beta }_{L^2(\e\square_n)} + \norm{A(\cdot/\e) ( \nabla \Phi_{n,j}^\beta(\cdot/\e) +  P_j^\beta) - \widehat{A} P_j^\beta }_{H^{-1}(\e\square_n)} \le C\e^{\sigma(d-s)} + C\e^s \mathcal{O}_1(1),
	\end{equation*}
	where $\e = 3^{-n}$.
\end{theorem}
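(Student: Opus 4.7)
The plan is to deduce the bound from Theorem~\ref{thm.Dnu} by a scaling change of variables, applied with $m = n$ and $P = P_j^\beta$ (for which $\norm{P}_2 = 1$). Write $\e = 3^{-n}$ so that $\e\square_n = \square_0 = (-\tfrac12,\tfrac12)^d$, and for any function $u$ on $\square_n$ set $u_\e(y) := u(y/\e)$. Two elementary changes of variables are central:
\[
\norm{u_\e}_{L^2(\e\square_n)} = \Big(\average_{\square_n}|u|^2\Big)^{1/2},
\qquad
\norm{g_\e}_{H^{-1}(\e\square_n)} = \e\,\norm{g}_{\underline{\widehat{H}}^{-1}(\square_n)}.
\]
The first is immediate since $|\e\square_n| = 1$; the second follows by rescaling a test function $\phi \in H^1_0(\e\square_n)$ to $\tilde\phi(x) := \phi(\e x) \in H^1_0(\square_n)$, which sends the normalization $\norm{\nabla\phi}_{L^2(\e\square_n)}\le 1$ to $(\average_{\square_n}|\nabla\tilde\phi|^2)^{1/2}\le\e$, and unpacking the duality produces the factor $\e$.

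Now apply Theorem~\ref{thm.Dnu}, using the identity $\nabla\Phi_{n,j}^\beta = \nabla\nu(\cdot,\square_n,P_j^\beta) - P_j^\beta$, to obtain
\[
\e\,\norm{\nabla\Phi_{n,j}^\beta}_{\underline{\widehat{H}}^{-1}(\square_n)} + \e\,\norm{A\nabla\nu - \widehat{A}P_j^\beta}_{\underline{\widehat{H}}^{-1}(\square_n)} \le C\e^{\sigma(d-s)} + \mathcal{O}_1(C\e^s).
\]
For the flux term, put $g := A\nabla\nu - \widehat{A}P_j^\beta$; then $g_\e(y) = A(y/\e)\big(\nabla\Phi_{n,j}^\beta(y/\e) + P_j^\beta\big) - \widehat{A}P_j^\beta$, and the second scaling identity above converts the $\underline{\widehat{H}}^{-1}(\square_n)$ bound into the desired $H^{-1}(\e\square_n)$ bound. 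For the $L^2$ term, since $\Phi_{n,j}^\beta \in H^1_0(\square_n)$ the scale-invariant form of Lemma~\ref{lem.Nabla} (applied componentwise) gives
\[
\Big(\average_{\square_n}\big|\Phi_{n,j}^\beta - \average_{\square_n}\Phi_{n,j}^\beta\big|^2\Big)^{1/2} \le C\,\norm{\nabla\Phi_{n,j}^\beta}_{\underline{\widehat{H}}^{-1}(\square_n)},
\]
which, combined with a control of the spatial mean $|\average_{\square_n}\Phi_{n,j}^\beta|$ at the same rate (via a standard comparison with the stationary, mean-zero infinite-volume corrector as in \cite{AKM19}) and the first scaling identity above, yields $\e\,\norm{\Phi_{n,j}^\beta(\cdot/\e)}_{L^2(\e\square_n)}\le C\e^{\sigma(d-s)} + \mathcal{O}_1(C\e^s)$.

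The main obstacle is not the rescaling itself but rather (i) pinning down the exact factor of $\e$ in the passage between the scale-invariant $\underline{\widehat{H}}^{-1}$ norm on $\square_n$ and the ordinary $H^{-1}$ norm on $\e\square_n$, and (ii) controlling $|\average_{\square_n}\Phi_{n,j}^\beta|$ at the same stochastic rate, which relies on stationarity and ergodicity rather than on pure analysis. The solenoidal constraint of the Stokes system plays no role in the rescaling itself; it is already built into the construction of $\Phi_{n,j}^\beta$ and into Theorem~\ref{thm.Dnu}.
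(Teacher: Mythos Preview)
Your approach is correct and matches the paper's: the theorem is obtained by rescaling Theorem~\ref{thm.Dnu} with $m=n$, $P=P_j^\beta$, and $\e=3^{-n}$, exactly as you describe. The scaling identities you wrote down are the right ones, and the flux term is handled verbatim.

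The one point worth sharpening is your treatment of the $L^2$ term. You split off the mean $|\average_{\square_n}\Phi_{n,j}^\beta|$ and invoke a comparison with the infinite-volume corrector and stationarity/ergodicity to control it. This detour is unnecessary (and a bit imprecise, since the infinite-volume corrector is not mean-zero in general). Because $\Phi_{n,j}^\beta\in H^1_0(\square_n)$, the \emph{full} $\underline{L}^2$ norm---mean included---is controlled deterministically by $\norm{\nabla\Phi_{n,j}^\beta}_{\underline{\widehat{H}}^{-1}(\square_n)}$: solve $-\Delta\psi=\Phi_{n,j}^\beta$ in $\square_n$ with $\psi=0$ on $\partial\square_n$, write
\[
\average_{\square_n}|\Phi_{n,j}^\beta|^2=\average_{\square_n}\nabla\Phi_{n,j}^\beta\cdot\nabla\psi\le \norm{\nabla\Phi_{n,j}^\beta}_{\underline{\widehat{H}}^{-1}(\square_n)}\norm{\nabla\psi}_{\underline{\widehat{H}}^{1}(\square_n)},
\]
and use the scale-invariant elliptic estimate $\norm{\nabla\psi}_{\underline{\widehat{H}}^{1}(\square_n)}\le C\norm{\Phi_{n,j}^\beta}_{\underline{L}^2(\square_n)}$ to close. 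No stochastic input is needed here; the randomness enters only through Theorem~\ref{thm.Dnu}.
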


Again, the above theorem does not include the estimate of the pressure $\Pi_n$. The following lemma reduces the estimate of $\Pi_n$ to the estimate of $\Phi_n$.
\begin{lemma}\label{lem.DwDpi}
	Let $D$ be a bounded Lipschitz domain. Let $W \in L^2(D;\R^{d\times d})$ and $ \pi\in L^2_0(D)$ satisfy $\nabla\cdot W + \nabla \pi = 0$ in $D$. Then, there exists $\delta = \delta(d,\Lambda, \text{Lip}(D))>0$ so that
	\begin{equation*}
	\norm{\pi}_{\underline{H}^{-1}(D)} \le C\norm{W}_{\underline{H}^{-1}(D)}^\delta \norm{W}_{\underline{L}^2(D)}^{1-\delta},
	\end{equation*}
	where $C$ depends only on $d,\Lambda$ and $\text{Lip}(D)$.
\end{lemma}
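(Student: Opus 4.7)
The plan is to combine a Meyers-type $L^p$ estimate for the map $W\mapsto\pi$ with a duality argument based on the Bogovski operator and a two-scale decomposition of the test function.

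First, I would upgrade the $L^2$ bound to an $L^p$ bound for some $p>2$. Starting from $\|\pi\|_{L^2(D)} \le C\|W\|_{L^2(D)}$ (immediate from Lemma \ref{lem.Nabla} applied to $\nabla\pi = -\nabla\cdot W$), Gehring's reverse Hölder inequality yields that there exists $p_0 = p_0(d,\Lambda,\mathrm{Lip}(D)) > 2$ with
$$\|\pi\|_{L^p(D)} \le C\|W\|_{L^p(D)} \quad\text{for } p\in[p_0',p_0].$$
The same reverse Hölder machinery also produces a Meyers-type Bogovski operator on Lipschitz domains: for any $g\in L^{p_0}_0(D)$ there exists $u\in W^{1,p_0}_0(D;\R^d)$ with $\nabla\cdot u = g$ and $\|u\|_{W^{1,p_0}_0}\le C\|g\|_{L^{p_0}}$.

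Next, to bound $\|\pi\|_{\underline{H}^{-1}(D)}$ I use its defining duality. Given $v\in H^1_0(D)$ with $\|v\|_{\underline{H}^1(D)}\le 1$, set $\tilde v = v - \fint_D v$, and let $u\in W^{1,p_0}_0(D;\R^d)$ satisfy $\nabla\cdot u = \tilde v$. Since $\pi\in L^2_0$, integration by parts in the equation $\nabla\cdot W+\nabla\pi = 0$ against $u$ yields
$$\int_D \pi\,v \;=\; \int_D \pi\,\tilde v \;=\; -\int_D W:\nabla u.$$

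The heart of the argument is to estimate $\int_D W:\nabla u$ in an interpolated fashion. Fix a mesoscale $t\in(0,1)$ and decompose $\nabla u = \Phi_t + R_t$, where $\Phi_t := \eta_t\ast(\chi_t\nabla u) \in C^\infty_c(D;\R^{d\times d})\subset H^1_0$ is a mollified and cut-off approximation ($\chi_t$ vanishes within a $t$-neighborhood of $\partial D$, and $\eta_t$ is a mollifier at scale $t$), and $R_t = \nabla u - \Phi_t$ is the remainder. Standard mollification gives $\|\Phi_t\|_{\underline{H}^1}\le Ct^{-1}\|\nabla u\|_{\underline{L}^2}$. For the remainder, the boundary-layer piece $(1-\chi_t)\nabla u$ is controlled via Hölder using the Meyers higher integrability of $\nabla u$ (from Step 1 applied to the Bogovski operator) together with the thin-shell measure $|\{\mathrm{dist}(\cdot,\partial D)\le t\}|\lesssim t$, yielding $\|(1-\chi_t)\nabla u\|_{\underline{L}^2}\le Ct^\gamma\|\nabla u\|_{\underline{L}^{p_0}}$ with $\gamma=\tfrac12-\tfrac{1}{p_0}>0$; the interior mollification error is controlled by local $H^2$ regularity of the Bogovski solution away from $\partial D$. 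Pairing and using the $\underline{H}^{-1}$-duality on $\Phi_t\in H^1_0$ and $\underline{L}^2$-duality on $R_t$,
$$\Big|\fint_D W:\nabla u\Big| \le C\|W\|_{\underline{H}^{-1}}\|\Phi_t\|_{\underline{H}^1}+C\|W\|_{\underline{L}^2}\|R_t\|_{\underline{L}^2} \le C\bigl(t^{-1}\|W\|_{\underline{H}^{-1}}+t^\gamma\|W\|_{\underline{L}^2}\bigr),$$
where the $\nabla u$ factors have been absorbed into constants using Sobolev embedding (arranging $p_0\le 2d/(d-2)$ so that $\|v\|_{\underline{L}^{p_0}}\lesssim\|v\|_{\underline{H}^1}\le 1$). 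Optimizing $t$ so that the two terms balance (i.e., $t^{1+\gamma}\sim\|W\|_{\underline{H}^{-1}}/\|W\|_{\underline{L}^2}$) gives the stated interpolation inequality with $\delta = \gamma/(1+\gamma)>0$.

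The main obstacle is the quantitative $\underline{L}^2$-control of the mollification error $R_t$ when $\nabla u$ has only $L^{p_0}$ integrability. The boundary strip is treated cleanly by Meyers + Hölder, but the interior mollification piece requires a localized $H^2$ estimate for the Bogovski solution whose constants depend polynomially on the distance to $\partial D$, so that the tradeoff with the mesoscale $t$ remains effective; verifying this compatibility is the key technical step.
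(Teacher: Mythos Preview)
Your overall strategy---dualize, write $\int_D \pi\,\phi = -\int_D W:\nabla u$ for a divergence-inverting $u$, split $\nabla u$ into an interior $H^1_0$-piece paired via $\underline H^{-1}$-duality and a thin boundary-layer piece paired via $L^2$ and controlled by higher integrability, then optimize the scale $t$---is exactly the paper's approach. The exponent you obtain, $\delta=\gamma/(1+\gamma)$ with $\gamma=\tfrac12-\tfrac1{p_0}$, also matches.

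The substantive difference, and the place where your argument has a real gap, is the choice of the divergence-inverting operator. You take $u$ to be the Bogovski solution of $\nabla\cdot u=\tilde v$. The Bogovski operator maps $L^p_0\to W^{1,p}_0$, and on star-shaped pieces it raises regularity by one derivative for inputs in $W^{s,p}_0$; but for $\tilde v\in H^1(D)$ that does not vanish at $\partial D$ there is no interior $H^2$ estimate of the form you need, because the Bogovski integral sees the boundary values of $\tilde v$. The ``localized $H^2$ estimate for the Bogovski solution'' you flag as the main obstacle is not available in general, so your control of the interior mollification remainder $R_t$ breaks down.

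The paper sidesteps this by inverting the divergence with the Stokes system instead: solve $\Delta u+\nabla p=0$, $\nabla\cdot u=\phi$, $u=0$ on $\partial D$. This is an elliptic system, so interior regularity gives $u\in H^2_{\mathrm{loc}}(D)$ with
\[
\|\nabla^2 u\|_{L^2(D')}\le \frac{C}{\mathrm{dist}(D',\partial D)}\,\|\phi\|_{H^1(D)},
\]
and the global Meyers estimate for Stokes (Theorem~\ref{thm.Meyers.Stokes}) gives $\nabla u\in L^{p_0}(D)$. With $H^2_{\mathrm{loc}}$ in hand one can dispense with the mollifier entirely: a cutoff $\eta_t$ alone produces $\eta_t\nabla u\in H^1_0(D)$ with $\|\eta_t\nabla u\|_{H^1}\le Ct^{-1}\|\phi\|_{H^1}$, and the boundary layer $(1-\eta_t)\nabla u$ is handled exactly as you describe via H\"older and the $L^{p_0}$ bound. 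Your Step~1 (a Meyers bound for $\pi$ in terms of $W$) is not actually needed; what is needed is the $L^{p_0}$ integrability of $\nabla u$, which comes from the Stokes Meyers estimate applied to the auxiliary problem, not to the original equation for $\pi$.
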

\begin{proof}
	This can be proved by duality and the Meyers' estimate. By rescaling, we may assume $|D| = 1$. Let $\phi \in H^1(D)$ with $\int_D \phi = 0$ and let $(u,p)$ solve
	\begin{equation}
	\left\{
	\begin{aligned}
	\Delta u + \nabla p &= 0 \qquad &\txt{in }& D, \\
	\nabla\cdot u & = \phi \qquad &\txt{in }& D, \\
	u & = 0 \qquad &\txt{on }& \partial D.
	\end{aligned}
	\right.
	\end{equation}
	Note that $\phi \in H^1(D)$ implies $\phi\in L^{p_1}$ where $p_1 = 2d/(d-2)$. Since $D$ is Lipschitz, by the global version of Theorem \ref{thm.Meyers.Stokes}, there exists $p_0\in (2,p_1]$ so that
	\begin{equation}\label{est.Dup.phi}
	\norm{\nabla u}_{L^{p_0}(D)} + \norm{p}_{L^{p_0}(D)} \le C\norm{\phi}_{L^{p_0}(D)} \le C\norm{\phi}_{H^1(D)}.
	\end{equation}
	
	On the other hand, the interior $H^2$ regularity implies $u\in H^2_{\text{loc}}(D;\R^d)$. Moreover, if $D'\subset\!\subset D$
	\begin{equation}\label{est.D2u}
	\norm{\nabla^2 u}_{L^2(D')} \le \frac{C}{\text{dist}(D', \partial D)} \norm{\phi}_{H^1(D)}.
	\end{equation}
	
	Now, given any $t\in (0,1)$, define $D_t = \{ x\in D: \text{dist}(x,\partial D) \ge t \}$. Let $\eta_t\in C_0^\infty(D)$ be a cut-off function so that $\eta_t(x) = 0$ if $x\in D\setminus D_t$ and $\eta_t(x) = 1$ if $x\in D_{2t}$. Moreover, $|\nabla \eta_t(x)| \le Ct^{-1}$. For any matrix $P\in \R^{d\times d}$, the integration by parts yields
	\begin{equation*}
	\begin{aligned}
	\int_{D} \pi \phi &= \int_{D} \pi \nabla\cdot u \\
	& = -\int_{D} \nabla \pi \cdot u \\
	& = \int_{D} (\nabla\cdot W) \cdot u \\
	& = -\int_{D_t} W \cdot \eta_t \nabla u - \int_{D\setminus D_{2t} } W \cdot (1-\eta_t) \nabla u.
	\end{aligned}
	\end{equation*}
	The first integral is bounded by
	\begin{equation*}
	\norm{W}_{H^{-1}(D)} \norm{\eta_t \nabla u}_{H^1(D_t)} \le Ct^{-1} \norm{W}_{H^{-1}(D)} \norm{\phi}_{H^1(D)},
	\end{equation*}
	where we have used Theorem \ref{thm.Stokes.Energy} and (\ref{est.D2u}). The second integral is bounded by
	\begin{equation*}
	\begin{aligned}
	\norm{W}_{L^2(D)} \norm{\nabla u}_{L^2(D\setminus D_{2t})} &\le C|D\setminus D_{2t}|^{\frac{1}{2} - \frac{1}{p_0} } \norm{W}_{L^2(D)} \norm{\nabla u}_{L^{p_0}(D)} \\
	& \le Ct^{\frac{1}{2} - \frac{1}{p_0}} \norm{W}_{L^2(D)} \norm{\phi}_{H^1(D)},
	\end{aligned}
	\end{equation*}
	where we have used (\ref{est.Dup.phi}) in the last inequality. It follows that
	\begin{equation}\label{est.pi.dual}
	\bigg| \int_{D} \pi \phi \bigg| \le \Big( Ct^{-1} \norm{W}_{H^{-1}(D)} + Ct^{\frac{1}{2} - \frac{1}{p_0}} \norm{W}_{L^2(D)} \Big) \norm{\phi}_{H^1(D)}.
	\end{equation}
	Now choose
	\begin{equation*}
	t = \bigg( \frac{\norm{W}_{H^{-1}(D)}}{\norm{W}_{L^2(D)}} \bigg)^{\frac{1}{1+\frac{1}{2} - \frac{1}{p_0}}}, \qquad \beta = \frac{ \frac{1}{2} - \frac{1}{p_0} }{ 1+\frac{1}{2} - \frac{1}{p_0} }.
	\end{equation*}
	Then we obtain the desired estimate from (\ref{est.pi.dual}) by duality.
\end{proof}

\begin{theorem}\label{thm.corrector.Pi}
	Fix $s\in (0,d)$. There exists $\sigma >0, \delta>0$ so that
	\begin{equation*}
	\norm{\Pi_{n,j}^\beta(\cdot/\e) }_{H^{-1}(\e\square_n)} \le C\e^{\sigma(d-s)} + C(\e^s O_1(1))^\delta,
	\end{equation*}
	where $\e = 3^{-n}$.
\end{theorem}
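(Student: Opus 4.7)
The plan is to reduce Theorem \ref{thm.corrector.Pi} to Theorem \ref{thm.corrector.Phi} via the pressure-from-divergence estimate of Lemma \ref{lem.DwDpi}. After rescaling the corrector equation to the unit cube $\e\square_n=(-\tfrac12,\tfrac12)^d$ (since $\e=3^{-n}$), set
\begin{equation*}
W(x):=A(x/\e)\bigl(\nabla \Phi_{n,j}^\beta(x/\e)+P_j^\beta\bigr)-\widehat{A}P_j^\beta,\qquad \pi(x):=\Pi_{n,j}^\beta(x/\e)-\fint_{\e\square_n}\Pi_{n,j}^\beta(\cdot/\e).
\end{equation*}
Since $\widehat{A}P_j^\beta$ is constant and the rescaled corrector system reads $\nabla\cdot(A(\cdot/\e)(\nabla\Phi_{n,j}^\beta(\cdot/\e)+P_j^\beta))+\nabla\Pi_{n,j}^\beta(\cdot/\e)=0$, we obtain the structural identity $\nabla\cdot W+\nabla \pi=0$ in $\e\square_n$ with $\pi\in L^2_0(\e\square_n)$. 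Crucially $|\e\square_n|=1$, so the scale-invariant norms $\underline H^{-1}$ and $\underline L^2$ coincide with the ordinary $H^{-1}$ and $L^2$ norms, which removes any scaling bookkeeping.

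Next I collect the two inputs needed to feed into Lemma \ref{lem.DwDpi}. Theorem \ref{thm.corrector.Phi} yields directly
\begin{equation*}
\norm{W}_{H^{-1}(\e\square_n)} \le C\e^{\sigma(d-s)}+C\e^{s}\mathcal{O}_1(1),
\end{equation*}
while the standard energy estimate for the Dirichlet corrector $\Phi_{n,j}^\beta$ on $\square_n$ gives $\fint_{\square_n}|\nabla\Phi_{n,j}^\beta+P_j^\beta|^2\le C$, which after change of variables translates into $\norm{W}_{L^2(\e\square_n)}\le C$. Because $\e\square_n$ has a uniformly bounded Lipschitz constant, Lemma \ref{lem.DwDpi} applies and produces
\begin{equation*}
\norm{\pi}_{H^{-1}(\e\square_n)} \le C\norm{W}_{H^{-1}(\e\square_n)}^{\delta}\norm{W}_{L^2(\e\square_n)}^{1-\delta} \le C\norm{W}_{H^{-1}(\e\square_n)}^{\delta},
\end{equation*}
for some $\delta=\delta(d,\Lambda)\in(0,1)$.

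Finally, using the elementary inequality $(a+b)^{\delta}\le a^{\delta}+b^{\delta}$ for $\delta\le 1$ on the bound for $\norm{W}_{H^{-1}(\e\square_n)}$, and relabelling $\sigma\delta\to\sigma$, one arrives at the stated estimate
\begin{equation*}
\norm{\Pi_{n,j}^\beta(\cdot/\e)}_{H^{-1}(\e\square_n)} \le C\e^{\sigma(d-s)}+C\bigl(\e^{s}\mathcal{O}_1(1)\bigr)^{\delta}.
\end{equation*}
The only subtle point in the execution is Lemma \ref{lem.DwDpi} itself, whose proof via duality and Meyers' estimate is the real engine converting a weak flux bound into a weak pressure bound; once that lemma is in hand, the argument above is essentially a bookkeeping of norms. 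The main conceptual obstacle is that the pressure $\Pi_{n,j}^\beta$ is never directly controlled by the variational framework used to define $\mu$ and $\mu^*$, which is exactly why the detour through $W$ and the interpolation between $H^{-1}$ and $L^2$ is necessary, and why the exponent $\delta$ (rather than $1$) appears in the second term.
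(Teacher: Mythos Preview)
Your proposal is correct and takes essentially the same approach as the paper, which simply states that the result follows from Theorem \ref{thm.corrector.Phi} and Lemma \ref{lem.DwDpi}. You have correctly filled in the details: the identification of $W$ and $\pi$ satisfying $\nabla\cdot W+\nabla\pi=0$, the observation that $|\e\square_n|=1$ removes any discrepancy between scale-invariant and ordinary norms, the energy bound $\norm{W}_{L^2}\le C$, and the final subadditivity step $(a+b)^\delta\le a^\delta+b^\delta$ with relabelling $\sigma\delta\to\sigma$.
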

\begin{proof}
	This follows from Theorem \ref{thm.corrector.Phi} and Lemma \ref{lem.DwDpi}.
\end{proof}

\subsection{Convergence rates}

With Theorem \ref{thm.corrector.Phi} and \ref{thm.corrector.Pi}, we are able to prove the an algebraic rate of convergence for Stokes system in Lipschitz domains.

\begin{theorem}\label{thm.Stokes.rate}
	Let $\delta>0, s\in (0,d)$, and $D$ be a bounded Lipschitz domain. There exist exponents $\alpha, \beta>0$, a constant $C>0$ (depending only on $\delta, s, d, \Lambda$ and $\text{Lip}(D)$) and a random variable $\X = \X_s:\Omega \to [0,\infty)$ satisfying
	\begin{equation*}
	\X \le \mathcal{O}_1(C),
	\end{equation*}
	such that for every $\varepsilon \in (0,1]$ and the solution pairs $(u^\varepsilon,p^\e),\  (u^0,p^0)$ of the Stokes systems
	\begin{equation*}
	\left\{
	\begin{aligned}
	\nabla\cdot(A^\e \nabla u^\e)+\nabla p^\e &=0 &\text{ in }D,\\
	\nabla\cdot u^\e &=\Ag{f}_D &\text{ in }D,\\
	u^\e & = f &\text{ on }\partial D,\\
	\end{aligned}
	\right.
	\end{equation*}
	and
	\begin{equation*}
	\left\{
	\begin{aligned}
	\nabla\cdot(\widehat{A} \nabla u^0)+\nabla p^0 &=0 &\text{ in }D,\\
	\nabla\cdot u^0 &=\Ag{f}_D &\text{ in }D,\\
	u^0 & = f &\text{ on }\partial D,\\
	\end{aligned}
	\right.
	\end{equation*}
	with $u_0\in W^{1,2+\delta}(D;\R^d)$, we have
	\begin{equation*}
	\begin{aligned}
	&\|u^\varepsilon -u^0\|_{L^2(D)} +\|\nabla u^\varepsilon-\nabla u^0\|_{H ^{-1}(D)}+\|A^\e\nabla u^\e-\widehat{A}\nabla u^0\|_{H^{-1}(D)}+\|p^\e-p^0\|_{H^{-1}(D)}\\
	&\quad\le C\Big( \e^{\beta(d-s)} + (\e^s \X)^\alpha \Big) \|\nabla u_0\|_{L^{2+\delta}(D)}.
	\end{aligned}
	\end{equation*}
\end{theorem}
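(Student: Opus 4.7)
The plan is to carry out a quantitative two-scale expansion adapted to the Stokes system, following the blueprint of the elliptic case in \cite{AKM19} but with additional care for the pressure and the solenoidal constraint. Fix a mesoscale $\ell \in (\e,1)$ to be optimized, let $n\in\N$ with $3^{-n}$ comparable to $\e$, let $S_\ell$ denote convolution with a standard mollifier at scale $\ell$, and let $\eta_\ell$ be a smooth cutoff that equals $1$ on $\{x\in D : \txt{dist}(x,\partial D) > 2\ell\}$ and vanishes within $\ell$ of $\partial D$. Since the finite-volume correctors $\Phi_{n,j}^\beta$ are divergence-free (as $\nabla\cdot\Phi_{n,j}^\beta = 0$ once the affine part $P_j^\beta x$ is removed, by the constraint in (\ref{def.dirichlet-corrector})), I set
\begin{equation*}
w^\e := u^0 + \e\, \eta_\ell(x)\, \Phi_{n,j}^\beta(x/\e)\, S_\ell(\partial_\beta u^{0,j})(x),
\qquad
q^\e := p^0 + \eta_\ell(x)\, \Pi_{n,j}^\beta(x/\e)\, S_\ell(\partial_\beta u^{0,j})(x).
\end{equation*}

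A direct computation yields $\nabla\cdot(A^\e\nabla w^\e) + \nabla q^\e = \nabla\cdot R^\e$ and $\nabla\cdot w^\e - \Ag{f}_D = r^\e$, where $R^\e$ and $r^\e$ split into: (i) the \emph{flux-corrector} term $\eta_\ell\,(A^\e(I+\nabla\Phi_n(\cdot/\e)) - \widehat{A})\, S_\ell\nabla u^0$; (ii) cutoff commutators involving $\nabla\eta_\ell$, supported in the boundary layer $\{x:\txt{dist}(x,\partial D)<2\ell\}$; and (iii) a lower-order interior term of pointwise size $O(\e/\ell)|\nabla u^0|$. Subtracting the true system for $(u^\e,p^\e)$ and correcting the residual divergence via a Bogovskii-type right inverse (available on Lipschitz domains by the surjectivity of $\nabla\cdot$ implicit in Lemma~\ref{lem.Nabla}), the difference $(u^\e - w^\e, p^\e - q^\e)$ solves a Stokes system with zero boundary data, and Theorem~\ref{thm.Stokes.Energy} delivers an $H^1 \times L^2$-bound in terms of $\|R^\e\|_{L^2} + \|r^\e\|_{L^2}$.

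It remains to estimate each piece. For (i), Theorem~\ref{thm.corrector.Phi} gives $H^{-1}$-smallness of the flux corrector of order $\e^{\sigma(d-s)} + \e^s \X$; pairing this against the smooth factor $\eta_\ell S_\ell\nabla u^0$ (whose higher Sobolev norms scale as negative powers of $\ell$) yields a contribution of the form $\ell^{-m}\bigl(\e^{\sigma(d-s)} + \e^s\X\bigr)\|\nabla u^0\|_{L^2}$ for some $m = m(d)$. For (ii), H\"older's inequality combined with the Meyers' estimate $u^0\in W^{1,2+\delta}(D)$ confines the boundary-layer contribution to $\ell^{\delta/(2+\delta)}\|\nabla u^0\|_{L^{2+\delta}(D)}$. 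Optimizing $\ell = \ell(\e)$ to balance these two contributions produces algebraic rates $\e^{\beta(d-s)} + (\e^s\X)^\alpha$ with explicit exponents $\alpha,\beta > 0$; the $L^2$-estimate on $u^\e - u^0$ and the $H^{-1}$-estimate on $\nabla u^\e - \nabla u^0$ then follow from the $H^1$-bound together with $\|w^\e - u^0\|_{L^2} = O(\e)$. Finally, the pressure estimate comes from Lemma~\ref{lem.DwDpi} applied to $W := A^\e\nabla u^\e - \widehat{A}\nabla u^0$ and $\pi := p^\e - p^0$, which satisfy $\nabla\cdot W + \nabla\pi = 0$: interpolating the just-established $H^{-1}$-bound on $W$ against its uniform $L^2$-bound from the energy estimate yields the required $H^{-1}$-bound on $p^\e - p^0$.

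The main obstacle will be the interplay of three inconveniences absent in the purely elliptic theory: the Stokes constraint $\nabla\cdot v = \Ag{f}_D$ is only approximately preserved by the two-scale expansion, and the divergence-correction step is where the Lipschitz character of $D$ genuinely enters; the Lipschitz boundary forbids any regularity of $u^0$ beyond Meyers', which caps the boundary-layer gain at $\ell^{\delta/(2+\delta)}$ and forces the appearance of the $W^{1,2+\delta}$-norm on the right-hand side; and tracking the stochastic exponent $\mathcal{O}_s(\cdot)$ through products with deterministic mesoscale factors requires systematic use of the algebra of $\mathcal{O}_s$ random variables in the optimization. Once these are handled, the choice of $\ell = \ell(\e)$ and the bookkeeping of exponents is routine.
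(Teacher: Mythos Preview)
Your proposal is correct and follows essentially the same approach that the paper indicates: a quantitative two-scale expansion in the spirit of \cite[Theorem~1.17]{AKM19}, adapted to the Stokes setting via the divergence-free finite-volume correctors of Theorem~\ref{thm.corrector.Phi}, a Bogovski\u{\i}-type correction of the residual divergence, and the pressure bound from Lemma~\ref{lem.DwDpi}. The paper itself omits the detailed argument, citing precisely \cite{AKM19} and \cite{Gu16} for these ingredients, so your sketch is an accurate fleshing-out of what the authors intend.
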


The proof of Theorem \ref{thm.Stokes.rate} uses the same idea from \cite[Theorem 1.17]{AKM19}, as well as some useful computation for Stokes system from \cite{Gu16}. We omit the detailed proof.

\section{Homogenization of Elasticity System}

In this section, we will establish the convergence rate for the linear system of nearly incompressible elasticity.

\subsection{Reduction to constant $\lambda$}
Let the probability measure space $(\Omega,\mathcal{F},\mathbb{P})$ satisfy (\ref{def.Omega}), \ref{def.stationarity} and (\ref{def.unit-range-dependence}). Let $u^\e_\lambda$ be the weak solution of
\begin{equation*}
\left\{
\begin{aligned}
\nabla\cdot (A^\e \nabla u^\e_\lambda) + \nabla (\lambda^\e \nabla\cdot u^\e_\lambda) &= 0 \qquad &\txt{in } &D, \\
u^\e_\lambda  &= f \qquad &\txt{on } &\partial D.
\end{aligned}
\right.
\end{equation*}
In order to reduce the system to the case with constant $\lambda$, use the splitting technique in (\ref{eq.reduction}) and (\ref{eq.splitting}), and write (\ref{eq.elasticity.Le}) as
\begin{equation}\label{eq.Le.const}
\left\{
\begin{aligned}
\nabla\cdot (\widetilde{A}^\e \nabla u^\e_\lambda) + \lambda_0 \nabla (\nabla\cdot u^\e_\lambda) &= 0 \qquad &\txt{in } &D, \\
u^\e_\lambda  &= f \qquad &\txt{on } &\partial D,
\end{aligned}
\right.
\end{equation}
where $\lambda(x) = \lambda_0 + b(x)$ and $\widetilde{A} = (\widetilde{a}_{ij}^{\alpha\beta})$ is defined by
\begin{equation}\label{def.splitting}
\widetilde{a}_{ij}^{\alpha\beta}(x) = a_{ij}^{\alpha\beta}(x) + b(x)\delta_{i}^\alpha \delta_{j}^\beta.
\end{equation}
Denote the map $(A,\lambda) \mapsto (\widetilde{A},\lambda_0)$ by $S$, i.e., $(\widetilde{A},\lambda_0) = S(A,\lambda)$.
Now, note that in (\ref{eq.Le.const}), $\lambda_0$ is a fixed constant and $\widetilde{A}$ is a new (tensor-valued) random variable on $\Omega$ satisfying the similar hypothesis as $A$. Define a new probability measure space $(\widetilde{\Omega}, \widetilde{\mathcal{F}}, \widetilde{\mathbb{P}})$ endowed by $(\Omega,\mathcal{F},\mathbb{P})$ via the map $S$, where
\begin{equation*}
\widetilde{\Omega} =\{ (\widetilde{A},\lambda_0) = S(A,\lambda): (A,\lambda)\in \Omega  \},
\end{equation*}
$\widetilde{\mathcal{F}} = S(\mathcal{F})$ and $\widetilde{\mathbb{P}}[\omega] = \mathbb{P}[S^{-1}(\omega)]$ for every $\omega\in \widetilde{\mathcal{F}}$.

The following proposition shows that the probability measure space $(\widetilde{\Omega}, \widetilde{\mathcal{F}}, \widetilde{\mathbb{P}})$ satisfies the same conditions as in Section 4.
\begin{proposition}\label{prop.tOmega}
	The probability measure space $(\widetilde{\Omega}, \widetilde{\mathcal{F}}, \widetilde{\mathbb{P}})$ satisfies the following conditions:
	\begin{itemize}
		
		\item Stationarity with respect to $\mathbb{Z}^d$-translations:
		\begin{equation*}
		\widetilde{\mathbb{P}}\circ T_z = \widetilde{\mathbb{P}}, \quad\text{where }(T_z(\widetilde{A},\lambda_0))(x)= (\widetilde{A}(x+z),\lambda_0).
		\end{equation*}
		
		\item Unit range of dependence:
		\begin{equation*}
		\begin{aligned}
		&\text{$\widetilde{\mathcal{F}}_D$ and $\widetilde{\mathcal{F}}_E$ are $\mathbb{P}$-independent for every Borel}\\
		&\qquad\text{subset pair $D, \, E\subset \mathbb{R}^d$ satisfying dist$(D,E)\ge 1$}.
		\end{aligned}
		\end{equation*}
		
	\end{itemize}
\end{proposition}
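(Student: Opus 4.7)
The plan is to reduce both properties to the corresponding properties of the original space $(\Omega, \mathcal{F}, \mathbb{P})$ by exploiting two structural features of the map $S: (A,\lambda) \mapsto (\widetilde{A},\lambda_0)$ defined in \eqref{def.splitting}: namely, that $S$ is \emph{pointwise local} (the value of $\widetilde{A}(x)$ depends only on the values of $A(x)$ and $\lambda(x)$) and \emph{translation-equivariant} (it commutes with the action of $\mathbb{Z}^d$-translations $T_z$ on fields). Once these two features are recorded, both items follow from a direct change-of-variables computation using $\widetilde{\mathbb{P}} = \mathbb{P} \circ S^{-1}$.

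For stationarity, I would first observe the commutation identity
\begin{equation*}
S \circ T_z = T_z \circ S \qquad \text{on } \Omega,
\end{equation*}
which is immediate from \eqref{def.splitting} since $\widetilde{a}_{ij}^{\alpha\beta}(x+z) = a_{ij}^{\alpha\beta}(x+z) + b(x+z)\delta_i^\alpha\delta_j^\beta$. Then for any $\omega \in \widetilde{\mathcal{F}}$,
\begin{equation*}
\widetilde{\mathbb{P}}(T_z^{-1}\omega) = \mathbb{P}\bigl(S^{-1}(T_z^{-1}\omega)\bigr) = \mathbb{P}\bigl(T_z^{-1}(S^{-1}\omega)\bigr) = \mathbb{P}(S^{-1}\omega) = \widetilde{\mathbb{P}}(\omega),
\end{equation*}
using \eqref{def.stationarity} for $\mathbb{P}$ in the third equality.

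For unit range of dependence, I would check that $S$ pulls the localized $\sigma$-algebras back to themselves, i.e.\ $S^{-1}(\widetilde{\mathcal{F}}_D) \subset \mathcal{F}_D$ for every Borel set $D$. This is exactly the locality of $S$: for any test function $\phi \in C_0^\infty(D)$,
\begin{equation*}
\int_{\mathbb{R}^d}\widetilde{a}_{ij}^{\alpha\beta}(x)\phi(x)\,dx = \int_{\mathbb{R}^d} a_{ij}^{\alpha\beta}(x)\phi(x)\,dx + \delta_i^\alpha\delta_j^\beta\int_{\mathbb{R}^d} b(x)\phi(x)\,dx,
\end{equation*}
and each summand on the right is $\mathcal{F}_D$-measurable by definition of $\mathcal{F}_D$ (noting $b = \lambda - \lambda_0$ depends measurably on $\lambda$). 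Thus every generator of $\widetilde{\mathcal{F}}_D$ pulls back to $\mathcal{F}_D$, proving the inclusion.

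Given the inclusion, unit range of dependence is straightforward. If $\text{dist}(D,E)\ge 1$ and $\widetilde{\omega}_1\in \widetilde{\mathcal{F}}_D$, $\widetilde{\omega}_2\in\widetilde{\mathcal{F}}_E$, then $S^{-1}\widetilde{\omega}_1\in\mathcal{F}_D$ and $S^{-1}\widetilde{\omega}_2\in\mathcal{F}_E$ are independent under $\mathbb{P}$ by \eqref{def.unit-range-dependence}, so
\begin{equation*}
\widetilde{\mathbb{P}}(\widetilde{\omega}_1\cap\widetilde{\omega}_2) = \mathbb{P}\bigl(S^{-1}\widetilde{\omega}_1\cap S^{-1}\widetilde{\omega}_2\bigr) = \mathbb{P}(S^{-1}\widetilde{\omega}_1)\,\mathbb{P}(S^{-1}\widetilde{\omega}_2) = \widetilde{\mathbb{P}}(\widetilde{\omega}_1)\,\widetilde{\mathbb{P}}(\widetilde{\omega}_2),
\end{equation*}
as required. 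There is no real obstacle here; the only conceptual point worth stressing is that the splitting \eqref{def.splitting} was designed precisely so that $S$ is both local and translation-equivariant, which is exactly what makes the pushforward measure $\widetilde{\mathbb{P}}$ inherit the stationarity and unit-range properties from $\mathbb{P}$.
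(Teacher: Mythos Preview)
Your proof is correct and complete. The paper actually states this proposition without proof, treating it as routine; your argument supplies exactly the standard verification that the paper leaves implicit, namely that the pointwise-local, translation-equivariant splitting \eqref{def.splitting} pushes stationarity and unit range of dependence forward under $S$.
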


Because of the above reduction, without loss of generality, it suffices to consider the case with constant $\lambda$. In other words, we consider
\begin{equation}\label{eq.elasticity.LeConst}
\left\{
\begin{aligned}
\nabla\cdot (A^\e \nabla u^\e_\lambda) + \lambda \nabla ( \nabla\cdot u^\e_\lambda) &= 0 \qquad &\txt{in } &D, \\
u^\e_\lambda  &= f \qquad &\txt{on } &\partial D,
\end{aligned}
\right.
\end{equation}
where $\lambda\ge 0$ is constant and (\ref{def.Omega}) - (\ref{def.unit-range-dependence}) are satisfied.

\subsection{Small $\lambda$ case}
For $\lambda$ relatively small (compared to $\e^{-\sigma}$), the system of elasticity may be viewed as the elliptic system. In this case, the homogenization theory may be established by the standard method developed in \cite{AS16,AM16,AKM16,AKM19}. One of the goals in this subsection is to identify the structure of the homogenized operator depending on $\lambda$, which is not obvious.

Let $C_\lambda = (c_{\lambda,ij}^{\alpha\beta})$ be defined by
\begin{equation*}
c_{\lambda,ij}^{\alpha\beta}(x) = a_{ij}^{\alpha\beta}(x) + \lambda \delta^\alpha_i \delta^j_\beta.
\end{equation*}
Thus, the system (\ref{eq.elasticity.LeConst}) may be written as
\begin{equation}
\left\{
\begin{aligned}
\nabla\cdot (C^\e_\lambda \nabla u^\e_\lambda) &= 0 \qquad &\txt{in } &D, \\
u^\e_\lambda  &= f \qquad &\txt{on } &\partial D.
\end{aligned}
\right.
\end{equation}
Clearly, the random variable $C_\lambda$ also satisfies the stationarity and the unit-range dependence assumptions. However, in this case, the ellipticity constant of $C_{\lambda}$ depends on $\lambda$, namely,
\begin{equation}\label{cond.ellipticity.lambda}
\Lambda^{-1} |\xi|^2 \le  c_{\lambda,ij}^{\alpha\beta}(x)\xi_i^\alpha \xi_j^\beta \le (2\Lambda + \lambda) |\xi|^2 \quad \text{for any } x\in \R^d, \xi \in \R^{d\times d}. 
\end{equation}
In this case, we need to figure out how the homogenized operator and the convergence rate depend on the parameter $\lambda$. As in \cite{AKM19}, define
\begin{equation}\label{def.mu.lambda}
\begin{aligned}
\mu_\lambda(D,P) &: = \inf_{\nu \in \ell_P+H^1_0(D;\R^d)} \average_D \frac{1}{2} \nabla \nu  \cdot C_\lambda \nabla \nu \\
& = \inf_{\nu \in \ell_P+H^1_0(D;\R^d)} \average_D \bigg( \frac{1}{2} \nabla \nu  \cdot A \nabla \nu + \frac{1}{2}\lambda (\nabla\cdot \nu)^2 \bigg),
\end{aligned}
\end{equation}
where $\ell(P): = Px$ is an affine function. Recall that the unique minimizer $\nu_\lambda = \nu_\lambda(\cdot,D,P)$ (also called the Dirichlet corrector) is the solution of
\begin{equation}\label{eq.DCorrector}
\left\{
\begin{aligned}
\nabla\cdot (C_\lambda \nabla \nu_\lambda) = \nabla\cdot (A\nabla \nu_\lambda) + \lambda\nabla(\nabla\cdot \nu_\lambda) &= 0 \qquad &\txt{in } &D, \\
\nu_\lambda  &= \ell(P) \qquad &\txt{on } &\partial D.
\end{aligned}
\right.
\end{equation}

\begin{proposition}
	The minimum energy $\mu_\lambda(D,P)$ satisfies the following properties:
	
	\begin{itemize}
		\item[(i)] Representation as quadratic form: there exists an symmetric $A_{\lambda}(D)$ such that
		\begin{equation*}
		\Lambda^{-1}I \le A_{\lambda}(D) \le \Lambda I,
		\end{equation*}
		and for each $P\in \R^{d\times d}$
		\begin{equation*}
		\mu_\lambda(D,P) - \frac{1}{2}\lambda \txt{Tr}(P)^2 = \frac{1}{2} P\cdot A_{\lambda}(D) P.
		\end{equation*}
		
		\item[(ii)] Subadditivity: Let $\{D_i\}_{i=1}^N \subset D$ be a partition of $D$ of bounded Lipschitz domains, in the sense that $D_i\cap D_j = \emptyset$ if $i\neq j$ and
		$$
		\left|D \setminus \bigcup\limits_{i=1}^N D_i\right| =0.
		$$
		Then, for every $P\in\mathbb{R}^{d\times d}$,
		$$
		\mu_\lambda(D,P) \le\sum\limits_{i=1}^N \dfrac{|D_i|}{|D|}\mu_\lambda(D_i,P).
		$$
	\end{itemize}
\end{proposition}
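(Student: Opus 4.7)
The plan is to first reduce the problem by isolating the $\lambda$-dependent piece explicitly. For any admissible $\nu=\ell_P+w$ with $w\in H^1_0(D;\R^d)$, the divergence theorem gives $\average_D \nabla\cdot w=0$, hence $\average_D \nabla\cdot \nu=\text{Tr}(P)$. Expanding the square,
\begin{equation*}
\average_D (\nabla\cdot \nu)^2 = \text{Tr}(P)^2 + 2\text{Tr}(P)\average_D \nabla\cdot w + \average_D (\nabla\cdot w)^2 = \text{Tr}(P)^2 + \average_D (\nabla\cdot w)^2.
\end{equation*}
Substituting into (\ref{def.mu.lambda}) yields the key identity
\begin{equation*}
\mu_\lambda(D,P) - \tfrac{1}{2}\lambda \text{Tr}(P)^2 \;=\; \inf_{w\in H^1_0(D;\R^d)} \average_D \Big( \tfrac{1}{2}(P+\nabla w)\cdot A(P+\nabla w) + \tfrac{1}{2}\lambda (\nabla\cdot w)^2 \Big).
\end{equation*}
This is the conceptual crux: the $\tfrac{1}{2}\lambda \text{Tr}(P)^2$ term exactly captures the piece of the energy that is unbounded in $\lambda$, while what remains is comparable to the usual elliptic energy and will be controlled uniformly in $\lambda$.

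With this reduction in hand, part (i) follows by standard variational arguments. The objective in the displayed infimum is a strictly convex quadratic functional of $w$ whose Euler--Lagrange equation is exactly (\ref{eq.DCorrector}); hence the minimizer $w_P$ is unique and depends linearly on $P$. Substituting back produces a quadratic form in $P$, and by polarization one defines the symmetric matrix $A_\lambda(D)$ via $\tfrac{1}{2}P\cdot A_\lambda(D) P = \mu_\lambda(D,P) - \tfrac{1}{2}\lambda \text{Tr}(P)^2$. For the upper bound, test with $w=0$ to obtain $\tfrac{1}{2}P\cdot A_\lambda(D) P \le \tfrac{1}{2}\average_D P\cdot A P \le \tfrac{\Lambda}{2}|P|^2$. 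For the lower bound, drop the nonnegative $\lambda$-term and use ellipticity (\ref{cond.ellipticity}) together with Jensen's inequality applied to $\average_D(P+\nabla w)=P$:
\begin{equation*}
\tfrac{1}{2}P\cdot A_\lambda(D)P \;\ge\; \inf_w \tfrac{\Lambda^{-1}}{2}\average_D |P+\nabla w|^2 \;\ge\; \tfrac{\Lambda^{-1}}{2}|P|^2.
\end{equation*}

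For part (ii), I would use the standard pasting argument for subadditive quantities. Let $\nu_i$ denote the minimizer of $\mu_\lambda(D_i,P)$, so $\nu_i - \ell_P \in H_0^1(D_i;\R^d)$, and define $\nu^\ast(x)=\nu_i(x)$ for $x\in D_i$. Since each $\nu_i$ agrees with $\ell_P$ on $\partial D_i$, the traces match across shared interfaces and also on $\partial D$, giving $\nu^\ast \in \ell_P+H^1_0(D;\R^d)$. Splitting the integral over the partition and using the variational characterization of $\mu_\lambda(D,P)$,
\begin{equation*}
\mu_\lambda(D,P) \;\le\; \average_D \Big( \tfrac{1}{2}\nabla\nu^\ast\cdot A\nabla\nu^\ast + \tfrac{1}{2}\lambda(\nabla\cdot \nu^\ast)^2\Big) \;=\; \sum_{i=1}^N \frac{|D_i|}{|D|}\mu_\lambda(D_i,P).
\end{equation*}

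The main technical point to watch out for is the uniformity in $\lambda$ of the bounds on $A_\lambda(D)$: the coefficient tensor $C_\lambda$ itself has upper ellipticity constant $2\Lambda+\lambda$ that blows up as $\lambda\to\infty$ (see (\ref{cond.ellipticity.lambda})), so a direct appeal to the elliptic theory would only yield an upper bound proportional to $2\Lambda+\lambda$. The subtraction $\mu_\lambda(D,P)-\tfrac{1}{2}\lambda\text{Tr}(P)^2$ is precisely what extracts the $\lambda$-divergent part; once this is done, testing with $w=0$ uses only the bounded tensor $A$, and the lower bound ignores the $\lambda$-penalty entirely. Everything else is routine.
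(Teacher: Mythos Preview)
Your proof is correct and essentially identical to the paper's own argument. The paper performs the same splitting of the divergence term (written as $\average_D \tfrac{1}{2}\lambda(\nabla\cdot\nu)^2 = \average_D \tfrac{1}{2}\lambda(\nabla\cdot\nu-\text{Tr}(P))^2 + \tfrac{1}{2}\lambda\text{Tr}(P)^2$, which is your expansion phrased in terms of $\nu$ rather than $w$), then tests with $\nu=\ell_P$ for the upper bound and drops the nonnegative $\lambda$-penalty and applies Jensen for the lower bound; part (ii) is likewise handled by the same pasting argument you describe.
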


\begin{proof}
	Part (ii) is the same as in \cite{AKM19} for elliptic equations. It suffices to prove Part (i). As in \cite{AKM19}, we know in priori that $\mu_\lambda(D,P)$ is a symmetric quadratic form of $P$. For $\nu\in \ell_P + H_0^1(D;\R^d)$, using the divergence theorem, we have
	\begin{equation*}
	\average_D \frac{1}{2}\lambda (\nabla\cdot \nu)^2 = \average_D \frac{1}{2}\lambda (\nabla\cdot \nu - \text{Tr}(P) )^2 + \frac{1}{2}\lambda \text{Tr}(P)^2.
	\end{equation*}
	It follows that
	\begin{equation}\label{eq.mu-TrP}
	\mu_\lambda(D,P) - \frac{1}{2}\lambda \txt{Tr}(P)^2 = \inf_{\nu \in \ell_P+H^1_0(D;\R^d)} \average_D \bigg( \frac{1}{2} \nabla \nu  \cdot A \nabla \nu + \frac{1}{2}\lambda (\nabla\cdot \nu - \text{Tr}(P) )^2 \bigg).
	\end{equation}
	Now, by choosing $\nu = \ell_P$, we have $\nabla\cdot \nu = \text{Tr}(P)$ and therefore
	\begin{equation}\label{est.upper}
	\mu_\lambda(D,P) - \frac{1}{2}\lambda \txt{Tr}(P)^2 \le \frac{1}{2} \average_D P\cdot A P \le \frac{1}{2}\Lambda|P|^2.
	\end{equation}
	On the other hand, let $\nu = \ell_P + w$ with $w\in H_0^1(D;\R^d)$. Then, the H\"{o}lder's inequality and the divergence theorem imply
	\begin{equation}\label{est.lower}
	\begin{aligned}
	\mu_\lambda(D,P) - \frac{1}{2}\lambda \txt{Tr}(P)^2 &\ge \inf_{w \in H^1_0(D;\R^d)} \average_D \frac{1}{2} \nabla (P + \nabla w)  \cdot A \nabla (P+\nabla w) \\
	& \ge \frac{1}{2\Lambda} \inf_{w \in H^1_0(D;\R^d)} \average_D |P + \nabla w|^2 \\
	& \ge \frac{1}{2\Lambda} \inf_{w \in H^1_0(D;\R^d)} \bigg| \average_D (P + \nabla w) \bigg|^2 \\
	& = \frac{1}{2\Lambda} |P|^2.
	\end{aligned}
	\end{equation}
	Since we already know $\mu_\lambda(D,P) - \frac{1}{2}\lambda \txt{Tr}(P)^2$ is a symmetric quadratic form, (\ref{est.upper}) and (\ref{est.lower}) leads to the desired representation and estimate.
\end{proof}

Let $\square_m$ be the triadic cube defined as in Section 4. Then the subadditivity property implies that the limit of $\E[\mu_\lambda(\square_m,P)]$ exists as $m\to \infty$. Denote this limit by $\overline{\mu}_\lambda(P)$. It turns out that the limit
\begin{equation*}
\overline{\mu}_\lambda(P) - \frac{1}{2}\lambda \txt{Tr}(P)^2 = \lim_{m\to \infty} \E \Big[ \frac{1}{2} P\cdot A_\lambda(\square_m) P \Big]
\end{equation*}
exists and is a quadratic form. Hence, there exists a constant $\overline{A}_\lambda$ satisfying
\begin{equation}\label{cond.hatA.lambda}
\Lambda^{-1}I \le\overline{A}_{\lambda} \le \Lambda I,
\end{equation}
so that
\begin{equation*}
\overline{\mu}_\lambda(P)  = \frac{1}{2} P\cdot \overline{A}_\lambda P + \frac{1}{2}\lambda \txt{Tr}(P)^2.
\end{equation*}
This particularly indicates that the homogenized operator of $\nabla\cdot A^\e\nabla + \lambda\nabla(\nabla\cdot)$ takes a form of $\nabla\cdot \overline{A}_\lambda \nabla+ \lambda\nabla(\nabla\cdot)$. In fact, we have the following rate of convergence.
\begin{theorem}\label{thm.small-lambda}
	Let $\delta>0$ and $D$ be a bounded Lipschitz domain. Let $s\in (0,d)$ and $\lambda \in [0,\infty)$. There exists a random variable $\X = \X_{s,\lambda}$ and constants $\alpha,\beta, C_0>0$ (independent of $\e$ and $\lambda$) satisfying
	\begin{equation}\label{est.X.lambda}
	\X \le \mathcal{O}_1(C_0),
	\end{equation}
	such that if $u^\e_\lambda$ and $u^0_\lambda$ are the weak solutions of
	\begin{equation}\label{eq.rate.e}
	\left\{
	\begin{aligned}
	\nabla\cdot (A^\e \nabla u^\e_\lambda) + \lambda \nabla ( \nabla\cdot u^\e_\lambda) &= 0 \qquad &\txt{in } &D, \\
	u^\e_\lambda  &= f \qquad &\txt{on } &\partial D,
	\end{aligned}
	\right.
	\end{equation}
	and
	\begin{equation}\label{eq.rate.0}
	\left\{
	\begin{aligned}
	\nabla\cdot (\overline{A}_\lambda \nabla u^0_\lambda) + \lambda \nabla ( \nabla\cdot u^0_\lambda) &= 0 \qquad &\txt{in } &D, \\
	u^0_\lambda  &= f \qquad &\txt{on } &\partial D,
	\end{aligned}
	\right.
	\end{equation}
	respectively, with $f\in W^{1,2+\delta}(D;\R^d)$, then
	\begin{equation}\label{est.SRate}
	\norm{u^\e_\lambda - u^0_\lambda}_{L^2(D)} + \norm{\nabla u^\e_\lambda - \nabla u^0_\lambda}_{H^{-1}(D)} \le C(\lambda+1) \big(\e^{\beta(d-s)} + \e^s \X \big) \norm{\nabla f}_{L^{2+\delta}(D)}.
	\end{equation}
\end{theorem}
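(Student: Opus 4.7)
The plan is to recast (\ref{eq.rate.e}) as the uniformly stationary elliptic system $\nabla\cdot(C_\lambda^\e\nabla u_\lambda^\e)=0$ with $C_\lambda=A+\lambda E$ and $E_{ij}^{\alpha\beta}=\delta_i^\alpha\delta_j^\beta$, and then run the subadditive variational machinery of \cite{AKM19} on it, essentially as I did for the Stokes system in Section~4 (Theorem~\ref{thm.Stokes.rate}). The one essential twist is bookkeeping of the parameter $\lambda$: the coefficient $C_\lambda$ has ellipticity ratio $O(\lambda+1)$, but the \emph{random} part is still only $A$, which is $\Lambda$-bounded; that dichotomy lets all the deterministic prefactors collect into the $C(\lambda+1)$ multiplying the rate, while the stochastic size $\X\le\mathcal{O}_1(C_0)$ stays $\lambda$-independent.

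The first step is to introduce the dual subadditive quantity
\begin{equation*}
\mu_\lambda^{*}(D,Q):=\sup_{\nu\in H^{1}(D;\R^d)}\average_D\Bigl(-\tfrac{1}{2}\nabla\nu\cdot C_\lambda\nabla\nu+Q\cdot\nabla\nu\Bigr),
\end{equation*}
and the defect
\begin{equation*}
J_\lambda(D,P,Q):=\mu_\lambda(D,P)+\mu_\lambda^{*}(D,Q)-P\cdot Q\ge 0.
\end{equation*}
Using the identity (\ref{eq.mu-TrP}) and its dual analogue, $J_\lambda$ decomposes into a $\lambda$-deterministic piece plus a piece $\frac{1}{2}P\cdot A_\lambda(D)P+\frac{1}{2}Q\cdot A_\lambda^{*}(D)^{-1}Q-P\cdot Q$ that is a quadratic form in $(P,Q)$ with $\Lambda$-bounded random coefficients. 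Standard properties (quadratic response, subadditivity under triadic partitions, and monotonicity of $m\mapsto\E[J_\lambda(\square_m,P,Q)]$) follow exactly as in Propositions \ref{prop.J1}--\ref{prop.J2}.

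Next, I would show that $\E[J_\lambda(\square_m,P,Q)]\lesssim 3^{-m\sigma(d-s)}$ and that the centered quantity concentrates, with constants that do \emph{not} depend on $\lambda$. The key point is that the envelope theorem expresses the sensitivity of $\mu_\lambda(\square_m,P)$ to a unit-scale perturbation of $A$ in terms of $\int|\nabla\nu_\lambda|^2$ on that unit cube, and the variational upper bound $\mu_\lambda(\square_m,P)-\frac{1}{2}\lambda\mathrm{Tr}(P)^2\le\frac{\Lambda}{2}|P|^2$ together with the coercivity estimate (\ref{est.lower}) gives $\|\nabla\nu_\lambda\|_{L^2(\square_m)}\le C|P|$ uniformly in $\lambda$. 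Combined with unit-range dependence, this yields the Efron--Stein/concentration bound $J_\lambda(\square_m,P,Q)-\E[J_\lambda(\square_m,P,Q)]\le \mathcal{O}_1(C_0 3^{-ms})$ with $C_0$ depending only on $s,d,\Lambda$. The analogues of Theorems \ref{thm.Dnu}, \ref{thm.corrector.Phi} for the Dirichlet correctors $\nu_\lambda$ then follow, and by Lemma \ref{lem.DwDpi}-type duality I obtain corresponding rates for the associated fluxes $C_\lambda(\nabla\nu_\lambda-P)$.

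Finally, I would run the two-scale expansion $w^\e_\lambda:=u^\e_\lambda-u^0_\lambda-\e\Phi_{n,j\lambda}^\beta(\cdot/\e)\,\eta_\e\,S_\e\partial_j(u^0_\lambda)^\beta$, with standard smoothing $S_\e$ and a boundary cutoff $\eta_\e$, and use the energy estimate for $\nabla\cdot(C_\lambda^\e\nabla w^\e_\lambda)=\nabla\cdot R^\e$. Bounding $\|R^\e\|_{H^{-1}(D)}$ via the corrector rates just established and a Meyers' bound on $u^0_\lambda$ in $W^{1,2+\delta}$ (together with the $C^{1,\alpha}$ interior theory for the homogenized operator $\nabla\cdot\overline{A}_\lambda\nabla+\lambda\nabla(\nabla\cdot)$ from Theorem~\ref{thm.C1a}) produces the rate (\ref{est.SRate}); the factor $C(\lambda+1)$ enters once, precisely when one converts $\|\nabla w^\e_\lambda\|_{L^2}$ into the error $\|R^\e\|_{H^{-1}}$ via the energy form $\int C_\lambda^\e\nabla w\cdot\nabla w$, whose coercivity constant is $\Lambda^{-1}$ but whose product estimates on $C_\lambda^\e$-weighted terms cost a prefactor of order $\lambda+1$. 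The main obstacle is exactly this decoupling: making sure that \emph{all} $\lambda$-growth is squeezed into a single deterministic $C(\lambda+1)$ prefactor and that none leaks into the stochastic concentration; this is achieved through the soft-constraint structure $\frac{1}{2}\lambda(\nabla\cdot\nu-\mathrm{Tr}(P))^2$ of $\mu_\lambda$, which acts as a penalty that never amplifies fluctuations originating from $A$.
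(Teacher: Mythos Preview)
Your proposal is correct and takes essentially the same approach as the paper: view the system as the elliptic system $\nabla\cdot(C_\lambda^\e\nabla u_\lambda^\e)=0$ and run the \cite{AKM19} machinery, tracking the $\lambda$-dependence. The paper's own proof is a single sentence (``By viewing (\ref{eq.rate.e}) as an elliptic system with ellipticity constant $\Lambda+\lambda$, the result may be seen by examining the proof \cite[Theorem 2.18]{AKM19}''), and your sketch is exactly what that examination amounts to; in particular, your observation that $\fint|\nabla\nu_\lambda|^2\le\Lambda^2|P|^2$ uniformly in $\lambda$ (via the soft-constraint form (\ref{eq.mu-TrP})) is the right reason the concentration constant $C_0$ stays $\lambda$-independent, which the paper leaves implicit. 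One minor quibble: in the two-scale expansion step on a Lipschitz domain you only need Meyers' estimate for $u_\lambda^0$ (as in Theorem~\ref{thm.Stokes.rate}), not the interior $C^{1,\alpha}$ theory of Theorem~\ref{thm.C1a}.
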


\begin{proof}
	By viewing (\ref{eq.rate.e}) as an elliptic system with ellipticity constant $\Lambda+\lambda$, the result may be seen by examining the proof \cite[Theorem 2.18]{AKM19}.
\end{proof}

\begin{remark}
	Given $s\in (0,d)$, let $\sigma = \beta(d-s)/2$ and $s' = s+\sigma \in (0,d)$. If $\lambda \le \e^{-\sigma}$, the above theorem particularly implies
	\begin{equation}\label{est.YOs}
	\norm{u^\e_\lambda - u^0_\lambda}_{L^2(D)} + \norm{\nabla u^\e_\lambda - \nabla u^0_\lambda}_{H^{-1}(D)} \le \big(\e^{\beta(d-s')-\sigma} + (\e \X')^{s'-\sigma} \big) \norm{\nabla f}_{L^{2+\delta}(D)},
	\end{equation}
	where $\X' = (\X)^{1/(s'-\sigma)} = \X^{1/s} \le \mathcal{O}_s(C_0')$ (since $\X \le \mathcal{O}_1(C_0)$). Now, without loss of generality, by making $\beta$ smaller in (\ref{est.YOs}), we have
	\begin{equation}\label{est.u.X'}
	\norm{u^\e_\lambda - u^0_\lambda}_{L^2(D)} + \norm{\nabla u^\e_\lambda - \nabla u^0_\lambda}_{H^{-1}(D)} \le \big(\e^{\beta(d-s)} + (\e \X')^{s} \big) \norm{\nabla f}_{L^{2+\delta}(D)},
	\end{equation}
	with $\X' = \X'_{s,\lambda} \le \mathcal{O}_s(C)$. 
	
	For the same reason, if $\lambda\le \e^{-\sigma}$, (\ref{est.SRate}) also implies the rate of convergence for the pressure
	\begin{equation}\label{est.p.X'}
	\norm{\lambda \nabla\cdot u^\e_\lambda - \lambda \nabla\cdot u^0_\lambda}_{H^{-1}(D)} \le C\big(\e^{\beta(d-s)} + (\e \X')^{s} \big) \norm{\nabla f}_{L^{2+\delta}(D)},
	\end{equation}
	with the same $\X'$.
	
	We should point out that even though the homogenized matrix $\overline{A}_\lambda$ and the random variable $\X_{s,\lambda}$ may vary as $\lambda$ varies, both of them are fortunately in the classes that are in dependent of $\lambda$; see (\ref{cond.hatA.lambda}) and (\ref{est.X.lambda}).
\end{remark}

\subsection{Large $\lambda$ case}
To obtain a rate of convergence for the system of elasticity when $\lambda$ is relatively large, we have to employ the result of Stokes system. Let us state the main result of this subsection.

\begin{theorem}\label{thm.large-lambda}
	Let the same assumptions of Theorem \ref{thm.small-lambda} hold. There exist a random variable $\X'' = \X''_{s}$  and constants $\alpha,\beta,C_0>0$ (independent of $\e$ and $\lambda$) satisfying
	\begin{equation*}
	\X'' \le \mathcal{O}_s(C_0),
	\end{equation*}
	so that
	\begin{equation*}
	\begin{aligned}
	& \norm{u^\e_\lambda - u^0_\lambda}_{L^2(D)} + \norm{\lambda \nabla\cdot u^\e_\lambda - \lambda\nabla\cdot u^0_\lambda}_{H^{-1}(D)} \\
	&\qquad \le C \big( \lambda^{-1}+ \e^{\beta(d-s)}  + (\X'' \e)^{\alpha s} \big)\norm{\nabla f}_{L^{2+\delta}(D)}.
	\end{aligned}
	\end{equation*}
	where $u^\e_\lambda$ and $u^0_\e$ are the weak solutions of (\ref{eq.rate.e}) and (\ref{eq.rate.0}), respectively.
\end{theorem}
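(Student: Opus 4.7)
The plan is to execute the diagram of Figure~\ref{fig_2}: reduce both the $\varepsilon$-problem and the constant-coefficient homogenized problem to Stokes systems via the asymptotic expansion of Theorem~\ref{thm.expansion}, invoke Theorem~\ref{thm.Stokes.rate} for the Stokes convergence, and bridge the two Stokes systems by a quantitative comparison of $\overline{A}_\lambda$ with $\widehat{A}$. Throughout we may assume $\lambda \ge C_0$, where $C_0$ is the threshold of Theorem~\ref{thm.expansion}; the complementary range is already handled by Theorem~\ref{thm.small-lambda}.

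I introduce three auxiliary Stokes pairs on $D$, all with divergence $\Ag{f}_D$ and boundary data $f$: $(v^\varepsilon,p^\varepsilon)$ with matrix $A^\varepsilon$; $(v^0_\lambda,p^0_\lambda)$ with the homogenized elasticity matrix $\overline{A}_\lambda$; and $(v^0,p^0)$ with the homogenized Stokes matrix $\widehat{A}$. Applying Theorem~\ref{thm.expansion} to $u^\varepsilon_\lambda$, whose constants depend only on $d,\Lambda,\mathrm{Lip}(D)$ and not on the realization of $A^\varepsilon$, yields
\begin{equation*}
\norm{u^\varepsilon_\lambda - v^\varepsilon}_{H^1(D)} + \norm{\lambda\nabla\cdot u^\varepsilon_\lambda - \lambda\Ag{f}_D - p^\varepsilon}_{L^2(D)} \le C\lambda^{-1}\norm{f}_{H^{1/2}(\partial D)}.
\end{equation*}
The same bound, applied instead to the constant-coefficient problem defining $u^0_\lambda$, compares it to $(v^0_\lambda,p^0_\lambda)$ at rate $\lambda^{-1}$.

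The bridge between $(v^0_\lambda,p^0_\lambda)$ and $(v^0,p^0)$ requires the deterministic matrix estimate $|\overline{A}_\lambda - \widehat{A}|\le C\lambda^{-1}$ on trace-free matrices. I extract this from the variational characterizations: for $P$ with $\mathrm{Tr}(P)=0$, Lemma~\ref{lem.lambda.rate} on $\square_m$ (after rescaling to a unit cube, the constant is uniform in $m$) compares the elasticity Dirichlet corrector $\nu_\lambda(\cdot,\square_m,P)$ of~\eqref{eq.DCorrector} with its Stokes counterpart $\nu(\cdot,\square_m,P)$ of~\eqref{def.dirichlet-corrector}, delivering $\average_{\square_m}|\nabla\nu_\lambda-\nabla\nu|^2 \le C\lambda^{-2}|P|^2$. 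Substituting into the identities $\mu_\lambda(\square_m,P)-\tfrac12\lambda\,\mathrm{Tr}(P)^2=\tfrac12 P\cdot A_\lambda(\square_m)P$ and $\mu(\square_m,P)=\tfrac12 P\cdot A(\square_m)P$, then taking $\E$ and letting $m\to\infty$, yields $|\overline{A}_\lambda-\widehat{A}|\le C\lambda^{-1}$ on $\{\mathrm{Tr}(P)=0\}$. Because $\nabla\cdot(v^0-v^0_\lambda)=0$ and $\nabla v^0_\lambda$ has constant trace $\Ag{f}_D$, only the trace-free part of the coefficient difference contributes to $\nabla\cdot((\widehat{A}-\overline{A}_\lambda)\nabla v^0_\lambda)$, and the Stokes energy estimate (Theorem~\ref{thm.Stokes.Energy}) then produces $\norm{v^0-v^0_\lambda}_{H^1(D)}+\norm{p^0-p^0_\lambda}_{L^2(D)}\le C\lambda^{-1}\norm{\nabla f}_{L^2(D)}$.

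Finally, Theorem~\ref{thm.Stokes.rate} supplies a random variable $\X\le \mathcal{O}_1(C_0)$ with
\begin{equation*}
\norm{v^\varepsilon - v^0}_{L^2(D)} + \norm{p^\varepsilon - p^0}_{H^{-1}(D)} \le C\big(\varepsilon^{\beta(d-s)} + (\varepsilon^s\X)^\alpha\big)\norm{\nabla f}_{L^{2+\delta}(D)}.
\end{equation*}
Decomposing $u^\varepsilon_\lambda - u^0_\lambda = (u^\varepsilon_\lambda - v^\varepsilon) + (v^\varepsilon - v^0) + (v^0 - v^0_\lambda) + (v^0_\lambda - u^0_\lambda)$ and applying the triangle inequality (using the continuous embedding $L^2(D)\hookrightarrow H^{-1}(D)$ to absorb the three $O(\lambda^{-1})$ pieces into the $H^{-1}$-norm of the pressure difference) produces the error $C\bigl(\lambda^{-1}+\varepsilon^{\beta(d-s)}+(\varepsilon^s\X)^\alpha\bigr)\norm{\nabla f}_{L^{2+\delta}(D)}$. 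Setting $\X'':=\X^{1/s}\le \mathcal{O}_s(C_0^{1/s})$ converts $(\varepsilon^s\X)^\alpha=(\varepsilon\X'')^{\alpha s}$, matching the form stated in Theorem~\ref{thm.large-lambda} (possibly after shrinking $\alpha,\beta$). The principal obstacle is the matrix comparison $|\overline{A}_\lambda-\widehat{A}|\le C\lambda^{-1}$: $\overline{A}_\lambda$ and $\widehat{A}$ arise from genuinely different variational principles (full elasticity minimization with divergence penalty versus solenoidal Stokes minimization), and the $\lambda^{-1}$ rate must be extracted uniformly in the scale $m$ before expectation and the infinite-volume limit are taken.
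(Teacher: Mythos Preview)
Your proposal is correct and follows essentially the same four-step chain as the paper's proof: reduce $u^\varepsilon_\lambda$ and $u^0_\lambda$ to Stokes solutions via Lemma~\ref{lem.lambda.rate}, homogenize the Stokes system via Theorem~\ref{thm.Stokes.rate}, and close the loop with the matrix estimate $|\overline{A}_\lambda-\widehat{A}|\le C\lambda^{-1}$ (which the paper isolates as Lemma~\ref{lem.barA-hatA}). Two cosmetic differences: the paper's intermediate $v^0_\lambda$ solves the \emph{elasticity} system with $\widehat{A}$ rather than the Stokes system with $\overline{A}_\lambda$, and the paper proves the matrix comparison for all $P$ (not just trace-free $P$), so your detour through ``only the trace-free part contributes'' is unnecessary---but either route works.
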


To prove this theorem, we need the following lemma.
\begin{lemma}\label{lem.barA-hatA}
	For any $\lambda\in (0,\infty)$, we havve $|\overline{A}_\lambda - \widehat{A}| \le C\lambda^{-1}$.
\end{lemma}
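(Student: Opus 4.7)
The plan is to bound the finite-volume matrices $A_\lambda(\square_m) - A_{\square_m}$ by $C\lambda^{-1}$ uniformly in $m$, then take expectation and pass to the limit $m\to\infty$. The key input is the quantitative convergence from Lemma \ref{lem.lambda.rate} applied to Dirichlet correctors.

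First I would record a bilinear representation of each matrix via its corrector. Let $\nu_\lambda = \nu_\lambda(\cdot,\square_m,P)$ solve (\ref{eq.DCorrector}) and let $\nu = \nu(\cdot,\square_m,P)$ solve the Stokes corrector system (\ref{def.dirichlet-corrector}); its compatibility condition $\int_{\square_m}\mathrm{Tr}(P)\,dy = \int_{\partial\square_m}\ell_P\cdot n\,d\sigma$ is automatic by the divergence theorem. Testing the elasticity equation against $\nu_\lambda - \ell_P\in H^1_0(\square_m;\R^d)$ and using $\average_{\square_m}\nabla\cdot\nu_\lambda = \mathrm{Tr}(P)$ gives
\[
2\mu_\lambda(\square_m,P) = P\cdot\average_{\square_m}A\nabla\nu_\lambda + \lambda\,\mathrm{Tr}(P)^2,
\]
hence $A_\lambda(\square_m)P\cdot P = P\cdot\average_{\square_m}A\nabla\nu_\lambda$. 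The Stokes analogue, with test function $\nu - \ell_P\in H_0^{\mathrm{sol}}(\square_m)$ (which annihilates the pressure term), yields $A_{\square_m}P\cdot P = P\cdot\average_{\square_m}A\nabla\nu$. Subtracting,
\[
(A_\lambda(\square_m) - A_{\square_m})P\cdot P = P\cdot\average_{\square_m}A(\nabla\nu_\lambda - \nabla\nu).
\]

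Next I would compare the two correctors using Lemma \ref{lem.lambda.rate}. Rescale by $y = x/3^m$, setting $u(y) = 3^{-m}\nu_\lambda(3^m y)$ and $v(y) = 3^{-m}\nu(3^m y)$; these solve the elasticity and Stokes systems on the fixed unit cube $\square_0$ with rescaled coefficient $A(3^m\cdot)$ (still obeying (\ref{cond.ellipticity})) and shared boundary data $\ell_P$. Lemma \ref{lem.lambda.rate} on $\square_0$ delivers
\[
\|u-v\|_{H^1(\square_0)} \le C\lambda^{-1}\|\ell_P\|_{H^{1/2}(\partial\square_0)} \le C\lambda^{-1}|P|
\]
with $C = C(d,\Lambda)$. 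Since $\nabla_y u(y) = \nabla_x\nu_\lambda(3^m y)$, the $L^2$ gradient norm is scale-invariant under this dilation, so
\[
\bigg(\average_{\square_m}|\nabla\nu_\lambda - \nabla\nu|^2\bigg)^{1/2} \le C\lambda^{-1}|P|.
\]
Combining with the identity above and Cauchy-Schwarz yields $|(A_\lambda(\square_m) - A_{\square_m})P\cdot P| \le C\lambda^{-1}|P|^2$ uniformly in $m$, and the symmetry of both matrices upgrades this to the operator-norm bound $|A_\lambda(\square_m) - A_{\square_m}| \le C\lambda^{-1}$. Taking expectation and sending $m\to\infty$, using the definitions of $\overline{A}_\lambda$ and $\widehat{A}$ as limits, closes the argument.

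The only subtlety is to verify that the constant in Lemma \ref{lem.lambda.rate} does not deteriorate with $m$. This is precisely what the rescaling in the previous paragraph buys us: after dilation every triadic cube becomes the single fixed domain $\square_0$, whose Lipschitz character is universal, so $C$ depends only on $d$ and $\Lambda$ and not on $m$.
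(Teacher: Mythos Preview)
Your proof is correct and uses the same essential ingredient as the paper: Lemma~\ref{lem.lambda.rate} applied to the Dirichlet correctors after rescaling to the unit cube, so that the constant is independent of $m$. The presentation differs slightly. The paper compares the energies $\mu_\lambda(\square_m,P)-\tfrac12\lambda\,\mathrm{Tr}(P)^2$ and $\mu(\square_m,P)$ directly, obtaining one inequality by using the Stokes minimizer as a competitor in the elasticity functional and the other via the corrector convergence estimate; you instead derive the bilinear identities $A_\lambda(\square_m)P\cdot P = P\cdot\average_{\square_m}A\nabla\nu_\lambda$ and $A_{\square_m}P\cdot P = P\cdot\average_{\square_m}A\nabla\nu$, subtract, and apply a single Cauchy--Schwarz. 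Your route is a touch more direct, but the two arguments are equivalent in substance.
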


\begin{proof}
	Let $\mu(D,P)$ and $\mu_\lambda(D,P)$ be defined in (\ref{def.mu}) and (\ref{def.mu.lambda}), respectively. Let $\nu_\infty \in \ell_P + H_0^{\text{sol}}(D;\R^d)$ and $\nu_\lambda\in \ell_P + H_0^1(D;\R^d)$ be the minimizers of (\ref{def.mu}) and (\ref{def.mu.lambda}), respectively. Because $H_0^{\text{sol}}(D;\R^d) \subset H_0^1(D;\R^d)$, we have
	\begin{equation}\label{est.mulambda.mu}
	\begin{aligned}
	\mu_\lambda(D,P) & \le \average_D \bigg( \frac{1}{2} \nabla \nu_\infty  \cdot A \nabla \nu_\infty + \frac{1}{2}\lambda (\nabla\cdot \nu_\infty)^2 \bigg) \\
	& = \average_D \bigg( \frac{1}{2} \nabla \nu_\infty  \cdot A \nabla \nu_\infty \bigg) + \frac{1}{2}\lambda \text{Tr}(P)^2 \\
	& = \mu(D,P) + \frac{1}{2}\lambda \text{Tr}(P)^2,
	\end{aligned}
	\end{equation}
	where we have used the fact $\nabla\cdot v_\infty = \text{Tr}(P)$.
	
	On the other hand, by Lemma \ref{lem.lambda.rate} and rescaling,
	\begin{equation*}
	\bigg( \average_D |\nabla v_\lambda - \nabla v_\infty |^2 \bigg)^{1/2}  \le C\lambda^{-1} |P|.
	\end{equation*}
	Consequently,
	\begin{equation}\label{est.mu.mulambda}
	\begin{aligned}
	\mu(D,P) & =  \average_D \frac{1}{2} \nabla \nu_\infty  \cdot A\nabla \nu_\infty \\
	& = \average_D \frac{1}{2} \nabla \nu_\lambda  \cdot A\nabla \nu_\lambda + \average_D \frac{1}{2} (\nabla \nu_\infty - \nabla \nu_\lambda)  \cdot A( \nabla  \nu_\infty + \nabla \nu_\lambda) \\
	& \le \mu_\lambda(D,P) - \frac{1}{2}\lambda \text{Tr}(P)^2  + C\lambda^{-1}|P|^2,
	\end{aligned}
	\end{equation}
	where we have used (\ref{eq.mu-TrP}) in the last inequality.
	
	Combining (\ref{est.mulambda.mu}) and (\ref{est.mu.mulambda}),
	\begin{equation*}
	|\mu_\lambda(D,P) - \frac{1}{2}\lambda \text{Tr}(P)^2 - \mu(D,P)| \le C\lambda^{-1}|P|^2.
	\end{equation*}
	Now, let $D = \square_m$. Taking expectations and sending $m\to \infty$, in view of the definition of $\overline{A}_\lambda$ and $\widehat{A}$, we have
	\begin{equation*}
	\Big|\frac{1}{2} P\cdot \overline{A}_\lambda P - \frac{1}{2} P\cdot \widehat{A}P \Big| \le C\lambda^{-1}|P|^2.
	\end{equation*}
	This implies the desired estimate.
\end{proof}

\begin{proof}[Proof of Theorem \ref{thm.large-lambda}]
	Let $v^\e_\lambda$ be the weak solution of
	\begin{equation}\label{eq.rate.ve}
	\left\{
	\begin{aligned}
	\nabla\cdot (A^\e \nabla v^\e) + \nabla p^\e &= 0 \qquad &\txt{in }& D, \\
	\nabla\cdot v^\e & = \Ag{f}_D \qquad &\txt{in }& D, \\
	v^\e & = f \qquad &\txt{on }& \partial D,
	\end{aligned}
	\right.
	\end{equation}
	and let $v^0$ be the weak solution of
	\begin{equation}\label{eq.rate.v0}
	\left\{
	\begin{aligned}
	\nabla\cdot (\widehat{A} \nabla v^0) + \nabla p^0 &= 0 \qquad &\txt{in }& D, \\
	\nabla\cdot v^0 & = \Ag{f}_D \qquad &\txt{in }& D, \\
	v^0 & = f \qquad &\txt{on }& \partial D.
	\end{aligned}
	\right.
	\end{equation}
	It follows from Lemma \ref{lem.lambda.rate} that
	\begin{equation}\label{est.rate.1}
	\norm{u^\e_\lambda - v^\e}_{H^1(D)} + \norm{\lambda \nabla\cdot u^\e_\lambda - \average_D \lambda \nabla\cdot u^\e_\lambda - p^\e }_{L^2(D)} \le C\lambda^{-1} \norm{\nabla f}_{L^2(D)}.
	\end{equation}
	On the other hand, Theorem \ref{thm.Stokes.rate} implies
	\begin{equation}\label{est.rate.2}
	\norm{v^\e - v^0}_{L^2(D)} + \norm{p^\e - p^0}_{H^{-1}(D)} \le C\big( \e^{\beta(d-s)} + (\e \X'')^{\alpha s} \big) \|\nabla f\|_{L^{2+\delta}(D)},
	\end{equation}
	for some random variable $\X'' = \X''_s \le \mathcal{O}_s(C_0)$ with some absolute constant $C_0$.
	
	Next, using Lemma \ref{lem.lambda.rate} again, we obtain
	\begin{equation}\label{est.rate.3}
	\norm{v^0_\lambda - v^0}_{H^1(D)} + \norm{\lambda \nabla\cdot v^0_\lambda - \average_D \lambda \nabla\cdot v^0_\lambda - p^0 }_{L^2(D)} \le C\lambda^{-1} \norm{\nabla f}_{L^2(D)},
	\end{equation}
	where $v^0_\lambda$ is the weak solution of
	\begin{equation}\label{eq.hatA.rate}
	\left\{
	\begin{aligned}
	\nabla\cdot (\widehat{A} \nabla v^0_\lambda) + \lambda \nabla ( \nabla\cdot v^0_\lambda) &= 0 \qquad &\txt{in } &D, \\
	v^0_\lambda  &= f \qquad &\txt{on } &\partial D.
	\end{aligned}
	\right.
	\end{equation}
	
	Finally, comparing (\ref{eq.rate.0}) and (\ref{eq.hatA.rate})) and using Lemma \ref{lem.barA-hatA} and the energy estimate, we obtain
	\begin{equation}\label{est.rate.4}
	\norm{v^0_\lambda - u^0_\lambda}_{H^1(D)} + \norm{\lambda \nabla\cdot v^0_\lambda - \lambda\nabla\cdot u^0_\lambda }_{L^2(D)} \le C\lambda^{-1} \norm{\nabla f}_{L^2(D)},
	\end{equation}
	where we also used the fact
	\begin{equation*}
	\average_D (\lambda \nabla\cdot v^0_\lambda - \lambda\nabla\cdot u^0_\lambda ) = 0.
	\end{equation*}
	Combining (\ref{est.rate.1}), (\ref{est.rate.2}), (\ref{est.rate.3}) and (\ref{est.rate.4}), we obtain the announced estimate.
\end{proof}

\subsection{Global convergence rate}
Combing Theorem \ref{thm.small-lambda} and \ref{thm.large-lambda}, we obtain a global convergence rate uniform for any $\lambda \ge 0$.
\begin{theorem}\label{thm.global.rate}
	Let the same assumptions of Theorem \ref{thm.small-lambda} hold. There exist a random variable $\X = \X_{s,\lambda}$ and constants $\alpha,\beta,C>0$ (independent of $\e$ and $\lambda$) satisfying
	\begin{equation*}
	\X \le \mathcal{O}_s(C),
	\end{equation*}
	so that
	\begin{equation*}
	\begin{aligned}
	& \norm{u^\e_\lambda - u^0_\lambda}_{L^2(D)} + \norm{\lambda \nabla\cdot u^\e_\lambda - \lambda\nabla \cdot u^0_\lambda}_{H^{-1}(D)} \\
	&\qquad \le C \big(\e^{\beta(d-s)}  + (\X \e)^{\alpha s} \big)\norm{\nabla f}_{L^{2+\delta}(D)},
	\end{aligned}
	\end{equation*}
	where $u^\e_\lambda$ and $u^0_\e$ are the weak solutions of (\ref{eq.rate.e}) and (\ref{eq.rate.0}), respectively.
\end{theorem}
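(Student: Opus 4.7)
The plan is to prove the theorem by a dichotomy on the size of $\lambda$ relative to a threshold $\lambda = \e^{-\sigma}$, for a small exponent $\sigma>0$ to be chosen independent of $\e$ and $\lambda$. This is exactly the strategy advertised in the introduction and in Figure 2: below the threshold the system is treated as an elliptic system with mildly $\lambda$-dependent ellipticity, while above the threshold it is perturbed from a Stokes system at rate $\lambda^{-1}$. The target estimate will be obtained by absorbing the unfavorable $\lambda$-factors on either side of the threshold into powers of $\e$.

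In the regime $\lambda \le \e^{-\sigma}$, I apply Theorem \ref{thm.small-lambda} with the same $s$ and $\delta$, which gives
\begin{equation*}
\norm{u^\e_\lambda - u^0_\lambda}_{L^2(D)} + \norm{\nabla u^\e_\lambda - \nabla u^0_\lambda}_{H^{-1}(D)} \le C(\lambda+1)\bigl(\e^{\beta_0(d-s)} + \e^s \X_1\bigr)\norm{\nabla f}_{L^{2+\delta}(D)},
\end{equation*}
with $\X_1 = \X_{s,\lambda} \le \mathcal{O}_1(C_0)$. The factor $\lambda+1 \lesssim \e^{-\sigma}$ is absorbed by shrinking $\beta_0$ to a smaller $\beta$ with $\sigma = \beta_0(d-s)/2$, exactly as in the remark following Theorem \ref{thm.small-lambda}, and the stochastic term is rewritten using $\X_1^{1/s}\le\mathcal{O}_s(C_0')$ so that it takes the form $(\e\X')^{\alpha s}$ with $\X' \le \mathcal{O}_s(C)$ and a suitably small $\alpha$. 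To upgrade this to the required pressure estimate, I note that \eqref{est.SRate} also provides the control on $\lambda\nabla\cdot u^\e_\lambda - \lambda\nabla\cdot u^0_\lambda$ (by adding one $\lambda$-factor, which is again absorbed by the threshold as explained in \eqref{est.p.X'}).

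In the regime $\lambda > \e^{-\sigma}$, I apply Theorem \ref{thm.large-lambda}, obtaining
\begin{equation*}
\norm{u^\e_\lambda - u^0_\lambda}_{L^2(D)} + \norm{\lambda\nabla\cdot u^\e_\lambda - \lambda\nabla\cdot u^0_\lambda}_{H^{-1}(D)} \le C\bigl(\lambda^{-1} + \e^{\beta_0(d-s)} + (\X''\e)^{\alpha s}\bigr)\norm{\nabla f}_{L^{2+\delta}(D)},
\end{equation*}
with $\X'' = \X''_s \le \mathcal{O}_s(C_0)$. Since $\lambda^{-1} < \e^\sigma$, by choosing $\sigma$ slightly larger than $\beta(d-s)$ (the $\beta$ fixed in the first case) this term is absorbed into $\e^{\beta(d-s)}$. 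In this regime the $L^2$ convergence of $u^\e_\lambda - u^0_\lambda$ follows from the $H^1$ bound implicit in the proof of Theorem \ref{thm.large-lambda} (via \eqref{est.rate.1}--\eqref{est.rate.4}).

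Finally I merge the two regimes by taking $\X := \X' + \X''$, which still satisfies $\X \le \mathcal{O}_s(C)$ by the elementary sum bound for the $\mathcal{O}_s(\cdot)$ notation, and by choosing the overall exponents $\alpha,\beta$ as the smaller pair from the two cases. The only obstacle is bookkeeping: one must pick $\sigma$ simultaneously small enough for the $(\lambda+1)$-absorption in the small-$\lambda$ side and large enough for the $\lambda^{-1}$-absorption in the large-$\lambda$ side, which reduces to checking that for a common $\beta$ one has $\sigma \in [\beta(d-s),\, \beta_0(d-s)/2]$; this is a straightforward arithmetic consistency that can always be arranged by shrinking $\beta$.
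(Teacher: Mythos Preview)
Your proposal is correct and follows essentially the same approach as the paper: the dichotomy at $\lambda = \e^{-\sigma}$, invoking Theorem~\ref{thm.small-lambda} (via the remark's reformulation \eqref{est.u.X'}--\eqref{est.p.X'}) below the threshold and Theorem~\ref{thm.large-lambda} above it, then merging the random variables. The paper uses $\X = \max\{\X',\X''\}$ rather than $\X'+\X''$, but this is a cosmetic difference, and your explicit consistency check on $\sigma$ is slightly more detailed than the paper's ``with appropriate $\sigma$''.
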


\begin{proof}
	Let $\X' = \X'_{s,\lambda} \le \mathcal{O}_s(C_0)$ be the random variable in (\ref{est.u.X'}) and (\ref{est.p.X'}). Let $\X'' = \X''_s \le \mathcal{O}_s(C_0)$ be the random variable in Theorem \ref{thm.large-lambda}.
	Choose $\X = \X_{s,\lambda} = \max \{ \X',\X'' \}$. Observe that $\X\le \mathcal{O}_s(C)$ for some absolute constant $C$. In fact, since $\X'\le \mathcal{O}_s(C_0)$ and $\X''\le \mathcal{O}_s(C_0)$,
	\begin{equation*}
	\E\big[ \exp\big((\X/C)^s\big) \big] \le \E\big[ \exp\big((\X'/C)^s\big) + \exp\big((\X''/C)^s\big) \big] \le 4,
	\end{equation*}
	as desired.
	
	Then, given any fixed $\lambda\ge 0$, by considering $\lambda < \e^{-\sigma}$ and $\lambda>\e^{-\sigma}$ separately with appropriate $\sigma$ depending on $s\in (0,d)$, Theorem \ref{thm.small-lambda} and \ref{thm.large-lambda} together implies that there exist $\alpha,\beta>0$ so that
	\begin{equation*}
	\begin{aligned}
	\norm{u^\e_\lambda - u^0_\lambda}_{L^2(D)} + \norm{\lambda \nabla\cdot u^\e_\lambda - \lambda\nabla \cdot u^0_\lambda}_{H^{-1}(D)} \le C\big( \e^{\beta(d-s)} + (\e \X)^{\alpha s} \big)\norm{\nabla f}_{L^{2+\delta}(D)},
	\end{aligned}
	\end{equation*}
	where $C$ is independent of $\e$ and $\lambda$.
\end{proof}

\section{Large-Scale Regularity: Interior Estimates}

In this section, we investigate the large-scale interior estimate for the system of elasticity
\begin{equation}\label{eq.int.B2}
\nabla\cdot (A^\e \nabla u^\e_\lambda) + \lambda\nabla ( \nabla\cdot u^\e_\lambda) = 0 \qquad\txt{in } B_2,
\end{equation}
where $B_2 = B_2(0)$ and $\lambda \ge 0$ is a constant.  

\subsection{Excess quantities and properties}
Let $u^\e_\lambda$ be a weak solution of (\ref{eq.int.B2}). We define two critical excess quantities. Define
\begin{equation*}
\begin{aligned}
\Phi(t) & =\frac{1}{t}\inf\limits_{q\in\R^d} \bigg( \average_{B_t } |u^\e_\lambda - q|^2
\bigg)^{1/2} \\
& \quad + \frac{1}{t} \norm{\lambda\nabla\cdot u_\lambda^\e - \average_{B_t} \lambda\nabla \cdot u_\lambda^\e }_{\underline{H}^{-1}(B_t)} + \sup_{k,\ell\in [1/4,1]} \bigg| \average_{B_{kt}} \lambda\nabla\cdot u_\lambda^\e -  \average_{B_{\ell t}} \lambda\nabla\cdot u_\lambda^\e\bigg|.
\end{aligned}
\end{equation*}
For any $v\in H^1(B_{t};\R^d)$, define
\begin{equation*}
\begin{aligned}
H(t;v) & = \frac{1}{t}\inf\limits_{\substack{M\in \R^{d\times d}\\ q\in\R^d}} \bigg( \average_{B_{t} } |v-Mx-q|^2 \bigg)^{1/2} \\
& \quad + \frac{1}{t} \norm{\lambda\nabla\cdot v - \average_{B_t} \lambda\nabla \cdot v }_{\underline{H}^{-1}(B_t)} + \sup_{k,\ell\in [1/4,1]} \bigg| \average_{B_{kt}} \lambda\nabla\cdot v -  \average_{B_{\ell t}} \lambda\nabla\cdot v\bigg|.
\end{aligned}
\end{equation*}
For simplicity, if $v = u^\e_\lambda$, we also write $H(r) = H(r;u^\e_\lambda)$. Then it is not hard to see 
\begin{equation}\label{est.basic.HPhi}
H(r) \le \Phi(r) \le C\bigg( \average_{B_r}|\nabla u^\e_\lambda|^2 \bigg)^{1/2} \le C\Phi(2r),
\end{equation}
where the second inequality follows from the Poincar\'{e} inequality and Lemma \ref{lem.Nabla}, and the third inequality follows from (\ref{est.WCaccioppoli.Int}).

Two useful properties of $H$ and $\Phi$ are given below.

\begin{lemma}\label{lem.h}
	There exists a function $h:(0,2) \mapsto [0,\infty)$ so that for any $r\in (0,1)$
	\begin{equation*}
	\left\{
	\begin{aligned}
	h(r) & \le C( H(r) + \Phi(r)) \\
	\Phi(r) &\le H(r) + h(r) \\
	\sup_{r\le s,t\le 2r} |h(s) - h(t)| &\le CH(2r).
	\end{aligned}\right.
	\end{equation*}
\end{lemma}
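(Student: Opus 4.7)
The plan is to let $h(r)$ measure the size of the best affine slope fitting $u^\e_\lambda$ on $B_r$. Concretely, I would let $(M_r,q_r)\in\R^{d\times d}\times\R^d$ be the unique minimizer of the strictly convex quadratic functional $(M,q)\mapsto \average_{B_r}|u^\e_\lambda-Mx-q|^2$ and define
$$h(r):=\frac{1}{r}\Bigl(\average_{B_r}|M_r x|^2\Bigr)^{1/2}=c_d|M_r|,$$
for a dimensional constant $c_d>0$. The first-order optimality conditions yield $q_r=\average_{B_r}u^\e_\lambda$ (which also happens to be the $\Phi$-minimizer in $q$) and $M_{r,ij}=\frac{d+2}{r^2}\average_{B_r}(u^\e_\lambda)_i x_j$. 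They also give the orthogonality relation $\average_{B_r}(u^\e_\lambda-M_r x-q_r)\cdot M_r x=0$, hence the Pythagorean identity
$$\average_{B_r}|u^\e_\lambda-q_r|^2=\average_{B_r}|u^\e_\lambda-M_r x-q_r|^2+\average_{B_r}|M_r x|^2.$$

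Granting this identity, both of the first two items are immediate. Since the pressure-type second and third terms of $H(r)$ and $\Phi(r)$ coincide, it suffices to compare their first (displacement) terms. Applying $\sqrt{a+b}\le\sqrt{a}+\sqrt{b}$ to the Pythagorean identity and dividing by $r$ gives $\Phi(r)\le H(r)+h(r)$; dropping the nonnegative first summand in the same identity instead gives $\average_{B_r}|M_r x|^2\le\average_{B_r}|u^\e_\lambda-q_r|^2$, so $h(r)\le \Phi(r)\le H(r)+\Phi(r)$.

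The only step requiring a bit of thought is the continuity estimate. For $r\in(0,1)$ and $s,t\in[r,2r]$, I would compare both $(M_s,q_s)$ and $(M_t,q_t)$ to the common reference $(M_{2r},q_{2r})$. Using the inclusions $B_s\subset B_{2r}$ and $B_t\subset B_{2r}$ together with the minimality of $(M_s,q_s)$ on $B_s$ and of $(M_t,q_t)$ on $B_t$,
$$\average_{B_s}|u^\e_\lambda-M_s x-q_s|^2\le\average_{B_s}|u^\e_\lambda-M_{2r}x-q_{2r}|^2\le\frac{|B_{2r}|}{|B_s|}\average_{B_{2r}}|u^\e_\lambda-M_{2r}x-q_{2r}|^2\le Cr^2H(2r)^2,$$
and an identical chain (via $B_s\subset B_t\subset B_{2r}$) yields the same bound with $(M_t,q_t)$ in place of $(M_s,q_s)$. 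The triangle inequality on $B_s$ then gives
$$\Bigl(\average_{B_s}|(M_s-M_t)x+(q_s-q_t)|^2\Bigr)^{1/2}\le CrH(2r),$$
and the identity $\average_{B_s}|Px+p|^2=c_d^{-2}s^2|P|^2+|p|^2$ (which holds because $\average_{B_s}x=0$) isolates the slope to produce $s|M_s-M_t|\le CrH(2r)$. Since $s\ge r$, this yields $|M_s-M_t|\le CH(2r)$, and hence $|h(s)-h(t)|=c_d\bigl||M_s|-|M_t|\bigr|\le CH(2r)$.

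The main (mild) obstacle is recognizing that $h$ should be defined through the optimal affine slope, after which the Pythagoras/orthogonality structure coming from $\average_{B_r}x=0$ does all of the work; once this is in place, the three claims are short computations.
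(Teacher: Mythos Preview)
Your proof is correct and follows essentially the same approach as the paper: both define $h(r)$ as (a dimensional constant times) $|M_r|$, where $M_r$ is the matrix minimizing the first term of $H(r)$, and both handle the oscillation estimate by a triangle-inequality comparison of $M_s$ and $M_t$ through the common data $u^\e_\lambda$ on a ball contained in $B_{2r}$. Your use of the explicit Pythagorean identity (from $\average_{B_r}x=0$) is a small refinement over the paper's triangle-inequality argument for the first two items, yielding the sharper bound $h(r)\le\Phi(r)$ directly; for the third item the two arguments are essentially identical, with the paper comparing on $B_r$ and you on $B_{\min(s,t)}$ (your parenthetical ``$B_s\subset B_t$'' tacitly assumes $s\le t$, which is harmless by symmetry).
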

\begin{proof}
	Let $M_r$ be the matrix in $\R^{d\times d}$ that minimizes $H(r)$, namely,
	\begin{equation*}
	\begin{aligned}
	H(r) & = \frac{1}{r}\inf_{q\in \R^d} \bigg( \average_{B_{r} } |u^\e_\lambda -M_r x-q|^2 \bigg)^{1/2} \\
	&\quad + \frac{1}{t} \norm{\lambda\nabla\cdot u^\e_\lambda - \average_{B_r} \lambda\nabla \cdot u^\e_\lambda }_{\underline{H}^{-1}(B_r)} + \sup_{k,\ell\in [1/4,1]} \bigg| \average_{B_{kr}} \lambda\nabla\cdot u^\e_\lambda -  \average_{B_{\ell r}} \lambda\nabla\cdot u^\e_\lambda\bigg|.
	\end{aligned}
	\end{equation*}
	Define $h(r) = |M_r|$. In view of the definition, it is obvious that $\Phi(r) \le H(r) + h(r)$. Also,
	\begin{equation*}
	\begin{aligned}
	h(r) & \le \frac{C}{r} \inf_{q\in \R^d} \bigg( \average_{B_r} |M_r x + q|^2 \bigg)^{1/2} \\
	& \le \frac{C}{r} \inf_{q\in \R^d} \bigg( \average_{B_r} |u^\e_\lambda - M_r x - q|^2 \bigg)^{1/2} + \frac{1}{r} \inf_{q\in \R^d} \bigg( \average_{B_r} |u_\lambda^\e - q|^2 \bigg)^{1/2} \\
	&\le C( H(r) + \Phi(r)).
	\end{aligned}
	\end{equation*}
	Finally, if $r\le s,t\le 2r$,
	\begin{equation*}
	\begin{aligned}
	|h(s) - h(t)| & \le |M_s - M_t| \\
	&\le \frac{C}{r} \inf_{q\in \R^d} \bigg( \average_{B_r} |(M_s - M_t) x + q|^2 \bigg)^{1/2} \\
	& \le \frac{1}{r} \inf_{q\in \R^d} \bigg( \average_{B_r} |u^\e_\lambda - M_s x - q|^2 \bigg)^{1/2}  + \frac{1}{r} \inf_{q\in \R^d} \bigg( \average_{B_r} |u^\e_\lambda - M_t x - q|^2 \bigg)^{1/2} \\
	& \le \frac{C}{s}\inf\limits_{\substack{M\in \R^{d\times d}\\ q\in\R^d}} \bigg( \average_{B_s} |u^\e_\lambda - M x - q|^2 \bigg)^{1/2} + \frac{C}{t}\inf\limits_{\substack{M\in \R^{d\times d}\\ q\in\R^d}} \bigg( \average_{B_t} |u^\e_\lambda - M x - q|^2 \bigg)^{1/2}\\
	& \le \frac{C}{2r}\inf\limits_{\substack{M\in \R^{d\times d}\\ q\in\R^d}} \bigg( \average_{B_{2r}} |u^\e_\lambda - M x - q|^2 \bigg)^{1/2} \le CH(2r).
	\end{aligned}
	\end{equation*}
	The proof is complete.
\end{proof}
\begin{lemma}
	Suppose $u^\e_\lambda$ is a weak solution of (\ref{eq.int.B2}). There exists a constant $C$ so that for any $r\in (0,1)$, we have
	\begin{equation*}
	\sup_{s\in [r,2r]} \Phi(s) \le C\Phi(2r).
	\end{equation*}
\end{lemma}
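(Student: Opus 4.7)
The plan is to decompose $\Phi(s)$ into its three constituent pieces and to bound each by a constant multiple of $\Phi(2r)$. Throughout fix $s\in[r,2r]$ with $r\in(0,1)$ and write $\psi:=\lambda\nabla\cdot u^\e_\lambda$. The first piece $\frac{1}{s}\inf_q(\average_{B_s}|u^\e_\lambda-q|^2)^{1/2}$ is dominated by a constant times the first piece of $\Phi(2r)$ using $B_s\subset B_{2r}$ with $|B_{2r}|/|B_s|\le 2^d$ and $1/s\le 2/(2r)$.

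For the second piece, $\frac{1}{s}\|\psi-\average_{B_s}\psi\|_{\underline{H}^{-1}(B_s)}$, I would take any test function $v\in H^1_0(B_s)$ with $\|v\|_{\underline{H}^1(B_s)}\le 1$, extend it by zero to $B_{2r}$, and note that $\|v\|_{\underline{H}^1(B_{2r})}\le (s/2r)^{d/2}\le 1$. Writing $\psi-\average_{B_s}\psi=(\psi-\average_{B_{2r}}\psi)+(\average_{B_{2r}}\psi-\average_{B_s}\psi)$ and pairing with $v$, the first summand is controlled, via the $\underline{H}^{-1}(B_{2r})$ duality, by the second piece of $\Phi(2r)$, while the second summand produces a factor of $|\average_{B_{2r}}\psi-\average_{B_s}\psi|$ which lies in the third piece of $\Phi(2r)$ since $s/(2r)\in[1/2,1]\subset[1/4,1]$.

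The real obstacle is the third piece $\sup_{k,\ell\in[1/4,1]}|\average_{B_{ks}}\psi-\average_{B_{\ell s}}\psi|$ of $\Phi(s)$: it involves balls of radii as small as $s/4\ge r/4$, whereas the third piece of $\Phi(2r)$ only controls oscillations at radii $\ge r/2$. By the triangle inequality the task reduces to bounding $\sup_{\rho\in[r/4,2r]}|\average_{B_\rho}\psi-\average_{B_{2r}}\psi|$ by $C\Phi(2r)$, and only the subrange $\rho\in[r/4,r/2)$ is non-trivial. In that range I would triangulate through $B_{2\rho}$: since $2\rho\in[r/2,r]$, the difference $|\average_{B_{2\rho}}\psi-\average_{B_{2r}}\psi|$ again lies within the third piece of $\Phi(2r)$, and the remaining piece $|\average_{B_\rho}\psi-\average_{B_{2\rho}}\psi|$ is estimated by Cauchy--Schwarz, Lemma \ref{lem.Nabla} applied to $\psi$, and the equation (which converts $\|\nabla\psi\|_{H^{-1}(B_{2\rho})}$ into $\|\nabla u^\e_\lambda\|_{L^2(B_{2\rho})}$), producing
\begin{equation*}
|\average_{B_\rho}\psi-\average_{B_{2\rho}}\psi|\le C\bigg(\average_{B_{2\rho}}|\nabla u^\e_\lambda|^2\bigg)^{1/2}\le C\bigg(\average_{B_r}|\nabla u^\e_\lambda|^2\bigg)^{1/2}\le C\Phi(2r),
\end{equation*}
where the final step uses the Caccioppoli-type bound in (\ref{est.basic.HPhi}), valid since $r\le 1$. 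Assembling the three term-by-term estimates yields $\Phi(s)\le C\Phi(2r)$ uniformly in $s\in[r,2r]$. The conceptual crux is that Lemma \ref{lem.Nabla} together with the elasticity equation trades sub-$r/2$-scale pressure oscillations for an $L^2$-gradient norm on $B_r$, which is exactly the quantity that $\Phi(2r)$ already controls.
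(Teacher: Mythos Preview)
Your proof is correct and follows essentially the same approach as the paper's own proof: both decompose $\Phi(s)$ into its three pieces, handle the first two by the obvious inclusion $B_s\subset B_{2r}$ and $H^1_0(B_s)\subset H^1_0(B_{2r})$, and treat the delicate sub-$r/2$ range in the third piece by invoking Lemma~\ref{lem.Nabla} together with the equation to pass from pressure oscillations to $(\average_{B_r}|\nabla u^\e_\lambda|^2)^{1/2}$, which is then absorbed into $\Phi(2r)$ via~(\ref{est.basic.HPhi}). The only cosmetic difference is that the paper anchors at $B_r$ and bounds $|\average_{B_{ks}}\psi-\average_{B_r}\psi|$ directly for $ks\in(r/4,r)$, whereas you anchor at $B_{2r}$ and insert the intermediate radius $2\rho$; both routes rely on the same key estimate.
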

\begin{proof}
	We estimate the three parts of $H(s)$ separately. First, it is obvious that
	\begin{equation*}
	\frac{1}{s}\inf_{q\in\R^d} \bigg( \average_{B_{s} } |u^\e_\lambda-q|^2 \bigg)^{1/2} \le \frac{C}{2r}\inf_{q\in\R^d} \bigg( \average_{B_{2r} } |u^\e_\lambda-q|^2 \bigg)^{1/2} \le C\Phi(2r).
	\end{equation*}
	Second, using the fact that $H_0^1(B_s) \subset H_0^1(B_{2r})$ and (\ref{est.H-1.L2}), we have
	\begin{equation*}
	\begin{aligned}
	& \frac{1}{s} \norm{\lambda\nabla\cdot u^\e_\lambda - \average_{B_s} \lambda\nabla \cdot u^\e_\lambda }_{\underline{H}^{-1}(B_s)} \\
	& \le \frac{1}{s} \norm{\lambda\nabla\cdot u^\e_\lambda - \average_{B_{2r}} \lambda\nabla \cdot u^\e_\lambda }_{\underline{H}^{-1}(B_s)} + \bigg| \average_{B_s} \lambda\nabla\cdot u^\e_\lambda - \average_{B_{2r}} \lambda\nabla\cdot u^\e_\lambda \bigg| \\
	& \le \frac{1}{s} \norm{\lambda\nabla\cdot u^\e_\lambda - \average_{B_{2r}} \lambda\nabla \cdot u^\e_\lambda }_{\underline{H}^{-1}(B_{2r})} + \sup_{k,\ell \in [1/4,1]}\bigg| \average_{B_{k2r}} \lambda\nabla\cdot u^\e_\lambda - \average_{B_{\ell 2r}} \lambda\nabla\cdot u^\e_\lambda \bigg| \\
	& \le 2\Phi(2r).
	\end{aligned}
	\end{equation*}
	Finally, it suffices to estimate
	\begin{equation*}
	\sup_{k,\ell \in [1/4,1]}\bigg| \average_{B_{ks}} \lambda\nabla\cdot u^\e_\lambda - \average_{B_{\ell s}} \lambda\nabla\cdot u^\e_\lambda \bigg| \le 2\sup_{k\in [1/4,1]} \bigg| \average_{B_{ks}} \lambda\nabla\cdot u^\e_\lambda - \average_{B_{r}} \lambda\nabla\cdot u^\e_\lambda \bigg|.
	\end{equation*}
	If $r\le ks \le 2r$, then by definition of $\Phi(2r)$, the above inequality is bounded by $2\Phi(2r)$. If $r/4 < ks < r$, we have
	\begin{equation*}
	\begin{aligned}
	\bigg| \average_{B_{ks}} \lambda\nabla\cdot u^\e_\lambda - \average_{B_{r}} \lambda\nabla\cdot u^\e_\lambda \bigg| & \le C\bigg( \average_{B_r}| \lambda\nabla\cdot u^\e_\lambda - \average_{B_{r}} \lambda\nabla\cdot u^\e_\lambda |^2 \bigg)^{1/2} \\
	& \le C\bigg( \average_{B_r} |\nabla u^\e_\lambda|^2 \bigg)^{1/2} \\
	& \le C\Phi(2r),
	\end{aligned}
	\end{equation*}
	where we have used Lemma \ref{lem.Nabla} and (\ref{est.WCaccioppoli.Int}). The desired estimate then follows readily.
\end{proof}

\subsection{Excess decay estimates}
This is done via a sequence of lemmas.
\begin{lemma}\label{lem.rater}
	For each $s\in (0,d)$, there exists a random variable $\X = \X_{s,\lambda}:\Omega\to [1,\infty)$ and a constant $C>0$ satisfying
	\begin{equation*}
	\X \le \mathcal{O}_s(C),
	\end{equation*}	
	such that for each $r\in (\e, 1)$, we have
	\begin{equation}\label{est.rate.Br}
	\bigg( \average_{B_r} |u^\e_\lambda - u^{0,r}_\lambda|^2 \bigg)^{1/2} + \norm{\lambda \nabla\cdot u^\e_\lambda - \lambda \nabla\cdot u^{0,r}_\lambda }_{\underline{H}^{-1}(B_r)} \le Cr \eta(\e \X/r) \Phi(4r),
	\end{equation}
	where
	\begin{equation}\label{def.Ys}
	\eta(\rho) = \rho^{\frac{1}{3}\min\{ \alpha s, \beta(d-s) \}},
	\end{equation}
	and $u_\lambda^{0,r}$ is the weak solution of
	\begin{equation}\label{eq.ulambda.0r}
	\left\{
	\begin{aligned}
	\nabla\cdot (\overline{A}_\lambda \nabla u^{0,r}_\lambda) + \lambda \nabla ( \nabla\cdot u^{0,r}_\lambda) &= 0 \qquad &\txt{in } &B_{r}, \\
	u^{0,r}_\lambda  &= u_\lambda^\e \qquad &\txt{on } &\partial B_{r}.
	\end{aligned}
	\right.
	\end{equation}
\end{lemma}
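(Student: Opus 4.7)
The plan is to apply the global convergence rate Theorem \ref{thm.global.rate} on $B_r$ (viewed at scale $\e/r$ after rescaling), use Meyers' estimate to trade the required $W^{1,2+\delta}$ regularity of the boundary data for the $L^2$-based excess $\Phi(4r)$, and finally take a union bound over a dyadic family of scales to produce a single random variable $\X$ controlling the rate at every admissible $r$.

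First I would rescale. Setting $v^\e(x) = r^{-1} u^\e_\lambda(rx)$ and $v^0(x) = r^{-1} u^{0,r}_\lambda(rx)$ on $B_1$, the pair $(v^\e, v^0)$ solves a system as in (\ref{eq.rate.e})--(\ref{eq.rate.0}) with coefficient field $A(r\,\cdot/\e)$ and small parameter $\e/r\in(0,1)$ (this is where $r>\e$ enters), sharing the boundary data $v^\e|_{\partial B_1}$. Applying Theorem \ref{thm.global.rate} on $D=B_1$ yields, for some $\alpha,\beta>0$ and a random variable $\X_r\le \mathcal{O}_s(C)$,
\begin{equation*}
\|v^\e - v^0\|_{L^2(B_1)} + \|\lambda\nabla\cdot v^\e - \lambda\nabla\cdot v^0\|_{H^{-1}(B_1)} \le C\big((\e/r)^{\beta(d-s)} + ((\e/r)\X_r)^{\alpha s}\big)\|\nabla v^\e\|_{L^{2+\delta}(B_1)}.
\end{equation*}
Undoing the rescaling converts the left-hand side into $r^{-1}$ times the two quantities appearing in (\ref{est.rate.Br}).

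Second, I would bound $\|\nabla v^\e\|_{L^{2+\delta}(B_1)}$ by the excess $\Phi(4r)$. The Meyers-type estimate Theorem \ref{thm.Meyers.Elasticity} applied to $v^\e$ on $B_{3/2}$ gives
\begin{equation*}
\Big(\average_{B_1}|\nabla v^\e|^{q_0}\Big)^{1/q_0} \le C\Big(\average_{B_{3/2}}|\nabla v^\e|^2\Big)^{1/2}
\end{equation*}
for some $q_0 = 2+\delta>2$. Rescaling back, the right-hand side becomes $(\average_{B_{3r/2}}|\nabla u^\e_\lambda|^2)^{1/2}$, which by the generalized interior Caccioppoli inequality (\ref{est.WCaccioppoli.Int}) applied on $B_{3r/2}\subset B_{2r}$ is controlled by $C\Phi(4r)$ (the three summands on the right of (\ref{est.WCaccioppoli.Int}) are precisely the three pieces building $\Phi$).

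Finally, to obtain a single random variable valid for every $r\in(\e,1)$, I would apply the above at a dyadic family of scales $r_k = 2^{-k}$ for $k$ up to $\lceil\log_2(1/\e)\rceil$ and set $\X := \max_k \X_{r_k}$. The standard behaviour of $\mathcal{O}_s$ tails under taking the maximum of $N$ random variables each bounded by $\mathcal{O}_s(C)$ gives $\max_k \X_{r_k} \le \mathcal{O}_s(C(\log N)^{1/s})$, and the polylogarithmic factor in $1/\e$ is easily absorbed into an arbitrarily small loss of exponent; hence $\X\le\mathcal{O}_s(C)$ with $C$ independent of $\e,\lambda$. Interpolation between consecutive dyadic scales costs only a constant. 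The exponent $\tfrac13\min\{\alpha s,\beta(d-s)\}$ in $\eta$ leaves ample room to absorb both this log loss and the Meyers' exponent trade-off. The main obstacle is exactly this latter point: Theorem \ref{thm.global.rate} demands $W^{1,2+\delta}$ boundary data while $u^\e_\lambda$ is only a priori in $H^1$, and Meyers is the only available bridge, so the full natural exponent $\min\{\alpha s,\beta(d-s)\}$ is unreachable and one must settle for a definite fraction, here $1/3$.
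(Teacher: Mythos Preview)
Your approach is essentially the paper's: rescale to $r=1$, apply Meyers (Theorem \ref{thm.Meyers.Elasticity}) to pass from $L^2(B_2)$ to $L^{2+\delta}(B_1)$, invoke Theorem \ref{thm.global.rate}, and close with the generalized Caccioppoli inequality (\ref{est.WCaccioppoli.Int}). Two simplifications are worth noting. First, the dyadic union bound is unnecessary: the random variable $\X$ in Theorem \ref{thm.global.rate} is a function of the coefficient field $A$ only and does not depend on $\e$, so after rescaling $B_r$ to $B_1$ (which replaces $\e$ by $\e/r$ but leaves $A$ fixed) the \emph{same} $\X$ controls every admissible $r$ simultaneously. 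Second, your explanation of the factor $1/3$ in $\eta$ is off: Meyers causes no exponent loss here (it upgrades $L^2$ to $L^{2+\delta}$ without penalty), and since no union bound is needed there is no log loss to absorb. The estimate (\ref{est.rate.Br}) in fact holds with the full exponent $\min\{\alpha s,\beta(d-s)\}$; the paper simply records a weaker power, which suffices since only positivity of the exponent matters in Lemma \ref{lem.iteration}. The cube root that appears in Lemma \ref{lem.Hr.iteration} arises from a separate mechanism (Lemma \ref{lem.average2H-1}), not from this step.
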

\begin{proof}
	By rescaling, it suffices to prove the result for $r = 1$. By the Meyers' estimate, we know that $u^\e_\lambda \in W^{1,2+\delta}(B_1;\R^d)$ for some $\delta>0$ and
	\begin{equation*}
	\norm{\nabla u^\e_\lambda}_{L^{2+\delta}(B_1)} \le C\norm{\nabla u^\e_\lambda}_{L^{2}(B_2)}.
	\end{equation*}
	Then the desired estimate follows immediately from Theorem \ref{thm.global.rate} and (\ref{est.WCaccioppoli.Int}).
\end{proof}

\begin{lemma}\label{lem.thetar}
	Let $u_\lambda^{0,r}$ be a solution of (\ref{eq.ulambda.0r}). There exists some $\theta \in (0,\frac{1}{4})$, depending only on $\Lambda$ and $d$, so that
	\begin{equation}\label{est.Hr.u0}
	H(\theta r; u_\lambda^{0,r}) \le \frac{1}{2} H(r; u_\lambda^{0,r}).
	\end{equation}
\end{lemma}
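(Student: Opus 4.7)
The plan is to prove this excess decay for the constant-coefficient homogenized system by combining rescaling invariance, the generalized Caccioppoli inequality (\ref{est.WCaccioppoli.Int}), and the $\lambda$-uniform Schauder estimate of Theorem~\ref{thm.C1a}. The key structural feature is that $\overline{A}_\lambda$ is constant and any affine function $Mx+q$ solves the homogeneous system $\nabla\cdot(\overline{A}_\lambda\nabla\,\cdot\,)+\lambda\nabla(\nabla\cdot\,\cdot\,)=0$, so the argument reduces to a Campanato-type iteration.

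First I would reduce to $r=1$ by rescaling: setting $\tilde v(y)=u_\lambda^{0,r}(ry)$ yields a solution of the same constant-coefficient equation on $B_1$, and a direct change of variables (using $\lambda\nabla\cdot v(x)=r^{-1}\lambda\nabla\cdot\tilde v(x/r)$) shows $H(t;u_\lambda^{0,r})=r^{-1}H(t/r;\tilde v)$ for every $t\in(0,r]$, so the claim becomes $H(\theta;\tilde v)\le\tfrac12 H(1;\tilde v)$.

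Let $(M,q)\in\R^{d\times d}\times\R^d$ be a minimizer in the first part of $H(1;\tilde v)$, and set $w=\tilde v-Mx-q$; since $Mx+q$ solves the homogeneous system, so does $w$. Applying (\ref{est.WCaccioppoli.Int}) to $w$ at radius $1/2$, and noting that $\lambda\nabla\cdot w-\fint_{B_s}\lambda\nabla\cdot w=\lambda\nabla\cdot\tilde v-\fint_{B_s}\lambda\nabla\cdot\tilde v$ for every $s$ (the additive constant $\lambda\,\mathrm{tr}(M)$ cancels), each fluctuation term on the right is controlled by a corresponding piece of $H(1;\tilde v)^2$, while $\inf_{q'}\fint_{B_1}|w-q'|^2$ is bounded by $\fint_{B_1}|\tilde v-Mx-q|^2\le H(1;\tilde v)^2$. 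This yields $\bigl(\fint_{B_{1/2}}|\nabla\tilde v-M|^2\bigr)^{1/2}\le C\,H(1;\tilde v)$. Theorem~\ref{thm.C1a} applied to $w$ at the interior point $0$ with source $h=0$ (so the contact set is empty) then gives the $\lambda$-uniform Schauder bound $[\nabla\tilde v]_{C^\alpha(B_{1/4})}+[\lambda\nabla\cdot\tilde v]_{C^\alpha(B_{1/4})}\le C\bigl(\fint_{B_{1/2}}|\nabla w|^2\bigr)^{1/2}\le C\,H(1;\tilde v)$, where I use that the $C^\alpha$-seminorms of $\nabla\tilde v$ and $\lambda\nabla\cdot\tilde v$ coincide with those of $\nabla w$ and $\lambda\nabla\cdot w$.

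For $\theta\in(0,1/4]$ I would bound each piece of $H(\theta;\tilde v)$ from this Schauder estimate: a first-order Taylor expansion at the origin with $M'=\nabla\tilde v(0)$, $q'=\tilde v(0)$ controls the first part by $C\theta^\alpha[\nabla\tilde v]_{C^\alpha(B_\theta)}$; (\ref{est.H-1.L2}) together with the $C^\alpha$-oscillation of $\lambda\nabla\cdot\tilde v$ on $B_\theta$ controls the second part by $C\theta^\alpha[\lambda\nabla\cdot\tilde v]_{C^\alpha(B_\theta)}$; and the third part is bounded pointwise by $C\theta^\alpha[\lambda\nabla\cdot\tilde v]_{C^\alpha(B_\theta)}$. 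Summing gives $H(\theta;\tilde v)\le C\theta^\alpha H(1;\tilde v)$, so picking $\theta\in(0,1/4)$ with $C\theta^\alpha\le\tfrac12$ closes the argument. The main obstacle is the book-keeping in the Caccioppoli step: verifying that every pressure-fluctuation term for $w$ is absorbed into $H(1;\tilde v)$, which works precisely because $\lambda\nabla\cdot w$ differs from $\lambda\nabla\cdot\tilde v$ only by the additive constant $\lambda\,\mathrm{tr}(M)$.
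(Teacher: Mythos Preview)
Your proposal is correct and follows essentially the same approach as the paper: subtract an affine function (which solves the constant-coefficient system), apply the generalized Caccioppoli inequality (\ref{est.WCaccioppoli.Int}) to control $\fint_{B_{1/2}}|\nabla w|^2$ by $H(1)$, then use the interior $\lambda$-uniform $C^{1,\alpha}$ estimate (the interior analogue of Theorem~\ref{thm.C1a}) to obtain $H(\theta)\le C\theta^\alpha H(1)$ and choose $\theta$ small. You simply spell out more of the details---the rescaling, the cancellation of the constant $\lambda\,\mathrm{tr}(M)$ in the pressure-fluctuation terms, and the piece-by-piece control of $H(\theta)$---that the paper's two-line proof leaves implicit.
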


\begin{proof}
	First of all, for any $M\in \R^{d\times d}$ and $q\in \R^d$, $u_\lambda^{0,r}-Mx-q$ is a weak solution of (\ref{eq.ulambda.0r}) with constant coefficients. By the interior $C^{1,\alpha}$ estimate uniform in $\lambda$ (analogous to Theorem \ref{thm.C1a}), for any $\theta\in (0,1/4)$, we have
	\begin{equation*}
	H(\theta r; u_\lambda^{0,r}) \le C\theta^\alpha \inf\limits_{\substack{M\in \R^{d\times d}\\ q\in\R^d}} \bigg( \average_{B_{r/2}} |\nabla (u_\lambda^{0,r} -Mx-q) |^2 \bigg),
	\end{equation*}
	where we also used the fact (see (\ref{est.H-1.L2}))
	\begin{equation*}
	\norm{\lambda\nabla\cdot v - \average_{B_t} \lambda\nabla \cdot v }_{\underline{H}^{-1}(B_t)} \le Ct \bigg( \average_{B_t} |\lambda\nabla\cdot v - \average_{B_t} \lambda\nabla \cdot v|^2 \bigg)^{1/2}.
	\end{equation*}
	Combined with the generalized Caccioppoli inequality (\ref{est.WCaccioppoli.Int}), we obtain
	\begin{equation*}
	H(\theta r; u_\lambda^{0,r}) \le C\theta^\alpha  H(r;u_\lambda^{0,r}).
	\end{equation*}
	Finally, the desired estimate follows by choosing a proper $\theta$ so that $C\theta^\alpha \le 1/2$.
\end{proof}

\begin{lemma}\label{lem.Hr.iteration}
	There exists $\theta\in (0,\frac{1}{4})$, for each $r\in (\e\X,1)$,
	\begin{equation}\label{est.Hr}
	H(\theta r) \le \frac{1}{2} H(r) + C \sqrt[3]{\eta(\e \X/r)} \Phi(4r).
	\end{equation}
\end{lemma}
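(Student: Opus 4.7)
The plan is to decompose the excess at scale $\theta r$ via the homogenized approximation $u_\lambda^{0,r}$ of Lemma \ref{lem.rater}, apply the $C^{1,\alpha}$-type decay for the homogenized equation from Lemma \ref{lem.thetar}, and control the error using the quantitative convergence rate. Since each of the three ingredients of $H(\,\cdot\,;v)$ is subadditive in $v$, writing $w := u_\lambda^\e - u_\lambda^{0,r}$ I would first observe
\[
H(\theta r) \le H(\theta r; u_\lambda^{0,r}) + H(\theta r; w) \le \tfrac{1}{2} H(r; u_\lambda^{0,r}) + H(\theta r; w) \le \tfrac{1}{2} H(r) + \tfrac{1}{2} H(r; w) + H(\theta r; w),
\]
using Lemma \ref{lem.thetar} and subadditivity. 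So it remains to bound $H(r; w)$ and $H(\theta r; w)$ by $C\sqrt[3]{\eta(\e\X/r)}\,\Phi(4r)$.

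The displacement ingredient is handled by the $L^2$ bound on $w$ from Lemma \ref{lem.rater}, with a harmless $\theta^{-d/2}$ factor when passing from $B_r$ to $B_{\theta r}$. The $\underline{H}^{-1}$ ingredient is handled by the same lemma after rescaling the $\underline{H}^{-1}$ norm, noting that $\int_{B_r} \lambda \nabla \cdot w = \lambda \int_{\partial B_r} w \cdot n\, d\sigma = 0$ since $w$ vanishes on $\partial B_r$, so the subtracted mean at scale $r$ is exactly zero; at scale $\theta r$ any shift by $\average_{B_{\theta r}} \lambda \nabla \cdot w$ is rolled into the third ingredient.

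The main obstacle is the third ingredient of $H(\theta r; w)$, the oscillation $\sup_{k,\ell \in [1/4,1]} |\average_{B_{k\theta r}} \lambda \nabla \cdot w - \average_{B_{\ell \theta r}} \lambda \nabla \cdot w|$, since this requires testing $\pi := \lambda \nabla \cdot w$ against characteristic functions of balls, which are not in $H_0^1$. My strategy is to use the equation: subtracting the systems for $u_\lambda^\e$ and $u_\lambda^{0,r}$ gives $\nabla \pi = -\nabla \cdot F$ in $B_r$ with $F := A^\e \nabla w + (A^\e - \overline{A}_\lambda) \nabla u_\lambda^{0,r}$, so Lemma \ref{lem.Nabla} yields the bounded (but not small) $L^2$ estimate
\[
\bigl(\average_{B_{\theta r}} |\pi - \average_{B_{\theta r}} \pi|^2 \bigr)^{1/2} \le C \|F\|_{\underline{L}^2(B_{\theta r})} \le C(\theta)\, \Phi(4r),
\]
where the final inequality uses the generalized Caccioppoli inequality on $u_\lambda^\e$ and interior regularity of the constant-coefficient solution $u_\lambda^{0,r}$. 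I would then interpolate this $L^2$ bound against the small estimate $\|\pi\|_{\underline{H}^{-1}(B_r)} \le C r\, \eta(\e\X/r)\,\Phi(4r)$ from Lemma \ref{lem.rater} by smoothing the mean-zero combination $\chi_{B_{k\theta r}}/|B_{k\theta r}| - \chi_{B_{\ell \theta r}}/|B_{\ell \theta r}|$ into an $H_0^1(B_r)$ test function with transition shells of relative width $\mu$: the smooth piece pairs with $\pi$ through the small factor $\eta\, \Phi$ at a cost of $\mu^{-1/2}$ in the $\underline{H}^1$ norm, while the rough shell remainder is bounded in $L^2$ by a $\mu^{1/2}\,\Phi$ term. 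Optimizing over $\mu$ and absorbing the $\theta$-dependent constants into the final coefficient produces the claimed bound $H(\theta r; w) \le C\sqrt[3]{\eta(\e\X/r)}\,\Phi(4r)$, and combining with the decomposition completes the proof.
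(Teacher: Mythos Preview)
Your approach is correct and essentially identical to the paper's: the same triangle-inequality decomposition via $u_\lambda^{0,r}$, Lemma \ref{lem.thetar} for the decay, and an $H^{-1}$-versus-$L^2$ interpolation to control the oscillation of averages of $\pi$. The paper packages the interpolation you sketch as a standalone Lemma \ref{lem.average2H-1}, and obtains the needed $L^2$ bound on $\pi$ more directly by the triangle inequality and Lemma \ref{lem.Nabla} applied to $u_\lambda^\e$ and $u_\lambda^{0,r}$ separately (using $\average_{B_r}\pi=0$ from the Dirichlet data) rather than through the equation for $w$.
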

\begin{proof}
	This is a corollary of Lemma \ref{lem.rater} and Lemma \ref{lem.thetar}. Let $u_\lambda^{0,r} $ and $\theta\in (0,\frac{1}{4})$ be as in Lemma \ref{lem.thetar}. Observe that the triangle inequality for $H$ implies
	\begin{equation*}
	|H(t;f) - H(t;g)| \le H(t;f-g).
	\end{equation*}
	Applying this to (\ref{est.Hr.u0}), we get
	\begin{equation*}
	\begin{aligned}
	H(\theta r; u_\lambda^\e) & \le H(\theta r; u_\lambda^0) + H(\theta r; u_\lambda^\e - u_\lambda^0) \\
	& \le \frac{1}{2} H(r; u_\lambda^0) + H(\theta r; u_\lambda^\e - u_\lambda^0) \\
	& \le \frac{1}{2} H(r; u_\lambda^\e) + H(\theta r; u_\lambda^\e - u_\lambda^0) + H(r; u_\lambda^\e - u_\lambda^0).
	\end{aligned}
	\end{equation*}
	Then it suffices to estimate $H(\theta r; u_\lambda^\e - u_\lambda^0)$ with $\theta\in (0,1]$ by Lemma \ref{lem.rater}. Let us consider the three terms of the function $H(\theta r; u_\lambda^\e - u_\lambda^0)$ separately.
	
	First, it is clear from (\ref{est.rate.Br}) that
	\begin{equation}\label{est.H.1}
	\begin{aligned}
	\frac{1}{\theta r}\inf\limits_{\substack{M\in \R^{d\times d}\\ q\in\R^d}} \bigg( \average_{B_{\theta r} } |(u^\e_\lambda - u^{0,r}_\lambda)-Mx-q|^2 \bigg)^{1/2} & \le \frac{C_\theta}{r} \bigg( \average_{B_{r} } |u^\e_\lambda - u^{0,r}_\lambda|^2 \bigg)^{1/2} \\
	& \le C_\theta \eta(\e\X/r) \Phi(4r).
	\end{aligned}
	\end{equation}
	Next, since $H^1_0(B_{\theta r})$ is continuously embedded into $H^1_0(B_r)$, we trivially have
	\begin{equation}\label{est.H.2}
	\frac{1}{\theta r}\norm{\lambda \nabla\cdot u^\e_\lambda - \lambda \nabla\cdot u^{0,r}_\lambda }_{\underline{H}^{-1}(B_{\theta r})} \le \frac{1}{\theta r}\norm{\lambda \nabla\cdot u^\e_\lambda - \lambda \nabla\cdot u^{0,r}_\lambda }_{\underline{H}^{-1}(B_r)} \le C_\theta \eta(\e\X/r) \Phi(4r).
	\end{equation}
	Finally, it suffices to estimate
	\begin{equation}\label{eq.Ave.Br}
	\average_{B_{k\theta r}} (\lambda\nabla\cdot u^\e_\lambda - \lambda\nabla\cdot u^{0,r}_\lambda)
	\end{equation}
	for any $k\in (1/4,1)$. This follows from the following lemma:
	\begin{lemma}\label{lem.average2H-1}
		Let $F\in L^2(B_r)$. Then for any $\theta\in (0,1)$,
		\begin{equation*}
		\bigg| \average_{B_{\theta r}} F \bigg|^3 \le \frac{C_\theta}{r} \norm{F}_{\underline{H}^{-1}(B_{r})} \average_{B_r} |F|^2 .
		\end{equation*}
	\end{lemma}
	We postpone the proof of Lemma \ref{lem.average2H-1} and apply it to estimate (\ref{eq.Ave.Br})
	\begin{equation}\label{est.Ave.Bktr}
	\bigg| \average_{B_{k\theta r}} (\lambda\nabla\cdot u^\e_\lambda - \lambda\nabla\cdot u^{0,r}_\lambda)\bigg| \le C_{\theta k} \sqrt[3]{\eta(\e\X/r) \Phi(4r) } \bigg( \average_{B_r} |\lambda \nabla\cdot u^\e_\lambda -\lambda \nabla\cdot u^{0,r}_\lambda |^2 \bigg)^{1/3}.
	\end{equation}
	Now, let $t\in [3/2,2]$ be given as in Lemma \ref{lem.rater}. From the boundary condition of (\ref{eq.ulambda.0r})
	\begin{equation*}
	\average_{B_{tr}} (\lambda \nabla\cdot u^\e_\lambda -\lambda \nabla\cdot u^{0,r}_\lambda) = 0.
	\end{equation*}
	Consequently, by the triangle inequality and Lemma \ref{lem.Nabla}, one has
	\begin{equation*}
	\begin{aligned}
	& \bigg( \average_{B_r} |\lambda \nabla\cdot u^\e_\lambda -\lambda \nabla\cdot u^{0,r}_\lambda |^2 \bigg)^{1/2} \\
	&\le \bigg( \average_{B_{tr}} |\lambda \nabla\cdot u^\e_\lambda -\lambda \nabla\cdot u^{0,r}_\lambda |^2 \bigg)^{1/2} \\
	& \le \bigg( \average_{B_{tr}} |\lambda \nabla\cdot u^\e_\lambda -\average_{B_{tr}} \lambda \nabla\cdot u^\e_\lambda |^2 \bigg)^{1/2} + \bigg( \average_{B_{tr}} |\lambda \nabla\cdot u^{0,r}_\lambda -\average_{B_{tr}} \lambda \nabla\cdot u^{0,r}_\lambda |^2 \bigg)^{1/2} \\
	& \le C\bigg( \average_{B_{tr}} |\nabla u^\e_\lambda|^2 \bigg)^{1/2} \le C\Phi(4r).
	\end{aligned}
	\end{equation*}
	Inserting this into (\ref{est.Ave.Bktr}), we obtain
	\begin{equation}\label{est.H.3}
	\bigg| \average_{B_{k\theta r}} (\lambda\nabla\cdot u^\e_\lambda - \lambda\nabla\cdot u^{0,r}_\lambda)\bigg| \le C\sqrt[3]{\eta(\e\X/r)} \Phi(4r).
	\end{equation}
	Combining (\ref{est.H.1}), (\ref{est.H.2}) and (\ref{est.H.3}), we obtain the estimate for $H(\theta r; u_\lambda^\e - u_\lambda^0)$ and hence complete the proof.
\end{proof}

\begin{proof}[Proof of Lemma \ref{lem.average2H-1}]
	By rescaling, it suffices to prove the case $r = 1$. Let $\theta\in (0,1)$ be fixed and $t\in (0,\theta)$ to be determined. Suppose $\phi\in C^\infty_0(B_\theta)$ is a test function so that $\phi(x) = 1$ for $x\in B_{\theta-t}$ and $|\nabla \phi| \le C/t$. Then
	\begin{equation*}
	\begin{aligned}
	\bigg| \int_{B_\theta} F \bigg| &= \bigg| \int_{B_{\theta}} F\phi \bigg| + \bigg| \int_{B_\theta\setminus B_{\theta-t}} F(1-\phi)\bigg| \\
	& \le C \norm{F}_{\underline{H}^{-1}(B_1)} \norm{\phi}_{\underline{H}^1(B_1)} + |B_\theta\setminus B_{\theta-t}|^{1/2} \norm{F}_{L^2(B_1)} \\
	& \le Ct^{-1}\norm{F}_{\underline{H}^{-1}(B_1)} + Ct^{1/2}\norm{F}_{L^2(B_1)}.
	\end{aligned}
	\end{equation*}
	Choosing
	\begin{equation*}
	t = \frac{\norm{F}_{\underline{H}^{-1}(B_1)}^{2/3}}{\norm{F}_{L^2(B_1)}^{2/3}},
	\end{equation*}
	we obtain
	\begin{equation*}
	\bigg| \int_{B_\theta} F \bigg|^3 \le C\norm{F}_{\underline{H}^{-1}(B_1)} \norm{F}_{L^2(B_1)}^2,
	\end{equation*}
	as desired.
\end{proof}

\subsection{Iteration}
The following lemma is a generalization of \cite[Lemma 8.5]{Shen17}.
\begin{lemma}\label{lem.iteration}
	Suppose $\eta_i:(0,1] \mapsto [0,1]$, with $i = 1,2$, are increasing continuous functions so that $\eta_i(0) = 0$ and
	\begin{equation}\label{cond.Dini}
	\int_0^1 \frac{\eta_i(r)}{r} dr < \infty \qquad \text{for } i = 1,2.
	\end{equation}
	Let $H,\Phi,h:(0,2]\mapsto [0,\infty)$ be nonnegative functions. Suppose that there exist $\theta\in (0,1/4)$ and $C_0>0$ so that
	\begin{subequations}\label{est.HTP}
		\begin{align}
		H(\theta r) &\le \frac{1}{2} H(r) + C_0 \big\{ \eta_1(\e/r)+\eta_2(r)\big\} \Phi(4r) \label{subeqn:a}\\
		H(r) & \le C_0\Phi(r) \label{subeqn:b}\\ 
		h(r) & \le C_0( H(r) + \Phi(r))  \label{subeqn:c}\\
		\Phi(r) &\le C_0( H(r) + h(r)) \label{subeqn:d}\\
		\sup_{r\le t\le 2r} \Phi(t) & \le C_0\Phi(2r) \label{subeqn:h}\\
		\sup_{r\le s,t\le 2r} |h(s) - h(t)| &\le C_0 H(2r) \label{subeqn:e}
		\end{align}
	\end{subequations}
	for all $r\in  [\e,1]$. Then there is a constant $C>0$ depending only on $C_0$ and $\eta_i (i = 1,2)$ such that
	\begin{equation}\label{est.iteration}
	\int_{\e}^{2} \frac{H(r)}{r} dr + \sup_{\e\le r\le 2}\Phi(r) \le C\Phi(2).
	\end{equation}
\end{lemma}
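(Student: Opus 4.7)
The plan is to iterate the decay estimate (a) at the geometric scales $\rho_k := \theta^k$ for $k = 0,1,\ldots,K$ with $\rho_{K+1} < \e \le \rho_K$, propagate the resulting decay to $h$ via chained applications of (e), and close a bootstrap on the supremum $M := \sup_{\e\le r\le 2} \Phi(r)$ using (d). Throughout, write $\omega_k := \eta_1(\e/\rho_k) + \eta_2(\rho_k)$.

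\textbf{Step 1 (Dini summability).} By comparing Riemann sums to integrals and substituting $r = \theta^k$ in the $\eta_2$-part and $r = \e/\theta^k$ in the $\eta_1$-part, the Dini condition (\ref{cond.Dini}) yields a bound
$$\sum_{k=0}^{K} \omega_k \le C_\star < \infty,$$
with $C_\star$ independent of $\e$ and $K$.

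\textbf{Step 2 (Iterate (a)).} Because $4\theta < 1$, an application of (h) a fixed ($\theta$-dependent) number of times gives $\Phi(4\rho_j) \le C M$ for every admissible $j$. Substituting into (a) and iterating,
$$H(\rho_k) \le 2^{-k} H(1) + C M \sum_{j=0}^{k-1} 2^{-(k-1-j)} \omega_j.$$
Summing over $k$ and swapping the order of summation,
$$\sum_{k=0}^K H(\rho_k) \le 2 H(1) + 2 C C_\star M \le C\,\Phi(2) + C M,$$
where we used (b) and (h) to estimate $H(1) \le C\Phi(2)$. Converting this sum to the integral (controlling $H$ on $[\rho_{k+1},\rho_k]$ by using (b), (h), (d) with $|\log\theta|$-dependent constants) produces $\int_\e^2 H(r)/r\,dr \le C(\Phi(2) + M)$.

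\textbf{Step 3 (Bounded variation of $h$).} Between two adjacent geometric scales, chain (e) through $J := \lceil \log_2(1/\theta)\rceil + 1$ dyadic doublings to get
$$|h(\rho_{k+1}) - h(\rho_k)| \le C_0 \sum_{j=1}^{J} H(2^j \rho_{k+1}).$$
For each fixed $j \in\{1,\ldots,J\}$, the set $\{2^j \rho_{k+1}: 0\le k\le K-1\}$ is a $\theta$-geometric orbit starting from $2^j \theta \le 2$; iterating (a) along this orbit exactly as in Step 2 gives $\sum_k H(2^j \rho_{k+1}) \le C(\Phi(2) + M)$. Summing over the $J$ orbits and invoking $h(1) \le C\Phi(2)$ (from (c), (b), (h)),
$$\sup_{\e\le r\le 1} h(r) \le h(1) + \sum_{k} |h(\rho_{k+1}) - h(\rho_k)| \le C\,\Phi(2) + C M.$$

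\textbf{Step 4 (Close the bootstrap).} Combining Steps 2--3 with (d) at each geometric scale, then extending to all $r \in [\e,2]$ via (h),
$$M \le C_0\bigl(\sup_r H(r) + \sup_r h(r)\bigr) \le C\,\Phi(2) + \gamma\, M,$$
for an accumulated constant $\gamma$. To ensure $\gamma < 1$, split the iteration into a \emph{head} $k < k_0$ (contributing only a bounded multiple of $\Phi(2)$) and a \emph{tail} $k \ge k_0$ whose contribution to $\gamma$ is bounded by $C\,\sum_{k\ge k_0}\omega_k$; by (\ref{cond.Dini}), this tail can be made arbitrarily small by taking $k_0$ large. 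Absorbing $\gamma M$ on the right yields $M \le C\,\Phi(2)$, which combined with Step 2 gives the desired (\ref{est.iteration}).

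\textbf{Main obstacle.} The delicate step is closing the bootstrap in Step 4: each of (b), (d), (h), together with the $J$-fold chaining in (e), contributes multiplicative constants, so a naive estimate yields $M \le C\Phi(2) + \gamma M$ with $\gamma \ge 1$. The resolution uses the Dini hypothesis (\ref{cond.Dini}) essentially: it guarantees $\omega_k\to 0$ and makes the tail Dini sum arbitrarily small, so that the iteration restricted to small scales is a strict contraction in $M$, at the price of a bounded head block that is absorbed into the additive $\Phi(2)$ term.
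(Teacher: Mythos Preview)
Your overall strategy is reasonable, but Step~4 contains a genuine gap. You claim that the tail contribution to $\gamma$ is bounded by $C\sum_{k\ge k_0}\omega_k$ and that this can be made arbitrarily small by choosing $k_0$ large. The $\eta_2$ part of this sum, $\sum_{k\ge k_0}\eta_2(\theta^k)$, does tend to zero by the Dini condition. But the $\eta_1$ part does not: as $k$ ranges over $[k_0,K]$ (with $\theta^{K}\sim\e$), the argument $\e/\theta^k$ ranges from $\e\theta^{-k_0}$ (small) up to roughly $1$, so the sum $\sum_{k=k_0}^{K}\eta_1(\e/\theta^k)$ always includes the terms where $\eta_1$ is largest. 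For instance, with $\eta_1(t)=t^\alpha$ one computes $\sum_{k=k_0}^{K}(\e/\theta^k)^\alpha \sim (\theta^{-\alpha}-1)^{-1}$, a positive constant independent of $k_0$ (for $k_0\ll K$). Hence a single head/tail split at $k_0$ cannot drive $\gamma$ below $1$ uniformly in $\e$, and the absorption fails.

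The paper avoids this circularity by a different mechanism. Instead of bootstrapping on $M=\sup\Phi$, it first uses (\ref{subeqn:e}) to prove the a~priori bound $h(t)\le C\Phi(2)+C\int_t^2 H(r)/r\,dr$, so that via (\ref{subeqn:d}) the quantity $\Phi$ in the error term of (\ref{subeqn:a}) is itself controlled by $\int H/r$ plus $\Phi(2)$. Integrating (\ref{subeqn:a}) over $r\in[\alpha\e,\beta]$ then yields an inequality of the form
\[
\int_{\theta\alpha\e}^{\theta\beta}\frac{H(r)}{r}\,dr \le C\Phi(2) + \Big(\tfrac12 + C\big\{\eta_1(\alpha^{-1})+\eta_2(\beta)+\textstyle\int_0^{\alpha^{-1}}\frac{\eta_1}{r}+\int_0^{\beta}\frac{\eta_2}{r}\big\}\Big)\int_{\alpha\e}^{2}\frac{H(r)}{r}\,dr.
\]
The crucial point is the \emph{two-sided} truncation: choosing $\alpha$ large kills the $\eta_1$ contribution (since $\e/r\le\alpha^{-1}$ on the range of integration) while choosing $\beta$ small independently kills the $\eta_2$ contribution. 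This makes the bracket at most $3/4$, closing the estimate on $\int H/r$ directly. The supremum bound on $\Phi$ then follows, and the residual interval $[\e,\theta\alpha\e]$ of bounded logarithmic length is handled at the end by finitely many applications of (\ref{subeqn:h}). Your discrete scheme can be repaired along the same lines, but it requires restricting to a \emph{middle} range $k_0\le k\le K-k_1$ and treating the two boundary blocks separately; the single-threshold split you describe is not sufficient.
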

\begin{proof}
	We start from an estimate of $h$. The assumption (\ref{subeqn:e}) on $h$ implies $h(r) \le h(2r) + CH(2r)$. Hence, given any $t\in (\e,1)$
	\begin{equation*}
	\begin{aligned}
	\int_t^{1} \frac{h(r)}{r} dr & \le \int_t^{1} \frac{h(2r)}{r} dr + C_0\int_t^{1} \frac{H(2r)}{r} dr \\
	& = \int_{2t}^{2} \frac{h(r)}{r} dr + C_0\int_{2t}^2 \frac{H(r)}{r} dr
	\end{aligned}
	\end{equation*}
	It follows in sequence from (\ref{subeqn:c}), (\ref{subeqn:b}) and (\ref{subeqn:h}) that
	\begin{equation*}
	\int_{t}^{2t} \frac{h(r)}{r} dr \le C\Phi(2) + C\int_{2t}^2 \frac{H(r)}{r} dr.
	\end{equation*}
	Hence, by using (\ref{subeqn:e}) again, for every $t\in (\e,1)$,
	\begin{equation}\label{est.ht}
	h(t) \le C\Phi(2) + C\int_{t}^2 \frac{H(r)}{r} dr.
	\end{equation}
	
	Let $\alpha>1$ be a large number and $\beta<1/2$ be a small number to be determined. Without loss of generality, assume $\e < \alpha^{-1}\beta$. Integrating (\ref{subeqn:a}) over the interval $[\alpha\e,\beta]$, we have
	\begin{equation}\label{est.H/r}
	\int_{\alpha \e}^{\beta} \frac{H(\theta r)}{r} dr \le \frac{1}{2}\int_{\alpha \e}^{\beta} \frac{H(r)}{r} dr + C_0 \int_{\alpha \e}^{\beta} \big\{ \eta_1(\e/r) + \eta_2(r) \big\} \Phi(4r) \frac{dr}{r}.
	\end{equation}
	Using the condition (\ref{subeqn:d}), we have
	\begin{equation*}
	\begin{aligned}
	\int_{\alpha \e}^{\beta} \big\{ \eta_1(\e/r) + \eta_2(r) \big\} \Phi(4r)\frac{dr}{r} \le C_0 \int_{\alpha\e}^{\beta} \big\{ \eta_1(\e/r) + \eta_2(r) \big\} (H(4r) + h(4r)) \frac{dr}{r}.
	\end{aligned}
	\end{equation*}
	Now, we observe that the monotonicity of $\eta_i$ and (\ref{est.ht}) imply
	\begin{equation*}
	\begin{aligned}
	\int_{\alpha\e}^{\beta} \big\{ \eta_1(\e/r) + \eta_2(r) \big\} H(4r) \frac{dr}{r}& = \int_{4\alpha\e}^{4\beta} \big\{ \eta_1(4\e/r) + \eta_2(r/4) \big\} H(r) \frac{dr}{r} \\
	&\le \big\{ \eta(\alpha^{-1}) + \eta(\beta) \big\} \int_{4\alpha\e}^{4\beta} H(r) \frac{dr}{r},
	\end{aligned}
	\end{equation*}
	and
	\begin{equation*}
	\begin{aligned}
	& \int_{\alpha\e}^{\beta} \big\{ \eta_1(\e/r) + \eta_2(r) \big\} h(4r) \frac{dr}{r} \\
	&\qquad \le C\Phi(2) \int_{\alpha\e}^{\beta} \big\{ \eta_1(\e/r) + \eta_2(r) \big\}\frac{dr}{r} + C\int_{\alpha\e}^{\beta} \big\{ \eta_1(\e/r) + \eta_2(r) \big\}\int_{r}^2 \frac{H(s)}{s} ds dr \\
	&\qquad \le C \Phi(2) + C\bigg\{ \int_{0}^{\alpha^{-1}} \frac{\eta_1(r)}{r} dr + \int_{0}^{\beta} \frac{\eta_2(r)}{r} dr \bigg\} \int_{\alpha \e}^2 \frac{H(s)}{s} ds.
	\end{aligned}
	\end{equation*}
	Combining the last four inequalities, we obtain
	\begin{equation*}
	\begin{aligned}
	& \int_{\theta\alpha \e}^{\theta\beta} \frac{H( r)}{r} dr \\
	&\le C\Phi(2) + \bigg[ \frac{1}{2} + C\bigg\{ \eta_1(\alpha^{-1}) + \eta_2(\beta) + \int_{0}^{\alpha^{-1}} \frac{\eta_1(r)}{r} dr + \int_{0}^{\beta} \frac{\eta_2(r)}{r} dr \bigg\} \bigg]\int_{\alpha \e}^2 \frac{H(r)}{r} dr.
	\end{aligned}
	\end{equation*}
	
	Next, by choosing $\alpha$ sufficiently large and $\beta$ sufficiently small so that
	\begin{equation*}
	\frac{1}{2} + C\bigg\{ \eta_1(\alpha^{-1}) + \eta_2(\beta) + \int_{0}^{\alpha^{-1}} \frac{\eta_1(r)}{r} dr + \int_{0}^{\beta} \frac{\eta_2(r)}{r} dr \bigg\} \le \frac{3}{4},
	\end{equation*}
	we have
	\begin{equation}\label{est.Hr.ab}
	\begin{aligned}
	\int_{\theta\alpha \e}^{2} \frac{H( r)}{r} dr &\le C\Phi(2) + 3 \int_{\theta\beta}^2 \frac{H(r)}{r} dr\le C\Phi(2),
	\end{aligned}
	\end{equation}
	where we also used (\ref{subeqn:b}) and (\ref{subeqn:h}) in the last inequality. In view of (\ref{est.ht}), this gives
	\begin{equation}\label{est.hr}
	h(r) \le C\Phi(2), \qquad \text{for any } r\in (\theta\alpha \e, 2).
	\end{equation}
	Therefore, for any $t\in (\theta\alpha \e, 2)$, by (\ref{subeqn:d}), (\ref{est.Hr.ab}) and (\ref{est.hr}),
	\begin{equation*}
	\int_t^{2t} \frac{\Phi(r)}{r} dr \le C_0\int_t^{2t} \frac{H(r)}{r} dr + C_0 \int_t^{2t} \frac{h(r)}{r} dr \le C\Phi(2).
	\end{equation*}
	In view of (\ref{subeqn:h}) again, this implies that
	\begin{equation}\label{est.Phir}
	\Phi(r) \le C\Phi(2), \qquad \text{for any } r\in (\theta\alpha \e, 2).
	\end{equation}
	Note that (\ref{est.Hr.ab}) and (\ref{est.Phir}) almost give the desired estimate (\ref{est.iteration}), except for the interval $(\e,\theta \alpha \e)$. However, since $\theta \alpha$ is a fixed number depending only on $C_0,\eta_1$ and $\eta_2$, by repeatedly using (\ref{subeqn:h}) finitely many times, we recover the estimate (\ref{est.Phir}) for $r\in (\e,\theta\alpha \e)$. Also, using (\ref{subeqn:b}), we recover 
	\begin{equation*}
	\int_{\e}^{\theta\alpha\e } \frac{H( r)}{r} dr \le C_0\int_{\e}^{\theta\alpha\e } \frac{\Phi( r)}{r} dr \le C_0(\theta \alpha-1) \sup_{r\in (\e,\theta\alpha )} \Phi(r) \le C\Phi(2).
	\end{equation*}
	This completes the proof.
\end{proof}

\begin{theorem}\label{thm.Lip}
	For any $s\in (0,d), \lambda\in [0,\infty)$, there exists a random variable $\X = \X_{s,\lambda}: \Omega\mapsto [1,\infty)$ satisfying
	\begin{equation*}
	\X \le \mathcal{O}_{s}(C),
	\end{equation*}
	such that if $r \in [\e \X, 1]$, then
	\begin{equation}\label{est.ue.Lip}
	\bigg( \average_{B_r} |\nabla u_\lambda^\e|^2 \bigg)^{1/2} + 	\bigg( \fint_{B_r} |\lambda \nabla\cdot u_\lambda^\e -  \fint_{B_2} \lambda \nabla\cdot u_\lambda^\e|^2 \bigg)^{1/2} \le C \bigg( \average_{B_2} |\nabla u_\lambda^\e|^2 \bigg)^{1/2}.
	\end{equation} 
\end{theorem}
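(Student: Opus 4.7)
The plan is to apply the iteration lemma (Lemma \ref{lem.iteration}) to the triple $(H,\Phi,h)$ defined in Section 6.1, with the microscopic scale $\e$ replaced by $\e\X$, where $\X=\X_{s,\lambda}$ is the random variable furnished by Lemma \ref{lem.Hr.iteration}. Each of the six hypotheses (a)--(f) of Lemma \ref{lem.iteration} is already in place: (a) is exactly Lemma \ref{lem.Hr.iteration}, with $\eta_1(\rho)=\sqrt[3]{\eta(\rho)}$---a positive power of $\rho$ by (\ref{def.Ys}), so that the Dini integrability (\ref{cond.Dini}) is automatic---and $\eta_2\equiv 0$; (b) is the first inequality $H\le\Phi$ of (\ref{est.basic.HPhi}); (c), (d) and (f) are the three assertions of Lemma \ref{lem.h}; and (e) is the unnamed lemma immediately preceding Lemma \ref{lem.rater}. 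Lemma \ref{lem.iteration} therefore yields
\begin{equation}\label{eq.plan.iter}
\int_{\e\X}^{2}\frac{H(\rho)}{\rho}\,d\rho+\sup_{\e\X\le\rho\le 2}\Phi(\rho)\le C\Phi(2)\le C\bigg(\average_{B_2}|\nabla u_\lambda^\e|^2\bigg)^{1/2},
\end{equation}
where the last inequality uses (\ref{est.basic.HPhi}).

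The gradient half of (\ref{est.ue.Lip}) is then immediate from (\ref{eq.plan.iter}) and (\ref{est.basic.HPhi}): for $r\in[\e\X,1]$ one has $\bigl(\average_{B_r}|\nabla u_\lambda^\e|^2\bigr)^{1/2}\le C\Phi(2r)\le C\bigl(\average_{B_2}|\nabla u_\lambda^\e|^2\bigr)^{1/2}$. For the pressure, splitting via the triangle inequality gives
\[
\bigg(\average_{B_r}\big|\lambda\nabla\cdot u_\lambda^\e-\average_{B_2}\lambda\nabla\cdot u_\lambda^\e\big|^2\bigg)^{1/2}\le\bigg(\average_{B_r}\big|\lambda\nabla\cdot u_\lambda^\e-\average_{B_r}\lambda\nabla\cdot u_\lambda^\e\big|^2\bigg)^{1/2}+\bigg|\average_{B_r}\lambda\nabla\cdot u_\lambda^\e-\average_{B_2}\lambda\nabla\cdot u_\lambda^\e\bigg|,
\]
and the first summand is controlled, via the generalized Caccioppoli inequality (\ref{est.WCaccioppoli.Int}) and the supremum bound of (\ref{eq.plan.iter}), by $C\bigl(\average_{B_2}|\nabla u_\lambda^\e|^2\bigr)^{1/2}$.

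The main technical obstacle is to bound the scale-to-scale pressure oscillation $\bigl|\average_{B_r}\lambda\nabla\cdot u_\lambda^\e-\average_{B_2}\lambda\nabla\cdot u_\lambda^\e\bigr|$ \emph{without a logarithmic loss in $1/(\e\X)$}. I chain along dyadic radii $r_j=2^j r$, $j=0,\dots,J$, with $r_J=2$: since both $r_j$ and $r_j/2$ lie in $[r_{j+1}/4,r_{j+1}]$, the third summand of $H(r_{j+1})$ dominates each consecutive difference $\bigl|\average_{B_{r_j}}\lambda\nabla\cdot u_\lambda^\e-\average_{B_{r_{j+1}}}\lambda\nabla\cdot u_\lambda^\e\bigr|$, and the quantity in question is at most $\sum_{j=0}^{J-1}H(r_{j+1})$. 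The na\"{\i}ve estimate $\sum H(r_{j+1})\le J\cdot\sup\Phi$ produces a spurious factor $\log(1/(\e\X))$; to eliminate it, I iterate the excess decay of Lemma \ref{lem.Hr.iteration} against the uniform bound $\Phi(4r)\le C\Phi(2)$. Starting from $r\sim 1$ and using $\sqrt[3]{\eta(\rho)}=\rho^{\sigma}$ for some $\sigma>0$ depending on $s,d,\Lambda$, the recursion $H(\theta r)\le\tfrac12 H(r)+C\Phi(2)(\e\X/r)^{\sigma}$ produces the pointwise power decay
\[
H(r)\le C\Phi(2)\bigl(r^{\alpha_1}+(\e\X/r)^{\alpha_2}\bigr),\qquad r\in[\e\X,1],
\]
for positive exponents $\alpha_1=\log_{1/\theta}2$ and $\alpha_2\in(0,\sigma)$. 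Substituting into $\sum_{j=0}^{J-1}H(r_{j+1})$ with $r_{j+1}=2^{j+1}r$ yields two geometric sums in $j$, each bounded by a constant independent of $J$ (the first geometric in $2^{\alpha_1}>1$ summing telescopically to a bounded multiple of $(2^J r)^{\alpha_1}=2^{\alpha_1}$, the second geometric in $2^{-\alpha_2}<1$ summing to a bounded multiple of $(\e\X/r)^{\alpha_2}\le 1$); hence the whole sum is at most $C\Phi(2)\le C\bigl(\average_{B_2}|\nabla u_\lambda^\e|^2\bigr)^{1/2}$. Assembling the three contributions gives (\ref{est.ue.Lip}), with $\X$ inheriting the stochastic integrability $\X\le\mathcal{O}_s(C)$ directly from Lemma \ref{lem.Hr.iteration}.
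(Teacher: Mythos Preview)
Your proof is correct and follows the same overall architecture as the paper's: verify the hypotheses of Lemma \ref{lem.iteration} for $(H,\Phi,h)$, extract \eqref{eq.plan.iter}, read off the gradient bound via \eqref{est.basic.HPhi}, and split the pressure by the triangle inequality. The one genuine difference is how you control the telescoped pressure oscillation $\sum_{j}\bigl|\average_{B_{r_j}}\lambda\nabla\cdot u_\lambda^\e-\average_{B_{r_{j+1}}}\lambda\nabla\cdot u_\lambda^\e\bigr|$. You re-iterate Lemma \ref{lem.Hr.iteration} against the already-established uniform bound on $\Phi$ to manufacture a pointwise power decay $H(r)\le C\Phi(2)\bigl(r^{\alpha_1}+(\e\X/r)^{\alpha_2}\bigr)$, and then sum two geometric series. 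The paper instead uses the \emph{integral} part of \eqref{eq.plan.iter}, which you state but never exploit: since the third summand of $H(\rho)$ dominates each dyadic jump for every $\rho\in[r_{j+1},2r_{j+1}]$, one has $\bigl|\average_{B_{r_j}}-\average_{B_{r_{j+1}}}\bigr|\le C\int_{r_{j+1}}^{2r_{j+1}}\frac{H(\rho)}{\rho}\,d\rho$, and the telescoped sum is bounded by $C\int_{\e\X}^{2}\frac{H(\rho)}{\rho}\,d\rho\le C\Phi(2)$ directly. Your route works, but it duplicates the iteration that Lemma \ref{lem.iteration} has already performed; the paper's use of the integral bound is shorter and explains why that integral is part of the lemma's conclusion in the first place.
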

\begin{proof}
	Let $H$ and $\Phi$ be defined as in Section 6.1. Note that the conditions of $H$ and $\Phi$ are verified by Lemma \ref{lem.h}, \ref{lem.thetar} and \ref{lem.Hr.iteration}. Let $\eta_1(r) = \sqrt[3]{\eta(r)} = r^{\sigma}$ with $\sigma>0$ and $\eta_2 = 0$. Applying Lemma \ref{lem.iteration} with $\e$ replaced by $\X\e$, we obtain
	\begin{equation}\label{est.Tr.T1}
	\int_{\e\X}^{2} \frac{H(r)}{r} dr + \sup_{\e\X \le r\le 2}\Phi(r) \le C\Phi(2).
	\end{equation}
	This, together with (\ref{est.basic.HPhi}), implies
	\begin{equation}\label{est.Lip.Br}
	\bigg( \average_{B_r} |\nabla u_\lambda^\e|^2 \bigg)^{1/2} \le C \bigg( \average_{B_2} |\nabla u_\lambda^\e|^2 \bigg)^{1/2}.
	\end{equation}
	
	To estimate the pressure, note that the first part of (\ref{est.Tr.T1}) gives
	\begin{equation*}
	\int_{\e\X}^{2} \sup_{k,\ell\in [1/4,1]} \bigg| \average_{B_{kt}} \lambda\nabla\cdot u^\e_\lambda - \average_{B_{\ell t}} \lambda\nabla\cdot u^\e_\lambda \bigg| \frac{dt}{t} \le C\bigg( \average_{B_2} |\nabla u_\lambda^\e|^2 \bigg)^{1/2}.
	\end{equation*}
	For a given $r\in (\e\X,1)$, there is an integer $N\ge 0$ so that $2^N r \in (1/2,1]$. Now, for any $0\le j\le N-1$, note that
	\begin{equation*}
	\bigg| \average_{B_{2^j r}} \lambda\nabla\cdot u^\e_\lambda - \average_{B_{2^{j+1} r}} \lambda\nabla\cdot u^\e_\lambda \bigg| \le 2\int_{2^{j+1} r}^{2^{j+2} r} \sup_{k,\ell\in [1/4,1]} \bigg| \average_{B_{kt}} \lambda\nabla\cdot u^\e_\lambda - \average_{B_{\ell t}} \lambda\nabla\cdot u^\e_\lambda \bigg| \frac{dt}{t}.
	\end{equation*}
	It follows that
	\begin{equation*}
	\begin{aligned}
	\bigg| \average_{B_{r}} \lambda\nabla\cdot u^\e_\lambda - \average_{B_{2^{N} r}}  \lambda\nabla\cdot u^\e_\lambda \bigg| & \le \sum_{j=0}^{N-1} \bigg| \average_{B_{2^j r}} \lambda\nabla\cdot u^\e_\lambda - \average_{B_{2^{j+1} r}}  \lambda\nabla\cdot u^\e_\lambda\bigg| \\
	& \le 2\int_{2r}^{2^{N+1} r} \sup_{k,\ell\in [1/4,1]} \bigg| \average_{B_{kt}} \lambda\nabla\cdot u^\e_\lambda - \average_{B_{\ell t}} \lambda\nabla\cdot u^\e_\lambda \bigg| \frac{dt}{t} \\
	& \le C\bigg( \average_{B_2} |\nabla u_\lambda^\e|^2 \bigg)^{1/2}.
	\end{aligned}
	\end{equation*}
	On the other hand, since $2^N r \in (1,2]$, Lemma \ref{lem.Nabla} implies
	\begin{equation*}
	\bigg| \average_{B_{2^N r}} \lambda\nabla\cdot u^\e_\lambda - \average_{B_{2}}  \lambda\nabla\cdot u^\e_\lambda \bigg| \le C\bigg( \average_{B_2} |\nabla u_\lambda^\e|^2 \bigg)^{1/2}.
	\end{equation*}
	Consequently, for any $r\in (\e\X,1)$,
	\begin{equation}\label{est.Pressure.rto2}
	\bigg| \average_{B_{r}} \lambda\nabla\cdot u^\e_\lambda - \average_{B_{2}}  \lambda\nabla\cdot u^\e_\lambda \bigg| \le  C\bigg( \average_{B_2} |\nabla u_\lambda^\e|^2 \bigg)^{1/2}.
	\end{equation}
	
	Finally, using (\ref{est.Pressure.rto2}), Lemma \ref{lem.Nabla} and (\ref{est.Lip.Br}), we obtain
	\begin{equation*}
	\begin{aligned}
	&\bigg( \fint_{B_r} |\lambda \nabla\cdot u_\lambda^\e -  \fint_{B_2} \lambda \nabla\cdot u_\lambda^\e|^2 \bigg)^{1/2} \\
	&\le \bigg( \fint_{B_r} |\lambda \nabla\cdot u_\lambda^\e -  \fint_{B_r} \lambda \nabla\cdot u_\lambda^\e|^2 \bigg)^{1/2} + \bigg| \fint_{B_r} \lambda \nabla\cdot u_\lambda^\e - \fint_{B_2} \lambda \nabla\cdot u_\lambda^\e \bigg| \\
	& \le C\bigg( \average_{B_r} |\nabla u_\lambda^\e|^2 \bigg)^{1/2} + C\bigg( \average_{B_2} |\nabla u_\lambda^\e|^2 \bigg)^{1/2} \\
	& \le C\bigg( \average_{B_2} |\nabla u_\lambda^\e|^2 \bigg)^{1/2},
	\end{aligned}
	\end{equation*}
	as desired.
\end{proof}

\begin{proof}[Proof of Theorem \ref{thm.main}]
	By (\ref{eq.reduction}) and (\ref{eq.splitting}), as well as the reduction in Section 5.1, the system may be reduced to the case with constant $\lambda = \lambda_0$. Then, by Theorem \ref{thm.Lip}, we obtain
	\begin{equation*}
	\bigg( \average_{B_r} |\nabla u_\lambda^\e|^2 \bigg)^{1/2} + \bigg( \fint_{B_r} |\lambda_0 \nabla\cdot u_\lambda^\e -  \fint_{B_2} \lambda_0 \nabla\cdot u_\lambda^\e|^2 \bigg)^{1/2}  \le C \bigg( \average_{B_2} |\nabla u_\lambda^\e|^2 \bigg)^{1/2}.
	\end{equation*}
	Finally, $\lambda_0$ may be replaced by $\lambda^\e$ due to the assumption (\ref{cond.incompr}) and the estimate of $|\nabla u^\e_\lambda|$ in the last inequality. This finishes the proof.
\end{proof}

\section{Large-Scale Regularity: Boundary Estimates}
In this section, we study the uniform large-scale estimate near the boundary which is of $\e$-scale $C^{1,\alpha}$ at $0\in \partial D$. Unfortunately, the previous argument for the interior estimate does not apply identically since the boundary below $\e$-scale is only Lipschitz, which means the local $C^{1,\alpha}$ regularity cannot be expected even for the homogenized system. To overcome this difficulty, we introduce a new idea of boundary perturbation to modify the excess decay iteration method which allows the boundary to be bumpy at microscopic scales.

\subsection{Boundary geometry}
Let $\alpha\in (0,1)$ be fixed and $D$ a bounded $\e$-scale $C^{1,\alpha}$ domain and $0\in \partial D$. As usual, define $D_t = D\cap B_t(0)$ and $\Delta_t = \partial D\cap B_t(0)$. By Definition \ref{def.C1a.e}, there exists a constant $C_0$ so that for any $t\in (\e,1)$, there exists a unit ``outer normal'' vector $n_t\in \mathbb{S}^{d-1}$ such that
\begin{equation*}
\begin{aligned}
T_t^{-} &:= \{x\in \R^d: x\cdot n_t < -C_0 t \zeta(t,\e) \} \cap B_t(0)\\
& \subset D_t \subset T_t^+:= \{x\in \R^d: x\cdot n_t < C_0 t\zeta(t,\e) \}\cap B_t(0),
\end{aligned}
\end{equation*}
where $\zeta(t,\e) = t^\alpha + (\e/t)^\alpha$. This particularly implies that both $T_t^-$ and $T_t^+$ approximate $D_t$ well at almost all scales with $\e\ll t \ll 1$. Moreover, 
\begin{equation*}
|T_t^+\setminus T_t^-|\lesssim t^{d} \zeta(t,\e) \simeq \zeta(t,\e) |D_t|.
\end{equation*}

The outer normal $n_t$ of the flat boundary of $T_t^{\pm}$ will play an important role in our proof. Intuitively, it represents a  macroscopically approximate direction perpendicular to the boundary near $0$ at $t$-scale and coincides with the usual outer normal if the boundary is smooth (i.e., $C^{1,\alpha}$). The following lemma shows that $n_t$ changes gently with $t\in (\e,1)$.
\begin{lemma}\label{lem.n_r}
	Let $\e\le s\le r\le 1$, then
	\begin{equation*}
	|n_r - n_s| \le C\frac{r\zeta(r,\e)}{s}.
	\end{equation*}
\end{lemma}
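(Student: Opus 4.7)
The plan is to compare the two flat approximating slabs of Definition \ref{def.C1a.e} inside the smaller ball $B_s(0)$, where both the scale-$s$ and scale-$r$ enclosures apply simultaneously. Expanding $\zeta$ and using $s\le r$ together with $\alpha\in(0,1]$ yields the useful monotonicity $s\zeta(s,\e)\le r\zeta(r,\e)$; in particular, setting $\delta:=r\zeta(r,\e)/s$, we have $\zeta(s,\e)\le \delta$. Because $|n_r-n_s|\le 2$ always holds, I may restrict to the regime $\delta \le c_0$ for a small fixed constant $c_0=c_0(C_0)$, the complementary case giving the lemma trivially with a larger $C$.

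Under this smallness the flat face $F:=\{y\cdot n_s = -C_0 s\zeta(s,\e)\}\cap B_s(0)$ is a $(d-1)$-dimensional disc of radius at least $s/2$. Every $y\in F$ lies in $\overline{T_s^-}\subset \overline{D}$; approximating $y$ by points in $T_s^-\subset D\cap B_r\subset T_r^+$ yields the one-sided bound
\[
y\cdot n_r \le C_0 r\zeta(r,\e)\quad\text{for every }y\in F.
\]
I will decompose $n_r = (n_s\cdot n_r)\,n_s + w$ with $w\perp n_s$, parametrize $F\ni y = -C_0 s\zeta(s,\e)\,n_s + t$ with $t\perp n_s$ and $|t|\le s/2$, and then choose $t=(s/2)\,w/|w|$ to maximize $y\cdot n_r = -C_0 s\zeta(s,\e)(n_s\cdot n_r)+(s/2)|w|$. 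After rearranging and using $s\zeta(s,\e)\le r\zeta(r,\e)$, this yields the tangential estimate $|w|\le 4C_0\delta$.

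To fix the sign of $n_s\cdot n_r$, I will test at the deep interior point $p=-(s/2)n_s\in T_s^-\subset D$; this gives $p\cdot n_r = -(s/2)(n_s\cdot n_r)\le C_0 r\zeta(r,\e)$, i.e., $n_s\cdot n_r\ge -2C_0\delta$. Combining this with the orthogonal decomposition $(n_s\cdot n_r)^2 = 1-|w|^2\ge 1-16C_0^2\delta^2$, the flipped alternative $n_s\cdot n_r\le -\sqrt{1-16C_0^2\delta^2}$ becomes incompatible with the previous lower bound once $c_0$ is chosen small enough, and I deduce $n_s\cdot n_r\ge \sqrt{1-16C_0^2\delta^2}\ge 1-C\delta^2$. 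Then $|n_r - n_s|^2 = 2(1-n_r\cdot n_s)\le C\delta^2$, which is the announced bound.

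The main obstacle I expect is ruling out the spurious configuration $n_r\approx -n_s$: the tangential bound $|w|\le C\delta$ is invariant under $n_r\mapsto -n_r$ and does not distinguish the two orientations, so the auxiliary positivity test at $p=-(s/2)n_s$ combined with the unit-norm constraint is essential. Everything else reduces to parametric geometry of the approximating hyperplanes and the elementary monotonicity of $t\mapsto t\zeta(t,\e)$.
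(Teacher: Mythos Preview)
Your argument is correct. The paper actually omits the proof of this lemma entirely, labeling it ``a simple geometric observation,'' so there is nothing to compare against; your write-up supplies exactly the elementary geometry that the authors leave to the reader, and the key steps---the monotonicity $s\zeta(s,\e)\le r\zeta(r,\e)$, testing the $T_r^+$ enclosure on the flat face of $T_s^-$ to bound the tangential part $|w|$, and the auxiliary deep-point test to exclude the flipped orientation $n_r\approx -n_s$---are precisely the natural ingredients one expects.
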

This is a simple geometric observation and the proof will be omitted.

\subsection{Excess quantities and properties}
Let $\lambda\ge 0$ and $u^\e_\lambda \in H^1(D_2;\R^d)$ be the weak solution of
\begin{equation}\label{eq.ue.bdry}
\left\{
\begin{aligned}
\nabla\cdot (A^\e \nabla u^\e_\lambda) +  \nabla (\lambda \nabla\cdot u^\e_\lambda) &= 0 \qquad &\txt{in } &D_{2}, \\
u_\lambda^\e  &=0  \qquad &\txt{on } & \Delta_{2}.
\end{aligned}
\right.
\end{equation}
We redefine $\Phi$ and $H$ in the boundary case as follows:
\begin{equation}\label{def.re.Phi}
\begin{aligned}
\Phi(t) & =\frac{1}{t} \bigg( \average_{D_t } |u^\e_\lambda|^2 \bigg)^{1/2} + \frac{1}{t} \norm{\lambda\nabla\cdot u^\e_\lambda - \average_{D_t} \lambda\nabla\cdot u^\e_\lambda }_{\underline{H}^{-1}(D_t)} \\
& \qquad + \sup_{k,\ell \in [1/4,1]} \bigg| \average_{D_{kt}} \lambda\nabla\cdot u^\e_\lambda - \average_{D_{\ell t}} \lambda\nabla\cdot u^\e_\lambda \bigg|
\end{aligned}
\end{equation}
and for $v\in H^1(D_t;\R^d)$
\begin{equation}\label{def.re.H}
\begin{aligned}
H(t;v) &= \frac{1}{t}\inf_{q\in\R^d} \bigg( \average_{D_{t} } |v - (n_t\cdot x)q|^2 \bigg)^{1/2} + \frac{1}{t} \norm{\lambda\nabla\cdot v - \average_{D_t} \lambda\nabla\cdot v }_{\underline{H}^{-1}(D_t)} \\
& \qquad + \sup_{k,\ell \in [1/4,1]} \bigg| \average_{D_{kt}} \lambda\nabla\cdot v - \average_{D_{\ell t}} \lambda\nabla\cdot v \bigg|.
\end{aligned}
\end{equation}
Put $H(t) = H(t;u^\e_\lambda)$ for short.

To apply Lemma \ref{lem.iteration}, as before, we need some basic properties of $\Phi$ and $H$. As in the interior case, we still have
\begin{equation}\label{est.HPhi}
H(r) \le \Phi(r) \qquad \text{and} \qquad \sup_{r\le s\le 2r}\Phi(s) \le C\Phi(2r).
\end{equation}
Also, we have the following key property which is slightly different from Lemma \ref{lem.h}.
\begin{lemma}\label{lem.h.bdry}
	There exists a function $h:(0,2) \mapsto [0,\infty)$ so that for any $r\in (0,1)$
	\begin{equation*}
	\left\{
	\begin{aligned}
	h(r) & \le C(H(r) + \Phi(r)) \\
	\Phi(r) &\le H(r) + h(r) \\
	\sup_{r\le s,t\le 2r} |h(s) - h(t)| &\le CH(2r) + C\zeta(r,\e)\Phi(2r).
	\end{aligned}\right.
	\end{equation*}
\end{lemma}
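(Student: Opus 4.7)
My plan is to mimic the interior argument of Lemma \ref{lem.h}, replacing the affine matrix minimizer by the vector minimizer dictated by the first term of $H$. Concretely, let $q_r \in \argmin_{q\in\R^d} \bigl( \average_{D_r} |u^\e_\lambda - (n_r\cdot x) q|^2 \bigr)^{1/2}$ and define $h(r):=|q_r|$. Because $D_r \supset T_r^-$ occupies a macroscopic cap of $B_r$ lying strictly on the negative $n_r$-side of the hyperplane $\{x\cdot n_r = 0\}$, the cap lower bound
\[ \average_{D_r}(n_r\cdot x)^2 \ge c r^2 \]
holds for all $r\in(\e,1)$. Combining this with the triangle-inequality splitting $(n_r\cdot x)q_r = u^\e_\lambda - [u^\e_\lambda - (n_r\cdot x)q_r]$ proves the first inequality $h(r)\le C(H(r)+\Phi(r))$. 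The converse $\Phi(r)\le C(H(r)+h(r))$ is simpler, since the $\underline{H}^{-1}$ and oscillation terms in $\Phi$ and $H$ are identical and only the leading $L^2$ term must be compared.

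For the oscillation bound, fix $r\le s,t\le 2r$ and exploit the algebraic identity
\[ (n_s\cdot x)(q_s - q_t) = \bigl[(n_s\cdot x)q_s - (n_t\cdot x)q_t\bigr] + \bigl((n_t - n_s)\cdot x\bigr)\,q_t. \]
Taking $L^2(D_r)$-norms, invoking Lemma \ref{lem.n_r} (which yields $|n_s-n_t|\le C\zeta(r,\e)$ in the regime $r\le s,t\le 2r$), and applying the cap lower bound with direction $n_s$, one obtains
\[ r\,|q_s-q_t| \le \bigl(\average_{D_r} |(n_s\cdot x)q_s - (n_t\cdot x)q_t|^2\bigr)^{1/2} + Cr\,\zeta(r,\e)\,|q_t|. \]
The first right-hand term is controlled by inserting $\pm u^\e_\lambda$ and enlarging the domain of integration from $D_r$ to $D_s$ or $D_t$, which produces $Cr(H(s)+H(t))$; the factor $|q_t|=h(t)$ is bounded by $C\Phi(2r)$ through the first inequality and the monotonicity \eqref{est.HPhi}. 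The key remaining task is therefore to prove
\[ H(s)\le CH(2r) + C\zeta(r,\e)\Phi(2r) \qquad \text{for all } s\in[r,2r]. \]
For the leading term of $H(s)$, I would use $q_{2r}$ as a competitor and the directional comparison $|n_s - n_{2r}|\le C\zeta(r,\e)$ of Lemma \ref{lem.n_r} to obtain $H_1(s)\le CH_1(2r)+C\zeta(r,\e)h(2r)$; the factor $h(2r)$ is converted to $\Phi(2r)$ via the first inequality. The $\underline{H}^{-1}$ and oscillation terms of $H(s)$ are handled by the scale-invariant extension-by-zero argument already used to prove the monotonicity \eqref{est.HPhi}, which introduces no $\zeta$ factor.

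The principal obstacle is the directional drift of the approximate outer normal $n_t$: in contrast to the interior case, where an arbitrary affine competitor $Mx+q$ is available, here one is constrained to competitors of the restricted form $(n_t\cdot x)q$ determined by the $\e$-scale $C^{1,\alpha}$ boundary, and Lemma \ref{lem.n_r} forces every directional comparison to pay a $\zeta(r,\e)$-sized price. A subsidiary technical point is that the range $[s/4,s]$ appearing in the oscillation term of $H(s)$ is not exactly nested inside $[2r/4,2r]$ when $s\in[r,2r]$; this can be patched by interpolating through an intermediate scale together with the generalized boundary Caccioppoli inequality \eqref{est.WCacc.bdry}.
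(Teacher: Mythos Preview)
Your proposal is correct and follows essentially the same route as the paper. The paper chooses $n_{2r}$ (rather than your $n_s$) as the reference direction in the cap lower bound, then inserts $u^\e_\lambda$ and passes directly to scale $2r$ via the chain $n_{2r}\to n_s\to q_{2r}\to n_{2r}$, thereby bypassing your intermediate task of bounding the full $H(s)$; in particular, only the leading $L^2$ term $H_1(s)$ ever enters, so your subsidiary concern about the oscillation term nesting is unnecessary.
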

\begin{proof}
	The proofs for the first two inequality are similar to Lemma \ref{lem.h}. We only prove the third inequality here. 
	Let $q_r$ be the vector that minimizes $H(r)$, namely,
	\begin{equation}\label{def.re.H}
	\begin{aligned}
	H(r) &= \frac{1}{r} \bigg( \average_{D_{r} } |v - (n_r\cdot x)q_r|^2 \bigg)^{1/2} + \frac{1}{r} \norm{\lambda\nabla\cdot v - \average_{D_r} \lambda\nabla\cdot v }_{\underline{H}^{-1}(D_r)} \\
	& \qquad + \sup_{k,\ell \in [1/4,1]} \bigg| \average_{D_{kr}} \lambda\nabla\cdot v - \average_{D_{\ell r}} \lambda\nabla\cdot v \bigg|.
	\end{aligned}
	\end{equation}
	Define $h(r) = |q_r|$. Then $h(r) \le C(H(r) + \Phi(r)) \le C\Phi(r)$. Let $s,t\in [r,2r]$, one has
	\begin{equation}\label{est.qsqt}
	\begin{aligned}
	|q_s - q_t| &\le \frac{C}{r} \bigg( \average_{D_{r} } |(n_{2r}\cdot x)(q_s - q_t)|^2 \bigg)^{1/2} \\
	& \le \frac{C}{r} \bigg( \average_{D_{r} } |u^\e_\lambda - (n_{2r}\cdot x)q_s)|^2 \bigg)^{1/2} + \frac{C}{r} \bigg( \average_{D_{r} } |u^\e_\lambda - (n_{2r}\cdot x)q_t)|^2 \bigg)^{1/2}.
	\end{aligned}
	\end{equation}
	We estimate the first term. Using Lemma \ref{lem.n_r} and (\ref{est.HPhi}), we have
	\begin{equation*}
	\begin{aligned}
	& \frac{1}{r} \bigg( \average_{D_{r} } |u^\e_\lambda - (n_{2r}\cdot x)q_s)|^2 \bigg)^{1/2} \\
	&\le \frac{C}{r} \bigg( \average_{D_{s} } |u^\e_\lambda - (n_{s}\cdot x)q_s)|^2 \bigg)^{1/2} + |n_{2r}-n_s||q_s| \\
	& \le \frac{C}{s} \inf_{q\in \R^d}\bigg( \average_{D_{s} } |u^\e_\lambda - (n_{s}\cdot x)q)|^2 \bigg)^{1/2} + C\zeta(2r,\e) \Phi(2r) \\
	& \le \frac{C}{s}  \bigg( \average_{D_{s} } |u^\e_\lambda - (n_{s}\cdot x)q_{2r})|^2 \bigg)^{1/2} + C\zeta(2r,\e) \Phi(2r) \\
	& \le \frac{C}{2r} \bigg( \average_{D_{2r} } |u^\e_\lambda - (n_{2r}\cdot x)q_{2r})|^2 \bigg)^{1/2} + |n_{2r} - n_s| |q_{2r}| + C\zeta(2r,\e) \Phi(2r) \\
	& \le CH(2r) + C\zeta(r,\e) \Phi(2r),
	\end{aligned}
	\end{equation*}
	where we have used the fact $\zeta(2r,\e) \le C\zeta(r,\e)$ since $\zeta(r,\e) = r^\alpha+(\e/r)^\alpha$. The estimate for the second term of (\ref{est.qsqt}) is similar. Hence, for any $s,t\in [r,2r]$,
	\begin{equation*}
	|h(s) - h(t)| \le |q_s -q_t| \le CH(2r) + C\zeta(r,\e) \Phi(2r),
	\end{equation*}
	as desired.
\end{proof}

\subsection{Boundary perturbation}
We construct an approximate solution in $T_t^+$ at each scale $t > \e$ which admits a better regularity due to the smoothness of the boundary. Let $u_\lambda^\e\in H^1(D_{2};\R^d)$ solves (\ref{eq.ue.bdry}).
We extend the function $u^\e_\lambda$ to the entire $B_{2}$ by
\begin{equation}\label{def.tue}
\widetilde{u}^\e_\lambda(x) = \left\{ 
\begin{aligned}
& u^\e_\lambda(x)  &\quad &\text{for } x\in D_{2}\\
& 0  &\quad &\text{for } x\in B_{2}\setminus D_{2}.
\end{aligned}
\right.
\end{equation}
Then $\widetilde{u}^{\e}_\lambda\in H^1(B_{2};\R^d)$ and $\norm{\widetilde{u}^{\e}_\lambda}_{H^1(B_{r})} = \norm{u^{\e}_\lambda}_{H^1(D_{r})}$ for any $r\in (0,2)$. Now, fix $r\in (\e\X_s,1)$, let $v^\e_\lambda$ be the weak solution of
\begin{equation}\label{eq.v.approx}
\left\{
\begin{aligned}
\nabla\cdot (A^\e \nabla v^\e_\lambda) + \lambda \nabla ( \nabla\cdot v^\e_\lambda) &= 0 \qquad &\txt{in } &T_{2r}^+, \\
v_\lambda^\e  &=\widetilde{u}^{\e}_\lambda  \qquad &\txt{on } & \partial T_{2r}^+.
\end{aligned}
\right.
\end{equation}

The following is the key lemma.
\begin{lemma}\label{lem.u-v}
	There exists $\gamma>0$, depending only on $d,\Lambda$ and the Lipschitz constant of $\Delta_2$, so that for any $\lambda>0$,
	\begin{equation}\label{est.approx.Dr}
	\bigg( \average_{T_{2r}^+} |\nabla \widetilde{u}^\e_\lambda - \nabla v^\e_\lambda|^2 \bigg)^{1/2} + \bigg( \average_{T_{2r}^+} |\lambda \nabla\cdot \widetilde{u}^\e_\lambda - \lambda \nabla\cdot v^\e_\lambda|^2 \bigg)^{1/2} \le C\zeta(r,\e)^\gamma \bigg( \average_{D_{4r}} |\nabla u^\e_\lambda|^2 \bigg)^{1/2}.
	\end{equation}
\end{lemma}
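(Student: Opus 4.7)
The plan is to set $w:=\widetilde{u}^\e_\lambda-v^\e_\lambda\in H^1_0(T_{2r}^+)$. Although $\widetilde{u}^\e_\lambda$ fails to satisfy the elasticity system across the rough interface $\Delta_{2r}$, this interface is contained in a thin slab $S$ of relative measure $\lesssim \zeta(r,\e)$ inside $T_{2r}^+$, so the factor $\zeta(r,\e)^\gamma$ in \eqref{est.approx.Dr} should emerge from Meyers' higher integrability (Theorem~\ref{thm.Meyers.Elasticity}).

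Testing the equation for $v^\e_\lambda$ against $w$ yields the identity
\[
E(w):=\int_{T_{2r}^+}A^\e\nabla w\cdot\nabla w+\lambda(\nabla\cdot w)^2=\int_{D_{2r}}A^\e\nabla u^\e_\lambda\cdot\nabla w+\lambda(\nabla\cdot u^\e_\lambda)(\nabla\cdot w),
\]
since $\widetilde{u}^\e_\lambda\equiv 0$ in $T_{2r}^+\setminus D_{2r}$. I next choose a cutoff $\eta$ with $\eta=1$ in a slightly shrunken copy $T_{2r}^{--}\subset T_{2r}^-$, $\eta=0$ outside $T_{2r}^-$ (hence on $\Delta_{2r}$) and $|\nabla\eta|\le C(r\zeta(r,\e))^{-1}$. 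Since $\eta w\in H^1_0(D_{2r})$, it is admissible in the weak formulation of $u^\e_\lambda$, and subtracting that relation confines $E(w)$ to integrals on the thin slab $S:=D_{2r}\setminus T_{2r}^{--}$. Each such term is then bounded by Cauchy--Schwarz combined with (a) the Meyers' estimate (Theorem~\ref{thm.Meyers.Elasticity}) applied on $S$, which gives $\|\nabla u^\e_\lambda\|_{L^2(S)}\le C\zeta(r,\e)^\gamma\|\nabla u^\e_\lambda\|_{L^2(D_{4r})}$ for $\gamma=(q_0-2)/(2q_0)$ and the analogous estimate for the pressure oscillation $\lambda\nabla\cdot u^\e_\lambda-\fint_{D_{2r}}\lambda\nabla\cdot u^\e_\lambda$; and (b) the Hardy inequality $\|w\|_{L^2(S)}\le Cr\zeta(r,\e)\|\nabla w\|_{L^2(T_{2r}^+)}$, available because $w$ vanishes on the flat upper face of $\partial T_{2r}^+$. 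The output is $\|\nabla w\|_{L^2(T_{2r}^+)}\le C\zeta(r,\e)^\gamma\|\nabla u^\e_\lambda\|_{L^2(D_{4r})}$. For the $\lambda\nabla\cdot w$ estimate, Theorem~\ref{lem.energy.elasticity} applied to $w\in H^1_0(T_{2r}^+)$ (whose mean divergence vanishes by the divergence theorem) yields $\|\lambda\nabla\cdot w\|_{L^2(T_{2r}^+)}\le C\|F\|_{H^{-1}(T_{2r}^+)}$, where the source $F$ of $w$ is handled by an identical cutoff-plus-Meyers dualization against arbitrary $\psi\in H^1_0(T_{2r}^+)$.

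The main obstacle I foresee is treating $\lambda\nabla\cdot u^\e_\lambda$ uniformly in $\lambda$: only its oscillation has a uniform Meyers' bound, so the constant $c:=\fint_{D_{2r}}\lambda\nabla\cdot u^\e_\lambda$ must be subtracted everywhere. After integration by parts against $\eta w$, exploiting $\eta w|_{\partial D_{2r}}=0$, the constant contribution collapses to a boundary residue proportional to $c\int_{\Delta_{2r}}v^\e_\lambda\cdot n\,d\sigma$. I plan to absorb this residue via a localized trace inequality: because $v^\e_\lambda=\widetilde{u}^\e_\lambda$ on $\partial T_{2r}^+$ and $\Delta_{2r}$ lies within $O(r\zeta)$ of the flat upper face $\Sigma_{2r}^+$, a Poincar\'{e}-type bound gives $\|v^\e_\lambda\|_{L^2(\Delta_{2r})}\lesssim (r\zeta(r,\e))^{1/2}\|\nabla v^\e_\lambda\|_{L^2(T_{2r}^+)}$, while the compatibility identity $c|D_{2r}|=\lambda\int_{\partial B_{2r}\cap D}u^\e_\lambda\cdot n\,d\sigma$ (using $u^\e_\lambda=0$ on $\Delta_{2r}$) controls $|c|$ through the traces of $u^\e_\lambda$ on the regular part of $\partial D_{2r}$, which are in turn estimated by $\|\nabla u^\e_\lambda\|_{L^2(D_{4r})}$ via the energy inequality.
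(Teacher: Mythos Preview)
Your overall strategy---introduce the cutoff, localize to the thin slab of relative measure $\zeta(r,\e)$, and extract the factor $\zeta(r,\e)^\gamma$ via Meyers' higher integrability---is exactly the paper's approach, and the gradient part of the argument goes through as you describe.

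The genuine gap is in your handling of the mean pressure $c=\fint_{D_{2r}}\lambda\nabla\cdot u^\e_\lambda$. Your plan is to carry the residue $c\int_{\Delta_{2r}}v^\e_\lambda\cdot n$ and bound each factor separately; you propose to control $|c|$ through the compatibility identity $c|D_{2r}|=\lambda\int_{\partial B_{2r}\cap D}u^\e_\lambda\cdot n$. But this identity does \emph{not} bound $|c|$ uniformly in $\lambda$: the right-hand side has an explicit $\lambda$, and the trace of $u^\e_\lambda$ on $\partial B_{2r}\cap D$ is only controlled by $\norm{\nabla u^\e_\lambda}_{L^2(D_{4r})}$, with no compensating $\lambda^{-1}$. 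Since the Poincar\'{e}--trace bound for $\|v^\e_\lambda\|_{L^2(\Delta_{2r})}$ produces only a factor $\zeta^{1/2}$ (no $\lambda^{-1}$ either), the product $c\int_{\Delta_{2r}}v^\e_\lambda\cdot n$ blows up like $\lambda$ and cannot be absorbed.

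The paper avoids this residue entirely by a different placement of the cutoff. Rather than testing the equation for $u^\e_\lambda$ against $\eta w$ on $D_{2r}$ (which produces the boundary term on $\Delta_{2r}$), the paper writes the full energy identity $E(w)=B(w,w\phi_r)+B(w,w(1-\phi_r))$ over $T_{2r}^+$. The first bilinear piece vanishes because $w\phi_r\in H^1_0(T_{2r}^-)$ and $w=\widetilde{u}^\e_\lambda-v^\e_\lambda$ is a weak solution in $T_{2r}^-$ (both functions solve the equation there). For the second piece, the crucial observation is that $(1-\phi_r)w\in H^1_0(T_{2r}^+)$, hence $\int_{T_{2r}^+}\nabla\cdot[(1-\phi_r)w]=0$; this lets you subtract \emph{any} constants from $\lambda\nabla\cdot\widetilde{u}^\e_\lambda$ and $\lambda\nabla\cdot v^\e_\lambda$ in the pressure term, so only the uniformly bounded oscillations $\lambda\|\nabla\cdot u^\e_\lambda-\fint_{D_{2r}}\nabla\cdot u^\e_\lambda\|_{L^2(D_{2r})}$ and $\lambda\|\nabla\cdot v^\e_\lambda-\fint_{T_{2r}^+}\nabla\cdot v^\e_\lambda\|_{L^2(T_{2r}^+)}$ appear, and the uncontrolled mean $c$ never enters. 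The remaining slab integrals are then handled exactly by your Meyers and Poincar\'{e} steps.
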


\begin{proof}
	First of all, by the system (\ref{eq.v.approx}) and Theorem \ref{lem.energy.elasticity}, we have
	\begin{equation}\label{est.vel.T2r}
	\begin{aligned}
	&\norm{\nabla v^\e_\lambda}_{L^2(T_{2r}^+)}+\lambda \norm{\nabla\cdot v^\e_\lambda - \average_{T_{2r}^+} \nabla\cdot v^\e_\lambda }_{L^2(T_{2r}^+)} \\
	& \qquad \le C\norm{\nabla \widetilde{u}^\e_\lambda}_{L^2(T_{2r}^+)} = C\norm{\nabla u^\e_\lambda}_{L^2(D_{2r})}.
	\end{aligned}
	\end{equation}
	Let $\phi_r$ be a cut-off function so that $\phi_r(x) = 1$ on $\{ x\cdot n_{2r} \le -4C_0r\zeta(2r,\e) \}$ and $\phi_r(x) = 0$ on $\{x\cdot n_{2r} >-2C_0r\zeta(2r,\e) \}$ and $|\nabla \phi_r(x)| \le C[r\zeta(r,\e)]^{-1}$. Then it is not hard to show
	\begin{equation}\label{eq.tu-v}
	\begin{aligned}
	&\int_{B_{2r}} A^\e \nabla (\widetilde{u}^\e_\lambda -  v^\e_\lambda)\cdot \nabla(\widetilde{u}^\e_\lambda -  v^\e_\lambda) + \lambda \int_{B_{2r}} |\nabla\cdot (\widetilde{u}^\e_\lambda - v^\e_\lambda)|^2 \\
	& = \int_{B_{2r}} A^\e \nabla(\widetilde{u}^\e_\lambda - v^\e_\lambda)\cdot \nabla \Big[ ( \widetilde{u}^\e_\lambda - v^\e_\lambda) \phi_r \Big] + \lambda\int_{B_{2r}}\nabla\cdot (\widetilde{u}^\e_\lambda - v^\e_\lambda) \nabla\cdot \Big[ (\widetilde{u}^\e_\lambda -  v^\e_\lambda) \phi_r \Big] \\
	& \qquad + \int_{B_{2r}} A^\e \nabla(\widetilde{u}^\e_\lambda - v^\e_\lambda)\cdot \nabla \Big[ ( \widetilde{u}^\e_\lambda - v^\e_\lambda) (1-\phi_r) \Big] \\
	&\qquad + \lambda\int_{B_{2r}}\nabla\cdot (\widetilde{u}^\e_\lambda - v^\e_\lambda) \nabla\cdot \Big[ (\widetilde{u}^\e_\lambda -  v^\e_\lambda) (1-\phi_r) \Big].
	\end{aligned}
	\end{equation}
	Now observe that $(\widetilde{u}^\e_\lambda -  v^\e_\lambda) \phi_r \in H_0^1(T_{2r}^-;\R^d)$. Hence, by the integration by parts, the first two terms in the right-hand side of the last identity vanishes since $\widetilde{u}^\e_\lambda -  v^\e_\lambda$ is a weak solution in $T_{2r}^-$ which is a subset of $D_{2r} \subset T_{2r}^+$. So it suffices to estimate the third and forth terms. We will estimate the forth term only and the estimate of the third term is similar and easier. Note that $\widetilde{u}^\e_\lambda$ and $v^\e_\lambda$ are supported in $D_{2r}$ and $T_{2r}^+$, respectively. Thus $\widetilde{u}^\e_\lambda -  v^\e_\lambda \in H^1_0(B_{2r};\R^d)$ yields
	\begin{equation*}
	\int_{B_{2r}} \nabla\cdot \Big[ (\widetilde{u}^\e_\lambda -  v^\e_\lambda) (1-\phi_r) \Big] = 0.
	\end{equation*}
	Hence, by inserting constants in the forth term of (\ref{eq.tu-v}) and using the triangle inequality, the Cauchy–Schwarz inequality and (\ref{est.vel.T2r}), we obtain
	\begin{equation}\label{est.tuv.1-phi}
	\begin{aligned}
	&\bigg| \lambda\int_{B_{2r}}\nabla\cdot (\widetilde{u}^\e_\lambda - v^\e_\lambda) \nabla\cdot \Big[ (\widetilde{u}^\e_\lambda -  v^\e_\lambda) (1-\phi_r) \Big] \bigg| \\
	& \le \bigg\{ \lambda \bigg( \int_{D_{2r}} |\nabla\cdot \widetilde{u}^\e_\lambda - \average_{D_{2r}} \nabla\cdot \widetilde{u}^\e_\lambda|^2 \bigg)^{1/2} + \lambda \bigg( \int_{T_{2r}^+} |\nabla\cdot v^\e_\lambda - \average_{T_{2r}^+} \nabla\cdot v^\e_\lambda|^2 \bigg)^{1/2} \bigg\}  \\
	& \qquad \times \bigg( \int_{T_{2r}^+} \Big| \nabla\cdot \Big[ (\widetilde{u}^\e_\lambda -  v^\e_\lambda) (1-\phi_r) \Big] \Big|^2 \bigg)^{1/2} \\
	& \le C\norm{\nabla u^\e_\lambda}_{L^2(D_{2r})} \times \bigg( \int_{T_{2r}^+} \Big| \nabla\cdot \Big[ (\widetilde{u}^\e_\lambda -  v^\e_\lambda) (1-\phi_r) \Big] \Big|^2 \bigg)^{1/2}.
	\end{aligned}
	\end{equation}
	
	Next, by the Poincar\'{e} inequality
	\begin{equation*}
	\begin{aligned}
	\bigg( \int_{T_{2r}^+} \big|  \nabla\cdot\big[ \widetilde{u}^\e_\lambda (1-\phi_r) \big] \big|^2 \bigg)^{1/2} &\le \bigg( \int_{D_{2r}\cap  \{x\cdot n_{2r} \ge -4C_0r\zeta(2r,\e) \} } \big|  \nabla \widetilde{u}^\e_\lambda  \big|^2 \bigg)^{1/2} \\
	& \qquad + C[r\zeta(2r,\e)]^{-1} \bigg( \int_{D_{2r}\cap \{ x\cdot n_{2r} \ge -4C_0r\zeta(2r,\e) \} } \big| \widetilde{u}^\e_\lambda  \big|^2 \bigg)^{1/2} \\
	& \le C\bigg( \int_{D_{2r}\cap \{ x\cdot n_{2r} \ge -4C_0r\zeta(2r,\e) \} } \big|  \nabla \widetilde{u}^\e_\lambda  \big|^2 \bigg)^{1/2}.
	\end{aligned}
	\end{equation*}
	Note that $D_{2r}\cap \{ x\cdot n_{2r} \ge -4C_0r\zeta(2r,\e) \} \subset B_{2r} \cap \{  -4C_0r\zeta(2r,\e) \le x\cdot n_{2r} < 2C_0r\zeta(2r,\e) \}$, which implies $|D_{2r}\cap \{ x\cdot n_{2r} \ge -2C_0 r\zeta(2r,\e) \}| \le Cr^d\zeta(r,\e)$. It follows from the H\"{o}lder's inequality and the Meyers' estimate (Theorem \ref{thm.Meyers.Elasticity}) that
	\begin{equation*}
	\begin{aligned}
	& \bigg( \int_{D_{2r}\cap \{ x\cdot n_{2r} \ge -4C_0r\zeta(2r,\e) \} } \big|  \nabla \widetilde{u}^\e_\lambda  \big|^2 \bigg)^{1/2} \\
	&\qquad \le |D_{2r}\cap \{ x\cdot n_{2r} \ge -4C_0r\zeta(2r,\e) \}|^{\frac{1}{2} - \frac{1}{p_0}} \bigg( \int_{D_{2r} } \big|  \nabla \widetilde{u}^\e_\lambda  \big|^{p_0} \bigg)^{1/{p_0}} \\
	& \qquad \le C[\zeta(r,\e)]^{\frac{1}{2} - \frac{1}{p_0}}  \bigg( \int_{D_{4r} } \big|  \nabla \widetilde{u}^\e_\lambda  \big|^{2} \bigg)^{1/{2}}.
	\end{aligned}
	\end{equation*}
	Consequently, we arrive at
	\begin{equation*}
	\bigg( \int_{T_{2r}^+} \Big| \nabla\cdot \Big[ (\widetilde{u}^\e_\lambda -  v^\e_\lambda) (1-\phi_r) \Big] \Big|^2 \bigg)^{1/2} \le C[\zeta(r,\e)]^{\frac{1}{2} - \frac{1}{p_0}} \bigg( \int_{D_{4r} } \big|  \nabla \widetilde{u}^\e_\lambda  \big|^{2} \bigg)^{1/{2}}.
	\end{equation*}
	Substituting this into (\ref{est.tuv.1-phi}), we obtain the estimate of (\ref{eq.tu-v}). Particularly,
	\begin{equation}\label{est.Dtu-Dv}
	\bigg( \int_{T_{2r}^+} |\nabla \widetilde{u}^\e_\lambda - \nabla v^\e_\lambda|^2 \bigg)^{1/2} \le C\zeta(r,\e)^\gamma \bigg( \int_{D_{4r}} |\nabla u^\e_\lambda|^2 \bigg)^{1/2}.
	\end{equation}
	This implies the first part of (\ref{est.approx.Dr}). 
	
	To obtain the estimate for pressure, note that it follows easily from (\ref{est.Dtu-Dv}) and Lemma \ref{lem.DwDpi} that
	\begin{equation*}
	\bigg( \average_{D_{2r}} |\lambda \nabla\cdot \widetilde{u}^\e_\lambda - \lambda \nabla\cdot v^\e_\lambda|^2 \bigg)^{1/2} \le C\zeta(r,\e)^\gamma \bigg( \average_{D_{4r}} |\nabla u^\e_\lambda|^2 \bigg)^{1/2}.
	\end{equation*}
	On the other hand, in $T_{2r}^{+}\setminus D_{2r}$, using the Meyers' estimate for $\lambda\nabla\cdot v^\e_\lambda$ again and the fact $|T_{2r}^+\setminus D_{2r}| \le Cr\zeta(r,\e)$, we have
	\begin{equation*}
	\bigg( \average_{T_{2r}^+\setminus D_{2r}} |\lambda \nabla\cdot v^\e_\lambda|^2 \bigg)^{1/2} \le C\zeta(r,\e)^\gamma \bigg( \average_{D_{4r}} |\nabla u^\e_\lambda|^2 \bigg)^{1/2}.
	\end{equation*}
	Combining these estimates and using the fact $\widetilde{u}^\e_\lambda = 0$ in $T^+_{2r}\setminus D_{2r}$, we obtain the desired estimate.
\end{proof}

\begin{remark}\label{rmk.u-v}
	Since $\widetilde{u}^\e_\lambda - v^\e_\lambda = 0$ on $B_{2r}\setminus T_{2r}^+$, whose volume is comparable to $ r^d$, the Poincar\'{e} inequality implies
	\begin{equation}\label{est.T2r+.uv}
	\bigg( \average_{T_{2r}^+} | u^\e_\lambda - v^\e_\lambda|^2 \bigg)^{1/2} \le Cr\zeta(r,\e)^\gamma \bigg( \average_{D_{4r}} |\nabla u^\e_\lambda|^2 \bigg)^{1/2}.
	\end{equation}
\end{remark}

\subsection{Excess decay estimate and iteration}
Now we consider the homogenization of the approximate solution $v^\e_\lambda$. Let $v^0_\lambda$ be the weak solution of the corresponding homogenized system
\begin{equation}\label{eq.v0.approx}
\left\{
\begin{aligned}
\nabla\cdot (\overline{A}_\lambda \nabla v^0_\lambda) + \lambda \nabla ( \nabla\cdot v^0_\lambda) &= 0 \qquad &\txt{in } &T_{2r}^+, \\
v_\lambda^0  &=v^{\e}_\lambda  \qquad &\txt{on } & \partial (T_{2r}^+).
\end{aligned}
\right.
\end{equation}
Note that Theorem \ref{thm.Meyers.Elasticity} implies  $\widetilde{u}^\e_\lambda\in W^{1,2+\delta}(T_{2r}^+;\R^d)$ and $v^\e_\lambda \in W^{1,2+\delta}(T_{2r}^+;\R^d)$. Therefore, Theorem \ref{thm.global.rate} implies that there exists a random variable $\X = \X_{s,\lambda} \le O_s(C)$ with $s\in (0,d)$ such that
\begin{equation}\label{est.bdryRate}
\bigg( \average_{T_{2r}^+} |v^\e_\lambda - v^0_\lambda|^2 \bigg)^{1/2} + \norm{\lambda \nabla\cdot v^\e_\lambda - \lambda \nabla\cdot v^0_\lambda }_{\underline{H}^{-1}(T_{2r}^+)} \le Cr \eta(\e \X/r) \bigg( \fint_{D_{4r}} |\nabla u^\e_\lambda|^2 \bigg)^{1/2}.
\end{equation}

Next, we consider the smoothness of the homogenized solution. To this end, we will take a special form adapted to the boundary situation:
\begin{equation*}
\begin{aligned}
\widetilde{H}(r,\theta;v^0_\lambda) &= \frac{1}{\theta r}\inf_{q\in \R^d} \bigg( \average_{T_{2r}^+\cap B_{\theta r}} |v^0_\lambda - (n_{\theta r}\cdot x)q|^2 \bigg)^{1/2} \\
& \qquad + \frac{1}{\theta r} \norm{ \lambda\nabla\cdot v^0_\lambda - \average_{T_{2r}^+\cap B_{\theta r}}\lambda\nabla\cdot v^0_\lambda }_{\underline{H}^{-1}(T_{2r}^+\cap B_{\theta r})} \\
& \qquad+ \sup_{k,\ell \in [1/4,1]} \bigg| \average_{T_{2r}^+\cap B_{k\theta r}}\lambda\nabla\cdot v^0_\lambda - \average_{T_{2r}^+\cap B_{\ell\theta r}}\lambda\nabla\cdot v^0_\lambda \bigg|.
\end{aligned}
\end{equation*}
Recall that $n_{2r}$ is roughly the outward normal of the flat boundary of $T_{2r}^+$, which can be viewed as a large-scale normal vector of $\partial D$ at $0$. The linear function $(n_{\theta r}\cdot x)q$ is particularly effective in approximating $v_\lambda^0$, since $v^0_\lambda$ vanishes on the flat boundary and hence the tangent derivatives on the flat boundary vanishes.

\begin{lemma}\label{lem.bdry.v0}
	There exists a constant $\theta\in (0,1/4)$, depending only on $d$ and $\Lambda$, so that for any $r\in (\e,1)$
	\begin{equation*}
	\widetilde{H}(r,\theta;v^0_\lambda) \le \frac{1}{2} \widetilde{H}(r,2;v^0_\lambda) + C \zeta(r,\e) \bigg( \average_{T_{2r}^+} |\nabla v^0_\lambda|^2 \bigg)^{1/2}.
	\end{equation*}
\end{lemma}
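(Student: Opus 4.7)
The strategy is to exploit two facts about $v^0_\lambda$: it solves a constant-coefficient nearly incompressible elasticity system on $T_{2r}^+$, and it vanishes on the flat portion $\Sigma := \{x \cdot n_{2r} = c_r\}\cap \overline{B}_{2r}$ of $\partial T_{2r}^+$, where $c_r := C_0(2r)\zeta(2r,\e)$. The vanishing holds because $v^0_\lambda = v^\e_\lambda = \widetilde{u}^\e_\lambda$ on $\partial T_{2r}^+$ by construction, and $\Sigma \subset \overline{B}_{2r}\setminus D \subset B_2\setminus D_2$, where $\widetilde{u}^\e_\lambda$ is zero by extension.

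For any $P\in\R^d$, the affine map $L_P(x):=(n_{2r}\cdot x - c_r)P$ is itself a weak solution (its divergence is the constant $n_{2r}\cdot P$) and vanishes identically on $\Sigma$; hence $v^0_\lambda - L_P$ solves the same system with zero Dirichlet datum on $\Sigma$. Restricted to the subdomain $T_{2r}^+\cap B_r$, the only part of $\partial T_{2r}^+$ in the closure is the smooth flat piece $\Sigma\cap B_r$, while $\partial B_r \cap T_{2r}^+$ is interior and handled by standard interior Schauder estimates. Applying the $\lambda$-uniform $C^{1,\alpha}$ Schauder estimate (a localized version of Theorem \ref{thm.C1a}) to $v^0_\lambda - L_P$ and optimizing over $P$ produces the Campanato-type excess decay for displacement,
\begin{equation*}
\inf_{P}\frac{1}{\theta r}\bigg(\average_{T_{2r}^+\cap B_{\theta r}}|v^0_\lambda - L_P|^2\bigg)^{1/2}\le C_1\theta^\alpha\inf_{P}\frac{1}{r}\bigg(\average_{T_{2r}^+\cap B_r}|v^0_\lambda - L_P|^2\bigg)^{1/2},
\end{equation*}
together with the analogous $\theta^\alpha$-decay for the $\underline{H}^{-1}$-oscillation of $\lambda\nabla\cdot v^0_\lambda$ and for the ball-averaged oscillation of $\lambda\nabla\cdot v^0_\lambda$ that appear in $\widetilde H$.

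Next I convert between the boundary-adapted test function $L_P$ and the test function $(n_{\theta r}\cdot x)q$ used inside $\widetilde H(r,\theta;v^0_\lambda)$. Lemma \ref{lem.n_r} yields $|n_{2r} - n_{\theta r}|\le C\zeta(r,\e)/\theta$, and by construction $|c_r|\le Cr\zeta(r,\e)$, so for $x\in B_{\theta r}$,
\begin{equation*}
|L_P(x) - (n_{\theta r}\cdot x)P|\le Cr\zeta(r,\e)|P|,
\end{equation*}
with an analogous bound (at scale $r$) allowing the right-hand side of the decay to be replaced by a constant multiple of $\widetilde H(r,2;v^0_\lambda)$. The optimal $|P|$ is controlled by $C\bigl(\average_{T_{2r}^+}|\nabla v^0_\lambda|^2\bigr)^{1/2}$ via a routine best-fit affine estimate. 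Assembling these pieces produces
\begin{equation*}
\widetilde H(r,\theta;v^0_\lambda)\le C_1\theta^\alpha\,\widetilde H(r,2;v^0_\lambda) + \frac{C}{\theta}\zeta(r,\e)\bigg(\average_{T_{2r}^+}|\nabla v^0_\lambda|^2\bigg)^{1/2},
\end{equation*}
and fixing $\theta\in(0,1/4)$ small enough that $C_1\theta^\alpha\le 1/2$ yields the stated estimate (with $\theta$ now frozen, the $1/\theta$ factor is absorbed into an absolute constant).

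The principal technical obstacle is the localized Schauder estimate on $T_{2r}^+$, which has a corner where $\Sigma$ meets $\partial B_{2r}$ and is therefore not globally $C^{1,\alpha}$. I circumvent this by confining all boundary regularity arguments to $T_{2r}^+\cap B_r$, where the relevant boundary is only the smooth flat $\Sigma\cap B_r$; the part $\partial B_r$ lies interior to $T_{2r}^+$ and is treated by interior Schauder estimates with a buffer. The $\lambda$-uniformity of the $C^{1,\alpha}$ bounds for both displacement and pressure is precisely what Theorem \ref{thm.C1a} supplies, inherited from the asymptotic-expansion framework of Section 3. The remaining bookkeeping --- tracking the $\zeta(r,\e)/\theta$ factor and converting the scale-$r$ excess on the right into $\widetilde H(r,2;v^0_\lambda)$ --- is routine.
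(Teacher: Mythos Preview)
Your proposal is correct and follows essentially the same approach as the paper: both exploit the $\lambda$-uniform $C^{1,\alpha}$ estimate (Theorem \ref{thm.C1a}) together with the vanishing of $v^0_\lambda$ on the flat portion of $\partial T_{2r}^+$, then convert between the directions $n_{2r}$ and $n_{\theta r}$ via Lemma \ref{lem.n_r}. The paper phrases the decay step as a pointwise Taylor expansion around the boundary point $x_{2r}$ nearest the origin (using that the tangential derivative vanishes there), and then explicitly invokes the generalized Caccioppoli inequality \eqref{est.WCacc.bdry} to pass from $\big(\average_{T_{2r}^+\cap B_r}|\nabla v^0_\lambda|^2\big)^{1/2}$ back to $\widetilde{H}(r,2;v^0_\lambda)$; this last step is precisely what you describe as ``routine,'' but it is the place where the pressure terms in $\widetilde H$ re-enter on the right-hand side, so you should make that Caccioppoli application explicit rather than leaving it implicit.
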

\begin{proof}
	First of all, by the $C^{1,\alpha}$ regularity of $v^0_\lambda$ (Theorem \ref{thm.C1a}), we know
	\begin{equation}\label{est.C1a,v0}
	[\nabla v^0_\lambda]_{C^\alpha(T_{2r}\cap B_{r/2})} + [\lambda\nabla\cdot v^0_\lambda]_{C^\alpha(T_{2r}\cap B_{r/2})}\le Cr^{-\alpha} \bigg( \average_{T_{2r}^+\cap B_r} |\nabla v^0_\lambda|^2 \bigg)^{1/2}.
	\end{equation}
	Let $x_{2r}$ be the point on $\partial T_{2r}^+ \cap B_r$ (flat boundary) that is the closest to the origin. By our assumption, $|x_{2r}| \le Cr\zeta(r,\e)$. Clearly, since $v^0_\lambda$ is identically zero on the flat boundary, $v^0_\lambda(x_{2r}) = 0$ and the tangential derivative vanishes, i.e.,
	\begin{equation*}
	(I_{d\times d}-n_{2r}\otimes n_{2r}) \nabla v^0_\lambda(x_{2r}) = 0,
	\end{equation*}
	where $I_{d\times d}$ is the $d\times d$ identity matrix. It follows that
	\begin{equation}\label{eq.Dv.xr}
	\nabla v^0_\lambda(x_{2r}) = (n_{2r}\otimes n_{2r}) \nabla v^0_\lambda(x_{2r}) =  n_{2r} (n_{2r}\cdot \nabla v^0_\lambda(x_{2r})).
	\end{equation}
	Now, if $\theta\in (0,1/4)$ (to be determined) and $x\in T_{2r}^+ \cap B_{\theta r}$, (\ref{est.C1a,v0}) implies
	\begin{equation*}
	|v^0_\lambda(x) - v^0_\lambda(x_{2r}) - (x-x_{2r})\cdot \nabla v^0_\lambda(x_{2r})| \le C(\theta r)^{1+\alpha} r^{-\alpha} \bigg( \average_{T_{2r}^+\cap B_r} |\nabla v^0_\lambda|^2 \bigg)^{1/2}.
	\end{equation*}
	Combined with (\ref{eq.Dv.xr}), this implies
	\begin{equation*}
	\begin{aligned}
	|v_\lambda^0(x) - (x\cdot n_{2r})(n_{2r}\cdot \nabla v^0_\lambda(x_{2r}))| &\le C\theta^\alpha(\theta r) \bigg( \average_{T_{2r}^+\cap B_r} |\nabla v^0_\lambda|^2 \bigg)^{1/2} + |x_{2r}| \bigg( \average_{T_{2r}^+\cap B_r} |\nabla v^0_\lambda|^2 \bigg)^{1/2} \\
	& \le C\theta r \big( \theta^\alpha + \theta^{-1} \zeta(r,\e) \big) \bigg( \average_{T_{2r}^+\cap B_r} |\nabla v^0_\lambda|^2 \bigg)^{1/2}.
	\end{aligned}
	\end{equation*}
	Moreover, in view of Lemma \ref{lem.n_r}, we may replace $(x\cdot n_{2r})$ by $(x\cdot n_{\theta r})$ with the same error as above. Consequently, one arrives at
	\begin{equation*}
	\frac{1}{\theta r}\inf_{q\in \R^d} \bigg( \average_{T_{2r}^+\cap B_{\theta r}} |v^0_\lambda - (n_{\theta r}\cdot x)q|^2 \bigg)^{1/2} \le C \big( \theta^\alpha + \theta^{-1} \zeta(r,\e) \big) \bigg( \average_{T_{2r}^+\cap B_r} |\nabla v^0_\lambda|^2 \bigg)^{1/2}.
	\end{equation*}
	Together with the estimates of the pressure (which is obvious due to (\ref{est.C1a,v0})), we have
	\begin{equation*}
	\widetilde{H}(r,\theta; v^0_\lambda) \le C\big( \theta^\alpha + \theta^{-1} \zeta(r,\e) \big) \bigg( \average_{T_{2r}^+\cap B_r} |\nabla v^0_\lambda|^2 \bigg)^{1/2}.
	\end{equation*}
	
	Next, observe that $v^0_\lambda - n_{2r}\cdot (x-x_{2r}) q$ is a weak solution vanishing on the flat boundary for any $q\in \R^d$. Then (\ref{est.WCacc.bdry}) gives
	\begin{equation*}
	\begin{aligned}
	\bigg( \average_{T_{2r}^+\cap B_r} |\nabla v^0_\lambda|^2 \bigg)^{1/2} &\le \frac{C}{r} \inf_{q\in \R^d} \bigg( \average_{T_{2r}^{+} \cap B_{3r/2} } | v^0_\lambda - n_{2r}\cdot (x-x_{2r}) q|^2 \bigg)^{1/2} \\
	& \qquad + \frac{C}{r} \norm{\lambda \nabla\cdot v^0_\lambda - \average_{T_{2r}^{+}}\lambda \nabla\cdot v^0_\lambda  }_{\underline{H}^{-1}(T_{2r}^{+})} \\
	& \qquad + C\sup_{t\in [1/2,2]} \bigg| \average_{T_{2r}^{+}\cap B_{kr}} \lambda\nabla \cdot v^0_\lambda - \average_{T_{2r}^{+}\cap B_{\ell r}} \lambda\nabla \cdot v^0_\lambda \bigg| \\
	& \le C\widetilde{H}(r,2;v^0_\lambda) + C|x_{2r}|\bigg( \average_{T_{2r}^+} |\nabla v^0_\lambda|^2 \bigg)^{1/2}.
	\end{aligned}
	\end{equation*}
	Combining the previous two estimates, we obtain
	\begin{equation*}
	\widetilde{H}(r,\theta; v^0_\lambda) \le C\theta^\alpha \widetilde{H}(r,\theta; v^0_\lambda) + C_\theta \zeta(r,\e) \bigg( \average_{T_{2r}^+} |\nabla v^0_\lambda|^2 \bigg)^{1/2}.
	\end{equation*}
	Choosing $\theta \in (0,1/4)$ small enough, we obtain the desired estimate.
\end{proof}

\begin{lemma}\label{lem.approx.ve}
	There exists $\theta\in (0,1/4)$ so that for any $r\in (\e\X,1)$
	\begin{equation*}
	\widetilde{H}(r,\theta;v^\e_\lambda) \le \frac{1}{2} \widetilde{H}(r,2;v^\e_\lambda) + C\Big( \zeta(r,\e) + \sqrt[3]{\eta(\e\X/r)} \Big) \bigg( \average_{D_{4r}} |\nabla u^\e_\lambda|^2 \bigg)^{1/2}.
	\end{equation*}
\end{lemma}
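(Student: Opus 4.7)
\textbf{Proof plan for Lemma \ref{lem.approx.ve}.} The idea is to transfer the decay estimate for $v_\lambda^0$ furnished by Lemma \ref{lem.bdry.v0} to $v_\lambda^\e$, treating $v_\lambda^\e - v_\lambda^0$ as a perturbation controlled by the convergence rate (\ref{est.bdryRate}). Since $\widetilde H(r,\theta;\,\cdot\,)$ is a seminorm in its function argument, the triangle inequality together with Lemma \ref{lem.bdry.v0} gives
\begin{equation*}
\widetilde H(r,\theta;v_\lambda^\e) \le \tfrac12\widetilde H(r,2;v_\lambda^0) + C\zeta(r,\e)\bigg(\average_{T_{2r}^+}|\nabla v_\lambda^0|^2\bigg)^{1/2} + \widetilde H(r,\theta;v_\lambda^\e-v_\lambda^0),
\end{equation*}
and one more triangle inequality bounds $\widetilde H(r,2;v_\lambda^0)$ by $\widetilde H(r,2;v_\lambda^\e) + \widetilde H(r,2;v_\lambda^\e-v_\lambda^0)$. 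So it suffices to estimate the energy of $v_\lambda^0$ and the two error quantities $\widetilde H(r,\theta;v_\lambda^\e-v_\lambda^0)$ and $\widetilde H(r,2;v_\lambda^\e-v_\lambda^0)$.

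For the energy factor, the uniform-in-$\lambda$ energy estimate of Theorem \ref{lem.energy.elasticity} applied to (\ref{eq.v0.approx}) yields $\|\nabla v_\lambda^0\|_{L^2(T_{2r}^+)} \le C\|\nabla v_\lambda^\e\|_{L^2(T_{2r}^+)}$, and (\ref{est.vel.T2r}) then bounds this by $C\|\nabla u_\lambda^\e\|_{L^2(D_{2r})}$; since $|T_{2r}^+| \simeq |D_{4r}|$, this yields the required $\zeta(r,\e)(\average_{D_{4r}}|\nabla u_\lambda^\e|^2)^{1/2}$ contribution.

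For the two error quantities, I examine the three terms in the definition of $\widetilde H$ separately, in parallel with the interior argument of Lemma \ref{lem.Hr.iteration}. The $L^2$ piece (with $q=0$) is bounded by
\begin{equation*}
\frac{C_\theta}{r}\bigg(\average_{T_{2r}^+}|v_\lambda^\e-v_\lambda^0|^2\bigg)^{1/2} \le C_\theta\,\eta(\e\X/r)\bigg(\average_{D_{4r}}|\nabla u_\lambda^\e|^2\bigg)^{1/2}
\end{equation*}
directly from (\ref{est.bdryRate}). The $\underline H^{-1}$ piece is controlled by the same norm on the larger set $T_{2r}^+$, since $H_0^1(T_{2r}^+\cap B_{\theta r})\hookrightarrow H_0^1(T_{2r}^+)$, and again (\ref{est.bdryRate}) provides the required $\eta(\e\X/r)$ decay.

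The main obstacle is the third piece, the supremum of differences of averages of the pressure $\lambda\nabla\cdot(v_\lambda^\e-v_\lambda^0)$ over shrinking sets $T_{2r}^+\cap B_{k\theta r}$, because (\ref{est.bdryRate}) only delivers a weak $\underline H^{-1}$-type control for the pressure. I will resolve this by adapting Lemma \ref{lem.average2H-1} to the Lipschitz set $T_{2r}^+$: using a cutoff function supported away from the curved portion of $\partial T_{2r}^+$ (but not the flat portion, which does not interfere since $v_\lambda^\e - v_\lambda^0$ vanishes there only in the sense of boundary values, not identically), one obtains the interpolation
\begin{equation*}
\bigg|\average_{T_{2r}^+\cap B_{k\theta r}} F\bigg|^{3} \le \frac{C_{\theta,k}}{r}\,\|F\|_{\underline H^{-1}(T_{2r}^+)}\,\average_{T_{2r}^+}|F|^2
\end{equation*}
for $F = \lambda\nabla\cdot(v_\lambda^\e-v_\lambda^0)$. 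The $L^2$ factor is controlled via Lemma \ref{lem.Nabla} and the energy estimates (together with (\ref{est.vel.T2r})) by $C\bigl(\average_{D_{4r}}|\nabla u_\lambda^\e|^2\bigr)^{1/2}$, and the $\underline H^{-1}$ factor by $Cr\,\eta(\e\X/r)\bigl(\average_{D_{4r}}|\nabla u_\lambda^\e|^2\bigr)^{1/2}$ via (\ref{est.bdryRate}). Taking cube roots produces the factor $\sqrt[3]{\eta(\e\X/r)}$. Collecting the three pieces gives the stated bound for $\widetilde H(r,\theta;v_\lambda^\e-v_\lambda^0)$, and the same argument (with $2$ in place of $\theta$) for $\widetilde H(r,2;v_\lambda^\e-v_\lambda^0)$, after which the latter is absorbed into the error term, completing the proof.
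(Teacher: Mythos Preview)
Your proposal is correct and follows exactly the approach the paper indicates: the paper's proof is a one-line reference to the argument of Lemma \ref{lem.Hr.iteration}, replacing the interior convergence rate by (\ref{est.bdryRate}) and the interior excess decay by Lemma \ref{lem.bdry.v0}, and you have correctly spelled out each of those steps (including the adaptation of Lemma \ref{lem.average2H-1} to the domain $T_{2r}^+$ and the $L^2$ control of the pressure difference via Lemma \ref{lem.Nabla} and the energy estimates).
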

\begin{proof}
	This follows from a similar argument as Lemma \ref{lem.Hr.iteration} by using (\ref{est.bdryRate}) and Lemma \ref{lem.bdry.v0}.
\end{proof}

Combining (\ref{lem.u-v}) and Lemma \ref{lem.approx.ve}, we obtain
\begin{lemma}
	There exists $\theta\in (0,1/4)$ so that for each $r\in (\e\X,1)$, we have
	\begin{equation*}
	H(\theta r) \le \frac{1}{2}H(r) + C\Big( \zeta(r,\e)^{\gamma}+ \sqrt[3]{\eta(\e\X/r)}   \Big) \Phi(8r).
	\end{equation*}
\end{lemma}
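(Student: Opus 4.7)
The plan is to couple Lemma~\ref{lem.u-v} (the boundary perturbation) with the excess-decay estimate of Lemma~\ref{lem.approx.ve}, applied at the reference scale $s=r/2$, and then to transfer the resulting bound from the ``flat-boundary'' model set $T^+_{r}$ back to $D$ at a cost controlled by a power of $\zeta(r,\e)$. Let $\theta_0\in(0,1/4)$ be the constant produced by Lemma~\ref{lem.approx.ve} and set $\theta=\theta_0/2\in(0,1/4)$. Let $v^\e_\lambda$ solve~(\ref{eq.v.approx}) on $T^+_{r}$ with boundary datum $\widetilde u^\e_\lambda$. The generalized Caccioppoli inequality~(\ref{est.WCacc.bdry}) gives $(\fint_{D_{2r}}|\nabla u^\e_\lambda|^2)^{1/2}\le C\Phi(8r)$, so Lemma~\ref{lem.u-v} at scale $r/2$, together with Remark~\ref{rmk.u-v}, yields
\[
\Bigl(\fint_{T^+_r}|u^\e_\lambda-v^\e_\lambda|^2\Bigr)^{1/2}\le Cr\zeta(r,\e)^\gamma\Phi(8r),\qquad \Bigl(\fint_{T^+_r}|\nabla(\widetilde u^\e_\lambda-v^\e_\lambda)|^2+|\lambda\nabla\cdot(\widetilde u^\e_\lambda-v^\e_\lambda)|^2\Bigr)^{1/2}\le C\zeta(r,\e)^\gamma\Phi(8r).
\]

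By the triangle inequality, $H(\theta r)\le H(\theta r;v^\e_\lambda)+H(\theta r;u^\e_\lambda-v^\e_\lambda)$. The second summand is bounded by $C\zeta(r,\e)^\gamma\Phi(8r)$: using $q=0$ and $D_{\theta r}\subset T^+_r$, a volume-ratio argument estimates the first piece of $H$, while (\ref{est.H-1.L2}) estimates the $\underline H^{-1}$ piece and a Cauchy--Schwarz plus volume-ratio estimate controls the pressure-oscillation piece. To handle $H(\theta r;v^\e_\lambda)$ I change the underlying set: since $D_{\theta r}\subset T^+_r\cap B_{\theta r}=:E$ with $|E\setminus D_{\theta r}|\le C(\theta r)^d\zeta(r,\e)/\theta$, each constituent of $H(\theta r;v^\e_\lambda)$ is controlled by the corresponding constituent of $\widetilde H(s,\theta_0;v^\e_\lambda)$ up to a multiplicative factor $1+C\zeta(r,\e)$ (for $\theta$ fixed and $\zeta$ small). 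Lemma~\ref{lem.approx.ve} at scale $s=r/2$ (noting $\zeta(r/2,\e)\lesssim\zeta(r,\e)$ and $\eta(2\e\X/r)\lesssim\eta(\e\X/r)$) then gives
\[
\widetilde H(s,\theta_0;v^\e_\lambda)\le \tfrac12\widetilde H(s,2;v^\e_\lambda)+C\bigl(\zeta(r,\e)+\sqrt[3]{\eta(\e\X/r)}\bigr)\Phi(8r).
\]

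A symmetric domain-comparison argument (this time $D_r\subset T^+_r$ with $|T^+_r\setminus D_r|\le Cr^d\zeta(r,\e)$, and the extra mass on the thin shell controlled by Remark~\ref{rmk.u-v} because $\widetilde u^\e_\lambda\equiv 0$ on $T^+_r\setminus D$) yields $\widetilde H(s,2;v^\e_\lambda)\le H(r;v^\e_\lambda)+C\zeta(r,\e)^{\min(\gamma,1/2)}\Phi(8r)$; a final triangle-inequality step together with the approximation bounds and Lemma~\ref{lem.h.bdry} gives $H(r;v^\e_\lambda)\le H(r)+C\zeta(r,\e)^\gamma\Phi(8r)$. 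Chaining everything and absorbing all $O(\zeta(r,\e))\cdot H(r)$ contributions into the error via $H(r)\le\Phi(r)\le C\Phi(8r)$, I obtain $H(\theta r)\le\tfrac12 H(r)+C(\zeta(r,\e)^{\gamma'}+\sqrt[3]{\eta(\e\X/r)})\Phi(8r)$ with $\gamma'=\min(\gamma,1/2)$, which after renaming $\gamma'$ back to $\gamma$ is the conclusion. The hardest step is the two domain comparisons, where the non-local $\underline H^{-1}$ norm and the pressure-oscillation supremum must be transferred between $D_{\theta r}$ and $E$ (and between $D_r$ and $T^+_r$): this requires controlling both the change in the mean value of $\lambda\nabla\cdot v^\e_\lambda$ across the thin shell and the change in the $H^1_0$--duality normalization, with all errors absorbed into $\zeta(r,\e)^{\gamma'}\Phi(8r)$.
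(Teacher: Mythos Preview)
Your proposal is correct and follows essentially the same route as the paper: the triangle inequality splits $H(\theta r)$ into a $v^\e_\lambda$-part and a $(u^\e_\lambda-v^\e_\lambda)$-part, the latter is controlled by Lemma~\ref{lem.u-v} and Remark~\ref{rmk.u-v}, and the former is handled by transferring between the $D$-based excess $H$ and the $T^+$-based excess $\widetilde H$ (the ``domain comparisons''), with Lemma~\ref{lem.approx.ve} supplying the decay step; the paper's proof organizes exactly these ingredients as four error terms $\RN{1}$--$\RN{4}$, and your ``hardest step'' is precisely their Part~(i), where the Meyers estimate is used to move the $\underline H^{-1}$ pressure term across the thin shell. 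The only cosmetic difference is that you apply Lemma~\ref{lem.approx.ve} at scale $s=r/2$ (with $\theta=\theta_0/2$) so as to land on $\tfrac12 H(r)$ directly, whereas the paper's computation, working at scale $r$, actually produces $\tfrac12 H(2r)$ and then implicitly relabels $\theta$.
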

\begin{proof}
	By the triangle inequality, Lemma \ref{lem.approx.ve} and (\ref{est.WCacc.bdry}),
	\begin{equation}\label{est.H1234}
	\begin{aligned}
	H(\theta r) = H(\theta r; u^\e_\lambda) &\le H(\theta r; v^\e_\lambda) + H(\theta r; u^\e_\lambda - v^\e_\lambda) \\
	& \le \widetilde{H}(r,\theta;v^\e_\lambda) + |\widetilde{H}(r,\theta;v^\e_\lambda) - H(\theta;v^\e_\lambda)| + H(\theta r; u^\e_\lambda - v^\e_\lambda) \\
	& \le \frac{1}{2}\widetilde{H}(r,2;v^\e_\lambda) + C\Big( \zeta(r,\e)^{\gamma}+ \sqrt[3]{\eta(\e\X/r)}   \Big) \Phi(8r) \\
	& \qquad + |\widetilde{H}(r,\theta;v^\e_\lambda) - H(\theta r;v^\e_\lambda)| + H(\theta r; u^\e_\lambda - v^\e_\lambda)\\
	& \le \frac{1}{2}H(2r;u^\e_\lambda) + C\Big( \zeta(r,\e)^{\gamma}+ \sqrt[3]{\eta(\e\X/r)}   \Big) \Phi(8r) \\
	& \qquad + |\widetilde{H}(r,\theta;v^\e_\lambda) - H(\theta r;v^\e_\lambda)|  + H(\theta r; u^\e_\lambda - v^\e_\lambda)\\ 
	&\qquad + |\widetilde{H}(r,2;v^\e_\lambda) - H(2r;v^\e_\lambda)| + H(2r; u^\e_\lambda - v^\e_\lambda)\\
	&= \frac{1}{2}H(2r;u^\e_\lambda)+ C\Big( \zeta(r,\e)^{\gamma}+ \sqrt[3]{\eta(\e\X/r)}   \Big) \Phi(8r) \\
	& \qquad + \RN{1} + \RN{2}  + \RN{3} + \RN{4}.
	\end{aligned}
	\end{equation}
	It suffices to estimate $\RN{1}$ and $\RN{2}$, while the estimates of $\RN{3}$ and $\RN{4}$ are the same.
	
	Part (i): Estimate of $\RN{1}$. We will compare the three terms of $\widetilde{H}(r,\theta;v^\e_\lambda)$ and $H(\theta r; v^\e_\lambda)$ separately. Note that the quantities $\widetilde{H}$ and $H$ are defined for the same function $v^\e_\lambda$ over different domains $D_{\theta r}$ and $T_{2r}\cap B_{\theta r}$. Recall that $D_{\theta r} \subset T_{2r}^+ \cap B_{\theta r}$ and $|T_{2r}^+ \cap B_{\theta r} \setminus D_{\theta r}| \le Cr^d \zeta(r,\e) \approx \zeta(r,\e) |D_{\theta r}| \approx \zeta(r,\e) |T_{2r}\cap B_{\theta r}|$. This fact and the Meyers' estimate will be our main ingredients for the estimate of $\RN{1}$. We begin with the first term of $H(\theta r; v^\e_\lambda)$:
	\begin{equation*}
	\begin{aligned}
	&\frac{1}{\theta r}\inf_{q\in\R^d} \bigg( \average_{D_{\theta r} } |v^\e_\lambda  - (n_{\theta r}\cdot x)q|^2 \bigg)^{1/2} \\ &\le \frac{1}{\theta r}\inf_{q\in\R^d} \bigg( \average_{T_{2r}^+ \cap B_{\theta r} } |v^\e_\lambda  - (n_{\theta r}\cdot x)q|^2 \bigg)^{1/2} \bigg( \frac{|T_{2r}^+\cap B_{\theta r}|}{|D_{\theta r}|} \bigg)^{1/2} \\
	& \le \frac{1}{\theta r}\inf_{q\in\R^d} \bigg( \average_{T_{2r}^+ \cap B_{\theta r} } |v^\e_\lambda  - (n_{\theta r}\cdot x)q|^2 \bigg)^{1/2} \big(1+C\zeta(r,\e)\big)^{1/2} \\
	& \le \frac{1}{\theta r}\inf_{q\in\R^d} \bigg( \average_{T_{2r}^+ \cap B_{\theta r} } |v^\e_\lambda  - (n_{\theta r}\cdot x)q|^2 \bigg)^{1/2} + C\zeta(r,\e)^{1/2} \bigg( \average_{T_{2r}^+ \cap B_{\theta r} } |\nabla v^\e_\lambda|^2 \bigg)^{1/2} \\& \le \frac{1}{\theta r}\inf_{q\in\R^d} \bigg( \average_{T_{2r}^+ \cap B_{\theta r} } |v^\e_\lambda  - (n_{\theta r}\cdot x)q|^2 \bigg)^{1/2} + C\zeta(r,\e)^{1/2} \Phi(4r),
	\end{aligned}
	\end{equation*}
	where we also used the energy estimate of $v^\e_\lambda$ and the generalized Caccioppoli inequality (\ref{est.WCacc.bdry}) in the last inequality. To prove the other direction, let $q_{\theta r}$ be the vector that minimizes
	\begin{equation*}
	\frac{1}{\theta r}\inf_{q\in\R^d} \bigg( \average_{D_{\theta r} } |v^\e_\lambda  - (n_{\theta r}\cdot x)q|^2 \bigg)^{1/2}.
	\end{equation*}
	Then
	\begin{equation*}
	|q_{\theta r}| \le \frac{C}{\theta r} \bigg( \average_{D_{\theta r} } |(n_{\theta r}\cdot x)q_{\theta r}|^2 \bigg)^{1/2} \le \frac{C}{\theta r} \bigg( \average_{D_{\theta r} } |v^\e_\lambda|^2 \bigg)^{1/2} \le C\Phi(4r).
	\end{equation*}
	Hence,
	\begin{equation*}
	\begin{aligned}
	&\frac{1}{\theta r}\inf_{q\in\R^d} \bigg( \average_{T_{2r}^+ \cap B_{\theta r} } |v^\e_\lambda  - (n_{\theta r}\cdot x)q|^2 \bigg)^{1/2} \\
	& \qquad \le \frac{1}{\theta r} \bigg( \average_{T_{2r}^+ \cap B_{\theta r} } |v^\e_\lambda  - (n_{\theta r}\cdot x)q_{\theta r}|^2 \bigg)^{1/2} \\
	& \qquad \le \frac{1}{\theta r} \bigg( \average_{D_{\theta r} } |v^\e_\lambda  - (n_{\theta r}\cdot x)q_{\theta r}|^2 \bigg)^{1/2}\bigg( \frac{|D_{\theta r}|}{|T_{2r}^+\cap B_{\theta r}|} \bigg)^{1/2} \\
	& \qquad\qquad + \frac{1}{\theta r} \bigg( \frac{1}{|T_{2r}^+\cap B_{\theta r}|} \int_{T_{2r}^+ \cap B_{\theta r} \setminus D_{\theta r} } |v^\e_\lambda  - (n_{\theta r}\cdot x)q_{\theta r}|^2 \bigg)^{1/2} \\
	& \qquad \le \frac{1}{\theta r} \inf_{q\in \R^d} \bigg( \average_{D_{\theta r} } |v^\e_\lambda  - (n_{\theta r}\cdot x)q|^2 \bigg)^{1/2} + C\zeta(r,\e) \Phi(8r).
	\end{aligned}
	\end{equation*}
	Consequently,
	\begin{equation}\label{est.L2.Dtr}
	\begin{aligned}
	& \bigg| \frac{1}{\theta r}\inf_{q\in\R^d} \bigg( \average_{D_{\theta r} } |v^\e_\lambda  - (n_{\theta r}\cdot x)q|^2 \bigg)^{1/2} - \frac{1}{\theta r}\inf_{q\in\R^d} \bigg( \average_{T_{2r}^+ \cap B_{\theta r} } |v^\e_\lambda  - (n_{\theta r}\cdot x)q|^2 \bigg)^{1/2}\bigg|\\
	& \le C\zeta(r,\e)^{1/2} \Phi(8r).
	\end{aligned}
	\end{equation}
	
	Next, we consider the second part of $\widetilde{H}$ and $H$. We would like to show
	\begin{equation}\label{est.H-1}
	\begin{aligned}
	&\bigg| \norm{\lambda\nabla\cdot v^\e_\lambda - \average_{D_{\theta r}} \lambda\nabla\cdot v^\e_\lambda }_{\underline{H}^{-1}(D_{\theta r})} - \norm{\lambda\nabla\cdot v^\e_\lambda - \average_{T_{2r}^+ \cap B_{\theta r}} \lambda\nabla\cdot v^\e_\lambda }_{\underline{H}^{-1}(T_{2r}^+ \cap B_{\theta r})} \bigg| \\
	& \le C\zeta(r,\e)^\gamma \Phi(4r).
	\end{aligned}
	\end{equation}
	By the definition of $\underline{H}^{-1}$,
	\begin{equation*}
	\begin{aligned}
	&\norm{\lambda\nabla\cdot v^\e_\lambda - \average_{D_{\theta r}} \lambda\nabla\cdot v^\e_\lambda }_{\underline{H}^{-1}(D_{\theta r})} \\
	& \le  \norm{\lambda\nabla\cdot v^\e_\lambda - \average_{ T_{2r}^+ \cap B_{\theta r}} \lambda\nabla\cdot v^\e_\lambda }_{\underline{H}^{-1}(D_{\theta r})} + \bigg|\average_{ T_{2r}^+ \cap B_{\theta r}} \lambda\nabla\cdot v^\e_\lambda - \average_{D_{\theta r}} \lambda\nabla\cdot v^\e_\lambda  \bigg| \\
	& \le  \frac{|T_{2r}^+\cap B_{\theta r}|}{|D_{\theta r}|}\norm{\lambda\nabla\cdot v^\e_\lambda - \average_{ T_{2r}^+ \cap B_{\theta r}} \lambda\nabla\cdot v^\e_\lambda }_{\underline{H}^{-1}(T_{2r}^+ \cap B_{\theta r})} \\
	& \qquad + \bigg|\average_{ T_{2r}^+ \cap B_{\theta r}} \lambda\nabla\cdot v^\e_\lambda - \average_{D_{\theta r}} \lambda\nabla\cdot v^\e_\lambda  \bigg|.
	\end{aligned}
	\end{equation*}
	The second term on the right-hand side of the last inequality is bounded by $\zeta(r,\e)^{1/2} \Phi(4r)$ due to the fact $|T_{2r}^+ \cap B_{\theta r} \setminus D_{\theta r}| \le Cr^d\zeta(r,\e)$. Also observe that $|T^+_{2r}\cap B_{\theta r}|/|D_{\theta r}| = 1+ C\zeta(r,\e)$. It follows that
	\begin{equation}\label{est.H-1.Dtr}
	\begin{aligned}
	& \norm{\lambda\nabla\cdot v^\e_\lambda - \average_{D_{\theta r}} \lambda\nabla\cdot v^\e_\lambda }_{\underline{H}^{-1}(D_{\theta r})}\\ & \qquad \le \norm{\lambda\nabla\cdot v^\e_\lambda - \average_{T_{2r}^+ \cap B_{\theta r}} \lambda\nabla\cdot v^\e_\lambda }_{\underline{H}^{-1}(T_{2r}^+ \cap B_{\theta r})} + C\zeta(r,\e)^{1/2}\Phi(4r).
	\end{aligned}
	\end{equation}
	Conversely, to prove the other direction, we claim the following fact
	\begin{equation*}
	\norm{F}_{\underline{H}^{-1}(T_{2r}^+ \cap B_{\theta r})} \le \norm{F}_{\underline{H}^{-1}(D_{\theta r})} \frac{|D_{\theta r}|}{|T_{2r}^+ \cap B_{\theta r}| } + C\zeta(r,\e)^{\gamma} \bigg( \average_{T_{2r}\cap B_{\theta r}} |F|^p \bigg)^{1/p},
	\end{equation*}
	with $\gamma = 1/2-1/p$ and $p>2$. Applying the above fact to $F = \lambda\nabla\cdot v^\e_\lambda - \average_{T_{2r}^+ \cap B_{\theta r}} \lambda\nabla\cdot v^\e_\lambda$ and using the Meyers estimate of $v^\e_\lambda$, we have
	\begin{equation*}
	\begin{aligned}
	& \norm{\lambda\nabla\cdot v^\e_\lambda - \average_{T_{2r}^+ \cap B_{\theta r}} \lambda\nabla\cdot v^\e_\lambda }_{\underline{H}^{-1}(T_{2r}^+ \cap B_{\theta r})} \\ 
	&\le  \norm{\lambda\nabla\cdot v^\e_\lambda - \average_{T_{2r}^+ \cap B_{\theta r}} \lambda\nabla\cdot v^\e_\lambda }_{\underline{H}^{-1}(D_{\theta r})} + C\zeta(r,\e)^\gamma \Phi(4r) \\
	& \le  \norm{\lambda\nabla\cdot v^\e_\lambda - \average_{D_{\theta r}} \lambda\nabla\cdot v^\e_\lambda }_{\underline{H}^{-1}(D_{\theta r})} + C\zeta(r,\e)^\gamma \Phi(4r).
	\end{aligned}
	\end{equation*}
	This, combined with (\ref{est.H-1.Dtr}), gives (\ref{est.H-1}).

	Finally, for any $k,\ell \in [1/4,1]$, it is not hard to show
	\begin{equation*}
	\begin{aligned}
	& \bigg| \bigg| \average_{D_{k\theta r}} \lambda\nabla\cdot v^\e_\lambda - \average_{D_{\ell \theta r}} \lambda\nabla\cdot v^\e_\lambda \bigg| - \bigg| \average_{T_{2r}^+ \cap B_{k\theta r}} \lambda\nabla\cdot v^\e_\lambda - \average_{T_{2r}^+ \cap B_{\ell \theta r}} \lambda\nabla\cdot v^\e_\lambda \bigg| \bigg| \\
	& \le 2\sup_{k\in [1/4,1]} \bigg| \average_{D_{k\theta r}} \lambda\nabla\cdot v^\e_\lambda - \average_{T_{2r}^+ \cap B_{k\theta r}} \lambda\nabla\cdot v^\e_\lambda \bigg| \\
	& \le C\zeta(r,\e)^{1/2} \Phi(4r).
	\end{aligned}
	\end{equation*}
	
	This, together with (\ref{est.L2.Dtr}) and (\ref{est.H-1}), implies
	\begin{equation*}
	|\widetilde{H}(r,\theta;v^\e_\lambda) - H(\theta r; v^\e_\lambda)| \le C\zeta(r,\e)^\gamma \Phi(8r),
	\end{equation*}
	which is the desired estimate of $\RN{1}$.
	
	Part (ii): Estimate of $\RN{2}$. Recall that
	\begin{equation*}
	\begin{aligned}
	H(\theta r; u^\e_\lambda - v^\e_\lambda) &= \frac{1}{\theta r}\inf_{q\in\R^d} \bigg( \average_{D_{\theta r} } |u^\e_\lambda - v^\e_\lambda - (n_{\theta r}\cdot x)q|^2 \bigg)^{1/2} \\
	& \qquad + \frac{1}{\theta r} \norm{\lambda\nabla\cdot (u^\e_\lambda - v^\e_\lambda) - \average_{D_{\theta r}} \lambda\nabla\cdot (u^\e_\lambda - v^\e_\lambda) }_{\underline{H}^{-1}(D_{\theta r})} \\
	& \qquad + \sup_{k,\ell \in [1/4,1]} \bigg| \average_{D_{k\theta r}} \lambda\nabla\cdot (u^\e_\lambda - v^\e_\lambda) - \average_{D_{\ell \theta r}} \lambda\nabla\cdot (u^\e_\lambda - v^\e_\lambda) \bigg|.
	\end{aligned}
	\end{equation*}
	Then it follows from Lemma \ref{lem.u-v} and Remark \ref{rmk.u-v} that
	\begin{equation*}
	H(\theta r; u^\e_\lambda - v^\e_\lambda) \le C\zeta(r,\e)^\gamma \Phi(8r).
	\end{equation*}
	
	Hence, we have proved the estimates for $\RN{1}$ and $\RN{2}$. Similar argument gives the same estimates for $\RN{3}$ and $\RN{4}$. Therefore,  the desired estimate follows from (\ref{est.H1234}).
\end{proof}

Finally, following the similar argument as the interior estimate and using the full version of Lemma \ref{lem.iteration}, we obtain
\begin{theorem}\label{thm.bdry.Lip}
	For any $s\in (0,d),\lambda \in [0,\infty)$, there exists a random variable $\X = \X_{s,\lambda}: \Omega\mapsto [1,\infty)$ satisfying
	\begin{equation*}
	\X \le \mathcal{O}_{s}(C),
	\end{equation*}
	such that if $r \in [\e \X, 1]$, then
	\begin{equation}
	\bigg( \average_{D_r} |\nabla u_\lambda^\e|^2 \bigg)^{1/2} + \bigg( \fint_{D_r} |\lambda \nabla\cdot u_\lambda^\e -  \fint_{D_2} \lambda \nabla\cdot u_\lambda^\e|^2 \bigg)^{1/2} \le C \bigg( \average_{D_2} |\nabla u_\lambda^\e|^2 \bigg)^{1/2}.
	\end{equation}
\end{theorem}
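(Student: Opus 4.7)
The plan is to feed the excess decay estimate just established,
\begin{equation*}
H(\theta r) \le \frac{1}{2}H(r) + C\Big( \zeta(r,\e)^{\gamma} + \sqrt[3]{\eta(\e\X/r)}\,\Big)\Phi(8r),
\end{equation*}
into the iteration scheme of Lemma \ref{lem.iteration}, following essentially the same route as in the interior proof of Theorem \ref{thm.Lip}. Since $\zeta(r,\e)^{\gamma} \le C(r^{\alpha\gamma} + (\e/r)^{\alpha\gamma})$ and $\sqrt[3]{\eta(\rho)} = \rho^{\sigma}$ for some $\sigma>0$, I would set
\begin{equation*}
\eta_1(\rho) := \rho^{\alpha\gamma} + \rho^{\sigma}, \qquad \eta_2(\rho) := \rho^{\alpha\gamma},
\end{equation*}
both of which clearly satisfy the Dini condition (\ref{cond.Dini}). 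Replacing the parameter $\e$ in Lemma \ref{lem.iteration} by $\X\e$ — legitimate because $\X\ge 1$ makes $(\e/r)^{\alpha\gamma} \le (\X\e/r)^{\alpha\gamma}$ — puts condition (\ref{subeqn:a}) at our disposal on the full range $r\in[\X\e,1]$.

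Conditions (\ref{subeqn:b}), (\ref{subeqn:c}), (\ref{subeqn:d}) and (\ref{subeqn:h}) follow immediately from (\ref{est.HPhi}) and Lemma \ref{lem.h.bdry}. The subtle point, and what I expect to be the main obstacle, is condition (\ref{subeqn:e}): the boundary version of the oscillation estimate in Lemma \ref{lem.h.bdry} carries an extra term $C\zeta(r,\e)\Phi(2r)$ beyond the $CH(2r)$ allowed in the hypothesis of Lemma \ref{lem.iteration}. So I would actually apply an augmented version of that lemma in which (\ref{subeqn:e}) reads
\begin{equation*}
\sup_{r\le s,t\le 2r}|h(s)-h(t)| \le C_0 H(2r) + C_0\bigl(\eta_1(\e/r)+\eta_2(r)\bigr)\Phi(2r).
\end{equation*}
The key estimate (\ref{est.ht}) in the proof of Lemma \ref{lem.iteration} then acquires an additional term $C\int_t^1 (\eta_1(\e/r)+\eta_2(r))\Phi(2r)\,dr/r$, which is Dini-integrable and can be controlled by $\Phi(2)$ plus a small multiple of $\int H(s)/s\,ds$ — exactly the kind of term already handled when bounding $\int\eta_i h(4r)dr/r$ in the original proof. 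This augmentation is the "full version" referenced in the statement.

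With the augmented iteration in hand, I obtain
\begin{equation*}
\int_{\X\e}^{2}\frac{H(r)}{r}\,dr + \sup_{\X\e\le r\le 2}\Phi(r) \le C\Phi(2).
\end{equation*}
The gradient part of the conclusion then comes directly from the boundary generalized Caccioppoli inequality (\ref{est.WCacc.bdry}): for $r\in[\X\e,1]$,
\begin{equation*}
\Big(\average_{D_r}|\nabla u^\e_\lambda|^2\Big)^{1/2} \le C\Phi(2r) \le C\Phi(2) \le C\Big(\average_{D_2}|\nabla u^\e_\lambda|^2\Big)^{1/2}.
\end{equation*}
For the pressure, I mirror the telescoping argument used for Theorem \ref{thm.Lip}: the bound on $\int H(r)/r\,dr$ controls $\int_{\X\e}^2 \sup_{k,\ell\in[1/4,1]}|\average_{D_{kt}}\lambda\nabla\cdot u^\e_\lambda - \average_{D_{\ell t}}\lambda\nabla\cdot u^\e_\lambda|\,dt/t$, and summing dyadically from scale $r$ to the last dyadic scale below $1$ gives $|\average_{D_r}\lambda\nabla\cdot u^\e_\lambda - \average_{D_{2^Nr}}\lambda\nabla\cdot u^\e_\lambda| \le C\Phi(2)$; Lemma \ref{lem.Nabla} bridges the remaining jump from scale $2^N r$ to scale $2$. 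Combined with the local pressure-oscillation bound provided at scale $r$ by (\ref{est.WCacc.bdry}), this yields the second half of (\ref{est.bdryLip.Osc}). As in the interior case, the reduction to constant $\lambda=\lambda_0$ is carried out via the splitting (\ref{eq.reduction})--(\ref{eq.splitting}), and the $\lambda_0$ appearing in the conclusion may be replaced by $\lambda^\e$ using (\ref{cond.incompr}) together with the already obtained gradient bound.
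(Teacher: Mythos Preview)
Your overall route is correct and parallels the paper's: feed the boundary excess decay into Lemma \ref{lem.iteration}, then recover the gradient and pressure estimates exactly as in Theorem \ref{thm.Lip}. The difference lies in how the extra term $C\zeta(r,\e)\Phi(2r)$ from Lemma \ref{lem.h.bdry} is absorbed. You propose to relax hypothesis (\ref{subeqn:e}) of the iteration lemma; the paper instead keeps Lemma \ref{lem.iteration} untouched and replaces $H$ by $H^*(r):=H(r)+\zeta(r,\e)\Phi(r)$. Since $\zeta\Phi$ is itself of the form $(\eta_1+\eta_2)\Phi$, the pair $(H^*,\Phi,h)$ satisfies (\ref{subeqn:a})--(\ref{subeqn:e}) verbatim, and the lemma applies off the shelf. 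This is a bit slicker: no re-entry into the proof of Lemma \ref{lem.iteration} is needed.

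Your augmented-lemma approach also works, but the justification you give has a circularity you should close. You say the new term $C\int_t^2(\eta_1+\eta_2)\Phi(2r)\,dr/r$ in (\ref{est.ht}) is ``controlled by $\Phi(2)$ plus a small multiple of $\int H(s)/s\,ds$'' by the same mechanism as the bound on $\int\eta_i h(4r)\,dr/r$ later in the proof. But that later bound \emph{uses} (\ref{est.ht}); here you are still trying to \emph{establish} (\ref{est.ht}), and substituting $\Phi\le C(H+h)$ reintroduces $h$ on the right. The fix is a discrete Gronwall step: from $h(r)\le (1+C(\eta_1(\e/r)+\eta_2(r)))h(2r)+CH(2r)$, iterate dyadically and use the Dini condition to bound the product $\prod_j(1+C\eta_j)$, recovering $h(t)\le C\Phi(2)+C\int_t^2 H(r)/r\,dr$ with a larger constant. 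After that, the rest of your argument --- the Caccioppoli step, the dyadic telescoping for the pressure, and the final replacement of $\lambda_0$ by $\lambda^\e$ --- matches the paper.
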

\begin{proof}
	Let $H,\Phi$ and $h$ be defined as before. In view of Lemma \ref{lem.h.bdry}, the condition (\ref{subeqn:e}) in Lemma \ref{lem.iteration} is not satisfied exactly. However, if we set $H^*(r) = H(r) + \zeta(r,\e)\Phi(r)$. Thus, $H^*,\Phi$ and $h$ satisfy
	\begin{subequations}\label{est.HTP.bdry}
		\begin{align}
		H^*(\theta r) &\le \frac{1}{2} H(r) + C_0 \big\{ \zeta(r,\e)^\gamma + \zeta(\theta^{-1}r,\e) + \sqrt[3]{\eta(\e\X/r)} \big\} \Phi(8r) \label{subeqn:1}\\
		H^*(r) & \le C_0\Phi(r) \label{subeqn:2}\\ 
		h(r) & \le C_0( H^*(r) + \Phi(r))  \label{subeqn:3}\\
		\Phi(r) &\le C_0( H^*(r) + h(r)) \label{subeqn:4}\\
		\sup_{r\le t\le 2r} \Phi(t) & \le C_0\Phi(2r) \label{subeqn:5}\\
		\sup_{r\le s,t\le 2r} |h(s) - h(t)| &\le C_0 H^*(2r). \label{subeqn:6}
		\end{align}
	\end{subequations}
	Recall that for any $r\in (\e\X,1)$
	\begin{equation*}
	\zeta(r,\e)^\gamma + \zeta(\theta^{-1}r,\e) + \sqrt[3]{\eta(\e\X/r)} \le Cr^{\alpha\gamma} + C(\e/r)^{\alpha\gamma} + (\e\X/r)^{\rho/3}.
	\end{equation*}
	Thus, Lemma \ref{lem.iteration} applies (with $\e$ replaced by $\e\X$) and gives
	\begin{equation*}
	\int_{\e\X}^{2} \frac{H^*(r)}{r} dr + \sup_{\e\X \le r\le 2}\Phi(r) \le C\Phi(2).
	\end{equation*}
	Now, we may proceed as the proof of Theorem \ref{thm.Lip} and obtain the desired estimate.
\end{proof}

\begin{proof}[Proof of Theorem \ref{thm.main1}]
	The proof is the same as Theorem \ref{thm.main} by using Theorem \ref{thm.bdry.Lip}.
\end{proof}
\bibliographystyle{amsplain}
\bibliography{Lib}

\begin{flushleft}
Shu Gu,
Department of Mathematics,
Fordham University,
New York, NY 10458,
USA.

E-mail: sgu31@fordham.edu
\end{flushleft}

\begin{flushleft}
Jinping Zhuge,
Department of Mathematics,
University of Chicago,
Chicago, IL 60637,
USA.

E-mail: jpzhuge@math.uchicago.edu
\end{flushleft}

\medskip

\noindent \today
\end{document}